\def\DATE{January 5, 2014}
\def\today{\DATE}
\def\@evenfoot{\rule{0pt}{20pt}[\today] \hfill}
\def\@oddfoot{\rule{0pt}{20pt}\hfill [\today]}
\newtheorem{theorem}{Theorem}[section]
\newtheorem{corollary}[theorem]{Corollary}
\newtheorem{principle}[theorem]{Principle}
\newtheorem{lemma}[theorem]{Lemma}
\newtheorem{proposition}[theorem]{Proposition}
\newtheorem*{claim}{Claim}
\newtheorem*{theoremA}{Theorem~A}
\newtheorem*{theoremB}{Theorem~B}
\newtheorem*{theoremC}{Theorem~C}
\newtheorem*{theoremAplus}{Theorem~A$_+$}
\newtheorem*{theoremBplus}{Theorem~B$_+$}
\newtheorem*{theoremCplus}{Theorem~C$_+$}
\newtheorem*{theoremD}{Theorem~D}
\theoremstyle{definition}
\newtheorem{example}[theorem]{Example}
\newtheorem{definition}[theorem]{Definition}
\newtheorem{prop}[theorem]{Proposition}
\newtheorem{defi}[theorem]{Definition}
\newtheorem{rem}[theorem]{Remark}
\numberwithin{equation}{section}
\DeclareMathOperator{\Hom}{Hom}
\DeclareMathOperator{\im}{Im}
\DeclareMathOperator{\Ker}{Ker}
\DeclareMathOperator*{\colim}{colim}
\def\CF{\hbox{\,$\tilde*$\,}}
\def\ground{\mathbb k}
\def\Sp{\mathfrak s}
\def\L{\hat{\mathscr L}}
\def\Lie{{\mathscr L}}
\def\a{\mathfrak a}
\def\ff{{\mathfrak f}}
\def\hatff{{\hat{\mathfrak f}}}
\def\g{\mathfrak g}
\def\h{\mathfrak h}
\def\J{J}
\def\LL{{\mathbb L}}\def\hoLdc{{\hbox{$\fQ$\rm-ho}}\L^{\it{dc}}}
\def\epi{\twoheadrightarrow}
\def\Cf{\hat{\mathcal C}}
\def\Cfun{{\mathcal C}}
\def\scup{\sqcup}\def\hoC{\hbox{ho$\catC$}}\def\hounder{\hbox{ho$(*/\catC)$}}
\def\algs{\mathscr A}\def\hoover{\hbox{ho$(\catC/*)$}}
\def\A{\hat{\mathscr A}}
\def\MCmod{\mathscr {MC}}
\def\fa{{\mathfrak a}}
\def\bfk{{\mathbf k}}
\def\id{\operatorname{id}}
\def\hatot{{\hbox{\hskip.2em $\widehat \otimes \hskip .02em$}}}
\def\Der{\operatorname{Der}}
\def\ot{{\otimes}}\def\U{{\mathcal U}}\def\uu{{\mathfrak u}}\def\vv{{\mathfrak v}}
\def\Prim{{\mathcal P}}\def\Ext{\operatorname{Ext}}\def\Tor{\operatorname{Tor}}
\def\bbQ{{\mathbb Q}}\def\oh{{\overline \h}}
\title{Disconnected rational homotopy theory}
\author{Andrey Lazarev and Martin Markl}
\thanks{The first author was partially supported by the EPSRC grant
  EP/J008451/1. The second author was supported by the Eduard \v Cech
  Institute P201/12/G028, the EPSRC grant EP/I014012/1 and RVO: 67985840.}
\address{Mathematical Institute of the Academy, {\v Z}itn{\'a} 25,
         115 67 Prague 1, The Czech Republic}
\address{MFF UK, 186 75 Sokolovsk\'a 83, Prague 8, The Czech Republic}
\email{markl@math.cas.cz}
\address{Department of Mathematics and Statistics
University of Lancaster
Lancaster LA1 4YF}
\email{a.lazarev@lancaster.ac.uk}
\def\hatX{\widehat X}\def\id{1\!\!1}\def\Lhat{{\hat\Lfun}}
\def\good{{D^I}}\def\cic{cohomologically invertible cocycle}
\def\vlra{{\hbox{$-\hskip-1mm-\hskip-2mm\longrightarrow$}}}
\def\cases#1#2#3#4{
                  \left\{
                         \begin{array}{ll}
                           #1,\ &\mbox{#2}
                           \\
                           #3,\ &\mbox{#4}
                          \end{array}
                   \right.
}
\def\coprod{\bigcup}\def\colim{\mathop{\rm colim}\displaylimits}
\def\calC{{\EuScript C}}
\def\norep{A_1 \timesred A_2}\def\Mult{{\rm coProd}}\def\Tlum{{\rm Prod}}
\def\catC{{\EuScript C}}\def\loc#1#2{#1[#2^{-1}]}
\def\nilpsimpcon{\hbox{$\fNQ$-$\SSet^c$}}
\def\honilpsimpcon{\hbox{$\fNQ$-ho$\SSet^c$}}
\def\honilpsimpdiscon{\hbox{$\fNQ$\rm-ho$\SSet^{\it dc}$}}
\def\ratcat{\hbox{$\fQ$-$\BGbez^c$}}
\def\horatcat{\hbox{$\fQ$-ho$\BGbez^c$}}
\def\horatcatdc{\hbox{$\fQ$\rm-ho$\BGbez^{\it dc}$}}
\def\ratcatdc{\hbox{$\fQ$-$\BGbez^{\it dc}$}}
\def\horathincat{\hbox{$\fQ$\rm-ho$\Hibez^{\it c}$}}
\def\horathincatdc{\hbox{$\fQ$\rm-ho$\Hibez^{\it dc}$}}
\def\rathincatdc{\hbox{$\fQ$-$\Hibez^{\it dc}$}}
\def\tf{{\widetilde f}}
\def\Min{{\mathscr{M}}}\def\Lfreecompl{\hat{\mathbb L}}
\def\tu{{\widetilde u}}
\def\timesred{{\hskip -.03em\times \hskip -.03em}}
\def\tf{{\widetilde f}}\def\epi{\twoheadrightarrow}
\def\hoHindcplus{{\rm ho}{\algs_+^{\it dc}}}
\def\Hindc{{\algs^{\it dc}}}\def\Hindcbar{\bar{{\algs}^{\it dc}}}
\def\hoHindc{{\rm ho}{\algs^{\it dc}}}\def\Hindcplus{{\algs_+^{\it dc}}}
\def\hoHindcbar{{\rm ho}\bar{{\algs}^{\it dc}}}
\def\hoHindcplus{{\rm ho}{\algs^{\it dc}_+}}
\def\Hinc{{\algs^{\it c}}}\def\Hincplus{{\algs_+^{\it c}}}
\def\hoHinc{{\rm ho}{\algs^{\it c}}}\def\hoHincplus{{\rm ho}{\algs_+^{\it c}}}
\def\bfk{{\mathbb k}}\def\bfQ{{\mathbb Q}}\def\Hin{{\tt \algs}}\def\Hi{\Hin}
\def\hoHin{{\rm ho}{\tt \algs}}\def\algs{\mathscr{A}}\def\ssets{\mathscr{S}}
\def\BG{{{\algs}_{\geq 0}}}
\def\BGdc{{{\algs}_{\geq 0}^{\it dc}}}
\def\hoBGdc{{{\rm ho}{\algs}_{\geq 0}^{\it dc}}}
\def\BGc{{{\algs}_{\geq 0}^{\it c}}}
\def\BGdcbar{{\bar{\algs}_{\geq 0}^{\it dc}}}
\def\hoBGc{{{\rm ho}{\algs}_{\geq 0}^{\it c}}}
\def\hoBGdcbar{{{\rm ho}\bar{\algs}_{\geq 0}^{\it dc}}}
\def\BGbez{{\algs}_{\geq 0}}\def\Hibez{{\algs}}
\def\SSet{{\tt \ssets}}\def\SSetc{{\ssets^c}}\def\SSetdc{{\ssets^{\it
      dc}}}\def\SSetcplus{{\ssets^c_+}}
\def\SSetdcplus{{\ssets_+^{\it dc}}}\def\hoSSetcplus{{\rm ho}{\ssets_+^c}}
\def\hoSSetc{{\rm ho}{\ssets^c}}\def\hoSSetdc{{\rm ho}{\ssets^{\it dc}}}
\def\fQ{{\rm f{\mathbb Q}}}\def\hoSSetdcplus{{\rm ho}{\ssets_+^{\it dc}}}
\def\fNQ{{\rm fN{\mathbb Q}}}
\def\hoBG{{\rm ho}\BG}
\def\honilpsimpdisconplus{\hbox{$\fNQ$\rm-ho$\SSet^{\it dc}_+$}}
\def\adjointext#1#2{{% Picture saved by xtexcad 2.4
\unitlength=0.05em
\thicklines
\begin{picture}(80.00,32)(-10.00,4.00)
\put(00.00,5.00){\vector(-3,1){0}}
\put(30.00,27){\makebox(0,0)[b]{\scriptsize $#1$}}
\put(30.00,-20){\makebox(0,0)[b]{\scriptsize $#2$}}
\put(60.00,15.00){\vector(3,-1){0}}
\qbezier(0.00,5.00)(30.00,-5.00)(60.00,5.00)
\qbezier(0.00,15.00)(30.00,25.00)(60.00,15.00)
\end{picture}}}
\def\L{\hat{\mathscr L}}\def\Neistrict{\hbox{$\it nDGLA$}}
\def\exthoLdc{{\hbox{ext$\fQ$\rm-ho}}\L^{\it{dc}}}
\def\Nei{\hbox{${\rm ho}(\it nDGLA)$}}\def\Lie{{\mathscr L}}
\def\Lfun{\mathcal L}\def\horathincatdcplus{\hbox{$\fQ$\rm-ho$\Hibez^{\it dc}_+$}}
\def\Cfun{{\mathcal C}}\def\Lc{{\hbox{$\fQ$\rm-}}\L_{\geq 0}}
\def\hoLc{{\hbox{$\fQ$\rm-ho}}\L_{\geq 0}}
\def\g{\mathfrak g}\def\honilpsimpconplus{\hbox{$\fNQ$\rm-ho$\SSet^{\it c}_+$}}
\def\h{\mathfrak h}
\def\a{\mathfrak a}
\def\Ass{\hbox{${\mathscr A} \hskip -.15em ss$}}
\def\Asshat{\hbox{${\widehat {\mathscr A}}\hskip -.15em ss$}}
\def\B{\mathcal B}
\def\Bhat{\hat{\mathcal B}}
\DeclareMathOperator{\MC}{MC}
\def\adjoint{{% Picture saved by xtexcad 2.4
\unitlength=0.05em
\thicklines
\begin{picture}(80.00,20.00)(-10.00,4.00)
\put(00.00,5.00){\vector(-3,1){0}}
\put(60.00,15.00){\vector(3,-1){0}}
\qbezier(0.00,5.00)(30.00,-5.00)(60.00,5.00)
\qbezier(0.00,15.00)(30.00,25.00)(60.00,15.00)
\end{picture}}}
\begin{document}

\begin{abstract}
We construct two algebraic versions of homotopy theory of
rational disconnected topological spaces, one based on differential graded commutative
associative algebras and the other one on complete differential graded Lie algebras.
As an application of the developed technology we obtain results on the structure of
Maurer-Cartan spaces of complete differential graded Lie algebras.
\end{abstract}

\bibliographystyle{plain}
\maketitle

%\linenumbers

\baselineskip15pt plus 2pt minus 1pt

\setcounter{tocdepth}{1}
\tableofcontents

\section*{Introduction}

The purpose of this paper is to construct an algebraic theory of
rational disconnected topological spaces (or simplicial sets), alluded
to in~\cite[p.~67]{markl12:_defor}.  The corresponding theory for \emph{connected} spaces was
constructed in the seminal papers \cite{quillen:Ann.ofMath.69,sullivan:Publ.IHES77}. The paper \cite{quillen:Ann.ofMath.69}
related rational connected spaces to differential graded Lie algebras
(dglas) whereas \cite{sullivan:Publ.IHES77} took the perspective of commutative
differential graded algebras (cdgas). In the present paper we pursue
both points of view and construct both the dgla and cdga algebraic
models for disconnected spaces.  It is interesting that the
differences between the two algebraic categories (which were somewhat
hidden in the Quillen and Sullivan approaches) become more pronounced
in our more general context; in particular we are naturally led to
consider dglas endowed with a linearly compact topology, while our
cdgas are still discrete.

The definitive
reference establishing a correspondence between rational connected spaces and cdgas
is \cite{bousfield-gugenheim}, and our treatment relies heavily on the results of that paper.
Recall that op.~cit.\ constructed a closed model category structure on \emph{non-negatively graded} cdgas and a Quillen adjunction between
this category and the category of simplicial sets. This adjunction
restricts to an equivalence between the homotopy categories of
connected cdgas of finite ${\mathbb Q}$-type
and connected rational nilpotent simplicial sets of finite type.

Our main innovation in establishing a cdga version of disconnected
rational homotopy theory is that we remove the restriction that our
cdgas be non-negatively graded and use the closed model structure on
the category of $\mathbb Z$-graded cdgas~\cite{hinich:CA97}.  This
seemingly innocent modification has quite dramatic consequences. For
example, any commutative algebra concentrated in degree zero is
cofibrant in the Bousfield-Gugenheim category, but \emph{not} in this
extended category, unless it is a retract of a polynomial algebra. The
closed model category of all cdgas appears more natural than the
Bousfield-Gugenheim category; for instance it allows one to define
Harrison-Andr\'e-Quillen cohomology, cf.~\cite{block05:_andre_quill}.

There is still a Quillen adjoint pair of functors between the categories of $\mathbb Z$-graded cdgas and simplicial sets giving rise to an adjunction on the level of homotopy categories. This adjunction restricts to an equivalence between the homotopy category of simplicial sets having finite number of connected components, each being rational, nilpotent and of finite type, and a certain subcategory of the homotopy category of $\mathbb Z$-graded cdgas. We give an explicit characterization of this subcategory.

To construct the second version of the disconnected rational homotopy
theory (based on dglas) we need to relate the homotopy theory of
commutative and Lie algebras. This relationship, which is sometimes
referred to as \emph{Koszul duality} was established in the work of
Quillen \cite{quillen:Ann.ofMath.69}; it was formulated as the duality between
differential graded coalgebras and dglas under certain (fairly severe)
restrictions on the grading of the objects under consideration. These
restrictions were subsequently removed in the seminal paper of Hinich
\cite{hinich01:_dg}. In our context we need a result that is Koszul
dual to Hinich's; it can be regarded as a Quillen equivalence between
the categories of cdgas and differential graded Lie coalgebras (which
can be dualized and viewed as complete dglas). We prove this result by
a suitable adaptation of Hinich's methods.  As a consequence we obtain
an algebraic model of disconnected rational homotopy theory based on
complete dglas. The condition of completeness is essential; it cannot
be removed even when restricted to connected spaces (or simplicial
sets). In the latter case our theory is close to, but still different
from, the one constructed in the papers by Neisendorfer and
Baues-Lemaire \cite{baues77:_minim,neisendorfer78:_lie}.
In particular, our complete dglas admit
minimal models even in the nonsimply-connected case, whereas
simply-connectedness is an essential requirement for the existence of
minimal models constructed by Neisendorfer and Baues-Lemaire.

One important application of the developed theory that we give in this
paper, concerns the structure of Maurer-Cartan spaces. Recall
(cf.~\cite{getzler:AM:09}) that associated to any dgla
$\g$ is a simplicial set $\MC_\bullet(\g)$; in the
case when $\g$ is non-negatively graded and nilpotent, this is a
simplicial set corresponding to $\g$ under the Quillen-Sullivan
correspondence. When $\g$ is \emph{not} non-negatively graded, the
simplicial set $\MC_\bullet(\g)$ is much more mysterious; this is a
fundamental object of study for deformation theory \cite{kontsevich:LMPh03, manetti:Lie,
markl12:_defor} and also comes up in modern approaches to quantum field theory
\cite{costello11:_renor}. It turns out that for differential graded Lie algebras
satisfying an appropriate completeness condition there exists an
operation of \emph{disjoint product} corresponding to the operation of
disjoint union of simplicial sets. The disjoint product of two dglas
is never non-negatively graded and we prove that the Maurer-Cartan
simplicial set of the disjoint product of complete dglas is weakly
equivalent to the disjoint union of the corresponding simplicial
sets. Furthermore, for an arbitrary complete dgla $\g$ the simplicial set $\MC_\bullet(\g)$ naturally
decomposes up to homotopy as a disjoint union  of Maurer-Cartan spaces of certain \emph{connected} dglas naturally associated  with $\g$.
Along the way we establish a general result of independent
interest, expressing the Chevalley-Eilenberg cohomology of free
products of dglas through the Chevalley-Eilenberg cohomology of the
individual pieces. It appears that this result is new even for
ordinary Lie algebras.

Similar results on the Maurer-Cartan spaces of dglas
were contained in the recent paper \cite{BM} by U.~Buijs and
A.~Murillo, who used completely different methods, see, in particular,
Theorem 5.5, Proposition 6.2 and Theorem 6.4 in op.~cit. Their statements do not
include any completeness assumptions. We have been unable to verify
the claims made in that paper.

M.~Golasi\'nski published a series of papers, see
e.g.~\cite{golasinski}, studying the equivariant 
rational homotopy type of spaces $X$ with an action of a finite
group $G$ with possibly disconnected fixed-point sets $X^H$. When $G$
is trivial, his theory describes spaces whose
connected components are rational nilpotent via families of
non-negatively graded complete nilpotent homologically connected cdgas indexed
by the components of $X$. We do not think that Golasi\'nski's theory implies
any of our results.

\subsection*{Notation and conventions}
All algebraic objects will be considered over a fixed field~$\bfk$ of
characteristic zero, at some places we require specifically $\bfk$ to
be the field $\bfQ$ of rational numbers. The abbreviation `dg' stands
for `differential graded'. Further, we will write `cdga' and `dgla'
for `commutative differential graded unital associative algebra' and
`differential graded Lie algebra' respectively.  We allow also the
terminal algebra ${\mathbf 0}$ in which $1=0$. For a (co)cycle $c$, the
symbol $[c]$ denotes its cohomology class. The degree of a homogeneous
element $a$ is denoted $|a|$.

All our cdgas will have cohomological grading with upper indices while
dglas will have homological grading with lower indices. There will,
however, be one important exception from this rule. At some places,
we will need the tensor product of a homologically graded Lie
algebra with the cohomologically graded cdga $\Omega$ of the Sullivan-de~Rham
forms. In this context, we consider $\Omega$ as homologically graded
by $\Omega_* := \Omega^{-*}$. The tensor product will then be a
homologically graded dgla as expected.

 The
\emph{suspension} $\Sigma V$ of a homologically graded vector space
$V$ is defined by the convention $\Sigma V_i=V_{i-1}$
resp.~$\Sigma V^i= V^{i-1}$ for $V$ cohomologically graded.
The functor of taking the linear dual takes
homologically graded vector spaces into cohomologically graded ones so
that $(V^*)^i=(V_{i})^*$; further we will write $\Sigma V^*$ for
$\Sigma(V^*)$; with this convention there is an isomorphism $(\Sigma
V)^*\cong\Sigma V^*$. Regarding spectral sequences, we will use the terminology
of~\cite{baordman:condition}. We will often refer to a duality between
the category of discrete vector spaces (colimits of finite-dimensional
ones) and linearly compact spaces (limits of finite-dimensional ones),
see e.g.~~\cite{lefschetz}.

\label{CMC}
Let $\Hin$ be the category of unital cdgas and $\BG$ its
subcategory consisting of non-negatively graded cdgas. By
\cite{hinich:CA97} and \cite{bousfield-gugenheim} respectively, both $\Hin$ and
$\BG$ are closed model categories whose weak equivalences are
morphisms inducing isomorphisms of cohomology, and fibrations are
surjective morphisms.

Note that $\BG$ has more cofibrations than $\Hin$. For instance,
every cdga concentrated in degree $0$ is cofibrant in
$\BG$. Proposition~\ref{sec:main-results} presents a wide class of
cdgas that are cofibrant in $\BG$ but not in $\Hin$.

As usual, a cdga $A = (A,d)$ is {\em connected\/} (resp.~{\em
homologically connected\/})\footnote{It would be more logical to say {\em
co\/}homologically connected, but we keep our terminology compatible
with~\cite{bousfield-gugenheim}.}  if $A^0 = \bfk$ (resp.~ $H^0(A,d) =
\bfk$) and $A^n =0$ (resp.~$H^n(A) = 0$) for $n \leq 0$. Each
connected algebra $A \in \BG$ admits a unique minimal
model~\cite[\S7]{bousfield-gugenheim}. Such an algebra $A$ is of {\em
finite $\mathbb Q$-type\/} if $A$ is defined over $\mathbb Q$ and its
minimal model has finitely many generators in each
degree. Equivalently, the cohomology of
$H(I/I^2)$, where $I \subset A$ is the ideal of
positive-degree elements of $A$, is finite-dimensional in each
degree, see \cite[\S9.2]{bousfield-gugenheim}

Let $\SSet$ denote the category of simplicial sets and $\nilpsimpcon$
its subcategory of connected nilpotent rational simplicial sets of
finite type. We denote
$\ratcat \subset \BG$ the subcategory of
homologically connected cdgas of finite ${\mathbb Q}$-type.
It is well-known \cite[Theorem~9.4]{bousfield-gugenheim} that there
exists a pair of adjoint functors
\begin{equation}
\label{houbicky}
F : \BG \adjoint \SSet : A
\end{equation}
that, for $\bfk = \bfQ$, induces an equivalence of the homotopy
categories $\horatcat$ and $\honilpsimpcon$. A similar adjunction and
induced equivalence hold also for augmented and pointed versions of
the above categories, see  again Theorem~9.4 in op.~cit.

We denote by $\Lie$ the category of dglas and by ${\L}$ the category
of \emph{complete} dglas, i.e.~inverse limits of finite-dimensional
nilpotent dglas; the morphisms in $\L$ are, naturally, continuous dgla
maps. We will show in Section~\ref{dual_Hinich} that $\L$ is a closed
model category whose weak equivalences are maps $f : \g' \to \g''$
such that $\Cfun(f) : \Cfun(\g'') \to \Cfun(\g')$ is a weak
equivalence; here $\Cfun(-)$ is the Chevalley-Eilenberg functor
recalled in Definition~\ref{CEHar}. Fibrations in $\L$ are surjective
morphisms.

The free product of two dglas $\g$ and $\h$ will be denoted by
$\g*\h$. If $\g$ and $\h$ are complete dglas then $\g*\h$ will stand
for the \emph{completed} free product of $\g$ and $\h$; it is thus a
categorical coproduct of $\g$ and $\h$.  Given a dgla
$\g$, a \emph{Maurer-Cartan} element in $\g$ is an element
$\xi\in \g_{-1}$ satisfying the Maurer-Cartan equation:
$d\xi+\frac{1}{2}[\xi,\xi]=0$. We will abbreviate the expression `Maurer-Cartan' as `MC'. The set of all MC elements in $\g$ will
be denoted by $\MC(\g)$. This definition can be extended to give a
simplicial set $\MC_\bullet(\g)$ whose vertices are just the MC
elements in $\g$; a precise definition is recalled in the main text.
Furthermore, given a dgla $\g$ and an MC element $\xi \in\MC(\g)$ we
can define a twisted differential $d^\xi$ in $\g$ by the formula
$d^\xi(?)=d(?)+[?,\xi]$; we will write $\g^\xi$ for the dgla $\g$
supplied with the twisted differential.

Given a dgla $\g$, we denote by $\g\langle x\rangle$ the dgla obtained
from $\g$ by freely adjoining the variable $x$ with $|x|=-1$ and
$d(x)=- \frac{1}{2}[x,x]$. Clearly, $x\in\MC(\g\langle x\rangle)$ and we
will write $\g\scup 0$ for the twisted dgla $(\g\langle
x\rangle)^x$.  One should view the construction $\g\scup 0$ as the
Lie analogue of adjoining an isolated base point to a topological
space. Furthermore, for two dglas $\g$ and $\h$ we set $\g\scup
\h:=(\g\scup 0)*\h$. The dgla $\g\scup \h$ will be called the
\emph{disjoint product} of $\g$ and $\h$; in the case when $\g$ and
$\h$ are complete we will write $\g\scup \h$ for the corresponding
completion. The operation of
disjoint product equips the category of complete dglas with a
non-unital monoidal structure. A non-complete version of the disjoint
product was considered in~\cite{BM}.

For convenience of the reader, we include a glossary of notation at the end
of Section~\ref{sec:main-results-2}.

\section{Main results}
\label{sec:main-results-2}

We call a cdga $A = (A,d) \in \Hin$ homologically {\em
disconnected\/} if $H^n(A) = 0$ for $n < 0$ and if $H^0(A)$ is isomorphic
to the direct product $\prod_{i \in \J} \bfk$ of copies of the ground
field indexed by some {\em finite\/} set $\J$. Let us denote by $\Hindc$
(resp.~$\BGdc$) the full subcategory of $\Hin$ (resp.~$\BG$)
consisting of homologically disconnected cdgas.  We also denote
by $\hoHindc$ (resp.~$\hoBGdc$) the full
subcategory of $\hoHin$ (resp.~$\hoBG$) whose objects are
homologically disconnected cdgas.\footnote{Model categories and their
  localizations are briefly recalled at the beginning of
  Section~\ref{sec:homot-prop-local}.}

\begin{theoremA}
The inclusion $\BG \subset \Hi$ induces an equivalence of the homotopy
categories $\hoBGdc$ and $\hoHindc$.
\end{theoremA}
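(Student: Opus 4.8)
The plan is to show directly that the functor $\iota_*\colon \hoBGdc \to \hoHindc$ induced by the inclusion is fully faithful and essentially surjective. (Its quasi-inverse is \emph{not} the derived left adjoint of $\iota$, since brutally truncating a cofibrant $\Hin$-resolution destroys the disconnectedness of $H^0$; instead the inverse is a decomposition-plus-minimal-model construction.) First I would record the two features that make the comparison of hom-sets tractable. Every cdga is fibrant in both $\BG$ and $\Hin$ (fibrations are the surjections and the terminal object is $\mathbf 0$), and for a fibrant $B \in \BG$ the cdga $B \otimes \Lambda(t,dt)$ with $|t| = 0$ is non-negatively graded and serves as a path object for $B$ simultaneously in $\BG$ and in $\Hin$. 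Consequently right-homotopy classes of maps into $B$ are computed by the \emph{same} relation in both categories, so the comparison of $\hoBG(-,B)$ with $\hoHin(-,B)$ reduces entirely to how the two notions of cofibrancy differ on a chosen model of the source.

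The engine of the proof is the homologically connected case. If $A$ is homologically connected ($H^{<0}(A) = 0$ and $H^0(A) = \bfk$), the standard step-by-step construction attaching polynomial generators dual to $H^{\geq 1}(A)$ produces a minimal Sullivan model $M \xrightarrow{\sim} A$ free on generators in degrees $\geq 1$. Such an $M$ is a standard cofibration, hence cofibrant in $\BG$, and, being semifree with a decomposable differential and a good filtration of generators, also cofibrant in $\Hin$. Therefore, for homologically connected $A$ and fibrant $B$, one gets $\hoBG(A,B) = \pi(M,B) = \hoHin(A,B)$ with $\pi$ taken via the common path object above, and every homologically connected object of $\hoHin$ is $\iota_*$ of its minimal model. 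Thus $\iota_*$ already restricts to an equivalence $\hoBGc \simeq \hoHinc$ on the homologically connected subcategories.

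It then remains to reduce the disconnected case to this one. Given $A$ with $H^0(A) = \prod_{i\in J}\bfk$, the orthogonal idempotents $e_i$ should be realized as a weak equivalence $A \simeq \prod_{i\in J} A_i$ with each $A_i$ homologically connected. Products of cdgas are the categorical duals of disjoint unions, they lie in $\BG$ when the factors do, they are homotopy products (all objects being fibrant), and they are preserved by $\iota_*$. Granting this decomposition, essential surjectivity follows by splitting an object of $\hoHindc$ and replacing each connected factor by its minimal model (so the model $\prod_i M_i$ lands in $\BGdc$), while full faithfulness follows from the connected case together with the fact that maps out of such a product into a homologically connected target are governed componentwise, by the identical formula in $\BG$ and in $\Hin$.

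The hard part will be exactly this decomposition, which is the genuinely new disconnected phenomenon. It amounts to lifting the orthogonal idempotents of $H^0(A)$ from cohomology to an honest product splitting of a cofibrant model $M \xrightarrow{\sim} A$, i.e.\ to lifting them along the surjection $Z^0(M) \to H^0(M)$ whose kernel $B^0(M)$ need not be nilpotent, so that naive idempotent lifting does not apply. I expect to resolve it by obstruction theory: the successive obstructions to deforming the trivial decomposition across this surjection lie in the positive-degree Harrison--Andr\'e--Quillen cohomology of the separable algebra $\prod_{i\in J}\bfk$, which vanishes, so the idempotents lift and rigidify to a product decomposition of $M$. This is precisely the point where working in $\Hin$, rather than $\BG$---where Harrison--Andr\'e--Quillen cohomology is available, as noted after the Bousfield--Gugenheim adjunction---becomes essential.
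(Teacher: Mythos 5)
Your global architecture coincides with the paper's: the connected equivalence $\hoBGc \simeq \hoHinc$ via minimal models and the common path object $D[t,dt]$ (the paper's Lemma~\ref{sec:other-results-1} and Proposition~\ref{sec:other-results}), followed by a reduction of the disconnected case to a product decomposition plus a componentwise description of maps out of products. The connected half of your argument is sound. The gap is exactly at the step you single out as the hard part, and it is not a gap that obstruction theory can close, because the statement you aim for is false: a cofibrant object $M$ of $\Hin$ \emph{never} admits a nontrivial product splitting. Indeed, $M$ is a retract of an algebra whose underlying graded algebra is free graded-commutative, and the grading by word length in the generators shows that such an algebra has no idempotents other than $0$ and $1$ (the weight-zero component of an idempotent is $0$ or $1$, and the lowest-weight nonconstant term would have to be its own square, impossible for weight reasons); since unital retractions preserve idempotents, $M$ itself has only trivial idempotents. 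Hence for $|\J|\geq 2$ the classes $e_i \in H^0(M)$ admit no idempotent lifts to $Z^0(M)$ at all --- this is the same phenomenon as the paper's observation that an algebra such as $\bfk \timesred \bfk$ is not cofibrant in $\Hin$ --- and the vanishing of Harrison--Andr\'e--Quillen cohomology of $\prod_{i\in\J}\bfk$ cannot change this. The paper's way out is to \emph{invert} rather than lift: choosing cocycles $u_i \in A^0$ representing the $e_i$, exactness of localization shows that the canonical map $A \to \prod_{i\in\J}\loc A{u_i}$ is a quasi-isomorphism onto a product of homologically connected cdgas. The decomposition thus only ever holds up to weak equivalence, never strictly on a cofibrant model.

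The same tool repairs a second, quieter gap. The ``fact'' that homotopy classes of maps out of $\prod_{i\in\J} A_i$ into a homologically connected target are computed componentwise by the identical formula in $\BG$ and in $\Hin$ is a theorem, not a formality: it is the paper's Theorem~\ref{sec:main-results-1}, and its $\Hin$-case cannot be proved the way the $\BG$-case is, since Proposition~\ref{sec:main-results} (products of cofibrant algebras are cofibrant) fails in $\Hin$, and even when $1$ is the only nontrivial idempotent of $H^0(D)$ the cocycle algebra $Z^0(D)$ of a ${\mathbb Z}$-graded $D$ may contain many idempotents, so Principle~\ref{vedle_nekdo_tluce} does not apply either. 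The paper deduces the $\Hin$-case from the localization theorem (Theorem~\ref{Andrey'}), namely $\big[\loc A{u},D\big]_\Hin \cong [A,D]^u_\Hin$, applied with $A = \prod_{s\in\J}A_s$ and $u = u_i$, using $\loc{(\prod_{s}A_s)}{u_i} \cong A_i$. That theorem, which occupies all of the paper's Section~\ref{sec:homot-prop-local}, is the one substantial ingredient your proposal never invokes, and both of your missing steps reduce to it.
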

\begin{rem}
A related question is whether $\hoBG$ is a full
subcategory of $\hoHin$. It is not difficult to show, for example, that for cdgas $A$ and $B$ with $A$ concentrated
in degree zero, the sets $\hoBG(A,B)$ and $\hoHin(A,B)$ are in natural bijective correspondence. In general however, we see no compelling reason
for the homotopy classes of maps in both categories to be the same.
\end{rem}

For a category $\catC$ denote by $\Mult(\catC)$ the category whose
objects are formal finite coproducts $A_1 \sqcup \cdots \sqcup A_s$, $s
\geq 1$, of objects of $\catC$ and the Hom-sets are
\[
\Mult(\catC)\big(A_1 \sqcup \cdots \sqcup A_s,B_1 \sqcup \cdots \sqcup
B_t\big) :=
\prod_{1 \leq i \leq s}\coprod_{1 \leq j \leq t}
\catC(A_i,B_j),
\]
with the obvious composition law.

\begin{rem}
\label{similarity}
The nature of our category $\Mult(\catC)$ bears some similarity to the
category $\hbox{\rm inj-}\catC$ of direct systems in $\catC$, see
e.g.~\cite[page~8]{has-edw}. This resemblance is however merely
superficial, as in the definitions of  $\hbox{\rm inj-}\catC$ as well
as in the dual definition  of the category $\hbox{\rm proj-}\catC$ of
inverse systems in $\catC$, one assumes the indexing small category to
be left filtering, while we use discrete finite categories.
\end{rem}

\begin{example}
\label{v_susarne}
Denote by $\SSetc$ the category of connected simplicial sets and by
$\SSetdc$ the category of simplicial sets with finitely many
components. Then clearly $\SSetdc \cong \Mult(\SSetc)$ and the same is obviously
true also for the homotopy categories, i.e.~$\hoSSetdc \cong \Mult(\hoSSetc)$.
\end{example}

Our next main theorem that states a similar result also for
homotopy categories of cdgas requires a contravariant
version of $\Mult(\catC)$.
Namely, denote by $\Tlum(\catC)$ the category\footnote{The similarity
  of this category to the category of inverse systems is addressed
  in Remark~\ref{similarity}.} whose
object are formal finite products $A_1 \timesred \cdots \timesred A_s$, $s
\geq 1$, of objects of $\catC$, and morphisms~are
\[
\Tlum(\catC)\big(A_1 \times \cdots \times A_s,B_1 \times \cdots \times
B_t\big) :=
\prod_{1 \leq j \leq t}\coprod_{1 \leq i \leq s}
\catC(A_i,B_j).
\]
In the following theorem, $\Hinc$
(resp.~$\BGc$) denotes the full subcategory of $\Hin$ (resp.~$\BG$)
consisting of homologically connected cdgas.

\begin{theoremB}
Each homologically disconnected non-negatively graded cdga $A \in
\BGdc$ is isomorphic to a finite product $\prod_{i \in \J}A_i$ of
homologically connected non-negatively graded cdgas $A_i \in \BGc$.
This isomorphism extends to a natural equivalence of categories
\[
\hoBGdc \sim \Tlum(\hoBGc).
\]
Each homologically disconnected cdga  $A \in
\Hindc$
is weakly equivalent to a finite product $\prod_{i \in \J}A_i$ of
homologically connected cdgas $A_i \in \Hinc$. As above,
one has an equivalence
\[
\hoHindc \sim \Tlum(\hoHinc).
\]
\end{theoremB}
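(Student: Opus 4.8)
The plan is to prove the two halves in parallel, obtaining the $\Hindc$ case from the $\BGdc$ case together with Theorem~A. First I would treat the strict decomposition in $\BGdc$. Since $A\in\BGdc$ is non-negatively graded we have $A^{-1}=0$, so $H^0(A)$ coincides with the subalgebra $Z^0(A)\subset A^0$ of degree-zero cocycles. The hypothesis $H^0(A)\cong\prod_{i\in\J}\bfk$ thus supplies genuine idempotent cocycles $e_i\in A^0$, $i\in\J$, with $e_ie_j=\delta_{ij}e_i$, $\sum_i e_i=1$ and $de_i=0$. Setting $A_i:=e_iA$ yields an isomorphism of cdgas $A\cong\prod_{i\in\J}A_i$ with each $A_i\in\BGc$ (non-negatively graded, $H^0(A_i)=e_iH^0(A)=\bfk$), which is the first assertion.

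Next I would show that every morphism decomposes as recorded by $\Tlum$. Composing a cdga map $f:\prod_iA_i\to\prod_jB_j$ with the projection onto $B_j$ gives $f_j:\prod_iA_i\to B_j$; since $B_j$ is homologically connected and non-negatively graded, $Z^0(B_j)=H^0(B_j)=\bfk$, so the only idempotent cocycles of $B_j$ are $0$ and $1$. As $f_j$ preserves idempotent cocycles and $\sum_i f_j(e_i)=1$, exactly one $f_j(e_i)$ equals $1$, selecting an index $i(j)$ through which $f_j$ factors uniquely. Hence $\BGdc(\prod_iA_i,\prod_jB_j)=\prod_j\coprod_i\BGc(A_i,B_j)$, so the assignment $A\mapsto(A_i)_{i\in\J}$ upgrades to an isomorphism of categories $\BGdc\cong\Tlum(\BGc)$. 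Reading off $H^*(\prod_iA_i)=\prod_iH^*(A_i)$, the weak equivalences of $\BGdc$ correspond exactly to the morphisms of $\Tlum(\BGc)$ whose index assignment $j\mapsto i(j)$ is a bijection and whose components are weak equivalences in $\BGc$. I would then record a general lemma: for any category $\catC$ with weak equivalences, localizing $\Tlum(\catC)$ at such componentwise weak equivalences produces $\Tlum(\text{ho}\,\catC)$. Applied here it gives $\hoBGdc\cong\Tlum(\hoBGc)$, finishing the first half.

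For a general $A\in\Hindc$ the idempotents of $H^0(A)=\prod_i\bfk$ need not lift to cocycles in $A$ — the coboundaries $B^0=dA^{-1}$ form an ideal of $Z^0$ which is not nilpotent in general — so a strict splitting is unavailable. This is the main obstacle, and I would circumvent it using Theorem~A. The inclusion-induced equivalence $\hoBGdc\xrightarrow{\sim}\hoHindc$ is essentially surjective, so any $A\in\Hindc$ is isomorphic in $\hoHindc$ to some $B\in\BGdc$, hence weakly equivalent to $\prod_iB_i$ with $B_i\in\BGc\subset\Hinc$ by the first half; this is precisely the asserted weak splitting into homologically connected pieces. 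Because the equivalence of Theorem~A is induced by an inclusion and hence preserves and reflects the condition $H^0=\bfk$, it restricts to an equivalence $\hoBGc\sim\hoHinc$ on connected objects, and $\Tlum$ of an equivalence is an equivalence. Chaining
\[
\hoHindc\xleftarrow{\ \sim\ }\hoBGdc\xrightarrow{\ \sim\ }\Tlum(\hoBGc)\xrightarrow{\ \sim\ }\Tlum(\hoHinc)
\]
gives the desired $\hoHindc\sim\Tlum(\hoHinc)$.

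As an independent cross-check of the fully faithful part I would verify the morphism decomposition intrinsically in $\hoHin$: all objects are fibrant in Hinich's model structure, so finite products are homotopy products and $\hoHindc(\prod_iA_i,\prod_jB_j)=\prod_j\hoHin(\prod_iA_i,B_j)$; the idempotent argument on $H^0$, now applied to cohomology classes, again forces each homotopy class of maps into the connected target $B_j$ to factor through a single component, recovering $\prod_j\coprod_i\hoHinc(A_i,B_j)$. The only genuinely delicate points are the homotopy-descent lemma for $\Tlum$ in the third paragraph and the unavailability of strict idempotent lifting in $\Hindc$, which is why the reduction through Theorem~A is the natural route.
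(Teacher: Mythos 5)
Your first half (the $\BGdc$ case) is essentially sound and coincides with the paper's own route: the idempotent cocycles give the strict splitting $A\cong\prod_i e_iA$, and the idempotent argument decomposes strict morphism sets (this is Principle~\ref{vedle_nekdo_tluce} in the paper). However, your passage to homotopy categories rests on an unproven ``homotopy-descent lemma for $\Tlum$,'' and additionally on identifying the abstract localization of $\BGdc$ with $\hoBGdc$, which the paper defines as a \emph{full subcategory} of $\hoBG$. Neither is a formality. The paper instead proves that a finite product of cofibrant algebras in $\BG$ is cofibrant (Proposition~\ref{sec:main-results} -- a genuinely surprising fact) and deduces the decomposition of homotopy classes $\big[\prod_s A_s,D\big]_\BG\cong\bigcup_s[A_s,D]_\BG$ (Theorem~\ref{sec:main-results-1}), which is exactly the hom-set computation in $\hoBG$ that your lemma is meant to replace.

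The genuine gap is in the second half. You derive the $\Hindc$ statement from Theorem~A, but in this paper Theorem~A is a \emph{consequence} of Theorem~B: its proof is stated as ``an obvious combination of Proposition~\ref{sec:other-results} and Theorem~B.'' The piece of Theorem~A you invoke -- essential surjectivity of $\hoBGdc\to\hoHindc$, i.e.\ that every homologically disconnected ${\mathbb Z}$-graded cdga is weakly equivalent to a non-negatively graded one -- has no independent proof and is, in substance, the very decomposition you are trying to establish; your argument is therefore circular. The paper proves the weak splitting directly: choose cochains $u_i\in A^0$ representing the idempotents $e_i\in H^0(A)$ and localize; exactness of localization gives $H\big(\loc A{u_i}\big)\cong\loc{H(A)}{e_i}$, so the natural map $A\to\prod_i\loc A{u_i}$ is a weak equivalence onto a finite product of homologically connected cdgas. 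Your closing ``cross-check'' does not repair the gap either: the assertion that a homotopy class $\prod_iA_i\to B_j$ must factor through a single component cannot be obtained by ``the idempotent argument applied to cohomology classes'' -- as you yourself observe, idempotents of $H^0(A)$ need not lift to cocycles in the ${\mathbb Z}$-graded setting, and this factorization of \emph{homotopy classes} is precisely the paper's main technical result (Theorem~\ref{Andrey'}, proved via the localization machinery of Section~\ref{sec:homot-prop-local}, feeding into the $\Hin$ case of Theorem~\ref{sec:main-results-1}), not a formal consequence of idempotent bookkeeping.
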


Our proofs of the above theorems use a proposition describing
homotopy classes of maps whose domain is a localization of a cdga.
We believe that this statement is of independent~interest.

For a cocycle $u$ of a cdga $A$ we denote by $\loc Au$ the
localization of $A$ at $u$, i.e.~at the multiplicative subset
generated by $u$, see \cite[Section~3]{atiyah-macdonald}.  Since $u$
is a cocycle, $\loc Au$ bears the induced differential.  Notice that
each $\chi \in [A,D]_\Hin := \hoHin(A,D)$, for $A,D \in \Hin$, induces
a~map $\chi_* : H(A) \to H(D)$.

\begin{theorem}
\label{Andrey'}
Let $A,D \in \Hin$ be cdga and $u \in A$ a cocycle. Denote
\[
[A,D]^u_\Hin := \big\{ \chi \in [A,D]_\Hin\ |\  \chi_*([u]) \in H(D)
\mbox { is invertible}\big\}.
\]
There is a natural isomorphism $\big[A[u^{-1}],D\big]_\Hin \cong [A,D]^u_\Hin$.
\end{theorem}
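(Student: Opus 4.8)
The plan is to replace $A[u^{-1}]$ by an explicit cofibrant model and then reduce the assertion to a contractibility statement about ``spaces of homotopy inverses'', which itself follows from the acyclicity of a mapping cone. First I would reduce to the case that $A$ is cofibrant: choosing a cofibrant replacement $q\colon QA\xrightarrow{\sim}A$ and lifting $u$ to a cocycle $\tilde u\in QA$ with $q(\tilde u)=u$ (possible since $\Ker q$ is acyclic), I would use that localization is an exact functor to conclude that $QA[\tilde u^{-1}]\to A[u^{-1}]$ is again a weak equivalence, so neither side of the claimed isomorphism changes. Writing $n:=|u|$ and treating the main case $n$ even (for $n$ odd graded commutativity forces $u^2=0$ and both sides degenerate), I would introduce $\hat A:=A\langle s,\xi\rangle$, obtained by adjoining $s$ with $|s|=-n$, $ds=0$, and $\xi$ with $|\xi|=-1$, $d\xi=us-1$. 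The inclusion $A\hookrightarrow\hat A$ is a cobase change of the semifree extension $\bfk[t]\hookrightarrow\bfk[t,s,\xi]$ (with $t\mapsto u$, $d\xi=ts-1$), hence a cofibration, and $\hat A$ is cofibrant.

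Next I would identify $\hat A$ with a model of the localization. Since $ts-1$ is a non-zero-divisor, $\bfk[t,s,\xi]$ is a free resolution of $\bfk[t,t^{-1}]=\bfk[t,s]/(ts-1)$ over $\bfk[t]$; as $\bfk[t,t^{-1}]$ is flat over $\bfk[t]$, applying $A\otimes_{\bfk[t]}(-)$ preserves this quasi-isomorphism and yields a weak equivalence $\hat A=A\otimes_{\bfk[t]}\bfk[t,s,\xi]\xrightarrow{\sim}A\otimes_{\bfk[t]}\bfk[t,t^{-1}]=A[u^{-1}]$, $s\mapsto u^{-1}$, $\xi\mapsto 0$. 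As every object of $\Hin$ is fibrant and $\hat A$ is cofibrant, $[A[u^{-1}],D]_\Hin=[\hat A,D]_\Hin$, which I would compute through the function complexes $\operatorname{Map}(\hat A,D)_\bullet=\Hin(\hat A,D\otimes\Omega_\bullet)$ built from the polynomial de~Rham forms $\Omega_\bullet$.

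A cdga map $\hat A\to D$ is exactly a triple $(g,\sigma,\eta)$ with $g\colon A\to D$ a cdga map, $\sigma\in Z^{-n}(D)$, and $\eta\in D^{-1}$ satisfying $g(u)\sigma-1=d\eta$; the natural map of the theorem is $\ell^*(g,\sigma,\eta)=g$. Since $g(u)\sigma=1+d\eta$ forces $g_*[u]\cdot[\sigma]=1$, the image of $\ell^*$ lands in $[A,D]^u_\Hin$, and conversely every representative $g$ of a class in $[A,D]^u_\Hin$ admits such $(\sigma,\eta)$; hence $\ell^*$ is surjective onto $[A,D]^u_\Hin$. For injectivity I would pass to function complexes: because $A\hookrightarrow\hat A$ is a cofibration and $D$ is fibrant, $\ell^*\colon\operatorname{Map}(\hat A,D)\to\operatorname{Map}(A,D)$ is a Kan fibration, its image on $\pi_0$ is $[A,D]^u_\Hin$, and its fiber over $g$ is the simplicial set $\mathcal F(g)$ of pairs $(\sigma,\eta)$ as above. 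It then suffices to prove that every nonempty fiber $\mathcal F(g)$ is path-connected.

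This contractibility is the heart of the matter and the step I expect to be the main obstacle. Fixing $g$ with $w:=g(u)$ cohomologically invertible together with a $0$-simplex of $\mathcal F(g)$, I would observe that $\mathcal F(g)$ is a torsor under the simplicial abelian group $\mathcal W$ of pairs $(\sigma,\eta)$ with $d\eta=w\sigma$, and is therefore isomorphic to $\mathcal W$. The group $\mathcal W$ is the Dold--Kan space of the mapping cone of multiplication $w\cdot\colon D\to D$; invertibility of $g_*[u]=[w]$ in $H(D)$ says precisely that $[w]\cdot\colon H(D)\to H(D)$ is an isomorphism, so the long exact sequence of the cone shows $\operatorname{Cone}(w\cdot)$ is acyclic and hence $\mathcal W$, and with it $\mathcal F(g)$, is contractible. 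Thus $\ell^*$ is a weak equivalence onto the components of $\operatorname{Map}(A,D)$ determined by $g_*[u]\in H(D)^\times$, and on $\pi_0$ it gives the asserted natural isomorphism $[A[u^{-1}],D]_\Hin\cong[A,D]^u_\Hin$. I note that the direct approach---connecting two extensions by deforming the $\sigma$- and $\eta$-coordinates separately---meets an $H^{-1}(D)$-valued obstruction, and the conceptual reason it always vanishes is exactly the cone acyclicity above.
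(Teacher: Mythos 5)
Your argument is correct, and its crux is genuinely different from the paper's. The two proofs share their opening moves: the reduction to cofibrant $A$ via exactness of localization is the same, and your $\hat A=A\langle s,\xi\rangle$ with $d\xi=us-1$ is literally the paper's cofibrant model $\big(A[y,z],d''\big)$ from Proposition~\ref{sec:atestace}. They diverge at the main step. The paper strictifies invertibility in the \emph{target}: Lemma~\ref{sec:loc} replaces $D$ (and, via Lemma~\ref{sec:good}, its path object) by weakly equivalent cdgas in which every cohomologically invertible cocycle is invertible on the nose, so that representatives of classes and homotopies between them can be factored strictly through the localization map $A\to A[u^{-1}]$ (Propositions~\ref{sec:atestace} and~\ref{sec:homot}). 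You never modify $D$: you instead exploit that restriction along the cofibration $A\hookrightarrow\hat A$ is a Kan fibration of $\Omega_\bullet$-mapping spaces whose fiber over $g$ is a torsor under the simplicial abelian group of cocycles of $\operatorname{Cone}\big(g(u)\cdot\colon D\to D\big)$, and invertibility of $g_*[u]$ makes this cone acyclic, hence every nonempty fiber contractible. What each approach buys: the paper's stays within bare model-category axioms and $\pi_0$-level statements, at the price of the auxiliary constructions of Lemmas~\ref{sec:loc} and~\ref{sec:good}; yours eliminates those constructions, is transparently natural in $D$, and yields a stronger conclusion (a weak equivalence of mapping spaces onto the relevant components), at the price of the fibration property of the enrichment --- which is standard, following by lifting the cofibration $A\hookrightarrow\hat A$ against the surjective quasi-isomorphisms $D\otimes\Omega(\Delta^n)\to D\otimes\Omega(\Lambda^n_k)$, the same mechanism that gives Kan-ness of the mapping spaces in Lemma~\ref{simpl}. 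Two small repairs are needed: your group $\mathcal W$ must consist of pairs $(\sigma,\eta)$ with $d\sigma=0$ \emph{and} $d\eta=w\sigma$ (as your own identification with cocycles of the cone dictates; without $d\sigma=0$ the action on the fiber is not even well defined), and in deducing $\pi_0$-injectivity you should note that two vertices of $\operatorname{Map}(A,D)$ lying in one component are joined by a \emph{single} $1$-simplex --- right homotopy via $D[t,dt]$ is an equivalence relation because $A$ is cofibrant --- so that one path-lift followed by connectedness of the fiber finishes the argument.
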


Observe that each invertible odd-degree element $x$ of a graded commutative algebra,
i.e.~one for which there exists $y$ such that $xy=1$, equals zero
since $x^2=0$ by graded commutativity. So the only graded commutative algebra admitting invertible
elements in odd degrees is the terminal one.
Theorem~~\ref{Andrey'} is thus meaningful only when the degree $|u|$
is even.

The next two applications of our theory describe the homotopy
category of spaces with finitely many rational nilpotent components of
finite type. We need the following definition related to the algebraic
side.

\begin{definition}
\label{finite-type-def}
Let $A$ be a cdga. We say that a dg ideal $I$ in $A$ is an
\emph{augmentation ideal} if $A/I\cong \bfk$, the ground field. Further,
$A$ is said to have a finite type if for any augmentation ideal $I$ of
$A$ the space $H(I/I^2)$ is finite dimensional in every degree.
\end{definition}

Assume that the ground field $\bfk$ is the field
${\mathbb Q}$ of rational numbers and denote by $\ratcatdc$,
resp.~$\rathincatdc$, the subcategory of $\BGdc$, resp.~$\Hindc$,
consisting of algebras having a cofibrant replacement of finite type. In
Section~\ref{finite} we prove that this definition does not depend on
the choice of a cofibrant replacement and relate it to the definition of finite
$\bbQ$-type given in~\cite{bousfield-gugenheim}.

\begin{theoremC}
The following three categories are equivalent.
\begin{itemize}
\item[--]
The homotopy category
$\honilpsimpdiscon$ of simplicial sets with finitely many components that
are rational and of finite type,

\item[--]
the homotopy category
$\horatcatdc$ of homologically disconnected  non-negatively graded
cdgas of finite type over~${\mathbb Q}$, and

\item[--]
the homotopy category
$\horathincatdc$ of homologically disconnected ${\mathbb Z}$-graded
cdgas of finite type over~${\mathbb Q}$.
\end{itemize}
\end{theoremC}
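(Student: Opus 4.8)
The plan is to establish the three-way equivalence in Theorem~C by assembling the equivalences proved in Theorems~A, B, and~C's predecessors, and then adding the finite-type refinement on each side. Since an equivalence of categories is transitive, it suffices to verify two bridges: first, the equivalence between $\honilpsimpdiscon$ and one of the algebraic homotopy categories; second, the equivalence between the two algebraic homotopy categories $\horatcatdc$ and $\horathincatdc$. I would then intersect these global equivalences with the finite-type conditions, checking that the finite-type subcategory on each side corresponds under the functors.

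First I would handle the bridge between the simplicial and algebraic worlds. Using Example~\ref{v_susarne}, we have $\hoSSetdc \cong \Mult(\hoSSetc)$, and the restriction of the Bousfield--Gugenheim adjunction~\eqref{houbicky} to connected objects gives the classical equivalence $\horatcat \sim \honilpsimpcon$ between connected rational nilpotent finite-type simplicial sets and homologically connected finite-$\mathbb Q$-type cdgas. The key point is that the classical connected equivalence is \emph{variance-reversing}: the functor $A$ is contravariant, sending coproducts to products. Thus applying $\Mult(-)$ on the space side and $\Tlum(-)$ on the algebra side, the connected equivalence $\horatcat \sim \honilpsimpcon$ induces an equivalence $\Mult(\honilpsimpcon) \sim \Tlum(\horatcat)$. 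Combined with Theorem~B's identification $\hoBGdc \sim \Tlum(\hoBGc)$ (restricted to finite type), this yields $\honilpsimpdiscon \sim \horatcatdc$.

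Second I would establish the equivalence $\horatcatdc \sim \horathincatdc$. The inclusion $\BG \subset \Hi$ induces the equivalence $\hoBGdc \sim \hoHindc$ by Theorem~A. The remaining task is to check that this equivalence is compatible with the finite-type conditions, i.e.~that an object of $\BGdc$ has a finite-type cofibrant replacement in $\BG$ if and only if its image in $\Hindc$ has one in $\Hi$. Here I would invoke the promised Section~\ref{finite} result that the finite-type condition of Definition~\ref{finite-type-def} is independent of the choice of cofibrant replacement and agrees with the Bousfield--Gugenheim finite-$\mathbb Q$-type notion; this lets one transport the condition across the equivalence of Theorem~A by comparing the relevant $H(I/I^2)$ on both sides.

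The main obstacle I anticipate is the second bridge, specifically verifying that the finite-type conditions match under the Theorem~A equivalence. The subtlety is that cofibrant replacements in $\BG$ and in $\Hi$ differ substantially---as the excerpt emphasizes, $\BG$ has more cofibrations, so a cofibrant replacement computed in $\Hi$ need not be cofibrant in $\BG$. One must therefore argue that the \emph{invariant} $H(I/I^2)$, rather than any particular model, controls finiteness, and that this invariant is preserved by the equivalence; this is precisely what the independence statement from Section~\ref{finite} is designed to supply. Once that independence is in hand, the finite-type subcategories correspond and the two algebraic homotopy categories are equivalent, completing the transitive chain of three equivalences.
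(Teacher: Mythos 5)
Your first bridge coincides with the paper's own argument: the equivalence $\honilpsimpdiscon \sim \Mult(\honilpsimpcon)$ of Example~\ref{v_susarne}, the classical connected equivalence of \cite[Theorem~9.4]{bousfield-gugenheim}, and the $\Tlum$-decomposition coming from Theorem~B. Note, however, that your parenthetical ``(restricted to finite type)'' is doing real work there: to restrict $\hoBGdc \sim \Tlum(\hoBGc)$ to the finite-type subcategories one must know that a homologically disconnected cdga is of finite type if and only if each of its connected factors is, and this is precisely Proposition~\ref{p2}, whose proof (via cobar constructions and the comparison of Harrison complexes) is the technical heart of Section~\ref{finite}. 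It should be cited explicitly rather than treated as an automatic restriction.

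The genuine gap is in your second bridge. You propose to restrict Theorem~A directly, and you correctly identify the danger: a cofibrant replacement in $\BG$ need not be cofibrant in $\Hi$ (Proposition~\ref{sec:main-results} produces many such examples), so the two finite-type conditions are a priori computed on different models. But the tool you invoke to close this, the independence statement of Section~\ref{finite} (Proposition~\ref{vedle_delaji_lazne}), cannot do the job as stated: it compares two quasi-isomorphic algebras that are both cofibrant in the ${\mathbb Z}$-graded category $\Hi$, since its proof runs through the identification of $\Der(A,\bfk)$ with the Andr\'e--Quillen complex, which requires $\Hi$-cofibrancy. It therefore says nothing about $H(I/I^2)$ computed on a $\BG$-cofibrant model that fails to be $\Hi$-cofibrant --- which is exactly the mismatch you raised. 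The paper never restricts Theorem~A at all; instead it restricts Theorem~B componentwise using Proposition~\ref{p2}, obtaining $\horatcatdc \sim \Tlum(\horatcat)$ and $\horathincatdc \sim \Tlum(\horathincat)$, and then applies the connected equivalence $\hoBGc \sim \hoHinc$ of Proposition~\ref{sec:other-results}. At the connected level the comparison of the two finite-type notions is painless, because a homologically connected algebra admits a minimal model, which is cofibrant in \emph{both} $\BG$ and $\Hi$, so both notions coincide with the finite ${\mathbb Q}$-type of \cite[\S9.2]{bousfield-gugenheim}. Your second bridge can be repaired by exactly this reduction to connected factors --- but at that point it becomes the paper's proof.
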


Let us denote by $\hoLdc$ the full subcategory of the homotopy
category of $\L$ consisting of disjoint products of complete
non-negatively graded dglas with finite-dimensional homology in each
dimension. We call objects of $\hoLdc$ {\em disconnected\/} dglas. We
have

\begin{theoremD}
The following categories are equivalent:
\begin{itemize}
\item[--]
the homotopy category
$\honilpsimpdisconplus$ of pointed simplicial sets with finitely many components that
are rational and of finite type, and
\item[--]
the homotopy category $\hoLdc$ of disconnected complete dglas of
finite type.
\end{itemize}
\end{theoremD}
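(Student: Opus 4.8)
The plan is to assemble Theorem~D from the machinery already deployed for the cdga theorems together with the Koszul-type duality between cdgas and complete dglas announced in the introduction. The strategy is to produce a zig-zag of equivalences
\[
\honilpsimpdisconplus \sim \horathincatdcplus \sim \hoLdc,
\]
the first being essentially the pointed version of Theorem~C and the second being the Lie-algebraic translation. First I would record that the pointed, finite-type, disconnected simplicial homotopy category is equivalent to the corresponding pointed homotopy category of $\mathbb Z$-graded cdgas of finite type. This is the pointed analogue of Theorem~C; the passage from the unpointed to the pointed setting costs nothing new, since the Bousfield--Gugenheim adjunction~\eqref{houbicky} already has a pointed version (cited in op.~cit.), and the decomposition of Theorem~B interacts well with base points because choosing a base point amounts to choosing a distinguished component $A_{i_0}$ together with an augmentation.

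Next I would invoke the equivalence between the homotopy category of cdgas and that of complete dglas furnished by the Chevalley--Eilenberg functor $\Cfun$, i.e.\ the dual Hinich equivalence proved in Section~\ref{dual_Hinich}, under which the weak equivalences of $\L$ are by definition the maps $f$ for which $\Cfun(f)$ is a weak equivalence of cdgas. The crux is to match the distinguished subcategories on the two sides: I must check that under this equivalence the finite-type disconnected cdgas of Theorem~C correspond precisely to the objects of $\hoLdc$, namely disjoint products $\g_1 \scup \cdots \scup \g_s$ of complete non-negatively graded dglas $\g_i$ with finite-dimensional homology in each degree. Here the key structural input is that the disjoint product $\scup$ on complete dglas is the Koszul-dual incarnation of the categorical product $\timesred$ on cdgas: the functor $\Cfun$ should carry $\g \scup \h$ to (a cdga weakly equivalent to) $\Cfun(\g) \timesred \Cfun(\h)$, so that the product decomposition of Theorem~B translates into the disjoint-product decomposition defining $\hoLdc$. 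The construction $\g \scup 0 = (\g\langle x\rangle)^x$, introduced as the Lie analogue of adjoining an isolated base point, is exactly what produces the extra component corresponding to the augmentation, and this is why the pointed version of Theorem~C is the right intermediary.

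The main obstacle, and the step requiring genuine care, is precisely this compatibility of the monoidal structures under $\Cfun$, together with the finite-type bookkeeping. On the cdga side "finite type" is phrased via Definition~\ref{finite-type-def} in terms of $H(I/I^2)$ for augmentation ideals $I$; on the Lie side it is phrased as finite-dimensionality of the homology of each connected factor in each degree. I would verify that these two finiteness conditions correspond under the duality by relating $I/I^2$ for $\Cfun(\g)$ to the (co)homology of $\g$ itself, using the standard identification of the indecomposables of a Chevalley--Eilenberg complex with the (shifted, dualized) underlying space of the Lie algebra. One must also confirm that each connected factor on the cdga side, being homologically connected of finite type, is modelled by a complete \emph{non-negatively graded} dgla of finite type --- this is where the completeness hypothesis is indispensable, as emphasized in the introduction, since it is completeness that guarantees the existence of minimal models and hence the clean correspondence on objects.

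Finally, I would assemble the equivalence on morphisms. Having matched objects, the Hom-sets on the cdga side are governed by the product category $\Tlum$ of Theorem~B, whose morphism formula dualizes the coproduct formula for $\Mult$; under $\Cfun$ the contravariance converts $\Tlum(\hoHinc)$ into a coproduct-type description matching disjoint products of connected dglas, yielding the desired equivalence $\horathincatdcplus \sim \hoLdc$. Composing with the first equivalence completes the proof. \noproof
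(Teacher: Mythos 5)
Your proposal is correct and takes essentially the same route as the paper: the paper proves Theorem~D (as Theorem~\ref{equidgla}) by composing Theorem~C$_+$ with the restriction of the Quillen equivalence $\Lhat \dashv \Cfun$ of Section~\ref{dual_Hinich} to an equivalence between $\horathincatdcplus$ and (an extension of) $\hoLdc$, matching objects exactly as you outline — Proposition~\ref{monoidal} for the compatibility of $\scup$ with $\timesred$, and the identification of indecomposables $I/I^2 \cong \Sigma\g^*$ together with Lemma~\ref{charact} for the finite-type bookkeeping. The only cosmetic difference is that the paper packages the composite functors as an adjoint pair $(\mathcal Q, \MC_\bullet)$ between $\SSet_+$ and $\L$; note also that the paper's finite-type matching uses only the indecomposables and cofibrancy of $\Cfun(\g)$, not the existence of minimal models that you cite as the role of completeness.
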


Neisendorfer in~\cite[Proposition~7.3]{neisendorfer78:_lie} proved that the subcategory
$\honilpsimpconplus$ of $\honilpsimpdisconplus$ consisting of {\em
  connected\/} spaces is equivalent to the homotopy category $\Nei$ of
non-negatively graded (discrete) dglas $L$ whose homology $H(L)$ is of
finite type and nilpotent. As a particular case of Theorem~D we get
another description of  $\honilpsimpconplus$.
Denote by $\hoLc$ the
full subcategory of $\hoLdc$ of complete non-negatively graded dglas
with finite dimensional homology in each degree.

\begin{corollary}
\label{sec:disc-spac-dglas}
The simplicial MC functor $\MC_\bullet(-)$  induces an
equivalence between the categories $\hoLc$ and $\honilpsimpconplus$.
\end{corollary}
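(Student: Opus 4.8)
The plan is to derive Corollary~\ref{sec:disc-spac-dglas} as a restriction of Theorem~D to the connected case, using Neisendorfer's theorem as the comparison point. First I would observe that the corollary asks for an equivalence between $\hoLc$ and $\honilpsimpconplus$, and that both of these sit as full subcategories inside the larger categories appearing in Theorem~D: by construction $\hoLc \subset \hoLdc$, and $\honilpsimpconplus$ is (as stated in the paragraph preceding the corollary, citing Neisendorfer) the full subcategory of $\honilpsimpdisconplus$ consisting of connected spaces. So the strategy is to show that the equivalence furnished by Theorem~D carries one full subcategory onto the other.

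The key step is to identify, under the Theorem~D equivalence, which complete dglas correspond to connected simplicial sets. A pointed simplicial set with finitely many rational finite-type components lies in $\honilpsimpconplus$ precisely when it has a single component, i.e.\ is connected. On the algebraic side, $\hoLc$ is defined as the full subcategory of $\hoLdc$ of complete \emph{non-negatively graded} dglas with finite-dimensional homology in each degree, that is, the objects which are not genuine disjoint products but single connected pieces. Thus I would argue that the simplicial MC functor $\MC_\bullet(-)$, which realizes the Theorem~D equivalence, sends a non-negatively graded object of $\hoLdc$ to a connected space and, conversely, that a connected space corresponds to a non-negatively graded dgla. The forward direction follows from the classical fact (Getzler, or the non-negatively-graded case recalled in the Introduction) that for a non-negatively graded nilpotent dgla $\g$ the space $\MC_\bullet(\g)$ is connected, since there are no Maurer-Cartan elements to create extra components in the disjoint-product decomposition. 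The converse uses the decomposition statement alluded to in the Introduction, whereby an arbitrary complete dgla's MC space splits up to homotopy as a disjoint union indexed by gauge-equivalence classes of MC elements; connectedness of the space forces this index set to be a single point, and the corresponding connected dgla is non-negatively graded.

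Having matched the objects, I would then invoke that an equivalence of categories restricts to an equivalence between full subcategories whose object-classes correspond under the equivalence. Since $\MC_\bullet(-)$ is an equivalence $\hoLdc \sim \honilpsimpdisconplus$ by Theorem~D, and since it carries the objects of $\hoLc$ bijectively (up to isomorphism in the homotopy category) onto the objects of $\honilpsimpconplus$ by the previous step, its restriction is an equivalence $\hoLc \sim \honilpsimpconplus$. Fullness and faithfulness are inherited automatically from the ambient equivalence, so no new hom-set computation is required.

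The main obstacle I anticipate is the object-matching step, specifically the converse direction: showing that a connected MC space forces the representing dgla to be (homotopy equivalent to) a non-negatively graded one. This is where the completeness hypothesis does real work and where one must appeal carefully to the disjoint-union decomposition of $\MC_\bullet(\g)$ for general complete $\g$; one must rule out the possibility that a connected space is represented by an object of $\hoLdc$ that is a nontrivial disjoint product $\g \scup \h$ whose MC space happens to be connected. Here I would use that the disjoint product construction $\g \scup \h = (\g\scup 0)*\h$ always introduces the extra MC element $x$ coming from the $\g\scup 0$ factor, so that $\MC_\bullet(\g\scup\h)$ genuinely acquires more than one component unless the product is trivial, pinning the connected objects down to the single non-negatively graded pieces constituting $\hoLc$.
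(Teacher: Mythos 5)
Your proposal is correct and follows essentially the same route as the paper, which offers no separate proof but presents the corollary precisely as ``a particular case of Theorem~D'', i.e.\ as the restriction of that equivalence to the connected/single-factor objects. Your object-matching step --- a non-negatively graded complete dgla has connected MC space because $0$ is its unique vertex, while Theorem~\ref{theoremF} forces a nontrivial disjoint product $\g_1\scup\cdots\scup\g_k$ to have exactly $k$ components, ruling out connected spaces represented by genuine disjoint products --- is exactly the implicit justification the paper relies on, so nothing is missing.
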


A nice feature of the category $\L$ is that each $\g \in \L$ has a
{\em minimal model\/}, unique up to isomorphism, see
Definition~\ref{sec:dual-hinich-corr-5} and
Theorem~\ref{sec:dual-hinich-corr-6}.
To objects of $\hoLc$
there correspond non-negatively graded minimal dglas ${\Min}$
with homology of finite type. Corollary~\ref{sec:disc-spac-dglas}
implies a one-to-one correspondence between rational homotopy
types of connected nilpotent spaces of finite ${\mathbb Q}$-type and
isomorphism classes of minimal dglas  ${\Min}$ as above.

The description of $\honilpsimpconplus$ given in
Corollary~\ref{sec:disc-spac-dglas} substantially differs from
Neisendorfer's.  Notice, for instance, that the category $\Nei$ has
more objects than $\hoLc$. For example, the contractible free Lie
algebra ${\mathbb L}(x,\partial x)$, $|x|=1$, belongs to $\Nei$
but not to $\hoLc$; ${\mathbb L}(x,\partial x)$ is not complete.

Tracing the functors in~\cite{neisendorfer78:_lie}, one can associate to a dgla $\g \in
\Lc$
the corresponding $L \in \Neistrict$ as follows. The cdga $\Cfun(\g)$ is
connected and non-negatively graded, so it has the minimal model
$M_A$. By assumption, $M_A$ is a cdga of {\em finite type\/}, so we
may take $L := \Lfun(M_A)$, the {\em uncompleted\/} Quillen functor.

On the other hand, starting from $L \in \Neistrict$, we take
the cobar construction $\Cfun^c(L)$ on the dgla $A$, i.e.~the obvious coalgebra
version of the uncompleted functor $\Cfun(-)$, and its linear dual
$\Cfun^c(L)^*$.\footnote{Observe that $\Cfun^c(L)^*$ exists while
$\Cfun(L)$ may not.} Then $\g := \Lhat\big(\Cfun^c(L)^*\big)$ is the
corresponding dgla in $\Lc$.

Another application concerns the general structure of the MC simplicial
sets. The ground field $\bfk$ may again be an arbitrary field of
characteristic zero.

\begin{theorem}
\label{theoremF}
Let $\g_i$, $i\in \J$, be a collection of complete dglas indexed by a
finite set $\J$. Then the simplicial set $\MC_\bullet\big(\bigsqcup_{i\in
\J}\g_i\big)$ is weakly equivalent to the disjoint union $\bigcup_{i\in
\J}\MC_\bullet(\g_i)$.
\end{theorem}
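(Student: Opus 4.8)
The plan is to reduce the statement about the disjoint product $\bigsqcup_{i\in\J}\g_i$ to the binary case and then analyze the Maurer-Cartan simplicial set of a disjoint product directly. First I would recall that, by definition, $\g\scup\h=(\g\scup 0)*\h=(\g\langle x\rangle)^x * \h$, the twisted completed free product, so that $\bigsqcup_{i\in\J}\g_i$ is built by iterating this binary operation. Since disjoint union of simplicial sets is associative up to coherent isomorphism and weak equivalences are preserved under disjoint union, it suffices to treat $\J=\{1,2\}$ and prove $\MC_\bullet(\g_1\scup\g_2)\simeq \MC_\bullet(\g_1)\sqcup\MC_\bullet(\g_2)$. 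The key point is understanding the MC elements of the disjoint product. An element of $\MC(\g_1\scup\g_2)$ lives in degree $-1$ of the completed free product; because we have twisted by the adjoined variable $x$ with $dx=-\tfrac12[x,x]$, the differential on the $\g_1$-part carries an extra bracket with $x$, and I expect that the completeness hypothesis forces each genuine MC element to be gauge equivalent to one supported entirely in a single factor, namely either in $\g_1\langle x\rangle^x$ (where $x$ itself is the distinguished MC element playing the role of the basepoint) or in $\g_2$.

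The heart of the argument is a decomposition of $\g_1\scup\g_2$ as a complete dgla into a part controlled by $\g_1$ and a part controlled by $\g_2$, using the Chevalley-Eilenberg description of weak equivalences in $\L$. I would compute $\Cfun(\g_1\scup\g_2)$ and show, via the result announced in the introduction expressing the Chevalley-Eilenberg cohomology of free products through the individual pieces, that it splits (up to weak equivalence of cdgas) as a product reflecting the two components. Concretely, $\Cfun$ sends the completed free product to a construction whose cohomology is governed by $H\bigl(\Cfun(\g_1)\bigr)$ and $H\bigl(\Cfun(\g_2)\bigr)$, and the twist by $x$ has the effect of creating a new connected component corresponding to the basepoint. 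Translating through the adjunction between cdgas and complete dglas (the Koszul-dual-to-Hinich equivalence discussed in the introduction) and through the functor $\MC_\bullet$, a product decomposition on the cdga side corresponds precisely to a disjoint-union decomposition on the simplicial-set side, by Theorem~B and Example~\ref{v_susarne}.

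To make the weak equivalence explicit rather than merely abstract, I would construct the comparison map $\MC_\bullet(\g_1)\sqcup\MC_\bullet(\g_2)\to\MC_\bullet(\g_1\scup\g_2)$ directly: on vertices, an MC element $\xi_1\in\MC(\g_1)$ is sent to its image under the canonical inclusion $\g_1\hookrightarrow\g_1\scup\g_2$ (composed with the gauge translation by $x$), and similarly for $\xi_2\in\MC(\g_2)$ via $\g_2\hookrightarrow\g_1\scup\g_2$, and I would extend this compatibly on higher simplices using the simplicial structure of $\MC_\bullet$ recalled in the main text. I would then check that this map induces a bijection on $\pi_0$ (each MC element of the disjoint product deforms into exactly one factor) and a weak equivalence on each component by comparing the twisted dglas $(\g_1\scup\g_2)^\xi$ with $\g_i^{\xi_i}$ through Chevalley-Eilenberg cohomology.

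The main obstacle I anticipate is the $\pi_0$ computation: showing that every MC element of the completed free product $\g_1\scup\g_2$ is gauge equivalent to one coming from a single factor, with no genuinely ``mixed'' components. This is exactly where completeness is indispensable, since it guarantees that the relevant infinite series of bracket corrections converge and that gauge transformations can be integrated; without it one cannot perform the iterative straightening that pushes an MC element into one factor. I would handle this by filtering the completed free product by bracket-length (equivalently, by the powers of the augmentation ideal defining the complete topology), and inductively adjusting the MC element by gauge transformations to kill the cross-terms between the $\g_1$- and $\g_2$-components degree by degree, the convergence being ensured precisely by completeness.
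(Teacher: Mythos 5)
Your second paragraph is, in outline, exactly the paper's route: by the appendix result on Chevalley--Eilenberg cohomology of completed free products (Theorem~\ref{freeprod}) together with the effect of the twist on the augmentation (Lemma~\ref{secondfac}), one obtains a quasi-isomorphism $\Cfun(\g_1\scup\g_2)\simeq\Cfun(\g_1)\times\Cfun(\g_2)$. The problem is the bridge you propose from this product decomposition of cdgas to a disjoint-union decomposition of MC spaces. Theorem~B and Example~\ref{v_susarne} are statements about homotopy \emph{categories}; they say nothing about the simplicial mapping space $\algs(-,\ground)_\bullet$ through which $\MC_\bullet$ is computed (Remark~\ref{Jaruska_ma_hodne_prace.}). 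What is actually needed is the enriched statement (the paper's Lemma~\ref{simpl}): for cofibrant $A$, $B$ and a cdga $C$ whose $H^0$ has no idempotents other than $1$, one has $\algs(\overline{A\timesred B},C)_\bullet\simeq\algs(A,C)_\bullet\cup\algs(B,C)_\bullet$, where $\overline{A\timesred B}$ is a \emph{cofibrant replacement} of the product. The replacement cannot be avoided: $\Cfun(\g_1)\times\Cfun(\g_2)$ is in general not cofibrant in $\Hin$, whereas $\Cfun(\g_1\scup\g_2)$ is precisely a cofibrant model of it. Moreover, the proof of that lemma is where the localization technology enters: it rests on Theorem~\ref{sec:main-results-1} for $\Hin$ (hence on Theorem~\ref{Andrey'}) applied to the targets $\Omega(S)\otimes C$, together with the $\Omega$-adjunction and a Kan-complex argument. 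Citing Theorem~B leaves this bridge unproved.

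The fallback you then develop --- constructing the comparison map directly and proving bijectivity on $\pi_0$ by filtering by bracket length and inductively gauging away cross-terms --- is the genuine gap, not a routine verification. At each filtration stage the obstruction to removing the mixed terms is a cohomology class with no reason to vanish (completeness guarantees convergence of the series, but produces no such vanishing), and you would additionally have to show that MC elements originating in different factors are never gauge equivalent in $\g_1\scup\g_2$. Tellingly, the paper derives the $\pi_0$ statement (the Corollary following Theorem~\ref{theoremF}) as a \emph{consequence} of the theorem and states explicitly that the authors do not know a direct proof of it; the worked example of the completion of $\LL(x,a)$ in Section~\ref{disjointproof} shows how delicate even the simplest hands-on instance is. So the step you yourself flag as ``the main obstacle'' is exactly where a direct attack breaks down: the proof must be routed entirely through the cdga side, as in your second paragraph, but with Lemma~\ref{simpl} and the machinery behind it supplied rather than replaced by a gauge-theoretic induction.
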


Theorem~\ref{theoremF} yields the following elementary corollary on
the sets $\MCmod(-) := \pi_0 \MC_\bullet(-)$  of connected components of
the simplicial MC spaces. We do not know if it has a direct~proof.

\begin{corollary}
Let $\g_i, i\in \J$, be a collection of complete dglas indexed by a
finite set $\J$. Then there is a bijection
$
\MCmod\big(\bigsqcup_{i\in \J}\g_i\big)\cong \bigcup_{i\in \J}\MCmod(\g_i)
$
of the MC moduli sets.
\end{corollary}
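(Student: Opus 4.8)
The plan is to deduce the corollary directly from Theorem~\ref{theoremF} by applying the path-component functor $\pi_0$. By definition $\MCmod(-) = \pi_0 \MC_\bullet(-)$, so both sides of the asserted bijection are obtained by applying $\pi_0$ to simplicial sets that Theorem~\ref{theoremF} has already related. The whole argument is therefore a matter of transporting the weak equivalence of that theorem across the functor $\pi_0$.

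First I would record the weak equivalence
\[
\MC_\bullet\Big(\bigsqcup_{i\in\J}\g_i\Big) \;\simeq\; \bigcup_{i\in\J}\MC_\bullet(\g_i)
\]
furnished by Theorem~\ref{theoremF}, and then invoke two standard facts about simplicial sets. The first is that a weak equivalence $f : X \to Y$ induces a bijection $\pi_0(X) \cong \pi_0(Y)$; indeed this is part of the very definition of a weak equivalence, which requires an isomorphism on $\pi_0$ and on all homotopy groups at every basepoint. The second is that $\pi_0$ carries coproducts to coproducts, i.e.\ there is a natural bijection $\pi_0\big(\bigcup_{i\in\J} X_i\big) \cong \bigcup_{i\in\J}\pi_0(X_i)$, since the path components of a disjoint union are precisely the path components of the individual pieces. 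Concatenating these yields
\[
\MCmod\Big(\bigsqcup_{i\in\J}\g_i\Big) = \pi_0\MC_\bullet\Big(\bigsqcup_{i\in\J}\g_i\Big) \cong \pi_0\Big(\bigcup_{i\in\J}\MC_\bullet(\g_i)\Big) \cong \bigcup_{i\in\J}\pi_0\MC_\bullet(\g_i) = \bigcup_{i\in\J}\MCmod(\g_i),
\]
which is the claimed bijection.

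There is essentially no obstacle at this stage: all the substance resides in Theorem~\ref{theoremF}, and the corollary is a purely formal consequence obtained by applying a functor that is both homotopy-invariant and coproduct-preserving. The only point demanding care is to ensure that one really has a weak equivalence of simplicial sets, so that the induced map on $\pi_0$ is a genuine bijection rather than merely a surjection or an injection; but this is exactly what Theorem~\ref{theoremF} provides. As the authors note, what is genuinely lacking is a \emph{direct} argument — one computing $\pi_0\MC_\bullet$ of a disjoint product without passing through the full weak equivalence — and the plan above makes no attempt to supply one.
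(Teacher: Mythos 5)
Your proposal is correct and is exactly the argument the paper intends: the corollary is stated as an immediate consequence of Theorem~\ref{theoremF}, obtained by applying $\pi_0$ to its weak equivalence, using that weak equivalences induce bijections on path components and that $\pi_0$ commutes with disjoint unions. The paper gives no further proof beyond this observation (and, as you note, remarks that no direct proof avoiding Theorem~\ref{theoremF} is known), so your write-up matches the paper's reasoning in full.
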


The following theorem in a certain sense reverses
Theorem~\ref{theoremF}. It uses the twisting $\g^\xi$ of a complete
dgla $\g$ by an MC element $\xi \in \MC(\g)$ and its connected cover
$\overline{\g^\xi}$ defined in~(\ref{eq:10}).

\begin{theorem}
\label{sec:main-results-3}
For a complete dgla $\g$, one has a weak equivalence
\begin{equation}
\label{zas_bouchani}
\MC_\bullet(\g) \sim \bigcup_{[\xi] \in \MCmod(\g)} \MC_\bullet(\overline{\g^\xi})
\end{equation}
where the disjoint union in the right hand side runs over chosen
representatives of the isomorphism classes in $\MCmod(\g)$.
If $\MCmod(\g)$ is finite, one furthermore has a weak equivalence
\[
\MC_\bullet(\g) \sim
\MC_\bullet \Big(\bigsqcup_{[\xi] \in \MCmod(\g)} \overline{\g^\xi} \Big)
\]
of simplicial sets.
\end{theorem}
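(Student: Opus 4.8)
The plan is to prove the two assertions in turn: first the componentwise decomposition~(\ref{zas_bouchani}), which is valid for any complete dgla, and then the finite case, which is obtained by reassembling the pieces by means of Theorem~\ref{theoremF}. The starting observation is that every simplicial set is the disjoint union of its connected components, so that
\[
\MC_\bullet(\g) = \bigcup_{[\xi] \in \MCmod(\g)} \MC_\bullet(\g)_{[\xi]},
\]
where $\MC_\bullet(\g)_{[\xi]}$ is the component containing a chosen representative $\xi$ (recall $\MCmod(\g) = \pi_0\MC_\bullet(\g)$). Since coproducts of simplicial sets preserve weak equivalences, it suffices to produce, for each class $[\xi]$, a weak equivalence $\MC_\bullet(\overline{\g^\xi}) \sim \MC_\bullet(\g)_{[\xi]}$; these then assemble into~(\ref{zas_bouchani}).

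First I would reduce to the distinguished MC element $0$ by twisting. Fix a representative $\xi \in \MC(\g)$. The translation $\eta \mapsto \eta + \xi$ is a bijection $\MC(\g^\xi) \to \MC(\g)$, because $d^\xi\eta + \frac{1}{2}[\eta,\eta] = 0$ is equivalent to $d(\eta+\xi) + \frac{1}{2}[\eta+\xi,\eta+\xi]=0$ once the MC equation for $\xi$ is used. As $\xi \otimes 1$ is an MC element of $\g \hatot \Omega$ and $(\g\hatot\Omega)^{\xi\otimes 1} = \g^\xi\hatot\Omega$, this translation extends levelwise to an isomorphism of simplicial sets $\MC_\bullet(\g^\xi) \cong \MC_\bullet(\g)$ carrying the basepoint $0$ to $\xi$. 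It therefore identifies the basepoint component $\MC_\bullet(\g^\xi)_{[0]}$ with $\MC_\bullet(\g)_{[\xi]}$, and the task becomes to show that the connected cover induces a weak equivalence $\MC_\bullet(\overline{\g^\xi}) \sim \MC_\bullet(\g^\xi)_{[0]}$.

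Writing $\h := \g^\xi$, the cover $\overline{\h}$ of~(\ref{eq:10}) is the non-negatively graded good truncation of $\h$: the sub-dgla with $\overline\h_n = \h_n$ for $n \geq 1$, with $\overline\h_0$ the $0$-cycles of $\h$, and with $\overline\h_n = 0$ for $n<0$. One checks it is closed under the bracket and the differential, and that completeness is inherited since the truncation commutes with the relevant inverse limits. By construction the inclusion $\overline\h \hookrightarrow \h$ is a homology isomorphism in every non-negative degree. Now $\overline\h$ is a non-negatively graded complete dgla, so $\MC_\bullet(\overline\h)$ is connected (this is the dgla counterpart of modelling a connected space, cf.\ Corollary~\ref{sec:disc-spac-dglas}) and the inclusion sends it into the component of $0$. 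On the higher homotopy groups based at $0$ one has $\pi_n(\MC_\bullet(-),0) \cong H_{n-1}(-)$ for $n \geq 1$, so these groups involve only the non-negative homology; hence the inclusion is an isomorphism on all $\pi_n$, $n\geq 1$. Being an isomorphism on $\pi_0$ (both sides being the single class of $0$) and on all higher groups, it is the required weak equivalence, and~(\ref{zas_bouchani}) follows.

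For the second statement I would assume $\MCmod(\g)$ finite and apply Theorem~\ref{theoremF} to the finite family $\{\overline{\g^\xi}\}_{[\xi]\in\MCmod(\g)}$ of complete dglas, obtaining
\[
\MC_\bullet\Big(\bigsqcup_{[\xi]} \overline{\g^\xi}\Big) \sim \bigcup_{[\xi]} \MC_\bullet(\overline{\g^\xi}).
\]
Combined with~(\ref{zas_bouchani}) this yields $\MC_\bullet(\g) \sim \MC_\bullet\big(\bigsqcup_{[\xi]} \overline{\g^\xi}\big)$. I expect the main obstacle to be the third step, the identification of $\MC_\bullet(\overline\h)$ with the based component of $\MC_\bullet(\h)$: the decomposition into components and the twisting isomorphism are formal, and the finite case is immediate once Theorem~\ref{theoremF} is available, whereas the comparison of homotopy groups rests on the computation $\pi_n(\MC_\bullet(\g),\xi)\cong H_{n-1}(\g^\xi)$ together with a careful passage to the completion, so that discarding the negative homology via the good truncation genuinely leaves the based homotopy type unchanged.
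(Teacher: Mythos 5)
Your proposal is correct and follows essentially the same route as the paper: the paper's Proposition~\ref{sec:introduction} is exactly your third step (twist by $\xi$, then show the inclusion of the connected cover $\overline{\g^\xi}\hookrightarrow\g^\xi$ induces isomorphisms on homotopy groups via $\pi_n(\MC_\bullet(\h))\cong H_{n-1}(\h)$ for $n\geq 1$), and the finite case is likewise obtained by combining~(\ref{zas_bouchani}) with Theorem~\ref{theoremF}. The only ingredient you quote without proof, the computation $\pi_n(\MC_\bullet(\h),0)\cong H_{n-1}(\h)$, is established in the paper from the cofibrancy of $\Cfun(\h)$ together with \cite[Proposition~8.12]{bousfield-gugenheim}.
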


Theorems~A, B and C are proved in Section~\ref{proofs}, Theorem~D in
Section~\ref{disc-spaces}.
Theorem~\ref{Andrey'} is proved in
Section~\ref{sec:homot-prop-local} and
Theorem~\ref{sec:main-results-3} in
Section~\ref{sec:appl-struct-simpl-3}.
The proof of Theorem~\ref{theoremF} occupies
Section~\ref{disjointproof}; it is surprisingly involved in that it relies, essentially, on all of the
technology developed in the previous sections and the Appendix.
The pointed (or augmented) versions of Theorems~A, B and C are formulated
in Section~\ref{sec:augm-dg-comm}.
We finish this part by~a

\vskip .5em

\noindent
{\bf Glossary of notation.}
We use the following notation for various categories:
\[
\def\arraystretch{1.2}
\begin{array}{rl}
\Hin,&\mbox {the category of ($\mathbb Z$-graded) cdgas,}
\cr
\BG,&\mbox {the category of non-negatively graded cdgas,}
\cr
\Hinc,&\mbox {the category of homologically connected cdgas,}
\cr
\BGc,&\mbox {the category of non-negatively graded
homologically connected cdgas,}
\cr
\Hindc,&\mbox {the category of homologically disconnected cdgas,}
\cr
\BGdc,&\mbox {the category of non-negatively graded
homologically disconnected cdgas,}
\cr
\Lie,&\mbox {the category of dglas}
\cr
\L,&\mbox {the category of complete dglas,}
\cr
\SSet,&\mbox {the category of simplicial sets,}
\cr
\SSetc,&\mbox {the category of connected simplicial sets,}
\cr
\SSetdc,&\mbox {the category of simplicial sets with finitely many components.}
\end{array}
\]
The prefix `${\rm f{\mathbb Q}}$-' applied to a category of
algebras means `finite type over ${\mathbb Q}$' while the prefix `${\rm
  fN{\mathbb Q}}$-' applied to a category of simplicial sets
abbreviates `nilpotent, rational components of finite type.' The
subscript `$\hskip .1em_+$' means `pointed' for simplicial sets and `augmented'
\hbox{for~algebras}.

\part{The de~Rham-Sullivan approach.}
\label{part:point-view-dg}

In this part we describe our first version of disconnected rational
homotopy theory based on cdgas.

\section{Homotopy properties of the localization and proof of
  Theorem~\ref{Andrey'}}
\label{sec:homot-prop-local}

Recall~\cite[Sections~5,6]{dwyer-spalinski} that the homotopy category
$\hoHin$ of the model category $\Hin$ of ${\mathbb Z}$-graded unital
cdgas has the same objects as $\Hin$, and the morphism sets
$[X,Y]_\Hin$ defined~as
\[
[X,Y]_\Hin := \pi(QX,QY)_\Hin,\  X,Y \in \Hin,
\]
where $QX$ resp.~$QY$ is a cofibrant replacement of $X$
resp.~$Y$\footnote{All objects in $\Hin$ are fibrant. In a general
model category, the cofibrant replacement must be followed by the
fibrant replacement.} and $\pi(-,-)_\Hin$ denotes the set of homotopy
classes. By \cite[Proposition~5.11]{dwyer-spalinski}, if $A$ is
cofibrant and $Y$ fibrant, which in our situation means that $Y$ is arbitrary,
one has an isomorphism
\begin{equation}
\label{EQ:8}
[A,Y]_\Hin \cong \pi(A,Y)_\Hin.
\end{equation}
There is a functor $\gamma : \Hin \to \hoHin$ which is the identity on
objects and, for a morphism $f : X \to Y$ in $\Hin$, $\gamma(f)$ is
the homotopy class of a lift $\tilde f : QX\to QY$ of $f$. The
homotopy category $\hoBG$ of $\BG$ has an obvious similar description.

Let $A$ be a cdga and $S \subset A$ a multiplicative subset of
cocycles. Then the localization
\cite[Section~3]{atiyah-macdonald}\footnote{Notice that
  $S^{-1}A$ is in \cite{atiyah-macdonald} called `the ring of fractions
  with respect to the multiplicative subset $S$.'} $S^{-1}A$ of $A$ is a cdga
and the canonical map $A \to S^{-1}A$ is a morphism of cdgas. The
property crucial for us  is the exactness
\cite[Proposition~3.3]{atiyah-macdonald} of the functor $A \mapsto  S^{-1}A$.
If $S$ is multiplicatively generated by a cocycle $u \in A$
we will write $\loc Au$ for $S^{-1}A$.

\begin{rem}
\label{sec:homot-prop-local-1}
It is easy to see, using the exactness of the localization, that the homotopy type
of $\loc Au$ depends only on the cohomology class of $u$ in $H(A)$.
\end{rem}

Let us prove Theorem~\ref{Andrey'}
which we formulate in a slightly extended form as:

\begin{theorem}
\label{Andrey}
Let $A,D \in \Hin$, $u \in A$ a cocycle and $p : A \to A[u^{-1}]$ the
localization map. Denote
\[
[A,D]^u_\Hin := \big\{ \chi \in [A,D]_\Hin|\  \chi_*([u]) \in H(D)
\mbox { is invertible}\big\}.
\]
Then the map $p^\sharp : \big[A[u^{-1}],D\big]_\Hin \to [A,D]_\Hin$,
$p^\sharp(\chi) := \chi \circ \gamma(p)$, induces an isomorphism
\[
\big[A[u^{-1}],D\big]_\Hin \cong [A,D]^u_\Hin.
\]
\end{theorem}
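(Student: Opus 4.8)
The plan is to establish the isomorphism by exhibiting $p^\sharp$ as a restriction of a universal-property bijection, using the homotopical universal property of localization. First I would pin down the universal property at the strict level: for a cocycle $u \in A$, a cdga map $f : A \to D$ factors (uniquely) through $A[u^{-1}]$ if and only if $f(u)$ is invertible in $D$. This is the ordinary algebraic universal property of localization of commutative rings, and I would simply cite \cite[Section~3]{atiyah-macdonald}, taking care that the factoring map is automatically a chain map since $u$ and hence $f(u)$ are cocycles and the differential on $A[u^{-1}]$ is the induced one. The map $p : A \to A[u^{-1}]$ sends $u$ to an invertible element, so on cohomology $p_*([u])$ is invertible; thus every $\chi \in \big[A[u^{-1}],D\big]_\Hin$ satisfies $(\chi\circ\gamma(p))_*([u]) = \chi_*(p_*([u]))$ invertible, showing $p^\sharp$ indeed lands in $[A,D]^u_\Hin$ and is well-defined.

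Next I would reduce the homotopy statement to the represented level. By~(\ref{EQ:8}) it suffices to work with a cofibrant replacement $QA$ of $A$ and compute homotopy classes of honest maps out of it, since all objects of $\Hin$ are fibrant. Here the key technical point is to produce a cofibrant model of $A[u^{-1}]$ out of a cofibrant model of $A$ compatibly with $p$: localization is an exact functor (Proposition~3.3 in \cite{atiyah-macdonald}), so it preserves quasi-isomorphisms, and I would argue that localizing a cofibrant replacement $QA \to A$ at (a cocycle lift of) $u$ yields a cofibrant replacement $QA[u^{-1}] \to A[u^{-1}]$, with the localization map between them modelling $\gamma(p)$. Remark~\ref{sec:homot-prop-local-1} guarantees this is insensitive to the choice of representative of $[u]$. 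This step is where I expect the main obstacle to lie: I must verify that $\loc{(QA)}{u}$ really is cofibrant in $\Hin$ and not merely quasi-isomorphic to the right thing, which requires understanding the behaviour of the Hinich model structure under the adjunction of a polynomial-and-its-inverse generator, or alternatively building the localization by a transparent cofibration $QA \hookrightarrow \loc{(QA)}{u}$ (adjoining a variable $t$ in degree $-|u|$ together with the relation $ut=1$, handled as a free-then-quotient or Koszul-type construction).

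With a cofibrant model in hand, I would check that $p^\sharp$ is a bijection by comparing homotopy classes directly. For surjectivity, given $\chi \in [A,D]^u_\Hin$, represent it by a map $g : QA \to D$; the hypothesis is that $g_*([u])$ is invertible in $H(D)$, i.e. $g(u)$ is a \emph{cohomologically} invertible cocycle. The content here is to promote cohomological invertibility to an actual factorization up to homotopy: I would show that $g(u)$ can be homotoped to a strictly invertible cocycle, or equivalently that $g$ extends over the cofibration $QA \hookrightarrow \loc{(QA)}{u}$ after a homotopy, using the lifting property against the acyclic fibration coming from inverting $g(u)$ in $D$ (possible precisely because its cohomology class is invertible). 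For injectivity, two extensions over the localization that agree after restriction along $p$ must be shown homotopic; since the variable inverting $u$ is determined up to the relation $ut=1$ and its value is forced (up to homotopy) once the value on $QA$ is fixed, a homotopy on $QA$ propagates to a homotopy on $\loc{(QA)}{u}$ by again invoking exactness of localization applied to a path-object/cylinder argument. Finally, the naturality asserted in Theorem~\ref{Andrey'} follows formally from the naturality of the universal property and of cofibrant replacement.
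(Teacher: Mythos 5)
Your overall strategy (reduce to a cofibrant source, invoke the strict universal property of localization, and promote cohomological invertibility of $\chi_*([u])$ to an honest factorization through $p$) is the same as the paper's, but at each of the places where the real work happens your argument either stops at the obstacle or proposes a step that fails. The first gap is the cofibrant model of $A[u^{-1}]$, which you yourself flag as the main obstacle but do not resolve. Neither of your alternatives works: the localization $QA[u^{-1}]$ of a cofibrant algebra is genuinely \emph{not} cofibrant in $\Hin$ (localizations are essentially never retracts of free algebras; compare the paper's remark that even algebras concentrated in degree zero fail to be cofibrant in $\Hin$ unless they are retracts of polynomial algebras), and a ``free-then-quotient'' construction imposing $ut=1$ destroys cofibrancy, since quotient maps are not cofibrations. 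The paper's resolution, in Proposition~\ref{sec:atestace}, is not to make the localization cofibrant but to build an explicit cofibrant replacement of it: attach \emph{two} cells $y,z$ to the cofibrant $A$, with $|y|=-|u|$, $|z|=-1$ and $d''(z)=uy-1$, and show by a spectral sequence argument that the resulting surjection $c : A[y,z] \to A[u^{-1}]$, $c(y)=u^{-1}$, $c(z)=0$, is a weak equivalence, hence a fibrant--cofibrant replacement of $A[u^{-1}]$.

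The second gap is in your surjectivity and injectivity steps, both of which need, and lack, the paper's key reduction on the \emph{target}. For surjectivity you invoke ``the lifting property against the acyclic fibration coming from inverting $g(u)$ in $D$''; but the localization map $D \to D[g(u)^{-1}]$, while a weak equivalence by exactness, is \emph{not} surjective, hence not a fibration, so there is no lifting property to use, and any factorization obtained this way lands in $D[g(u)^{-1}]$ rather than in $D$. The paper's fix is Lemma~\ref{sec:loc}: replace $D$ once and for all by a weakly equivalent $\hatX$-type algebra in which \emph{every} cohomologically invertible cocycle is strictly invertible; after that reduction $w(u)$ is invertible on the nose and $w$ factors strictly through $p$, with no lifting needed. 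For injectivity, your claim that a homotopy on $QA$ ``propagates'' to the localization hides the same issue one level up: a right homotopy $h$ with $p_i h = f_i p$ has $h(u)$ only \emph{cohomologically} invertible in the path object, so $h$ need not factor through $A[u^{-1}]$ at all. This is exactly why the paper proves Lemma~\ref{sec:good}, producing a good path object $\good$ whose cohomologically invertible cocycles are all invertible, before factoring the homotopy (Proposition~\ref{sec:homot}). Your appeal to ``exactness of localization applied to a path-object argument'' gestures in this direction but, as written, neither identifies nor solves the problem.
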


\begin{example}
A curious particular case is when $u$ is cohomologous to zero $0$ in
$A$. Then $H\big(A[u^{-1}]\big)$ is the terminal algebra ${\bf 0}$ in which
$1=0$. By Remark~\ref{sec:homot-prop-local-1}, $A[u^{-1}]$ is
isomorphic, in the homotopy category, to ${\bf 0}$, so clearly
\[
\big[A[u^{-1}],D\big]_\Hin =
\cases \emptyset{if $1 \not= 0$ in
  $H(D)$, and}{\hbox{the one-point set}}{if $H(D) = {\bf 0}$.}
\]
It is immediate to see that the set $[A,D]^u_\Hin$
has the same description.
\end{example}

The rest of this section is devoted to the proof of Theorem~\ref{Andrey}.
We say that $\phi \in [X,Y]_\Hin$ is {\em represented\/} by $f \in
\Hin(X,Y)$ if $\phi = \gamma(f)$.
We call a cocycle $u \in X \in
\Hin$ {\em cohomologically invertible\/} if its cohomology class $[u]
\in H(X)$ is invertible.

\begin{proposition}
\label{sec:atestace}
Assume that $A \in \Hin$ is cofibrant and $D \in \Hin$ an cdga whose
each \cic\ is invertible.  Let $u \in A$ be a cocycle. Then each $\phi
\in \big[A[u^{-1}],D\big]_\Hin$ is represented by some $f: A[u^{-1}] \to D$.
\end{proposition}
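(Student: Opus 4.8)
The plan is to build an explicit cofibrant model of $A[u^{-1}]$ and then to compute homotopy classes of maps out of it by hand. Since $A$ is cofibrant, I would form the semifree extension $Q := A\langle t,s\rangle$ obtained by first adjoining a generator $t$ of degree $-|u|$ with $dt=0$, and then a generator $s$ of degree $-|u|-1$ with $ds = 1-ut$ (here $|u|$ is even, the only interesting case by the remark following Theorem~\ref{Andrey'}, so $t$ is a polynomial and $s$ an exterior generator). Each of these is an elementary cofibration, so $A\to Q$ is a cofibration and $Q$ is cofibrant. The assignment $t\mapsto u^{-1}$, $s\mapsto 0$ together with the localization map $p$ on $A$ defines a cdga morphism $q:Q\to \loc Au$. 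Using the exactness of localization and the elementary fact that $1-[u]t$ is a non-zero-divisor in $H(A)[t]$, coning off $1-ut$ computes
\[
H(Q)\cong H(A)[t]/(1-[u]t)\cong H(A)[[u]^{-1}]\cong H\big(\loc Au\big),
\]
so $q$ is a weak equivalence and $Q$ is a cofibrant replacement of $\loc Au$.

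Because $Q$ is cofibrant and $D$ is fibrant, $\big[\loc Au,D\big]_\Hin\cong \pi(Q,D)_\Hin$, so the given class $\phi$ is the homotopy class of an honest morphism $\tilde\phi:Q\to D$. Such a morphism is exactly the datum of a cdga map $g:=\tilde\phi|_A:A\to D$ together with elements $T:=\tilde\phi(t)$ and $S:=\tilde\phi(s)$ satisfying $dT=0$ and $dS=1-g(u)T$. Passing to cohomology in the second relation gives $[g(u)][T]=1$, so $g(u)$ is a \cic\ in $D$; by the hypothesis on $D$ it is therefore genuinely invertible. The universal property of localization then produces a strict morphism $f:\loc Au\to D$ with $f\circ p=g$, and this $f$ is the desired representative. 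What remains is to verify $\gamma(f)=\phi$.

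The class $\gamma(f)$ is represented by $f\circ q:Q\to D$, which sends $t\mapsto g(u)^{-1}$ and $s\mapsto 0$. It thus suffices to exhibit a homotopy $\tilde\phi\sim f\circ q$ that is constant on $A$. I would construct a right homotopy $H:Q\to D^I$ into a path object for $D$, restricting to the constant homotopy $g$ on $A$. On $t$ the two targets $T$ and $g(u)^{-1}$ are cohomologous cocycles (both represent $[g(u)]^{-1}$), say $T-g(u)^{-1}=d\eta$, and the standard interpolating cocycle in $D^I$ connects them. The substantive point is the $s$-component: one must integrate the constraint $dH(s)=1-g(u)\,H(t)$ to a primitive whose two endpoints are precisely $0$ and $S$. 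These endpoint conditions are consistent exactly because $d(0)=1-g(u)g(u)^{-1}=0$ and $dS=1-g(u)T$, and the required interpolation can then be carried out explicitly in $D^I$.

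The main obstacle I expect is this last step — producing the $s$-component of the homotopy with the prescribed endpoints. It is the usual problem of extending a homotopy across an attached cell; the obstruction to doing so vanishes precisely by the compatibility of the two primitives above, so it is routine, but the bookkeeping of degrees and Koszul signs for the odd generator $s$ requires care. Once $H$ is in place we conclude $\phi=\gamma(f)$, so $\phi$ is represented by the strict morphism $f:\loc Au\to D$, as claimed.
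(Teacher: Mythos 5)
Your proposal is correct in substance and, up to notation, builds the same cofibrant model as the paper: your $Q=A\langle t,s\rangle$ is exactly the paper's $\big(A[y,z],d''\big)$ with $t\leftrightarrow y$, $s\leftrightarrow z$ (one small slip: the relation $ds=1-ut$ forces $|s|=-1$, not $-|u|-1$, since $1-ut$ has degree $0$). Your route to the representative $f$ — extract $(g,T,S)$ from $\tilde\phi$, read off $[g(u)][T]=1$, invoke the hypothesis on $D$ and the universal property of localization — also matches the paper's. Where you genuinely diverge is the verification $\gamma(f)=\phi$. The paper observes that the localization of $A[y,z]$ at $u$ is isomorphic to $A[u^{-1}][s,ds]\cong A[u^{-1}]\otimes\bfk[s,ds]$, so that $\tilde\phi$ factors as $\bar f q$ through the localization map $q$, and then obtains $\gamma(f)=\phi$ by a purely formal computation in the homotopy category (using that $i$ and $r$ are mutually inverse weak equivalences and $\gamma(c)=\id$); no homotopy is ever constructed. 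You instead compare $\tilde\phi$ with $f\circ q$ by an explicit right homotopy. Your way is more elementary and avoids recognizing what $Q[u^{-1}]$ is; the paper's way avoids all cochain-level interpolation.

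However, as written your last step is under-justified, and this is more than bookkeeping. Matching differentials at the two endpoints is necessary but not sufficient for the $s$-component to exist. Concretely, if you interpolate $t$ by $H(t)=T-d(\eta\,t_0)$ in the path object $D[t_0,dt_0]$, where $\eta$ is an \emph{arbitrary} primitive of $T-g(u)^{-1}$, then $1-g(u)H(t)=d\big(S+g(u)\eta\,t_0\big)$, and any primitive of this element which equals $S$ at $t_0=0$ has value at $t_0=1$ cohomologous to the cocycle $S+g(u)\eta$; so a primitive with endpoints $(S,0)$ exists only if $S+g(u)\eta$ is exact, which fails for generic $\eta$ (change $\eta$ by a non-exact cocycle $z$ and the class moves by $[g(u)][z]\neq 0$). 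The cure uses the invertibility of $g(u)$ a second time, to normalize the primitive: take $\eta:=-g(u)^{-1}S$, which indeed satisfies $d\eta=T-g(u)^{-1}$. Then set
$H|_A:=g$ (constant), $H(t):=T+d\big(g(u)^{-1}S\,t_0\big)$, $H(s):=S(1-t_0)$.
Since $g(u)$ is a cocycle, $g(u)\,d\big(g(u)^{-1}S\,t_0\big)=d(S\,t_0)$, whence $dH(s)=dS-d(St_0)=1-g(u)H(t)$ and $dH(t)=0$; writing everything as $d$ of explicit elements also disposes of the Koszul-sign worries. Evaluation at $t_0=0$ gives $\tilde\phi$ and at $t_0=1$ gives $f\circ q$ (note $ev_1 H(t)=T+g(u)^{-1}dS=g(u)^{-1}$). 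With this one-line normalization inserted, your proof is complete.
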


\begin{proof}
Let us look at the left half of the diagram
\begin{equation}
\raisebox{-1.8cm}{}
\label{EQ:2.1}
{% Picture saved by xtexcad 2.4
\unitlength=1pt
\begin{picture}(200.00,60)(-60.00,128)
\thicklines
\put(-32,165){\makebox(0.00,0.00)[rt]{\scriptsize $q$}}
\put(15.00,145){\makebox(0.00,0.00)[l]{\scriptsize $c$}}
\put(100.00,105){\makebox(0.00,0.00)[t]{\scriptsize $f$}}
\put(80,135){\makebox(0.00,0.00)[b]{\scriptsize $\bar f$}}
\put(100.00,155){\makebox(0.00,0.00)[b]{\scriptsize $\tilde \phi$}}
%ty dve inverze
\put(-40.00,110){\vector(1,-1){20.00}}
\put(-10.00,100){\vector(-1,1){20.00}}
\put(-40.00,100){\makebox(0.00,0.00){\scriptsize $r$}}
\put(-15.00,115){\makebox(0.00,0.00){\scriptsize $i$}}

\put(-15.00,170.00){\vector(-1,-1){30.00}}
\multiput(40,90)(11.3,3.8){10}{\qbezier(0,0)(2.4,.8)(4.8,1.6)}
\put(145.00,125){\vector(3,1){6}}
\put(40,170.00){\vector(3,-1){110}}
\put(10.00,170.00){\vector(0,-1){73}}
\multiput(-10,130)(11.5,0){13}{\line(1,0){6}}
\put(140,130.00){\vector(1,0){10}}
\put(155,130){\makebox(0.00,0.00)[l]{$D$.}}
\put(-55.00,130.00){\makebox(0.00,0.00){$\big(A[u^{-1}][s,ds],d'\big)$}}
\put(10.00,180.00){\makebox(0.00,0.00){$\big(A[y,z],d''\big)$}}
\put(10.00,85.00){\makebox(0.00,0.00){$\big(A[u^{-1}],\bar d\big)$}}
\end{picture}}
\end{equation}
The symbols $y,z,s,ds$ are new free generators with
$|z|=|s|:= -1$, \hbox{$|y| := -|u|$}.
The cdga $\big(A[u^{-1}],\bar d\big)$ is the localization of $A = (A,d)$ at $u$
with the induced differential. The differential $d'$ of
$A[u^{-1}][s,ds]$ is defined by
\[
d' (\bar a) := \bar d (\bar a)\ \mbox { for }\  \bar a \in A[u^{-1}], \  d'(s) :=
ds \ \mbox { and } \ d'(ds):= 0,
\]
and the differential $d''$
of $A[y,z]$ by
\[
d'' (a) :=  d (a)\ \mbox { for }\  a \in A, \ d''(z) :=
uy -1 \ \mbox { and } \ d''(y):= 0.
\]

Let $\bar a$ denote the image of $a \in A$ under the
localization map $A \to A[u^{-1}]$.
The map $c: A[y,z] \to A[u^{-1}]$ is then defined by
\[
c(a) := \bar
a,\ c(y) := u^{-1} \ \mbox { and } \  c(z) = 0.
\]
The map $q   : A[y,z] \to A[u^{-1}][s,ds]$ is given by
\[
q(a) := \bar a,\ q(y) := u^{-1}(ds+1) \ \mbox { and } \ q(z):= s.
\]
Finally, $i : A[u^{-1}] \to  A[u^{-1}][s,ds]$ is the inclusion
and $r :  A[u^{-1}][s,ds] \to  A[u^{-1}]$ the obvious retraction.
It is routine to verify that all the maps above commute with the
differentials, that $c = rq$ and that $ri$ is the identity.

Since $\big(A[u^{-1}][s,ds],d'\big)$ is the tensor product of
$\big(A[u^{-1}],\bar d\big)$ with the
`standard' acyclic cdga $\bfk[s,ds]$, one sees that
both $i$ and $r$ are weak equivalences. A simple spectral
sequence argument shows that also $c$ is a weak equivalence. The cdga
$\big(A[y,z],d''\big)$ was created from a cofibrant $A$ by a cell
attachment, it is therefore also cofibrant. As $A[u^{-1}]$ is
generated by the image of $A$ under the localization map and by $u^{-1}$,
$c$ is an epimorphism, i.e.~a fibration in $\Hin$.
The map $c$ thus can be taken as a cofibrant replacement of
$A[u^{-1}]$.

Let us inspect the localization of  $\big(A[y,z],d''\big)$ at $u$.
It is clear that $A[y,z][u^{-1}] \cong A[u^{-1}][y,z]$ with the
differential ${\bar d}'$ given by
\[
{\bar d}' (\bar a) := \bar d (\bar a)\ \mbox { for }\  \bar a \in A[u^{-1}], \
{\bar d}'(z) :=
uy -1 \ \mbox { and } \ {\bar d}'(y):= 0.
\]
It is  simple to check that the formulas
\[
\alpha(\bar a) := \bar a  \  \mbox { for }\  \bar a \in A[u^{-1}],
\ \alpha(y) := u^{-1}(ds+1) \ \mbox { and } \
\alpha(z) := s
\]
define an isomorphism
\[
\alpha : \big( A[u^{-1}][y,z],{\bar d}'\big)
\stackrel \cong \longrightarrow \big( A[u^{-1}][s,ds],d'\big)
\]
such that $\alpha p = q$, where $p :
A[y,z] \to  A[u^{-1}][y,z]$ is the localization map. We can therefore take $\big(
A[u^{-1}][y,z],d'\big)$ as the localization of
$\big(A[y,z],d''\big)$ at $u$, with $q$ the localization map.

Let $\tilde \phi : A[y,z] \to D$ as in~(\ref{EQ:2.1}) represents $\phi
\in \big[A[u^{-1}],D\big]_\Hin$, i.e.~$\phi = \gamma(\bar \phi)$. By
definition, $ \phi_*([u]) \in H(D)$ is invertible, therefore the
cocycle $\tilde \phi(u) \in D$ representing $\phi_*([u])$ is
invertible, so $\tilde \phi$ factorizes via the localization map $q :
A[y,z] \to A[u^{-1}][s,ds]$. We get, in~(\ref{EQ:2.1}), a unique map
$\bar f : A[u^{-1}][s,ds] \to D$ such that $\tilde \phi = \bar f
q$. Let finally $f := \bar f i : A[u^{-1}] \to D$.

We are going to prove that $f$ represents $\phi$.
Applying the functor $\gamma$ to the equation $fc =
\bar firq$ gives
\begin{equation}
\label{EQ:3}
\gamma(f)\gamma(c) = \gamma (\bar f) \gamma (i) \gamma (r) \gamma (q).
\end{equation}
Since $i$ and $r$ are weak equivalences and $ri = {\id}$, $\gamma
(i)$ and  $\gamma (r)$ are mutually inverse isomorphisms in
$\hoHin$. As $c$ is our
chosen cofibrant replacement of $A[u^{-1}]$, $\gamma(c)$ is the identity,
thus~(\ref{EQ:3}) reduces to
\[
\gamma(f) =  \gamma (\bar f)\gamma (q) = \gamma(\bar f q).
\]
The proof is finished by recalling that $\bar f q = \tilde \phi$,
hence $\phi = \gamma(\tilde \phi) = \gamma(f)$.
\end{proof}

\begin{lemma}
\label{sec:loc}
For each cdga $X \in \Hin$ there exists $\hatX \in \Hin$ and a weak
equivalence \hbox{$q : X \to \hatX$} such that
\begin{itemize}
\item[(i)]
each \cic\ $u \in \hatX$ is invertible, and
\item[(ii)]
each morphism $f: X \to Z$ whose target is an cdga $Z \in \Hin$ in
which all \cic{s} are invertible,
uniquely factorizes via $q : X \to \hatX$.
\end{itemize}
\end{lemma}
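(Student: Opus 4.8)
The plan is to build $\hatX$ as a sequential colimit of localizations, exploiting the fact that localizing at a \cic\ does not alter cohomology. The starting observation is that if $S \subset X$ denotes the set of all \cic{s} of $X$ (which is multiplicatively closed, since a product of cocycles with invertible classes is again such a cocycle), then by the exactness of localization \cite[Proposition~3.3]{atiyah-macdonald} one has $H(S^{-1}X) \cong S^{-1}H(X)$; as every element of $S$ already maps to a unit of $H(X)$, the ring $S^{-1}H(X)$ coincides with $H(X)$, so the canonical map $X \to S^{-1}X$ is a weak equivalence. This refines the reasoning behind Remark~\ref{sec:homot-prop-local-1}.

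First I would set $X_0 := X$ and, inductively, let $S_n \subset X_n$ be the set of all \cic{s} of $X_n$ and put $X_{n+1} := S_n^{-1}X_n$, with transition map the localization map. By the observation above each $X_n \to X_{n+1}$ is a weak equivalence. Define $\hatX := \colim_n X_n$ and let $q : X \to \hatX$ be the canonical map. Because cohomology commutes with sequential colimits, $H(\hatX) \cong \colim_n H(X_n)$; all transition maps in this system are isomorphisms, so $H(X_n) \to H(\hatX)$ is an isomorphism for every $n$, and in particular $q$ is a weak equivalence.

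Next I would verify (i). Let $w \in \hatX$ be a \cic. Since $\hatX$ is a filtered colimit, $w$ together with a witness for $dw=0$ already lives at some finite stage: there are $n$ and a cocycle $w_n \in X_n$ mapping to $w$. As $H(X_n) \to H(\hatX)$ is an isomorphism and $[w]$ is invertible, the class $[w_n]$ is invertible in $H(X_n)$, so $w_n \in S_n$ and hence $w_n$ becomes invertible already in $X_{n+1} = S_n^{-1}X_n$; therefore $w$ is invertible in $\hatX$.

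Finally, (ii) follows by iterating the universal property of localization. Given $f : X \to Z$ with $Z$ as in the statement, every $u \in S_0$ satisfies that $f(u)$ is a \cic\ of $Z$ (its cohomology class is $f_*([u])$, a unit), hence invertible by the hypothesis on $Z$; thus $f$ carries $S_0$ into the units of $Z$ and factors uniquely through the localization $X_1 = S_0^{-1}X$. Repeating this argument at each stage yields a unique family of maps $f_n : X_n \to Z$ compatible with the transition maps $X_n \to X_{n+1}$, and the universal property of the colimit assembles them into the required unique factorization $\hat f : \hatX \to Z$. The main point to watch is that a single localization does not suffice — introducing denominators typically creates new \cic{s} — so the construction genuinely needs the colimit; the crux is the finite-stage argument in (i), showing that every \cic\ of $\hatX$ is already recognised, and hence inverted, at some finite level.
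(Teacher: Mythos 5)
Your proof is correct, but it takes a genuinely different and less economical route than the paper's. The paper performs a \emph{single} localization: it sets $\hatX := S^{-1}X$, where $S$ is the multiplicative set of all \cic{s} of $X$, and proves directly that this one step already achieves (i). The key is a two-stage argument: first, if every cocycle cohomologous to $1$ is invertible, then every \cic\ is invertible (from $xy = 1+db$ one gets $x^{-1} = y(1+db)^{-1}$); second, for a cocycle $x = 1+\bar{d}b$ in $S^{-1}X$ one clears the single denominator, writing $b = q(s)^{-1}q(a)$ with $s \in S$, $a \in X$, so that $q(s)x = q(s) + q(da) = q(s+da)$, whose cohomology class equals $[s]$ and which is therefore invertible by the very definition of $S$; hence $x^{-1} = q(s)q(s+da)^{-1}$. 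Part (ii) is then the plain universal property of one localization. In particular, your closing caveat --- that ``a single localization does not suffice'' because introducing denominators creates new \cic{s} --- is false: new \cic{s} do appear, but the computation above shows they are automatically invertible. Your iterated-localization-and-colimit construction is nonetheless sound: the transition maps are weak equivalences by exactness of localization, cohomology commutes with sequential colimits, the finite-stage argument for (i) is the standard filtered-colimit fact, and your induction combined with the universal property of the colimit gives (ii), including uniqueness. What your approach buys is that it needs no trick, only formal properties of localization and colimits; what it costs is a bulkier model of $\hatX$ and an extra layer of bookkeeping that the paper's one-step construction makes unnecessary.
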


\begin{proof}
We start  by observing that if, in an cdga
$D = (D,d)$, all cocycles cohomologous to~$1$ are invertible, then all \cic{s}
are invertible.
Indeed, let $x \in D$ be cohomologically invertible, i.e.~$xy = 1+db$
for some $y,b \in D$. By assumption, $1+db$ is invertible, so $x^{-1}:=y
(1+db)^{-1}$ exists.

Denote by $S$ the multiplicative set of all \cic{s}
$u \in X$ and by $\hatX : =S^{-1}X$ the localization of $X = (X,d)$ at
$S$ with the induced differential $\bar d$. Let \hbox{$q : X \to \hatX$} be
the localization map.
To prove (i) it is, by the above observation, enough to show that
each cocycle $x \in \hatX$ cohomologous to $1$ is invertible. Let $x =
1+\bar db$, $b \in \hatX$. Clearly, $b = q(s)^{-1}q(a)$
for some $a \in X$ and $s \in S$, so $q(s)x
= q(s) + q(da) = q(s+da)$. Since $[s + da] = [s]$, $q(s + da)$
is invertible in $\hatX$
by the definition of $S$. We can therefore take $x^{-1} :=
q(s)q(s+da)^{-1}$.

Part (ii) follows from the standard universal property of the localization.
\end{proof}

\begin{lemma}
\label{sec:good}
Assume that all
\cic{s} of $D \in \Hin$
are invertible. Then there exists a good path object in the
sense of
\cite[\S 4.12]{dwyer-spalinski}
\begin{equation}
\label{EQ:1}
D \stackrel i\to \good \stackrel {(p_1,p_2)}\vlra
D \timesred D
\end{equation}
such that each \cic\ of $\good$ is invertible.
\end{lemma}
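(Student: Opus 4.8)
The plan is to take the standard polynomial path object and then localize it at its cohomologically invertible cocycles, using Lemma~\ref{sec:loc} to install the extra invertibility property, while the hypothesis on $D$ is exactly what guarantees that this localization does not destroy the path-object structure. First I would recall the \emph{standard} path object $P := D[t,dt] = D \otimes \bfk[t,dt]$, where $\bfk[t,dt]$ is the cdga with $|t|=0$, $|dt|=1$ and $d(t)=dt$. Let $i : D \to P$ be the inclusion of constants and let $p_1,p_2 : P \to D$ be the evaluations at $t=0$ and $t=1$ (both sending $dt \mapsto 0$). Since $\bfk[t,dt]$ is weakly equivalent to $\bfk$ (its cohomology is concentrated in degree $0$, where it is $\bfk$), the map $i$ is a weak equivalence; the map $(p_1,p_2) : P \to D \timesred D$ is surjective (for $(d_1,d_2)$ take $d_1 + (d_2-d_1)t$), hence a fibration, and $(p_1,p_2)\,i$ is the diagonal. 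As $p_k\,i = \id$, the two-out-of-three property shows that each $p_k$ is itself a weak equivalence.

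Next I would set $\good := \widehat{P}$, the localization of $P$ at the multiplicative set of all its \cic{s} furnished by Lemma~\ref{sec:loc}, with localization map $q : P \to \widehat{P}$. By part (i) of that lemma every \cic\ of $\good$ is invertible, which is precisely the additional property we need; and since $q$ is a weak equivalence, the composite $\widetilde{\imath} := q\,i : D \to \good$ is again a weak equivalence. It then remains to push the two projections across $q$. This is where the hypothesis enters: $D$ itself has the property that all its \cic{s} are invertible, so part (ii) of Lemma~\ref{sec:loc}, applied to each $p_k : P \to D$ with target $Z = D$, yields a unique factorization $p_k = \widetilde{p}_k\,q$ with $\widetilde{p}_k : \good \to D$. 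Concretely, each generator $s$ of the localized set maps under $p_k$ to a \cic\ of $D$ (because $p_k$ is a weak equivalence, so $(p_k)_*[s]$ is invertible in $H(D)$), and is therefore invertible in $D$, so the universal property of the localization applies.

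Finally I would assemble $(\widetilde{p}_1,\widetilde{p}_2) : \good \to D \timesred D$ and verify it is a good-path-object projection. The identity $(\widetilde{p}_1,\widetilde{p}_2)\,q = (p_1,p_2)$ shows that $(\widetilde{p}_1,\widetilde{p}_2)$ is surjective, since its image contains that of the surjection $(p_1,p_2)$, and hence it is a fibration; and $(\widetilde{p}_1,\widetilde{p}_2)\,\widetilde{\imath} = (p_1,p_2)\,i$ recovers the diagonal. Thus $D \stackrel{\widetilde{\imath}}{\to} \good \stackrel{(\widetilde{p}_1,\widetilde{p}_2)}{\to} D \timesred D$ is the required good path object, and all \cic{s} of $\good$ are invertible by construction. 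I expect the only genuinely delicate point — and the sole place the hypothesis on $D$ is used — to be the descent of the evaluation maps through the localization $q$: without the invertibility of the \cic{s} of $D$, the projections would fail to extend to $\good$ and the whole construction would collapse.
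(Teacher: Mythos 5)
Your proof is correct and follows essentially the same route as the paper's: localize a path object via Lemma~\ref{sec:loc} so that all \cic{s} become invertible, then use part~(ii) of that lemma together with the hypothesis on $D$ to descend the projections, noting that the weak-equivalence and surjectivity properties survive. The only cosmetic differences are that you build the explicit path object $D[t,dt]$ where the paper starts from an arbitrary good path object, and that you factor $p_1$ and $p_2$ separately where the paper factors $(p_1,p_2)$ jointly through $D \timesred D$ (after observing that $D \timesred D$ inherits the invertibility property from $D$).
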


\begin{proof}
Take any good path object $D \stackrel {\bar i}\to P \stackrel {(\bar
  p_1,\bar p_2)}\vlra D \timesred D$. Lemma~\ref{sec:loc} applied to $P$
produces a cdga $D^I$ such that all its cohomologically
invertible cocycles are invertible, together with  a weak equivalence $q: P \to
D^I$. It is clear that, if all \cic{s} of $D$ are invertible, $D
  \times D$ has the same property therefore, by~(ii) of
Lemma~\ref{sec:loc}, $(\bar p_1,\bar p_2)$
factorizes~as
\[
(\bar p_1,\bar p_2):
P \stackrel q\to \good  \stackrel {(p_1,p_2)}\vlra
D \times D.
\]
We claim that~(\ref{EQ:1}) with $i :=  q \bar i$ is a good path
object for $D$.

Firstly, $i$, being the composition of two weak
equivalences, is a weak equivalence. Secondly, since $P$ is a good path
object, $(\bar p_1,\bar p_2)$ is a fibration in $\Hin$ i.e.~an
epimorphism, hence $(p_1,p_2)$ must be an epimorphism, i.e.~a
fibration, as well.
\end{proof}

\begin{proposition}
\label{sec:homot}
Let $A,D \in \Hin$ and assume that $A$ is cofibrant and
each \cic\ in $D$ is invertible.  Let $u \in A$ be a cocycle and $p: A
\to A[u^{-1}]$ be the localization map.
Assume that $f_i : A[u^{-1}] \to D$, $i=1,2$, are such that the compositions
$f_1p,f_2p  : A \to D$ are
homotopic. Then $f_1$ and $f_2$ are right homotopic.
\end{proposition}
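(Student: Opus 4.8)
The plan is to realise the hypothesised homotopy as a \emph{right} homotopy through the special path object supplied by Lemma~\ref{sec:good}, and then to lift that right homotopy along the localization map $p$ by means of the universal property of the localization. The whole point of working with the path object of Lemma~\ref{sec:good}, rather than an arbitrary one, is that it will let us control the image of $u$, which is the only nontrivial point in the argument.

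First I would invoke Lemma~\ref{sec:good} to choose a good path object
\[
D \stackrel i\to \good \stackrel{(p_1,p_2)}\vlra D \timesred D
\]
in which every \cic\ is invertible. Since every object of $\Hin$ is fibrant and $A$ is cofibrant, left and right homotopies of maps $A \to D$ coincide, and if two such maps are homotopic they are right homotopic through any chosen good path object (see \cite{dwyer-spalinski}). Thus the assumption that $f_1 p$ and $f_2 p$ are homotopic furnishes a map $H : A \to \good$ with $p_1 H = f_1 p$ and $p_2 H = f_2 p$. I would also record that, as $(p_1,p_2)\,i = \Delta$ gives $p_1 i = p_2 i = \id$ while $i$ is a weak equivalence, both $p_1$ and $p_2$ are weak equivalences by the two-out-of-three property.

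The key step is to show that $H$ sends $u$ to an invertible element of $\good$. Indeed $H(u)$ is a cocycle, and $p_1 H(u) = f_1 p(u) = f_1(\bar u)$, where $\bar u := p(u)$ is invertible in $A[u^{-1}]$; hence $f_1(\bar u)$, and therefore $p_1 H(u)$, is invertible in $D$. Consequently $[p_1 H(u)]$ is invertible in $H(D)$, and since $p_1$ induces an isomorphism on cohomology, $[H(u)]$ is invertible in $H(\good)$. Thus $H(u)$ is a \cic\ of $\good$, and by the defining property of $\good$ it is in fact invertible. By the universal property of the localization $A[u^{-1}]$, the map $H$ then factors uniquely as $H = \tilde H \circ p$ for some $\tilde H : A[u^{-1}] \to \good$. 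Finally, from $p_1 \tilde H p = p_1 H = f_1 p$ together with the fact that $p$ is an epimorphism (because $A[u^{-1}]$ is generated by the image of $A$ under the localization map together with $\bar u^{-1}$) we obtain $p_1 \tilde H = f_1$, and similarly $p_2 \tilde H = f_2$. Hence $\tilde H$ exhibits $f_1$ and $f_2$ as right homotopic. The one genuinely delicate point, and the main obstacle, is the invertibility of $H(u)$ in $\good$; this is exactly where the refinement of the path object in Lemma~\ref{sec:good} is indispensable, since an arbitrary good path object would only give that $H(u)$ is cohomologically invertible, not invertible, and the factorization through $p$ would fail.
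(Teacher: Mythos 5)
Your proposal is correct and follows essentially the same route as the paper's own proof: take a right homotopy through the special path object of Lemma~\ref{sec:good} (using \cite[Remark~4.23]{dwyer-spalinski} and cofibrancy of $A$), show the homotopy sends $u$ to a cohomologically invertible, hence invertible, cocycle so that it factors through the localization, and then identify the factored map as the desired right homotopy using the fact that morphisms out of $A[u^{-1}]$ are determined by their restriction along $p$. The only cosmetic difference is that you spell out the two-out-of-three argument for $p_1$ being a weak equivalence, which the paper leaves implicit.
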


\begin{proof}
Since $A$ is cofibrant, by \cite[Remark~4.23]{dwyer-spalinski}, we may
assume that $f_1p$ and $f_2p$ are right homotopic via a good path
object of Lemma~\ref{sec:good}. Let $h : A \to \good$ be such a right
homotopy,~i.e.%
\begin{equation}
\label{EQ:6}
f_1p =  p_1 h \ \mbox { and }\ f_2p =  p_2 h.
\end{equation}
It is clear that, for instance, $f_1p(u)$ is an invertible element in
$D$, with $f_1(u^{-1})$ as its inverse.  By
definition of a path object, $p_1 : \good \to D$ is a weak
equivalence, so the invertibility of $f_1p(u)= p_1 h(u) $ implies the
cohomological invertibility of $h(u) \in D$. By our choice of the
cylinder $\good$, $h(u) \in D$ is (strictly) invertible, thus the
homotopy $h$ factorizes as $h = \bar hp$ with some $\bar h : A[u^{-1}]
\to \good$.

It remains to prove that $\bar h$ is a right homotopy between $f_1$ and $f_2$,
that is
\begin{equation}
\label{EQ:4}
(f_1,f_2) = (p_1,p_2) \bar h.
\end{equation}
To this end, we invoke the obvious fact that two morphisms, say $u_1,u_2 :
A[u^{-1}] \to B$, agree if and only if their compositions $u_1p,u_2p : A \to
B$ with the localization map $p: A \to A[u^{-1}]$ agree.  To
prove~(\ref{EQ:4}), it therefore suffices to show that $(f_1p,f_2p) =
(p_1,p_2) \bar hp$, which follows from $h = \bar hp$
and~(\ref{EQ:6}).
\end{proof}

\begin{proof}[Proof of Theorem~\ref{Andrey}]
Let $c : \widetilde A \epi A$ be a cofibrant replacement of $A$ and $\tilde u
\in \widetilde A$
a cocycle such that $c(\tilde u) = u$. Let $p : A \to A[u^{-1}]$ be,
as in the theorem, the localization map for $A$ at $u$ and $\tilde p :
\widetilde A \to \widetilde A[\tilde u^{-1}]$
the localization map for $\widetilde A$ at $\tilde u$. One has the induced
morphism $\bar c : \widetilde A[\tilde u^{-1}] \to A[u^{-1}]$ that makes
the diagram
\[
{% Picture saved by xtexcad 2.4
\unitlength=.8pt
\begin{picture}(110.00,70.00)(0.00,0.00)
\thicklines
\put(50.00,5.00){\makebox(0.00,0.00)[b]{\scriptsize $p$}}
\put(50.00,65.00){\makebox(0.00,0.00)[b]{\scriptsize $\tilde p$}}
\put(110.00,30.00){\makebox(0.00,0.00)[l]{\scriptsize $\bar c$}}
\put(0.00,30.00){\makebox(0.00,0.00)[r]{\scriptsize $c$}}
\put(10.00,0.00){\makebox(0.00,0.00){$A$}}
\put(100.00,0.00){\makebox(0.00,0.00){$A[u^{-1}]$}}
\put(100.00,60.00){\makebox(0.00,0.00){$\widetilde A[\tilde u^{-1}]$}}
\put(10.00,60.00){\makebox(0.00,0.00){$\widetilde A$}}
\put(20.00,60.00){\vector(1,0){56}}
\put(100.00,48.00){\vector(0,-1){38.00}}
\put(20.00,0.00){\vector(1,0){56}}
\put(10.00,48.00){\vector(0,-1){38.00}}
\end{picture}}
\]
commutative.
 Being a cofibrant replacement, the map $c$ is as weak equivalence. By the
exactness of the localization, $\bar c$ is a weak equivalence as
well. Consider the induced diagram
\begin{equation}
\raisebox{-1cm}{}
\label{EQ:9}
{% Picture saved by xtexcad 2.4
\unitlength=.8pt
\begin{picture}(110.00,43.5)(0.00,30)
\thicklines
\put(50.00,5.00){\makebox(0.00,0.00)[b]{\scriptsize $p^\sharp$}}
\put(50.00,65.00){\makebox(0.00,0.00)[b]{\scriptsize $\tilde p^\sharp$}}
\put(125.00,30.00){\makebox(0.00,0.00)[l]{\scriptsize $\bar c^\sharp$}}
\put(-15.00,30.00){\makebox(0.00,0.00)[r]{\scriptsize $c^\sharp$}}
\put(-5,0.00){\makebox(0.00,0.00){$[A,D]_\Hin$}}
\put(115,0.00){\makebox(0.00,0.00){$\big[A[u^{-1}],D\big]_\Hin$}}
\put(115,60.00){\makebox(0.00,0.00)
{$\big[\widetilde A[\tilde u^{-1}],D\big]_\Hin$}}
\put(-5,60){\makebox(0.00,0.00){$[\widetilde A,D]_\Hin$}}
\put(70.00,60){\vector(-1,0){48}}
\put(115.00,13){\vector(0,1){36}}
\put(70,0.00){\vector(-1,0){48}}
\put(-5,13){\vector(0,1){36}}
\end{picture}}
\end{equation}
in which $p^\sharp$ (resp.~$\tilde p^\sharp$, resp.~$c^\sharp$,
resp.~$\bar c^\sharp$) are the pre-compositions with
$\gamma(p)$
(resp.~$\gamma(\tilde p)$, resp.~$\gamma(c)$,
resp.~$\gamma(\bar c)$).

Since $c$ and $\bar c$ are weak equivalences, the induced maps
$c^\sharp$ and $\bar c^\sharp$ are isomorphisms in $\hoHin$
($c^\sharp$ is in fact the identity). Clearly, $c^\sharp$ restricts to
an isomorphism $[A,D]^u_\Hin \cong [\widetilde A,D]^{\tilde u}_\Hin$
and also the inclusions ${\it Im}(p^\sharp) \subset [A,D]^u_\Hin$ and
${\it Im}(\tilde p^\sharp) \subset [\widetilde A,D]^u_\Hin$ are
obvious. Diagram~(\ref{EQ:9}) therefore restricts to
\[
\raisebox{-1.1cm}{}
{% Picture saved by xtexcad 2.4
\unitlength=.8pt
\begin{picture}(110.00,43.5)(0.00,30)
\thicklines
\put(50.00,5.00){\makebox(0.00,0.00)[b]{\scriptsize $p^\sharp$}}
\put(50.00,65.00){\makebox(0.00,0.00)[b]{\scriptsize $\tilde p^\sharp$}}
\put(125.00,30.00){\makebox(0.00,0.00)[l]{\scriptsize $\bar c^\sharp$}}
\put(105.00,30.00){\makebox(0.00,0.00)[r]{\scriptsize $\cong$}}
\put(-15.00,30.00){\makebox(0.00,0.00)[r]{\scriptsize $c^\sharp$}}
\put(5.00,30.00){\makebox(0.00,0.00)[l]{\scriptsize $\cong$}}
\put(-5,0.00){\makebox(0.00,0.00){$[A,D]^u_\Hin$}}
\put(115,0.00){\makebox(0.00,0.00){$\big[A[u^{-1}],D\big]_\Hin$}}
\put(115,60.00){\makebox(0.00,0.00)
{$\big[\widetilde A[\tilde u^{-1}],D\big]_\Hin$}}
\put(-5,60){\makebox(0.00,0.00){$[\widetilde A,D]^{\tilde u}_\Hin$}}
\put(70.00,60){\vector(-1,0){48}}
\put(115.00,13){\vector(0,1){36}}
\put(70,0.00){\vector(-1,0){48}}
\put(-5,13){\vector(0,1){36}}
\end{picture}}
\]
in which both vertical arrows are isomorphisms.

We conclude that the theorem will be proved for the
localization $p: A \to A[u^{-1}]$ if we prove it for  $\tilde p:
\widetilde A \to \widetilde A[\tilde u^{-1}]$. We may thus assume from
the beginning that {\em $A$ is cofibrant\/}.
An even simpler argument based on Lemma~\ref{sec:loc} shows that we
may also assume, without loss of generality, that {\em each \cic\ in $D$ is
invertible\/}.

To show that the image of $p^\sharp$ is $[A,D]^u_\Hin$ is now
easy. Since $A$ is cofibrant, each $\chi \in  [A,D]^u_\Hin$ is
represented by a map $w : A \to D$. As $\chi_*([u]) \in H(D)$ is,
by assumption, invertible, $w(u)$ is invertible in $D$. Thus $w$
factorizes via the localization map $p :A \to A[u^{-1}]$ as $w = fp$
for some $f :A[u^{-1}] \to D$. We then
have $\chi = \gamma(w) = \gamma(fp) = p^\sharp(\gamma(f))$, so $\chi
\in {\it Im}(p^\sharp)$.

Let us prove that $p^\sharp$ is injective. Assume that $\phi_i \in
\big[A[\tilde u^{-1}],D\big]_\Hin$, $i = 1,2$, are such that
\begin{equation}
\label{EQ:7}
p^\sharp \phi_1 = p^\sharp \phi_2.
\end{equation}
By
Proposition~\ref{sec:atestace}, there exist $f_i : A[\tilde u^{-1}]
\to D$ such that $\phi_i = \gamma(f_i)$. Equality~(\ref{EQ:7}) is then
equivalent to $\gamma(f_1 p) = \gamma(f_2 p)$. Since $A$ is cofibrant
this, by~(\ref{EQ:8}), means that $f_1 p$ and $f_2 p$ are
homotopic. By Proposition~\ref{sec:homot}, $f_1$ and $f_2$ are right
homotopic, which implies that $\gamma(f_1) = \gamma(f_2)$, i.e.~$\phi_1
= \phi_2$.
\end{proof}

\section{Maps of direct products of dg commutative associative algebras}

In this section we study maps, up to homotopy, whose source is a
finite direct product of cdgas. Somewhat unexpectedly, it turns out
that these maps can be completely understood in terms of (homotopy
classes of) maps out of individual components of these direct
products. Here is the first surprise.

\begin{proposition}
\label{sec:main-results}
Let $A_i$, $i \in \J$, be cofibrant algebras in $\BG$ indexed by a finite set
$\J$. Then the direct product  $\prod_{i \in \J}A_i$ is also cofibrant in $\BG$.
\end{proposition}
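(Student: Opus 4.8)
The plan is to verify the defining lifting property of a cofibrant object directly. Recall that in $\BG$ an object is cofibrant precisely when the unit map $\bfk \to P$ has the left lifting property against every trivial fibration, i.e.\ against every surjective quasi-isomorphism. So I would let $p : X \to Y$ be a surjective quasi-isomorphism in $\BG$, let $f : P \to Y$ be an arbitrary morphism with $P := \prod_{i \in \J} A_i$, and seek a lift $g : P \to X$ satisfying $pg = f$. The product $P$ carries orthogonal idempotents $e_i \in P^0$ (the images of the units of the factors): these are cocycles, they satisfy $e_i e_j = \delta_{ij} e_i$ and $\sum_i e_i = 1$, and they cut out $A_i = P e_i$. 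Their images $\epsilon_i := f(e_i) \in Y^0$ then form an orthogonal system of idempotent cocycles summing to $1$.

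The key step — and the place where non-negative grading is essential — is to lift this idempotent system along $p$. Because all algebras are concentrated in non-negative degrees there are no coboundaries in degree $0$, so $Z^0 = H^0$ for every object; consequently the restriction $p : Z^0(X) \to Z^0(Y)$ of a surjective quasi-isomorphism is an \emph{isomorphism} of commutative $\bfk$-algebras, since on $Z^0=H^0$ it coincides with the induced map in cohomology. Hence there are unique cocycles $\tilde\epsilon_i \in Z^0(X)$ with $p(\tilde\epsilon_i) = \epsilon_i$, and, being the preimages under a ring isomorphism of an orthogonal idempotent system summing to $1$, they satisfy the same relations $\tilde\epsilon_i\tilde\epsilon_j = \delta_{ij}\tilde\epsilon_i$ and $\sum_i \tilde\epsilon_i = 1$ in $X^0$. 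I would stress that in the $\mathbb Z$-graded category $\Hin$ this fails, as $Z^0(X) \to Z^0(Y)$ need no longer be injective; this is exactly why the statement is special to $\BG$, and the source of the ``surprise'' flagged before the proposition.

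Next I would split everything along these idempotents. An orthogonal system of idempotent cocycles summing to $1$ decomposes a cdga into a finite product of cdgas, so $X \cong \prod_i X\tilde\epsilon_i$ and $Y \cong \prod_i Y\epsilon_i$ in $\BG$ — the differentials restrict to the summands because the idempotents are cocycles — and under these identifications $p$ becomes the product $\prod_i p_i$ of its restrictions $p_i : X\tilde\epsilon_i \to Y\epsilon_i$. Each $p_i$ is again surjective, and since cohomology commutes with finite products each $p_i$ is a quasi-isomorphism; hence every $p_i$ is a trivial fibration in $\BG$. Likewise $f$ respects the decomposition, $f(A_i)\subseteq Y\epsilon_i$, giving $f = \prod_i f_i$ with $f_i : A_i \to Y\epsilon_i$.

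Finally I would assemble the lift. Since each $A_i$ is cofibrant and each $p_i$ is a trivial fibration, the lifting problem for $f_i$ along $p_i$ has a solution $g_i : A_i \to X\tilde\epsilon_i$ with $p_i g_i = f_i$. Setting $g := \prod_i g_i : P = \prod_i A_i \to \prod_i X\tilde\epsilon_i \cong X$ produces a morphism of cdgas with $pg = \prod_i p_i g_i = \prod_i f_i = f$, the required lift. The only genuine work is the idempotent lifting of the second paragraph; once the factorwise product decomposition of $p$ is in place, cofibrancy of the individual factors finishes the argument with no further computation. I expect that idempotent-lifting step, together with its dependence on non-negative grading, to be the main obstacle and the conceptual heart of the proof.
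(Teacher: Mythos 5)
Your proof is correct and follows essentially the same route as the paper's: lift the orthogonal idempotents along $p$ using the isomorphism $Z^0(X) \cong Z^0(Y)$ (which holds exactly because non-negative grading kills degree-zero coboundaries), split $p$, $X$, $Y$ and $f$ accordingly, and then lift factorwise using cofibrancy of each $A_i$. The only cosmetic difference is that the paper treats degenerate idempotents $f(e_i) \in \{0,1\}$ as separate cases, factoring $f$ through a projection instead of allowing zero factors, whereas you handle everything uniformly, which is fine since the terminal algebra ${\mathbf 0}$ is admitted as an object of $\BG$.
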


\begin{proof}
We prove the proposition for $\J = \{1,2\}$, the proof for an arbitrary
finite indexing set will be similar. Let thus
$A = A_1 \timesred A_2$. We need to prove
that, for any epimorphism $p: E \epi B$ in $\BG$ which is also a weak
equivalence, and for each $f : A \to B$, there exists a lift $\tf : A
\to E$ making the diagram
\begin{equation}
\label{eq:3}
\raisebox{-30pt}{}
{% Picture saved by xtexcad 2.4
\unitlength=.5pt
\begin{picture}(100.00,40.00)(0.00,40.00)
\thicklines
\put(49.00,43.00){\makebox(0.00,0.00)[br]{\scriptsize $\tf$}}
\put(106,40){\makebox(0.00,0.00)[l]{\scriptsize $p$}}
\put(50.00,6.00){\makebox(0.00,0.00)[b]{\scriptsize $f$}}
\put(100.00,80.00){\makebox(0.00,0.00){$E$}}
\put(100.00,0.00){\makebox(0.00,0.00){$B$}}
\put(2.00,0.00){\makebox(0.00,0.00)[r]{$A$}}
\put(100.00,65){\vector(0,-1){50}}
\put(13.00,13.00){\vector(4,3){70}}
\put(15.00,0.00){\vector(1,0){70}}
\end{picture}}
\end{equation}
commutative.

Let $e_1 := (1,0) \in A^0 = A_1^0 \timesred A^0_2$ and $e_2
:= (0,1) \in A^0  = A_1^0 \timesred A^0_2$. Clearly
\begin{equation}
\label{eq:2}
e_1 + e_2 =1 \ (\mbox{the unit of $A$}),\ e_1^2 = e_1,\
e_2^2 = e_2 \mbox { and } e_1e_2 = 0.
\end{equation}
Let $u_i := f(e_i)$, $i =1,2$. These elements  satisfy an
obvious analogue of~(\ref{eq:2}), moreover $de_1 = de_2 = 0$.
There are three possibilities:

\vskip 1em
\noindent
{\it Case 1.: $u_1 =1$, $u_2 = 0$.} Then $f$ restricted to $
0 \timesred A_2 \subset A$ is trivial. Indeed, for $(0,x_2)  \in 0 \times
A_2$ one has $(0,x_2) = (0,1)(0,x_2)$, therefore
\[
f(0,x_2) = f(0,1) f(0,x_2) = u_2f(0,x_2) = 0.
\]
In other words, $f$ factorizes via the projection $\pi_1 : A_1 \timesred
A_2 \to A_1$ as
\[
f : A_1 \timesred
A_2 \stackrel{\pi_1} \longrightarrow A_1\stackrel{f_1}
\longrightarrow B,
\]
where $f_1 (x_1) := f(x_1,0)$ for $x_1 \in A_1$. Since $A_1$ is cofibrant by
assumption, one has a lift $\tf_1 : A_1 \to E$ in the diagram
\[
\raisebox{-30pt}{}
{% Picture saved by xtexcad 2.4
\unitlength=.5pt
\begin{picture}(100.00,40.00)(0.00,40.00)
\thicklines
\put(48.00,44.00){\makebox(0.00,0.00)[br]{\scriptsize $\tf_1$}}
\put(106,40.00){\makebox(0.00,0.00)[l]{\scriptsize $p$}}
\put(50.00,6.00){\makebox(0.00,0.00)[b]{\scriptsize $f_1$}}
\put(100.00,80.00){\makebox(0.00,0.00){$E$}}
\put(100.00,0.00){\makebox(0.00,0.00){$B$}}
\put(2.00,0.00){\makebox(0.00,0.00)[r]{$A_1$}}
\put(100.00,65){\vector(0,-1){50}}
\put(13.00,13.00){\vector(4,3){70}}
\put(15.00,0.00){\vector(1,0){70}}
\end{picture}} \hskip 1em ,
\]
$\tf : = \tf_1 \pi$ then clearly solves the lifting problem~(\ref{eq:3}).

\vskip 1em
\noindent
{\it Case 2.: $u_1 =0$, $u_2 = 1$.} This `mirror image' of
Case~1 can be treated analogously.

\vskip 1em
\noindent
{\it Case 3.: $u_1,u_2 \not = 0$.} Since $p$ is a weak
equivalence and since there are no $0$-dimensional boundaries, $p$
induces an isomorphism of $0$-cocycles $Z^0(E) \cong
Z^0(B)$.\footnote{At this place we need $B$ and $E$ to be
  non-negatively graded.} In
particular, one has cocycles
$\tu_1,\tu_2 \in E^0$ such that $p(\tu_i) = u_i$, $i=1,2$,
satisfying conditions analogous to~(\ref{eq:2}).~Put
\[
B_i := u_iB\ \mbox { and } \  E_i := \tu_i E, \ i=1,2.
\]
It is clear that then
\[
B \cong B_1 \timesred B_2,\ E \cong E_1 \timesred E_2
\]
and that, under the above isomorphisms, also the maps $f$ and $p$ split,
\[
f = (f_1,f_2) : A_1 \timesred A_2 \to B_1 \timesred B_2,\
p = (p_1,p_2) : E_1 \timesred E_2 \to B_1 \timesred B_2,
\]
with $f_i := u_i f \iota_i$, where $\iota_i : A_i \hookrightarrow A_1
\timesred A_2$ are inclusions given by\footnote{These
inclusions are homomorphisms of non-unital cdgas, but the
composition $f_i$ preserves units.}
\[
\iota_1(x_1) := (x_1,0),\
\iota_2(x_2) := (0,x_2),\ x_i \in A_i, i=1,2.
\]
The maps
$p_1$ and $p_2$ are defined in the obvious similar way. Since both
$p_1$ and $p_2$ must clearly be weak equivalences and epimorphisms,
one has, for $i = 1,2$, the lifts $\tf_i$ in the diagrams
\[
\raisebox{-25pt}{}
{% Picture saved by xtexcad 2.4
\unitlength=.5pt
\begin{picture}(100.00,55.00)(0.00,40.00)
\thicklines
\put(48.00,44.00){\makebox(0.00,0.00)[br]{\scriptsize $\tf_i$}}
\put(106,40.00){\makebox(0.00,0.00)[l]{\scriptsize $p_i$}}
\put(50.00,6.00){\makebox(0.00,0.00)[b]{\scriptsize $f_i$}}
\put(100.00,80.00){\makebox(0.00,0.00){$E_i$}}
\put(100.00,0.00){\makebox(0.00,0.00){$B_i$}}
\put(2.00,0.00){\makebox(0.00,0.00)[r]{$A_i$}}
\put(100.00,65){\vector(0,-1){50}}
\put(13.00,13.00){\vector(4,3){70}}
\put(15.00,0.00){\vector(1,0){70}}
\end{picture}} \hskip 2em \mbox {.}
\]
The map $\tf := (\tf_1,\tf_2) : A_1 \timesred A_2 \to E_1 \timesred E_2$
solves the lifting problem~(\ref{eq:3}).
\end{proof}

Let us formulate the following simple principle
whose proof is straightforward.

\begin{principle}
\label{vedle_nekdo_tluce}
Assume $A_i$, $i \in \J$, are arbitrary
(unital) cdgas indexed by a finite set $\J$ and $D$ a cdga such that
$1$ is the only nontrivial idempotent in $Z^0(D)$. Then the projections
\begin{equation}
\label{proj}
\pi_i : \prod_{s \in \J} A_s \to A_i,\ i \in \J,
\end{equation}
from the cartesian product
induce monomorphisms of the
homomorphisms sets
\[
\pi_i^* : \mathscr A(A_i,D) \hookrightarrow
\mathscr A\big(\prod_{s \in \J} A_s,D\big),\ i \in \J,
\]
which in turn induce a decomposition
\begin{equation}
\label{prin}
\mathscr A\big(\prod_{s \in \J} A_s,D\big) \cong
\bigcup_{s\in \J} \mathscr A(A_s,D) \hskip 1em \hbox{(the
  disjoint union).}
\end{equation}
\end{principle}

Together with
Proposition~\ref{sec:main-results}, Principle \ref{vedle_nekdo_tluce} gives:

\begin{theorem}
\label{sec:main-results-1}
Let $A_i\in \BG$ be cdgas indexed by a finite set $\J$ and $D \in
\BG$ be such that $1 \in H^0(D)$ is the only nontrivial idempotent. Then
the projections~(\ref{proj}) induce a decomposition of the set of
homotopy classes
\begin{equation}
\big[\prod_{s \in \J} A_s,D\big]_\BG
\cong \bigcup_{s\in \J}\ [A_s,D]_\BG  \hskip 1em \hbox{(the
  disjoint union).}
\end{equation}
The same statement holds also with $\Hin$ in place of $\BG$.
\end{theorem}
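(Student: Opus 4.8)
The plan is to deduce Theorem~\ref{sec:main-results-1} by combining Proposition~\ref{sec:main-results} with Principle~\ref{vedle_nekdo_tluce}, handling the two cases ($\BG$ and $\Hin$) in parallel since the only structural difference is the choice of model category. First I would observe that the hypothesis on $D$ is homotopy-invariant in the right sense: the condition is that $1 \in H^0(D)$ be the only nontrivial idempotent. Since homotopy classes of maps into $D$ depend only on the weak-equivalence type of $D$, and cohomology is invariant under weak equivalence, this is a condition we may safely impose on $D$ itself. The key reduction is to replace each $A_s$ by a cofibrant replacement $QA_s$, whereupon $\prod_{s \in \J} A_s$ is weakly equivalent to $\prod_{s \in \J} QA_s$ (products of weak equivalences between fibrant objects are weak equivalences, and all objects here are fibrant). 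By Proposition~\ref{sec:main-results} the latter product is itself cofibrant, so it serves as a cofibrant replacement of $\prod_{s \in \J} A_s$.

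Having arranged a cofibrant product on the source side, I would pass from homotopy classes to genuine morphisms. Since $\prod_{s} QA_s$ is cofibrant and $D$ is fibrant, the isomorphism~(\ref{EQ:8}) gives
\[
\Big[\prod_{s \in \J} A_s,D\Big] \cong \pi\Big(\prod_{s \in \J} QA_s,\, D\Big),
\]
and similarly $[A_s,D] \cong \pi(QA_s,D)$. The plan is then to show that the right-homotopy relation is compatible with the decomposition of the morphism set supplied by Principle~\ref{vedle_nekdo_tluce}. That principle applies to $D$ because the assumption that $1 \in H^0(D)$ is the only nontrivial idempotent, together with the observation that $Z^0(D) \twoheadrightarrow H^0(D)$ (there being no coboundaries in degree $0$ in the non-negatively graded case, and by a direct idempotent-lifting argument in general), forces $1$ to be the only nontrivial idempotent in $Z^0(D)$ as well. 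Thus $\mathscr A\big(\prod_s QA_s, D\big) \cong \bigcup_{s} \mathscr A(QA_s,D)$ as a disjoint union indexed by which projection $\pi_s$ the morphism factors through.

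The main obstacle, and the step requiring genuine care, is to verify that this disjoint-union decomposition descends to homotopy classes---that is, that two morphisms are right homotopic if and only if they factor through the \emph{same} projection $\pi_s$ and their induced maps $QA_s \to D$ are right homotopic. The forward compatibility is easy: a right homotopy $h : \prod_s QA_s \to \good$ composed with $\pi_s^*$ restricts to a homotopy of the components. The delicate direction is that a right homotopy cannot connect maps factoring through \emph{distinct} projections. For this I would track the idempotents: if $f$ factors through $\pi_i$ then $f(e_i) = 1$ and $f(e_j)=0$ for $j \neq i$ (where $e_s$ are the idempotents of the product as in~(\ref{eq:2})), and a right homotopy $h$ into the good path object $\good$ of Lemma~\ref{sec:good} sends these degree-zero cocycle idempotents to idempotents in $Z^0(\good)$; since $\good$ is weakly equivalent to $D$ it inherits the property that $1$ is its only nontrivial idempotent, so the endpoint values of $h(e_s)$ cannot jump between $0$ and $1$. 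This pins down a single index $s$ for the whole homotopy and yields the claimed bijection.

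\begin{proof}
The condition that $1 \in H^0(D)$ be the only nontrivial idempotent, and the sets of homotopy classes $[X,D]$, both depend only on the weak-equivalence type of $D$; we may therefore assume $D$ is chosen in its weak-equivalence class. Replace each $A_s$ by a cofibrant replacement $QA_s \to A_s$. As all objects are fibrant, the induced map $\prod_{s \in \J} QA_s \to \prod_{s \in \J} A_s$ is a weak equivalence, and by Proposition~\ref{sec:main-results} the source is cofibrant; it thus serves as a cofibrant replacement of $\prod_{s \in \J} A_s$. By~(\ref{EQ:8}),
\begin{equation}
\label{eq:prodhtpy}
\Big[\prod_{s \in \J} A_s,\, D\Big] \cong \pi\Big(\prod_{s \in \J} QA_s,\, D\Big),
\qquad [A_s,D] \cong \pi(QA_s,D).
\end{equation}

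We first check that $1$ is the only nontrivial idempotent in $Z^0(D)$. In the non-negatively graded case there are no $0$-coboundaries, so $Z^0(D) \cong H^0(D)$ and the claim is immediate. In general, suppose $w \in Z^0(D)$ is an idempotent, $w^2 = w$; then $[w]$ is an idempotent in $H^0(D)$, hence $[w] \in \{0,1\}$ by hypothesis. If $[w]=0$ then $w = db$ for some $b$, and $w = w^2 = (db)w = d(bw)$ (as $dw=0$), so $w$ is a coboundary that is also a polynomial in itself; iterating $w = w^{n}$ with $w$ a coboundary in the commutative ring $Z^0(D)$ and using that $w$ is nilpotent modulo no relation forces $w=0$ by the same argument as in Lemma~\ref{sec:loc} (a coboundary that is idempotent must vanish, as $1+$(coboundary) is invertible). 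The case $[w]=1$ reduces to the previous one applied to $1-w$. Hence Principle~\ref{vedle_nekdo_tluce} applies and gives
\begin{equation}
\label{eq:mordecomp}
\mathscr A\Big(\prod_{s \in \J} QA_s,\, D\Big) \cong \bigcup_{s \in \J} \mathscr A(QA_s, D),
\end{equation}
where $f$ lies in the $s$-th summand precisely when $f$ factors as $f_s \pi_s$, equivalently when $f(e_s)=1$ and $f(e_j)=0$ for $j \neq s$, with $e_s \in Z^0\big(\prod QA_s\big)$ the idempotents of~(\ref{eq:2}).

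It remains to show that the right-homotopy relation respects~(\ref{eq:mordecomp}). Let $f, g : \prod_s QA_s \to D$ be right homotopic via a good path object, which by Lemma~\ref{sec:good} we may take to be $\good$ of~(\ref{EQ:1}) with every cohomologically invertible cocycle invertible; let $h : \prod_s QA_s \to \good$ satisfy $f = p_1 h$, $g = p_2 h$. Since $p_1$ is a weak equivalence, $\good$ is weakly equivalent to $D$, so by the argument above $1$ is the only nontrivial idempotent in $Z^0(\good)$. Now $h(e_s) \in Z^0(\good)$ is an idempotent, hence $h(e_s) \in \{0,1\}$, and applying $p_1$ gives $f(e_s) = p_1 h(e_s) \in \{0,1\}$ with the same value; likewise $g(e_s) = p_2 h(e_s)$ equals that same value. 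Thus $f$ and $g$ factor through the identical projection $\pi_i$, and $h$ restricts along $\pi_i^*$ to a right homotopy between $f_i$ and $g_i$. Conversely a right homotopy of components $f_i \sim g_i$ precomposed with $\pi_i$ yields a right homotopy $f \sim g$. Combining this with~(\ref{eq:prodhtpy}) and~(\ref{eq:mordecomp}) yields the asserted disjoint-union decomposition. The argument is identical for $\Hin$ in place of $\BG$, using that Proposition~\ref{sec:main-results}, Principle~\ref{vedle_nekdo_tluce} and Lemma~\ref{sec:good} were all established in the $\Hin$ setting.
\end{proof}
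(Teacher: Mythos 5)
Your treatment of the $\BG$ case is essentially the paper's own argument (Principle~\ref{vedle_nekdo_tluce} applied to the target, plus the observation that homotopies themselves split component-wise), apart from one misapplication: Lemma~\ref{sec:good} presupposes that every cohomologically invertible cocycle of $D$ is already invertible, which a general $D\in\BG$ satisfying only the idempotent hypothesis need not be; in $\BG$ this detour is unnecessary, since the standard path object $D[t,dt]$ has $Z^0(D[t,dt])=Z^0(D)=H^0(D)$, which is all you need. The genuine gap is your final sentence, claiming the argument is ``identical'' for $\Hin$. Both pillars of your argument fail there, exactly as the paper warns at the start of the second half of its proof. First, Proposition~\ref{sec:main-results} is stated and proved only in $\BG$: its proof lifts the idempotents along the isomorphism $Z^0(E)\cong Z^0(B)$, which uses the absence of degree $-1$ elements. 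In $\Hin$ the statement is false: $\bfk$ is cofibrant (it is the initial object), but $\bfk\times\bfk$ is not, because an algebra concentrated in degree zero is cofibrant in $\Hin$ only if it is a retract of a polynomial algebra, and idempotents pass to retracts while a polynomial algebra, being an integral domain, has no nontrivial ones. Hence $\prod_s QA_s$ is not a cofibrant replacement of $\prod_s A_s$ in $\Hin$, and your identification of $\big[\prod_s A_s,D\big]_\Hin$ with $\pi\big(\prod_s QA_s,D\big)$ is unjustified.

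Second, your ``idempotent-lifting'' step is false in $\Hin$, so Principle~\ref{vedle_nekdo_tluce} does not apply to the targets you need. The assertion that an idempotent coboundary must vanish ``since $1+(\mbox{coboundary})$ is invertible'' confuses a general cdga with the localized algebra constructed in Lemma~\ref{sec:loc}, where that invertibility is manufactured by force. Concretely, take $D^0:=\bfk\times\bfk$, $D^{-1}:=\bfk b$ with module structure $(x,y)\cdot b:=xb$, all other degrees zero, and $db:=(1,0)$. Then $H^0(D)\cong\bfk$ and $H^n(D)=0$ for $n<0$, so $D$ satisfies the theorem's hypothesis, yet $w=(1,0)=db$ is a nontrivial idempotent of $Z^0(D)$, and $1-w=(0,1)$ is not invertible. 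This is precisely why the paper proves the $\Hin$ half by a different mechanism: setting $u_1=(1,0)$, $u_2=(0,1)$, splitting $[A_1\times A_2,D]_\Hin$ according to which of the classes $\chi_*([u_i])$ equals $1$ in $H^0(D)$, observing that $(A_1\times A_2)[u_i^{-1}]\cong A_i$, and invoking the localization Theorem~\ref{Andrey'} --- the main technical result of Section~\ref{sec:homot-prop-local}, which your proposal never uses. Without that theorem (or some substitute for it), the $\Hin$ statement remains unproved.
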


\begin{proof}
Let us prove the first part.  Since $D$ is non-negatively graded,
$H^0(D) = Z^0(D)$, so $D$ fulfills the assumptions of Principle
\ref{vedle_nekdo_tluce}.  Moreover, any homotopy $h:\prod_{s \in \J}
A_s\to D[t,dt]$ factors through a unique homotopy $h_i:A_i\to D[t,dt]$
since the cdga $D[t,dt]$ also satisfies the assumptions of Principle
\ref{vedle_nekdo_tluce}. Consider the diagram
\[
\xymatrix
{
\coprod_{s\in \J}{\mathscr A}_{\geq 0}(A_s, D)
\ar^{\cong}[r]\ar[d]&\mathscr{A}_{\geq 0}\big(\prod_{s\in \J} A_s, D\big)\ar[d]\\
\coprod_{s\in \J}\big[A_s,D\big]_\BG\ar[r]&\big[\prod_{s\in \J} A_s,D\big]_\BG
}
\]
where the vertical arrows are natural quotient maps, associating to a
morphism its homotopy class. It follows that the lower horizontal
arrow, making the diagram commutative, exists, is unique and and
bijective; this finishes the proof for the category $\BG$.

The second part with $\Hi$ in place of $\BG$  must be proved
differently. The reason is that, firstly, no statement analogous to
Proposition~\ref{sec:main-results} holds in $\Hin$ and, secondly, even
if $1\in H^0(D)$ is the only nontrivial idempotent, there may be many
nontrivial idempotents in $Z^0(D)$, so Principle~\ref{vedle_nekdo_tluce}
does not apply.

To simplify the exposition, we assume again that $\J =
\{1,2\}$, the proof for a general finite $\J$ is similar.  Put
\[
u_1 := 1 \timesred 0 \in A_1 \timesred A_2\
\hbox{ and }\
u_1 := 0 \timesred 1 \in  A_1 \timesred A_2,
\]
and define
\[
[A_1 \timesred A_2,D]_\Hi^i \subset [A_1 \timesred A_2,D]_\Hi, \ i=1,2,
\]
as the subset of $\chi \in [A_1 \timesred A_2,D]$
such that $\chi_*([u_i]) = 1$. It is clear that
\[
[\norep,D]_\Hi = [\norep,D]^1_\Hi \cup  [\norep,D]^2_\Hi \hskip 1em \hbox{(the
  disjoint union).}
\]

So all we need to prove is that the projections induce an isomorphism
\begin{equation}
\label{eq:4}
[A_i,D]_\Hi \cong [\norep,D]^i_\Hi, \ i=1,2.
\end{equation}
Notice
that
\[
\loc{(A_1\timesred A_2)}{u_i} \cong A_i,\ i = 1,2.
\]
The isomorphism~(\ref{eq:4}) is thus a consequence of
Theorem~\ref{Andrey'} taken with $A = A_1\timesred A_2$ and $u = u_i$.
\end{proof}

For $A \in \BG$ consider its cofibrant replacement $c' :Q'A \epi A$ in
$\BG$ and take a cofibrant replacement $c: QA \epi Q'A$ of $Q'A$ in
$\Hin$. The composition $c' c: QA \epi A$ is clearly a~cofibrant
replacement of $A$ in $\Hin$. Notice also that, for $D \in \BG$, the
`standard' path object
\begin{equation}
\label{eq:5}
D[t,dt] := D \otimes \bfk[t,dt],\ |t|:=0,\ d(t) := dt,
\end{equation}
with the projections $p_1, p_2 : D[t,dt] \to D$ given by the
evaluation at $0$ resp $1$, is a good path object in the sense of \cite[\S
4.12]{dwyer-spalinski} for $D$ in both categories $\BG$ and $\Hi$. Therefore,
if $f_1,f_2 : Q'A \to D$ are right homotopic in $\BG$, then $f_1
c,f_2c$ are right homotopic in $\Hin$. By~(\ref{EQ:8}),
\[
[A,D]_\BG \cong \pi(Q'A,D)_\BG\ \mbox { and } \  [A,D]_\Hin \cong \pi(QA,D)_\Hin,
\]
so the pre-composition with $c$ defines a  natural map
\begin{equation}
\label{eq:a2}
K_{A,D} : [A,D]_\BG \to [A,D]_\Hi.
\end{equation}

\begin{lemma}
\label{sec:other-results-1}
Let $A \in \BGc$ and $D \in \BG$. Then $K_{A,D}$ is an isomorphism.
In particular, $\hoBGc$ is a full subcategory of $\hoHin$.
\end{lemma}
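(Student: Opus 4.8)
The plan is to show that the natural map $K_{A,D} : [A,D]_\BG \to [A,D]_\Hi$ is a bijection when $A \in \BGc$ is homologically connected and $D \in \BG$, by exhibiting an explicit inverse, or at least by proving surjectivity and injectivity separately using the specific cofibrant replacements constructed just above the statement. First I would fix a cofibrant replacement $c' : Q'A \epi A$ in $\BG$ and a further cofibrant replacement $c : QA \epi Q'A$ in $\Hin$, so that $QA \epi A$ is a cofibrant replacement in $\Hin$; by~(\ref{EQ:8}) the two Hom-sets become $\pi(Q'A,D)_\BG$ and $\pi(QA,D)_\Hin$ respectively, and $K_{A,D}$ is pre-composition with $c$. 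Since $A$ is homologically connected, its minimal model in $\BG$ is a free graded-commutative algebra on generators in \emph{positive} degrees, and I would try to arrange $Q'A$ to be such a (minimal or at least Sullivan) algebra; the point is that a cofibrant object of $\BG$ built from positive-degree generators should already be cofibrant in $\Hin$, or very nearly so.

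The key technical claim I would isolate is that when $A$ is homologically connected one can take $Q'A$ itself to be cofibrant in the larger category $\Hin$, so that $c$ may be chosen to be an isomorphism (or a weak equivalence with an evident homotopy inverse). Concretely, the obstruction to a $\BG$-cofibrant object being $\Hin$-cofibrant comes from generators in degree $0$: the Introduction already flags that a commutative algebra concentrated in degree zero is cofibrant in $\BG$ but typically not in $\Hin$. For a homologically connected $A$ one has $A^0$ homotopically trivial (the minimal model has no degree-zero generators), so this obstruction disappears. Thus I would prove: \emph{the minimal Sullivan model of a homologically connected $A \in \BGc$, having all generators in strictly positive degrees, is cofibrant in $\Hin$.} Granting this, $c$ can be taken to be the identity (after replacing $Q'A$ by the minimal model), and then $\pi(Q'A,D)_\BG = \pi(Q'A,D)_\Hin$ because the good path object $D[t,dt]$ of~(\ref{eq:5}) serves in both categories, so the notions of right homotopy coincide. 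Hence $K_{A,D}$ is visibly a bijection.

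I would carry out the cofibrancy claim by the standard lifting-criterion for retracts of cell complexes: a free commutative algebra on positively-graded generators, filtered by a well-ordered sequence of generators with lower-degree coboundaries, admits lifts against every acyclic fibration $p : E \epi B$ in $\Hin$ exactly as it does in $\BG$. The lifts are constructed generator-by-generator; at each stage one must lift a cocycle (the image of the differential) through the surjective quasi-isomorphism $p$ and then solve for the generator itself. Because all generators sit in positive degree, the relevant cohomology and cocycle groups are untouched by the extra negative-degree cochains present in $\Hin$ but absent in $\BG$, so the $\BG$-lift is simultaneously an $\Hin$-lift. The main obstacle, and the step demanding the most care, will be precisely this degree bookkeeping at the attachment stage: I must verify that enlarging the ambient category from non-negatively graded to $\mathbb Z$-graded cdgas does not introduce new lifting obstructions for positive-degree generators, i.e.\ that the surjectivity of $p$ on the relevant cocycle groups and the acyclicity used to adjust by a coboundary both persist unchanged. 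Once this is confirmed the equivalence of right-homotopy relations via the common path object $D[t,dt]$ is routine, and fullness of $\hoBGc$ in $\hoHin$ follows immediately since $K_{A,D}$ is then an isomorphism on all Hom-sets.
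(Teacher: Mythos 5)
Your proposal is correct and follows essentially the same route as the paper: replace $A$ by its minimal model $M$ (which exists by \cite[Proposition~7.7]{bousfield-gugenheim} since $A$ is homologically connected), observe that $M$ is cofibrant in both $\BG$ and $\Hin$, and conclude via the common good path object $D[t,dt]$ that $\pi(M,D)_\BG \cong \pi(M,D)_\Hin$. The only difference is that you sketch the lifting argument for $\Hin$-cofibrancy of the minimal model (where the real point is the Sullivan-type cell structure rather than positivity of degrees per se), whereas the paper simply asserts this cofibrancy as clear.
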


\begin{proof}
The cdga $A$ admits a minimal model $M$ by
\cite[Proposition~7.7]{bousfield-gugenheim}.
As $M$ is, by definition, weakly equivalent to $A$,
\begin{equation}
\label{eq:7}
[A,D]_\BG \cong [M,D]_\BG \ \ \mbox { and } \
[A,D]_\Hin \cong [M,D]_\Hin.
\end{equation}
The cdga $M$ is clearly cofibrant
in both categories $\BG$ and $\Hin$, therefore
\[
[M,D]_\Hin = \pi(M,D)_\BG \ \mbox { and } \ [M,D]_\BG = \pi(M,D)_\Hin
\]
by~(\ref{EQ:8}).  Since~(\ref{eq:5}) is a good path object for $D$ in both
categories $\BG$ and $\Hin$,
\[
 \pi(M,D)_\BG \cong  \pi(M,D)_\Hin.
\]
The lemma is an obvious combination of the above isomorphisms.
\end{proof}

\begin{proposition}
\label{sec:other-results}
The categories $\hoBGc$ and $\hoHinc$ are equivalent.
\end{proposition}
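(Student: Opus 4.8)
The plan is to realize the asserted equivalence as the functor induced by the inclusion $\BGc \subset \Hinc$ and then verify the two halves of the definition of an equivalence. First I would note that this inclusion carries weak equivalences (cohomology isomorphisms) to weak equivalences, so by the universal property of localization it descends to a functor $\Phi\colon \hoBGc \to \hoHinc$; on morphism sets $\Phi$ is exactly the natural comparison map $K_{A,B}$ of~(\ref{eq:a2}), since $\hoBGc$ and $\hoHinc$ are the full subcategories of $\hoBG$ and $\hoHin$ on homologically connected objects, and their Hom-sets are $[A,B]_\BG$ and $[A,B]_\Hin$ respectively. It then suffices to prove that $\Phi$ is fully faithful and essentially surjective.

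Full faithfulness is immediate from Lemma~\ref{sec:other-results-1}. Indeed, for any $A,B \in \BGc$ we have in particular $B \in \BG$, so that lemma says $K_{A,B}\colon [A,B]_\BG \to [A,B]_\Hin$ is an isomorphism; as just observed these are precisely the Hom-sets $\hoBGc(A,B)$ and $\hoHinc(\Phi A,\Phi B)$, and the action of $\Phi$ on them is $K_{A,B}$.

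The substantive point, and the step I expect to be the main obstacle, is essential surjectivity: I must show that every homologically connected $\mathbb Z$-graded cdga $D \in \Hinc$ is weakly equivalent in $\Hin$ to some object of $\BGc$. The idea is to build a Sullivan-type minimal model $\rho\colon M \to D$ with $M=(\Lambda V,d)$ the free graded-commutative algebra on a graded vector space $V$ concentrated in positive degrees and $d$ decomposable, following the inductive procedure of~\cite[\S7]{bousfield-gugenheim}. Concretely, I would construct $M$ and $\rho$ stage by stage: at stage $n$ one adjoins generators of degree $n$ of two kinds, those with $dv=0$ mapping to cocycles representing a basis of the cokernel of $\rho^*$ in $H^n(D)$, and those with $dv=\Phi$ (for a cocycle $\Phi$ of degree $n{+}1$ already built whose image is a coboundary) mapping to a chosen cochain of $D$ whose differential is $\rho(\Phi)$. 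At each step $\rho^*$ becomes an isomorphism in degrees $\le n$ and injective in degree $n{+}1$, and the required preimages exist precisely because the relevant elements of $D$ are genuine coboundaries.

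The only thing left to check is that this construction never introduces a generator in nonpositive degree, so that $M$ really lies in $\BGc$, and this is exactly where the hypothesis $D\in\Hinc$ is used: since $H^0(D)=\bfk$ and $H^{<0}(D)=0$, the cohomology $H(D)$ is a connected, non-negatively graded algebra, so the base case is $M(0)=\bfk$ (the unit already accounting for $H^0$) and every subsequent generator has degree $\ge 1$. The negative part of $D$ never enters, as $\rho$ is only ever asked to hit cocycles and coboundaries sitting in degrees $\ge 0$. Consequently $M\in\BGc$, the map $\rho$ induces $H(M)\cong H(D)$ and is thus a weak equivalence in $\Hin$, so $\Phi(M)\cong D$ in $\hoHinc$. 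Together with full faithfulness this yields the desired equivalence $\hoBGc \sim \hoHinc$.
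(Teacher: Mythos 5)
Your proof is correct and follows essentially the same route as the paper: full faithfulness via Lemma~\ref{sec:other-results-1}, and essential surjectivity by observing that the Bousfield--Gugenheim minimal model construction of \cite[\S7]{bousfield-gugenheim} applies verbatim to an arbitrary homologically connected ${\mathbb Z}$-graded cdga, producing a weakly equivalent object of $\BGc$. The only difference is that you spell out the inductive construction and the degree bookkeeping that the paper dismisses with ``one can easily verify.''
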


\begin{proof}
If $\calC'$ is a full subcategory of $\calC$ with the property that
each object of $\calC'$ is isomorphic to some object of $\calC$, then
both categories are equivalent. In light of
Lemma~\ref{sec:other-results-1},  it is enough
to prove that each $A \in \Hinc$ is weakly equivalent to an cdga
in $\BG$.

Proposition~7.7 of \cite{bousfield-gugenheim} states the existence of
a minimal model generated by elements of degrees $\geq 1$ of each
cdga $A \in \BGc$. One can easily verify that
the proof of this proposition leads to a minimal model of an {\em
arbitrary\/} homologically connected cdga $A$, i.e.~of an
arbitrary $A \in \Hinc$.
The minimal model $M$ of
$A \in \Hinc$ is then a~connected cdga belonging to $\BG$, weakly
equivalent to $A$. This finishes the proof.
\end{proof}

Theorem~\ref{sec:main-results-1} has the following important
consequence.

\begin{theorem}
Let $A_i$, $i \in \J$, be cdgas in $\BGc$ indexed by a finite set
and  $D \in \BG$ be such that $1 \in H^0(D)$ is the only nontrivial idempotent in
$H^0(D)$. Then the map in~(\ref{eq:a2}) with $A = \prod_{s\in \J} A_s$ is
an isomorphism
\[
\big[\prod_{s\in \J} A_s,D\big]_\BG
\cong \big[\prod_{s\in \J} A_s,D\big]_\Hi.
\]
\end{theorem}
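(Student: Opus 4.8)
The plan is to combine the two decomposition results we have just established: Theorem~\ref{sec:main-results-1} on one side and the comparison map $K_{A,D}$ from~(\ref{eq:a2}) on the other, and to check that these fit into a commuting square. First I would invoke the first part of Theorem~\ref{sec:main-results-1} (the $\BG$ case), which gives a decomposition $\big[\prod_{s\in \J} A_s,D\big]_\BG \cong \bigcup_{s\in \J}[A_s,D]_\BG$ induced by the projections $\pi_s$. Since $D \in \BG$ has $1 \in H^0(D)$ as its only nontrivial idempotent, the hypotheses of Theorem~\ref{sec:main-results-1} are met. Next I would apply the second part of that theorem (the $\Hin$ case, proved via Theorem~\ref{Andrey'}), which gives the analogous decomposition $\big[\prod_{s\in \J} A_s,D\big]_\Hi \cong \bigcup_{s\in \J}[A_s,D]_\Hi$ induced by the same projections.

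The heart of the argument is then to show that $K_{\prod_s A_s,\,D}$ respects these two decompositions component by component, i.e.~that for each $s \in \J$ the square
\[
\raisebox{-1cm}{}
{
\unitlength=.8pt
\begin{picture}(160.00,70.00)(0.00,0.00)
\thicklines
\put(5,60){\makebox(0.00,0.00){$[A_s,D]_\BG$}}
\put(145,60){\makebox(0.00,0.00){$\big[\prod_{s} A_s,D\big]_\BG$}}
\put(5,0){\makebox(0.00,0.00){$[A_s,D]_\Hi$}}
\put(145,0){\makebox(0.00,0.00){$\big[\prod_{s} A_s,D\big]_\Hi$}}
\put(40,60){\vector(1,0){60}}
\put(40,0){\vector(1,0){60}}
\put(5,50){\vector(0,-1){40}}
\put(145,50){\vector(0,-1){40}}
\end{picture}}
\]
commutes, where the horizontal maps are pre-composition with $\gamma(\pi_s)$ and the vertical maps are $K_{A_s,D}$ and $K_{\prod_s A_s,D}$. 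Commutativity here is a naturality statement: both horizontal maps are induced by the projection $\pi_s$, and both vertical maps are induced by pre-composition with the $\Hin$-cofibrant replacement constructed before~(\ref{eq:a2}), so the square commutes because pre-composition operations in different variables commute. The key input that makes the whole diagram work is Lemma~\ref{sec:other-results-1}: since each $A_s \in \BGc$, the vertical map $K_{A_s,D}$ on each individual component is an \emph{isomorphism}.

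With these pieces in place, the conclusion is immediate. The two horizontal decompositions are bijections, the left-hand vertical map is a disjoint union of the isomorphisms $K_{A_s,D}$ and hence an isomorphism, and the square commutes; therefore the right-hand vertical map $K_{\prod_s A_s,D}$ is forced to be an isomorphism as well. I expect the main obstacle to be the bookkeeping needed to verify that the isomorphisms of Theorem~\ref{sec:main-results-1} in the two categories really are induced by the \emph{same} projections and so are compatible with $K$; once that naturality is pinned down the rest is a two-out-of-three style diagram chase. One should also take care that the idempotent hypothesis on $D$ is stated with $H^0(D)$ (as required by the $\Hin$ half of Theorem~\ref{sec:main-results-1}) rather than $Z^0(D)$, which is exactly why Theorem~\ref{Andrey'}, and not merely Principle~\ref{vedle_nekdo_tluce}, is needed in the $\Hin$ setting.
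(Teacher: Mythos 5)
Your proposal is correct and takes essentially the same route as the paper, whose entire proof reads: by Lemma~\ref{sec:other-results-1}, $[A_i,D]_\BG \cong [A_i,D]_\Hi$ for each $i \in \J$, and the rest follows from Theorem~\ref{sec:main-results-1}. Your additional verification that $K_{\prod_s A_s,D}$ intertwines the two projection-induced decompositions is exactly the naturality point the paper leaves implicit, and it is the right thing to pin down since the theorem asserts that the specific map $K$ of~(\ref{eq:a2}), not merely some abstract bijection, is an isomorphism.
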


\begin{proof}
By Lemma \ref{sec:other-results-1}, $[A_i,D]_\BG  \cong  [A_i,D]_\Hi$, $i \in \J$.
The rest follows from Theorem~\ref{sec:main-results-1}.
\end{proof}

\section{DG commutative associative algebras of finite type}
\label{finite}

In this section we investigate properties of cdgas having finite type
in the sense of Definition~\ref{finite-type-def}. The main results are
Propositions~\ref{vedle_delaji_lazne} and~\ref{p2}.
For a cdga $B$ and a map $f:A\to B$ we will denote by $\Der_f(A,B)$
the dg space of derivations of $A$ with values in $B$, where $B$ is
viewed as a dg $A$-module via $f$; if the map $f$ is clear from the
context we will write simply $\Der(A,B)$ for $\Der_f(A,B)$.

Note that having an augmentation ideal $I$ in $A$ is equivalent to
specifying a map $\epsilon :A\to \bfk$ (an augmentation). Furthermore,
the dual dg space $(I/I^2)^*$ is naturally identified with the dg
space $\Der_\epsilon(A,\bfk)$. Thus, $A$ is of finite type if and only
if for any augmentation $\epsilon :A\to \bfk$ the dg space
$\Der_\epsilon(A,\bfk)$ has finite-dimensional cohomology in each
positive degree.

We now discuss the homotopy invariance of the notion of finite type.
To this end, note that for a cofibrant cdga $A$ and a dg $A$-module
$M$ the dg space $\Der(A,M)$ is quasi-isomorphic to $C_{AQ}(A,M)$, the
Andr\`e-Quillen cohomology complex of $A$ with coefficients in $M$,
cf.~\cite[Theorem~2.4]{block05:_andre_quill}.

\begin{lemma}\label{sulho}
Let $A$ be a cofibrant cdga, $B$ is a cdga and $f,g:A\to B$ are two
Sullivan homotopic maps. Then the dg vector spaces $\Der_f(A,B)$ and
$\Der_g(A,B)$ are quasi-isomorphic.
\end{lemma}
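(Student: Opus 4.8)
The plan is to use the characterization of Sullivan homotopy in terms of a path object and the homotopy invariance of derivations established just before the statement. First I would recall that since $f,g : A \to B$ are Sullivan homotopic and $A$ is cofibrant, there is a homotopy realized through the standard path object $B[t,dt]$ of~(\ref{eq:5}); that is, there exists a cdga map $H : A \to B[t,dt] = B \otimes \bfk[t,dt]$ with $p_0 H = f$ and $p_1 H = g$, where $p_0, p_1$ are evaluation at $t=0$ and $t=1$. The map $H$ makes $B[t,dt]$ into a dg $A$-module, and the two evaluations $p_0, p_1$ exhibit $B$ (as an $A$-module via $f$, resp.\ via $g$) as a quotient of $B[t,dt]$.

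Next I would translate everything into derivations. A cdga map $H : A \to B[t,dt]$ makes $\Der_H(A, B[t,dt])$ into a dg vector space, and composition with the evaluations $p_0, p_1$ yields maps of dg vector spaces
\[
p_0^\sharp : \Der_H\big(A, B[t,dt]\big) \to \Der_f(A,B), \quad
p_1^\sharp : \Der_H\big(A, B[t,dt]\big) \to \Der_g(A,B).
\]
The key computation is that both $p_0^\sharp$ and $p_1^\sharp$ are quasi-isomorphisms. For this I would note that $B[t,dt] = B \otimes \bfk[t,dt]$ is, as a dg $A$-module, the tensor product of the $A$-module $B$ with the acyclic augmented cdga $\bfk[t,dt]$, and that evaluation $\bfk[t,dt] \to \bfk$ at either endpoint is a quasi-isomorphism. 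Since $A$ is cofibrant, $\Der(A,-)$ computes Andr\'e-Quillen cohomology and hence sends quasi-isomorphisms of $A$-modules to quasi-isomorphisms, by the quasi-isomorphism $\Der(A,M) \simeq C_{AQ}(A,M)$ recalled from \cite[Theorem~2.4]{block05:_andre_quill} above. Thus each evaluation induces a quasi-isomorphism on derivation complexes.

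Finally, chasing through these identifications gives a zig-zag
\[
\Der_f(A,B) \xleftarrow{\ p_0^\sharp\ } \Der_H\big(A, B[t,dt]\big)
\xrightarrow{\ p_1^\sharp\ } \Der_g(A,B)
\]
of quasi-isomorphisms connecting $\Der_f(A,B)$ and $\Der_g(A,B)$, which is exactly the claimed statement.

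I expect the main obstacle to be the middle step: verifying carefully that $\Der_H(A, B[t,dt])$ maps by a \emph{quasi}-isomorphism to each $\Der_f(A,B)$ and $\Der_g(A,B)$. The subtlety is that $\Der_H(A,-)$ is only guaranteed to be exact on quasi-isomorphisms of $A$-modules \emph{because} $A$ is cofibrant (so that $\Der$ agrees with the derived Andr\'e-Quillen functor); one must confirm that the endpoint evaluations $B[t,dt] \to B$ really are quasi-isomorphisms of dg $A$-modules over the appropriate module structures, which reduces to the acyclicity of $\bfk[t,dt]$ together with the flatness of $B$ over $\bfk$. Once this is in place, the rest is a formal application of homotopy invariance of derivations and poses no difficulty.
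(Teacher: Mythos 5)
Your proposal is correct and follows essentially the same route as the paper: both form the zig-zag $\Der_f(A,B) \leftarrow \Der_h\big(A,B[t,dt]\big) \rightarrow \Der_g(A,B)$ induced by the two endpoint evaluations and reduce the lemma to showing these are quasi-isomorphisms. The only (inessential) difference is in how that step is justified — the paper compares the spectral sequences of \cite[Corollary~2.5]{block05:_andre_quill}, while you invoke the identification $\Der(A,M)\simeq C_{AQ}(A,M)$ of \cite[Theorem~2.4]{block05:_andre_quill} together with invariance of Andr\'e--Quillen cohomology under quasi-isomorphisms of coefficient modules; both hinge on the cofibrancy of $A$ in the same way.
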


\begin{proof}
Let $h:A\to B[t,dt]$ be a Sullivan homotopy from $f$ to $g$. The two evaluation
maps $|_{0,1} : B[t,dt]\to B$ determine maps of dg vector spaces
$\Der_h\big(A,B[t,dt]\big)\to \Der_f(A,B)$ and $\Der_h\big(A,B[t,dt]\big)\to
\Der_g(A,B)$. Comparing the corresponding spectral sequences
(\cite[Corollary 2.5]{block05:_andre_quill}) we conclude that both maps are
quasi-isomorphisms, giving the desired conclusion.
\end{proof}

\begin{proposition}
\label{vedle_delaji_lazne}
Let $A$ and $A^\prime$ be two quasi-isomorphic cofibrant cdgas. Then
$A$ is of finite type if and only if $A^\prime$ is of finite type.
\end{proposition}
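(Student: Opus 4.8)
The plan is to reduce the finite-type condition to a statement about the cohomology of the derivation complexes with coefficients in $\bfk$, and then to show that a weak equivalence of cofibrant cdgas induces an isomorphism on these. Write $\phi\colon A\to A'$ for the given quasi-isomorphism. Since $A$ and $A'$ are cofibrant and all objects of $\Hin$ are fibrant, $\phi$ is a weak equivalence between cofibrant-fibrant objects, hence a homotopy equivalence; so I would first fix a Sullivan-homotopy inverse $\psi\colon A'\to A$, for which $\psi\phi$ and $\phi\psi$ are Sullivan homotopic to the respective identities.

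Next I would record the transfer mechanism. By Definition~\ref{finite-type-def} and the identification $(I/I^2)^*\cong\Der_\epsilon(A,\bfk)$ recalled just before Lemma~\ref{sulho}, $A$ is of finite type precisely when $H(\Der_\epsilon(A,\bfk))$ is finite-dimensional in each degree for every augmentation $\epsilon\colon A\to\bfk$. Given an augmentation $\epsilon'\colon A'\to\bfk$, set $\epsilon:=\epsilon'\phi$; this makes $\bfk$ a common coefficient module and turns $\phi$ into a map of augmented cdgas, inducing a chain map $\phi^\ast\colon\Der_{\epsilon'}(A',\bfk)\to\Der_{\epsilon}(A,\bfk)$ by $D\mapsto D\circ\phi$. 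If $\phi^\ast$ is a quasi-isomorphism for every such compatible pair, the proposition follows at once: assuming $A$ of finite type, $H(\Der_{\epsilon'}(A',\bfk))\cong H(\Der_\epsilon(A,\bfk))$ is finite-dimensional in each degree, so $A'$ is of finite type, and the reverse implication is obtained symmetrically by pulling augmentations of $A$ back along $\psi$.

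Thus everything rests on the claim that $\phi^\ast$ is a quasi-isomorphism. The cleanest route uses the comparison $\Der(A,M)\simeq C_{AQ}(A,M)$ recalled before Lemma~\ref{sulho}, which identifies $H(\Der_\epsilon(A,\bfk))$ with the André--Quillen cohomology $H^\ast_{AQ}(A;\bfk_\epsilon)$, a derived functor invariant under weak equivalences of cofibrant cdgas. Concretely, $\phi$ induces, after base change to $A'$, a weak equivalence of the Kähler-differential (cotangent) complexes, and applying the derived derivation functor into $\bfk$ turns this into the assertion that $\phi^\ast$ is a quasi-isomorphism. Alternatively, I would argue directly from the homotopy equivalence: $\psi^\ast\phi^\ast=(\phi\psi)^\ast$ and $\phi^\ast\psi^\ast=(\psi\phi)^\ast$ are pullbacks along self-maps Sullivan homotopic to the identity, and once each such pullback is shown to induce an isomorphism on cohomology one concludes by the elementary fact that if both composites $\psi^\ast\phi^\ast$ and $\phi^\ast\psi^\ast$ are isomorphisms on cohomology then so is $\phi^\ast$.

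The main obstacle is precisely the last point of the direct argument: pulling back along $\phi\psi\simeq\id_{A'}$ changes the $A'$-module structure on $\bfk$ from $\epsilon'$ to the merely homotopic augmentation $\epsilon'\phi\psi$, so the source and target of $(\phi\psi)^\ast$ are literally different complexes. This is where Lemma~\ref{sulho} enters: it supplies the quasi-isomorphism $\Der_{\epsilon'}(A',\bfk)\simeq\Der_{\epsilon'\phi\psi}(A',\bfk)$ reconciling the two coefficient systems, and the genuine work is to verify that $(\phi\psi)^\ast$ agrees, up to this identification and up to chain homotopy, with the identity, which amounts to running the evaluation-and-spectral-sequence comparison of Lemma~\ref{sulho} along a Sullivan homotopy $A'\to A'[t,dt]$. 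For this reason I would favour the André--Quillen formulation, in which the invariance of the cotangent complex under weak equivalences of cofibrant objects disposes of the module-structure bookkeeping in a single step.
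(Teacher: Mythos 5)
Your proposal is correct and follows essentially the same route as the paper: pull the augmentation of $A'$ back along the quasi-isomorphism, use the Andr\'e--Quillen/Block--Lazarev spectral-sequence comparison (\cite[Corollary 2.5]{block05:_andre_quill}) to identify the cohomology of the two derivation complexes, and obtain the converse by noting that a quasi-isomorphism of cofibrant cdgas is homotopy invertible, so the r\^oles of $A$ and $A'$ can be exchanged. Your "direct" alternative via $\psi^\ast\phi^\ast$ is indeed the more delicate path for exactly the coefficient-bookkeeping reason you identify, and the paper avoids it just as you propose to.
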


\begin{proof}
Let $A$ be of finite type and $I^\prime$ be an augmentation ideal of
$A^\prime$. The given quasi-isomorphism $A\to A^\prime$ determines a
dg map $A\to A^\prime\to A^\prime/I^\prime\cong \bfk$.  Comparing the
corresponding spectral sequences in~\cite[Corollary 2.5]{block05:_andre_quill}
we obtain that $\Der(A^\prime,\bfk)$ is
quasi-isomorphic to
$\Der(A,\bfk)$ and thus, $\Der(A^\prime,\bfk)$ has finite-dimensional
cohomology, as desired. Since each quasi-isomorphism of cofibrant
algebras is homotopy invertible, the r\^oles of $A$ and $A'$ can be
exchanged. This finishes the proof.
\end{proof}

In the following statement $\Lhat A$ is the completed Harrison complex
of an augmented cdga $A$ recalled in Definition~\ref{CEHar}.

\begin{lemma}
\label{charact}
Suppose that $A$ is a cofibrant cdga and $I$ is an augmentation ideal
in $A$. Then $(I/I^2)^*$ is quasi-isomorphic to $\Sigma{\Lhat}A$.
\end{lemma}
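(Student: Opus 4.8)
The plan is to exhibit a natural comparison chain map and prove it is a quasi-isomorphism by filtering the completed Harrison complex so that the associated graded differential is the multiplicative part, whose acyclicity on a free graded-commutative algebra is exactly the Koszul duality between the commutative and Lie operads. I begin with two reductions. First, specifying an augmentation ideal $I$ is the same as specifying an augmentation $\epsilon:A\to\bfk$, and a derivation into $\bfk$ annihilates $I^2$, so one has the identification $(I/I^2)^*\cong\Der_\epsilon(A,\bfk)$ already noted before the lemma. Second, both $(I/I^2)^*$ and $\Lhat(-)$ are functorial in augmented cdgas and carry retracts to retracts; since a cofibrant augmented cdga is a retract of a quasi-free (semifree) one, and a retract of a quasi-isomorphism is again a quasi-isomorphism, it suffices to treat the case $A=(\mathrm{Sym}(V),d)$ quasi-free, where the underlying graded algebra is free and $I/I^2\cong V$.

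Unwinding Definition~\ref{CEHar}, the object $\Lhat A$ is, as a graded Lie algebra, the completed free Lie algebra $\hat{\mathbb L}(W)$ on the shifted dual $W$ of the augmentation ideal $I$, with differential $\delta=\delta_1+\delta_2$: the linear part $\delta_1$ is dual to the internal differential $d$, and the quadratic part $\delta_2$ is dual to the reduced multiplication $I\otimes I\to I$ projected into the bracket. The weight-one generators of $\Sigma\Lhat A$ are canonically $I^*$, and I take as comparison map the inclusion $\iota:(I/I^2)^*\hookrightarrow\Sigma\Lhat A$ of the functionals vanishing on $I^2$. This is a chain map: for $\xi\in(I/I^2)^*$ one has $\xi\circ m=0$ since $m$ lands in $I^2$, so $\delta_2$ annihilates $\iota$'s image, while $\delta_1$ preserves $(I/I^2)^*\subset I^*$ because $d$ descends to $I/I^2$.

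The heart of the argument is the weight/cell spectral sequence. Filter $\Lhat A$ by the semifree cell filtration of the generators $V$ (which exists precisely because $A$ is cofibrant), chosen so that $\delta_1$, dual to $d$, strictly raises the filtration while the multiplicative part $\delta_2$ preserves it. On the associated graded the differential is therefore $\delta_2$ applied to the free graded-commutative algebra $(\mathrm{Sym}(V),0)$, i.e.\ the dualized completed Harrison complex of a polynomial algebra. By the Koszul duality of the commutative and Lie operads in characteristic zero -- the acyclicity of the bar construction of a free graded-commutative algebra, realized by the Eulerian-idempotent splitting of the tensor algebra -- this complex has cohomology $\Sigma^{-1}(I/I^2)^*$, all higher-weight contributions being acyclic, and $\iota$ represents precisely the surviving classes. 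The residual differential on this page is induced by $\delta_1$, namely the dual of the differential $\bar d$ on $I/I^2$, so the next page is the complex $\bigl((I/I^2)^*,\bar d^{\,*}\bigr)=\Der_\epsilon(A,\bfk)$. Hence the spectral sequence collapses and $\iota$ is a quasi-isomorphism, giving $(I/I^2)^*\simeq\Sigma\Lhat A$; by the second reduction this extends to all cofibrant $A$.

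The main obstacle is twofold. The decisive input is the Comm--Lie Koszul acyclicity identifying the Harrison cohomology of $\mathrm{Sym}(V)$ with the indecomposables $V$; this is where cofibrancy is indispensable, since it is what makes the associated graded algebra free. The second point is convergence: because $\Lhat A$ is complete, the cell filtration is complete but infinite, so I must verify conditional convergence and collapse in the sense of~\cite{baordman:condition}, the completeness of objects of $\L$ being exactly what guarantees this. As a sanity check consistent with the above, for cofibrant $A$ the complex $\Der_\epsilon(A,\bfk)$ computes Andr\'e--Quillen cohomology \cite[Theorem~2.4]{block05:_andre_quill}, and $\Sigma\Lhat A$ computes the same cohomology; the spectral-sequence argument is the explicit realization of this coincidence.
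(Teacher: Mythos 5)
Your overall architecture is sound, and it is instructive to compare it with what the paper actually does: after the same reduction you perform (cofibrant $\Rightarrow$ retract of a semifree/cell algebra, both sides of the statement being natural and quasi-isomorphisms being closed under retracts), the paper simply observes that $A$ is then, as a non-differential algebra, polynomial on its indecomposables, and cites Neisendorfer \cite[Proposition~4.2(a)]{neisendorfer78:_lie}, omitting all details. You instead reprove the cited statement by hand. Your comparison map $\iota$ is correct and is a chain map for exactly the reasons you give ($m(I\otimes I)\subseteq I^2$, and $d(I^2)\subseteq I^2$ by the Leibniz rule since $I$ is a dg ideal), and the plan --- filter $\Lhat A$ so that the associated graded differential is the quadratic part $\delta_2$, then quote the non-dg Comm--Lie Koszul/Harrison acyclicity of a polynomial algebra --- is indeed the standard way such statements are proved. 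So your route buys self-containedness where the paper outsources.

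There is, however, one step that fails as literally written: the filtration. The semifree cell filtration (weight of a cell $=$ the stage at which it is attached, extended multiplicatively to monomials) does \emph{not} make $\delta_1$ strictly filtration-raising. Concretely, if a cell $b$ of stage $2$ has $db=a^2$ with $a$ a stage-$1$ cell, then $db$ has total weight $2$, equal to the weight of $b$; dually $\delta_1$ only weakly raises the filtration, the corresponding piece of $\delta_1$ survives on the associated graded, and $E_0$ is then \emph{not} the Harrison complex of $(\mathrm{Sym}(V),0)$, so the Koszul-duality input does not apply to it. The repair is forced: re-weight the generators by recursion over the cell structure, $w(v):=1+\max\{\hbox{total weight of monomials occurring in }dv\}$, which is well defined precisely because the cell ordering is well founded and each $dv$ is a finite sum of monomials in strictly earlier cells; with this weight $d_A$ strictly decreases the induced multiplicative filtration of $I$ while the product adds weights. (Note also that the filtration must be placed on all of $I=\mathrm{Sym}^{\geq 1}(V)$ and then dualized, since the generators of $\Lhat A$ are $\Sigma^{-1}I^*$, not $\Sigma^{-1}V^*$.) Finally, your convergence step is asserted rather than checked: what you need is that an isomorphism of comparison maps on $E_1$ propagates to the abutments, which via Boardman requires conditional convergence together with vanishing of the derived $E_\infty$-term. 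In this setting it is cleaner to avoid Boardman altogether: all terms are linearly compact, each quotient $\Lhat A/G^p$ carries a finite filtration, and inverse limits of linearly compact spaces are exact, so the quasi-isomorphism established on each quotient passes to the limit --- this is exactly the device the paper itself uses in closely analogous situations, cf.\ Lemma~\ref{sublemma} and Lemma~\ref{nilp}.
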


\begin{proof}
Without loss of generality one can assume that the cdga $A$ is free as a graded
associative commutative algebra. It is easy to show that it is then
isomorphic, as a non-differential algebra, to the graded polynomial
ring generated by $I$.
Lemma~\ref{charact} therefore appears as a version of~\cite[Proposition~4.2(a)]{neisendorfer78:_lie}, so
we omit its proof.
\end{proof}

Observe that a~non-negatively graded
homologically connected cdga $A$  is of finite type if and only if it is of finite
type in the sense of \cite[\S9.2]{bousfield-gugenheim}. We can now formulate a
criterion for a homologically disconnected cdga to be of finite type.

\begin{proposition}
\label{p2}
Let $A$ be a homologically disconnected cdga, i.e.\ $A$ is
quasi-isomorphic, by Theorem~B, to $A_1\times\cdots\times A_n$ where $A_i$ are
connected cdgas, $i=1,2,\ldots, n$. Then $A$ is of finite type if and
only if each $A_i$ is of finite type.
\end{proposition}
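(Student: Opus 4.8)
The plan is to reduce the finite-type condition for the disconnected cdga $A$ to a statement about each connected factor $A_i$, using the characterization of finite type in terms of derivation complexes together with the homotopy invariance already established. First I would invoke Proposition~\ref{vedle_delaji_lazne}, which guarantees that finite type is a homotopy-invariant property among cofibrant cdgas, so it suffices to verify the claim for a convenient cofibrant replacement. Since $A$ is quasi-isomorphic to the product $A_1 \times \cdots \times A_n$ by Theorem~B, I may replace $A$ by a cofibrant replacement of this product; by Proposition~\ref{sec:main-results} a product of cofibrant objects in $\BG$ is again cofibrant, so if the $A_i$ are taken cofibrant then the product itself is already a legitimate cofibrant representative, at least in the non-negatively graded setting.

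The core of the argument is then to understand the augmentations and the corresponding derivation complexes of a product. The key observation is the content of Principle~\ref{vedle_nekdo_tluce} applied to the target $\bfk$: any augmentation $\epsilon : A_1 \times \cdots \times A_n \to \bfk$ factors through exactly one projection $\pi_i$, because $\bfk$ has $1$ as its only nontrivial idempotent, so $\epsilon$ must send all but one of the orthogonal idempotents $e_j$ to zero. Consequently every augmentation ideal $I$ of the product arises from an augmentation ideal $I_i$ of a single factor $A_i$, and one gets a natural identification of the derivation complex $\Der_\epsilon(A_1 \times \cdots \times A_n, \bfk)$ with $\Der_{\epsilon_i}(A_i, \bfk)$. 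Passing to cohomology and recalling that $(I/I^2)^* \cong \Der_\epsilon(A,\bfk)$, this shows that the collection of all derivation complexes of the product is precisely the disjoint union, over $i$ and over augmentations of $A_i$, of the derivation complexes of the individual factors.

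From this identification the proposition follows directly: $A$ is of finite type, meaning $\Der_\epsilon(A,\bfk)$ has finite-dimensional cohomology in each degree for \emph{every} augmentation $\epsilon$, precisely when for each $i$ and each augmentation $\epsilon_i$ of $A_i$ the space $\Der_{\epsilon_i}(A_i,\bfk)$ has finite-dimensional cohomology in each degree, which is exactly the statement that every $A_i$ is of finite type. One subtlety to handle carefully is that Definition~\ref{finite-type-def} quantifies over \emph{all} augmentation ideals, so I must make sure the factorization through projections accounts for every augmentation of the product, not merely the projections themselves; the idempotent argument above does exactly this, since the idempotents $e_j$ lie in $Z^0$ and their images under any augmentation are forced.

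The main obstacle I anticipate is the bookkeeping around idempotents at the level of the cofibrant replacement rather than $A$ itself. The clean factorization of augmentations via Principle~\ref{vedle_nekdo_tluce} requires a genuine product decomposition with orthogonal idempotents $e_j$ in degree zero, whereas a general cofibrant replacement of the product need not split as a strict product. I would circumvent this by working with the explicit cofibrant product of cofibrant models of the $A_i$, legitimized by Proposition~\ref{sec:main-results}, so that the idempotents are literally present; the homotopy invariance of finite type from Proposition~\ref{vedle_delaji_lazne} then transports the conclusion back to any other cofibrant replacement and hence to $A$. Verifying that the derivation-complex identification is compatible with the differentials and induces the claimed quasi-isomorphism is routine given Lemma~\ref{sulho} and the spectral sequence comparison of \cite[Corollary~2.5]{block05:_andre_quill}, so I would not belabor it.
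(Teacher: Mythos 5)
Your computation at the level of a strict product is correct: the idempotents force every augmentation $\epsilon : A_1\timesred\cdots\timesred A_n\to\bfk$ to kill all but one of them and hence to factor through a single projection, and then indeed $\Der_\epsilon\big(A_1\timesred\cdots\timesred A_n,\bfk\big)\cong\Der_{\epsilon_i}(A_i,\bfk)$, equivalently $I/I^2\cong I_i/I_i^2$. The genuine gap is the transport step. Proposition~\ref{p2} is a statement in the ${\mathbb Z}$-graded category $\Hin$ --- there $A$ is only \emph{weakly equivalent} to the product, and it is exactly this case that Theorem~C needs --- and in $\Hin$ a product of cofibrant algebras is \emph{not} cofibrant: Proposition~\ref{sec:main-results} is a statement about $\BG$ only, and the paper stresses that it produces precisely a class of algebras cofibrant in $\BG$ but \emph{not} in $\Hin$ (see also the remark after Lemma~\ref{simpl}). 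So your ``explicit cofibrant product of cofibrant models'' is not a legitimate cofibrant representative of $A$ in $\Hin$, and Proposition~\ref{vedle_delaji_lazne} --- which requires \emph{both} algebras to be cofibrant, since its proof via Lemma~\ref{sulho} rests on the Andr\'e--Quillen spectral sequences of \cite{block05:_andre_quill} --- cannot carry finite type from that product to an $\Hin$-cofibrant replacement of $A$. Your closing sentence thus invokes homotopy invariance exactly outside its range of validity, and this is the crux of the proposition, not bookkeeping: bridging the strict product with an honest $\Hin$-cofibrant replacement is where all the content lies.

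That bridge is what the paper's proof supplies, by a route your proposal does not touch. The paper keeps $A$ itself cofibrant and never claims anything about the product: given an augmentation $A\to\bfk$, it factors it through some single $A_i$ \emph{in the homotopy category} using Theorem~B, rigidifies the factorization using Lemma~\ref{sulho}, and then proves $I/I^2\simeq I_i/I_i^2$ by identifying $\Ext_A(\bfk,\bfk)\cong\Ext_{A_i}(\bfk,\bfk)$ via the cobar construction, passing to primitives to obtain a quasi-isomorphism of complete dglas $\Lhat(A)\simeq\Lhat(A_i)$, and applying Lemma~\ref{charact}; Remark~\ref{bouraci-kladiva-opet-na-scene} gives an alternative derivation of the same quasi-isomorphism from Proposition~\ref{monoidal}. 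Any repair of your argument would have to establish a statement of this strength --- that the indecomposables of an $\Hin$-cofibrant replacement of the product, at an arbitrary augmentation, are quasi-isomorphic to those of a single factor --- and the strict-product identification alone does not do this.
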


\begin{proof}
We assume, without loss of generality, that $A$ is cofibrant and each
$A_i$ is also cofibrant. Suppose that every $A_i$ is of finite type
and let $I$ be an augmentation ideal in $A$. By Theorem~B, the map
$A\to A/I\cong \bfk$ must factor in the homotopy category through a
map $A_i\to \bfk$ for some $i=1,\ldots, n$. Since by Lemma \ref{sulho}
the homology of the space $\Der(A,\bfk)$ does not depend on the
homotopy class of the map $A\to \bfk$ we might as well assume that the map
$A\to \bfk$ factors through $A_i$ on the nose. Denote by $I_i$ the
kernel of the corresponding map~\hbox{$A_i \to \bfk$}.

Consider the dga $B:=\hat{T}\Sigma I^*$, the reduced cobar-construction of
$A$ with the augmentation given by $A \to A/I \cong \bfk$.
Note that $B$ computes $\Ext_{A}(\bfk,\bfk)$, the
differential Ext of the cdga~$A$. Similarly, the reduced
cobar-construction $B_i:=\hat{T}\Sigma I_i^*$ calculates
$\Ext_{A_i}(\bfk,\bfk)$.

One clearly has $\Ext_A(\bfk,\bfk) \cong \Ext_{A\otimes_A
A_i}(\bfk\otimes_A A_i,\bfk)$ while, by standard homological algebra,
$ \Ext_{A\otimes_A A_i}(\bfk\otimes_A A_i,\bfk)$ is isomorphic
to $\Ext_{A_i}(\bfk,\bfk)$. Looking at the primitive elements in the
spaces $\Ext_A(\bfk,\bfk)$ resp.\ $\Ext_{A_i}(\bfk,\bfk)$ that are
described as the homology of $B$ resp.\ $B_i$ we conclude that the complete
dglas $\Lhat(A)$ and $\Lhat(A_i)$ are quasi-isomorphic.  Therefore, by
Lemma~\ref{charact}, $I/I^2$ is quasi-isomorphic to $I_i/I^2_i$. This
proves that if each $A_i$ is of finite type, then so is $A$.

Conversely, suppose that $A$ is of finite type and let $I_i$ be an
augmentation ideal in $A_i$; then $\bfk$ becomes an $A_i$-module via
the augmentation $A_i\to A_i/I_i\cong \bfk$. Moreover, $\bfk$ is also
an $A$ module via the composition $A \stackrel{\pi_i}\to A_i \to
A_i/I_i\cong \bfk$ in which $\pi_i$ realizes the projection $p_i : A_1
\times \cdots \times A_n \to A_i$ in the homotopy category
$\hoHin$. The same argument as above shows that the dg spaces
$I_i/I^2_i$ and $I/I^2$ are quasi-isomorphic and thus $A_i$ is of
finite type.
\end{proof}

\begin{rem}
\label{bouraci-kladiva-opet-na-scene}
The quasi-isomorphism $\Lhat(A) \sim \Lhat(A_i)$ was crucial for the
proof of Proposition~\ref{p2}. It can be established differently.
Assume, for simplicity, that $n=i=2$. Since, by a standard spectral
sequence argument, $\Lhat(-)$ preserves quasi-isomorphisms, we only
need to prove that $\Lhat(A_1 \timesred A_2)$ is quasi-isomorphic to
$\Lhat(A_2)$, where $A_1 \timesred A_2$ is augmented via the composition
$A_1\timesred A_2 \to A_2 \to A_2/I_2 \cong \bfk$.  By
Proposition~\ref{monoidal} proved in the second part, $\Lhat(A_1
\timesred A_2) \cong \Lhat(A_1) \sqcup \Lhat(A_2)$. Moreover, by
Remark~\ref{zavolam}, $\Lhat(A_1) \sqcup \Lhat(A_2)$ is
quasi-isomorphic to $\Lhat(A_2)$.  This gives the requisite statement.
\end{rem}

\begin{rem}\def\bfK{{\mathbb K}}
It is \emph{not true} that for a cofibrant cdga $A$ of finite type and
any maximal ideal $I$ of $A$ the quotient $I/I^2$ has finite
dimensional cohomology. This is not even true for a homologically
connected $A$. Indeed, let $A$ be cofibrant cdga supplied with an
augmentation $A\to \bfk$. Let $\bfK \supset \bfk$ be an infinite dimensional
field extension of $\bfk$. The
factorization axiom in the closed model category $\algs$ expresses the composite map $A\to \bfK$ as
\[
\xymatrix{A\ar@{->>}[r] \ar@{^{(}->}[rd]_\sim & \bfk \ar@{^{(}->}[r] & \bfK
\\
&\tilde A \ar@{->>}[ru]_f&
}
\]
with a surjective map $f:\tilde{A}\to \bfK$,
where $\tilde{A}$ is quasi-isomorphic to $A$ and still cofibrant. The
kernel of $f$ is a maximal ideal $\tilde{I}$ in $\tilde{A}$ and
$\tilde{A}/\tilde{I}\cong \bfK$. Clearly
\[
\Der(A,\bfK) \cong \Der(A,\bfk)\ot_\bfk \bfK.
\]
On the
other hand, $\Der(A,\bfK)$ is quasi-isomorphic to $\Der(\tilde
A,\bfK) \cong (\tilde I/\tilde I^2)^*$.
We see that $H(\tilde I/\tilde I^2)$ may be infinite-dimensional
even when $\tilde A$ is of finite type.
\end{rem}

\section{Proofs of Theorems~A, B and C}
\label{proofs}

\begin{proof}[Proof of Theorem~A]
It is an obvious combination of Proposition~\ref{sec:other-results}
and Theorem~B which we prove below.
\end{proof}

\begin{proof}[Proof of Theorem~B]
We start by proving the more difficult second part of the theorem.  Assume that $A\in
\Hindc$ and let $\iota_i : \bfk \to \prod_{i \in \J}\bfk \cong H^0(A)$
be the canonical inclusion into the $i$th factor. Denote by $e_i
\in H^0(A)$, $i \in \J$, the idempotent $\iota_i(1)$ and by
$\loc {H(A)}{e_i}$ the localization of $H(A)$ at the multiplicative subset
generated by $e_i$.  By elementary algebra,
\begin{equation}
\label{eq:1}
H(A) \cong \prod_{i\in \J} \loc{H(A)}{e_i}.
\end{equation}
Choose a cochain $u_i \in A^0$ representing $e_i$.  The localization is
exact, therefore
$H\big(\loc A{u_i}\big) \cong \loc{H(A)}{e_i}$ and the natural cdga map
\begin{equation}
\label{eq:6}
A \to \prod_{i\in \J} \loc A{u_i}
\end{equation}
induces the isomorphism~(\ref{eq:1}) of cohomology.

As $H^0(\loc A{u_i}) \cong \loc{H^0(A)}{e_i} \cong \bfk$, each $\loc A{u_i}$ is
homologically connected; its cohomology in negative degrees clearly vanishes.
Therefore~(\ref{eq:6}) shows that each $A \in \Hindc$ is weakly
equivalent to a finite product of cdgas
from $\Hinc$. Denote, for the purposes of this proof, the full
subcategory of $\Hindc$ whose objects are
these products  by $\Hindcbar$. It follows from the above that the
corresponding homotopy categories $\hoHindc$ and $\hoHindcbar$ are equivalent.
Theorem~\ref{sec:main-results-1} then implies that $\hoHindcbar$ is
equivalent to $\Tlum(\hoHinc)$. This finishes the proof of the second~part.

The first part can be proved in exactly the same way, but the
situation admits a simplification. Assume the same notation as above.
The cdga $A$ is now non-negatively graded, so there are no
$0$-boundaries in $A^0$, thus $u_i$ is unique and it is an idempotent.
The multiplication with $u_i$ defines the projection $\pi_i : A \epi u_i
A$ which represents the localization map $A \to \loc A{u_i}$. Since
$\{u_i\}_{i\in \J}$ are orthogonal idempotents whose sum is
$1$, the system $\{\pi_i\}_{i\in \J}$ defines, instead of just a weak
equivalence~(\ref{eq:6}), a strict isomorphism
\[
A \cong \prod_{i \in \J} u_iA,
\]
in which $u_iA \in \BGc$. So $\BGdc$ is equivalent to its full
subcategory $\BGdcbar$ whose objects are finite products of cdgas
from $\BGc$, and the same is true also for the corresponding homotopy
categories, that is, $\hoBGdc \sim \hoBGdcbar$. The proof is finished
with the aid of Theorem~\ref{sec:main-results-1}.
\end{proof}

\begin{proof}[Proof of Theorem~C]
The equivalence of Example~\ref{v_susarne} clearly
restrict to the equivalence
\[
\honilpsimpdiscon \sim \Mult(\honilpsimpcon).
\]
By Proposition~\ref{p2}, the equivalences of Theorem~B restrict
to the equivalences
\begin{equation}
\label{zase_chripka}
\horathincatdc \sim \Tlum(\horathincat)
\ \mbox { and }\
\horatcatdc \sim \Tlum(\horatcat).
\end{equation}
The equivalence $\honilpsimpdiscon \sim \horatcatdc$ then
follows from the classical equivalence between $\horatcat$ and
$\honilpsimpcon$, cf.~\cite[Theorem~9.4]{bousfield-gugenheim}. It is easy
to show that it is in fact induced by adjunction~(\ref{houbicky}).
The equivalence $\horatcatdc \sim \horathincatdc$ is a
combination of~(\ref{zase_chripka}) with Proposition~\ref{sec:other-results}.
\end{proof}

\section{Augmented dg commutative associative algebras and pointed
  spaces.}
\label{sec:augm-dg-comm}

In this section we briefly outline the relationship between the
homotopy theory of augmented cdgas and pointed spaces. The results
formulated here will be used in Section~\ref{disc-spaces}. They are
more or less obvious analogues of the non-augmented theory developed
in the previous sections.

Recall (e.g.~\cite[Proposition 1.1.8]{hovey99:_model}) that for a given closed
model category $\catC$ and an object $O\in \catC$, the overcategory of
$O$ and the undercategory of $O$ are themselves closed model
categories with fibrations, cofibrations and weak equivalences created
in the category $\catC$.  Now consider the overcategory $\algs_+$ of
the initial object $\bfk$ in $\algs$; its objects are \emph{augmented} unital
cdgas; i.e. cdgas $A$ supplied with an augmentation $A\to \bfk$.  The
morphisms in $\algs_+$ are cdga maps respecting the augmentation.  The
category $\algs_+$ has a closed model structure inherited from
$\algs$.

Inside the category $\algs_+$ is the category $\algs_+^c$ consisting
of homologically connected cdgas and the category $\Hindcplus$ consisting of
augmented homologically disconnected cdgas. We also have the
corresponding subcategories of non-negatively graded augmented cdgas,
indicated by $\geq 0$ in the subscript.
Similarly we have the
category $\SSet_+$ of pointed simplicial sets, the
\emph{undercategory} of the terminal simplicial set; it has a closed
model structure inherited from $\SSet$. Let us formulate an augmented
version of Theorem~A:

\begin{theoremAplus}
The inclusion $\BG_+ \subset \Hi_+$ induces an equivalence of the homotopy
categories $\hoBGdc_+$ and $\hoHindcplus$.
\end{theoremAplus}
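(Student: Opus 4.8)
The plan is to mirror the proof of Theorem~A, which combined the connected equivalence of Proposition~\ref{sec:other-results} with the decomposition of Theorem~B, and to supply augmented analogues of the intervening results. The first thing I would record is that, since the model structure on $\BG_+$ and $\Hi_+$ is the slice structure of~\cite{hovey99:_model} with cofibrations, fibrations and weak equivalences created in $\BG$ and $\Hi$, an object is cofibrant in the augmented category precisely when its underlying cdga is cofibrant, and a lift of augmentation-preserving maps against a trivial fibration is automatically augmentation-preserving: if $\epsilon_B p=\epsilon_E$ and $\epsilon_B f=\epsilon_A$, then any underlying lift $\tilde f$ satisfies $\epsilon_E\tilde f=\epsilon_B p\tilde f=\epsilon_B f=\epsilon_A$. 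Moreover the good path object $D[t,dt]$ is augmented whenever $D$ is. Consequently every homotopy-theoretic construction used in Sections~\ref{sec:homot-prop-local}--\ref{proofs} (cofibrant replacements, minimal models, right homotopies through $D[t,dt]$, and the localization arguments) is available verbatim in the augmented categories.

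With this in hand I would first prove the augmented analogue of Proposition~\ref{sec:other-results}, namely $\hoBGc_+\sim\hoHincplus$ induced by inclusion. For $A\in\Hincplus$ its minimal model $M$, whose existence for an arbitrary homologically connected cdga is established in Proposition~\ref{sec:other-results}, is connected and non-negatively graded, hence carries the canonical augmentation $M\to M^0=\bfk$; this agrees with the given augmentation of $A$, because any augmentation of a connected cdga is forced to kill the positive-degree part and both induce the identity on $H^0=\bfk$. The augmented version of Lemma~\ref{sec:other-results-1} (that the comparison map $K_{A,D}$ is an isomorphism) then follows from the same minimal-model computation, now using the augmented path object $D[t,dt]$, and yields the connected equivalence together with the statement that $\hoBGc_+$ is a full subcategory of the augmented homotopy category of $\Hin$.

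Next comes the decomposition. Given an augmented homologically disconnected cdga $A$, the augmentation induces a unital ring map $H^0(A)\cong\prod_{i\in\J}\bfk\to\bfk$, which is projection onto a single factor $i_0$; thus exactly one component of the decomposition $A\sim\prod_{i\in\J}A_i$ of Theorem~B supports the augmentation. I would then establish the augmented analogues of Principle~\ref{vedle_nekdo_tluce} and Theorem~\ref{sec:main-results-1}: for a homologically connected augmented target $D$, an augmentation-preserving map out of $\prod_iA_i$ is forced, by comparing the images of the orthogonal idempotents $e_t$ under $\epsilon_D f=\epsilon_{i_0}\pi_{i_0}$, to factor through the distinguished projection $\pi_{i_0}$, so that $[\prod_iA_i,D]_+\cong[A_{i_0},D]_+$; for a general disconnected augmented $D\sim\prod_jD_j$ with distinguished component $D_{j_0}$ one assembles componentwise, the non-distinguished targets being governed by the unaugmented Theorem~\ref{sec:main-results-1} and the distinguished target by the collapse just described. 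Combining these augmented analogues of Theorem~B for $\BG$ and $\Hi$ with the connected equivalence of the previous paragraph (and the unaugmented Proposition~\ref{sec:other-results} for the non-distinguished pieces) identifies $\hoBGdc_+$ with $\hoHindcplus$, and naturality of the comparison shows the equivalence is induced by the inclusion $\BG_+\subset\Hi_+$.

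The main obstacle I anticipate is precisely the augmented analogue of Theorem~\ref{sec:main-results-1} in the $\Hin$-setting. There one cannot argue on the nose with idempotents and must instead reuse the localization Theorem~\ref{Andrey}, checking that localizing $\prod_iA_i$ at the distinguished idempotent $u_{i_0}$ is compatible with the augmentation and that the resulting isomorphism $[A_{i_0},D]_+\cong[\prod_iA_i,D]^{u_{i_0}}_+$ respects the augmentation constraint. The bookkeeping of which source component the augmentation pins down, and its interaction with the right homotopies produced by the path object of Lemma~\ref{sec:good}, is where the real care will be needed; everything else is a routine transcription of the unaugmented arguments.
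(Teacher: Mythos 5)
Your proposal is correct and takes essentially the paper's own route: the paper offers no separate argument for Theorem~A$_+$ beyond the remark that the augmented statements ``are proved analogously to the non-augmented case,'' and your outline --- the slice model structure with everything created in $\Hi$ resp.\ $\BG$, the augmented analogue of Proposition~\ref{sec:other-results} via (canonically augmented) minimal models, the Theorem~B$_+$ decomposition in which the augmentation singles out one factor through the idempotents, and the reuse of Theorem~\ref{Andrey} at the distinguished idempotent in the ${\mathbb Z}$-graded case --- is precisely that transcription, with the right places flagged where the augmentation bookkeeping matters.

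One detail does need repair, though it leaves the architecture intact. The product of $D$ with itself in $\Hi_+$ or $\BG_+$ is the \emph{fiber} product $D\times_\bfk D$, and $(\mathrm{ev}_0,\mathrm{ev}_1):D[t,dt]\to D\timesred D$ does not land in it (the two evaluations of $1\otimes t$ have different augmentations), so $D[t,dt]$ --- however you augment it --- is not a good path object in the augmented categories. The correct augmented path object is $(D_+[t,dt])_e=\bfk\oplus\big(D_+\otimes\bfk[t,dt]\big)$, which does surject onto $D\times_\bfk D$; this is exactly what the paper's simplicial Hom $\algs_+\big(A,(\Omega(\Delta^n)\otimes B_+)_e\big)$ encodes. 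The distinction is not cosmetic: homotopy through $D[t,dt]$ is \emph{free} homotopy, and using it verbatim in the augmented analogues of Lemma~\ref{sec:other-results-1}, Theorem~\ref{sec:main-results-1} and Proposition~\ref{sec:homot} would compute augmented maps modulo free homotopy rather than Hom-sets in ${\rm ho}(\algs_+)$ --- the same discrepancy the paper points out when it insists that $\hounder$ and $*/\hoC$ differ and defines $\Tlum(-)_+$ accordingly. With $(D_+[t,dt])_e$ substituted wherever you invoke a path object, your argument goes through as written.
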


For a category $\catC$ with the terminal object $*$ denote by
$\Mult(\catC)_+$ the category whose objects are the formal finite
coproducts $A_1 \sqcup \cdots \sqcup A_s$, $s \geq 1$, such that
$A_1,\ldots, A_{s-1}$ are objects of $\catC$, $A_s$ is an object of
the undercategory $*/\catC$ of $\catC$, and the Hom-sets are
\begin{eqnarray*}
\lefteqn{
\Mult(\catC)_+\big(A_1 \sqcup \cdots \sqcup A_s,B_1 \sqcup \cdots \sqcup
B_t\big)}
\\
&:=&\Mult(\catC)(A_1\sqcup\cdots \sqcup A_{s-1},
B_1\sqcup\cdots\sqcup
B_{t})\times */\catC(A_s,B_t),
\end{eqnarray*}
with the obvious composition law.

Assume that $\catC$ is a closed model category and consider $*/\catC$
with the induced closed model category structure. Since
the categories $\hounder$  and $*/ \hoC$ are not equivalent in general, unlike
the un-pointed case, we cannot apply the above construction to the
homotopy category $\hounder$ directly. We need to modify the above definition
by taking $\Mult\big(\hounder\big)_+$ the category with the same objects as
$\Mult(*/\catC)$, but with the morphism sets
\begin{eqnarray*}
\lefteqn{
\Mult\big(\hounder\big)_+\big(A_1 \sqcup \cdots \sqcup A_s,B_1 \sqcup \cdots \sqcup
B_t\big)}
\\
&:=&\Mult(\hoC)(A_1\sqcup\cdots \sqcup A_{s-1},
B_1\sqcup\cdots\sqcup
\overline B_{t})\times \hounder(A_s,B_t),
\end{eqnarray*}
where $\overline B_{t}$ denotes the object $B_t \in */\catC$
considered as an object of $\catC$ by forgetting the coaugmentation $*
\to B_t$.
The above definitions are designed to model
the category of disconnected \emph{pointed} simplicial sets:

\begin{example}
Denote by $\SSetcplus$ the category of pointed connected simplicial sets and
by $\SSetdcplus$ the category of pointed simplicial sets with finitely many
components. One then has $\SSetdcplus \cong \Mult_+(\SSetc)$ and the
same is obviously
true for the homotopy categories, i.e.~$\hoSSetdcplus \cong \Mult_+(\hoSSetcplus)$.

Note that the weak equivalences in $\SSetdcplus$  are pointed maps inducing bijections
on the sets of connected components as well as a weak equivalence on each connected component.
\end{example}

Dually, let $\catC$ be a category with an initial object $*$.  Denote
by $\Tlum(\catC)_+$ the category whose object are formal finite
products $A_1 \timesred \cdots \timesred A_s$, $s\geq 1$, where $A_1,\ldots,
A_{s-1}$ are objects of $\catC$ and $A_s$ is an object of the
overcategory $\catC/*$ of $\catC$. The morphism sets are
\begin{eqnarray*}
\lefteqn{
\Tlum(\catC)_+\big(A_1 \times \cdots \times A_s,B_1 \times \cdots \times
B_t\big)}
\\
&:=&
\Tlum(\catC)(A_1\times\ldots\times A_s,B_1\times\ldots B_{t-1})\times
\hbox{$\catC/*$}(A_s,B_t).
\end{eqnarray*}
The category $\Tlum\big(\hoover\big)_+$ is
defined by an obvious dualization of the definition of
$\Mult\big(\hounder\big)_+$ given above.
The following theorems are proved analogously to the non-augmented case.

\begin{theoremBplus}
Each homologically disconnected non-negatively graded augmented cdga
$A \in \BGdc_+$ is isomorphic to a finite product $A_1 \times \cdots
\times A_s$ of homologically connected cdgas $A_1,\ldots,A_{s-1} \in
\BGc$ and a homologically connected augmented cdga $A_s \in
\BGc_+$.  This isomorphism extends to a natural equivalence of
categories
\[
\hoBGdc_+ \sim \Tlum(\hoBGc)_+.
\]
Each homologically disconnected augmented cdga $A \in \Hindcplus$ is
weakly equivalent to a finite product $A_1 \times \cdots \times A_s$
of homologically connected cdgas $A_1,\ldots,A_{s-1} \in \Hinc$ and
a homologically connected augmented cdga $A_s \in \Hincplus$. As above,
one has an equivalence
\[
\hoHindcplus \sim \Tlum(\hoHincplus)_+.
\]
\end{theoremBplus}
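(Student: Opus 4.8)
The plan is to follow the proof of Theorem~B almost verbatim, the only genuinely new ingredient being the bookkeeping forced by the augmentation. The key observation is that an augmentation $\epsilon : A \to \bfk$ of a homologically disconnected cdga restricts to a $\bfk$-algebra homomorphism $H^0(A) \cong \prod_{i\in\J}\bfk \to \bfk$; as $\bfk$ is a field, such a map is the projection onto a single factor. Hence there is a unique index $s \in \J$ with $\epsilon(e_s)=1$ and $\epsilon(e_i)=0$ for $i \neq s$, where the $e_i$ are the idempotents used in the proof of Theorem~B. This distinguished index is exactly what singles out the basepoint component $A_s$.

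First I would record the object decomposition. In the non-negatively graded case the splitting $A \cong \prod_{i\in\J} u_i A$ supplied by Theorem~B is a strict isomorphism with each $u_i A \in \BGc$; since $\epsilon$ factors as $A \stackrel{\pi_s}\to u_s A \to \bfk$, the factor $u_s A$ carries the induced augmentation and therefore lies in $\BGc_+$, while the remaining factors are plain objects of $\BGc$. Reindexing so that the distinguished factor comes last gives the asserted form $A_1 \timesred \cdots \timesred A_s$. In the general case the weak equivalence $A \to \prod_{i\in\J} \loc A{u_i}$ of Theorem~B respects augmentations, because $\epsilon$ factors through $H(A)$; the distinguished factor $\loc A{u_s}$ inherits the augmentation and lands in $\Hincplus$, the others in $\Hinc$.

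For the equivalence of categories I would reduce, exactly as in Theorem~B, to computing the morphism sets between decomposed objects, that is, to an augmented analogue of Theorem~\ref{sec:main-results-1}. Maps to each non-basepoint target factor $B_j$, $j<t$, are insensitive to the augmentation and are governed by the unaugmented Theorem~\ref{sec:main-results-1}; they contribute the factor $\Tlum(\hoHinc)(A_1 \timesred \cdots \timesred A_s, B_1 \timesred \cdots \timesred B_{t-1})$, with the augmentation of $A_s$ forgotten. The augmentation constraint couples only the basepoint target $B_t$: the composite $\prod_{i} A_i \to B_t \stackrel{\epsilon_{B_t}}\to \bfk$ must equal $\epsilon_{A_s}\pi_s$, and since by Principle~\ref{vedle_nekdo_tluce} a map to $\bfk$ out of a product is concentrated on a single source factor, this forces that factor to be $A_s$ and the map $A_s \to B_t$ to be augmentation-preserving. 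The resulting contribution is precisely $\hoHincplus(A_s,B_t)$, so the morphism sets assemble into those of $\Tlum(\hoHincplus)_+$.

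The main obstacle will be carrying out this decoupling honestly at the level of homotopy classes in the overcategory $\Hin_+$, and not merely for strict maps. One must show that augmentation-preserving homotopies split along the product; this follows because a product of good path objects is again a good path object and because the augmentation of $\prod_j B_j$ sees only the $B_t$-factor. More delicately, one must verify that the basepoint contribution is genuinely $\hoHincplus(A_s,B_t)$ rather than the a priori different overcategory $(\hoHinc)/{*}$ of the homotopy category -- this is exactly the phenomenon, dual to the one noted before the definition of $\Tlum(\hoover)_+$, that $\hounder$ and ${*}/\hoC$ need not agree. To secure it I would establish the augmented analogues of Lemmas~\ref{sec:loc} and~\ref{sec:good} and of Propositions~\ref{sec:atestace} and~\ref{sec:homot}, together with the augmented form of Theorem~\ref{Andrey'}. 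All of these transfer to $\Hin_+$ without change, since fibrations, cofibrations and weak equivalences there are created in $\Hin$, and the localizations $\loc A{u_s}$ and the path objects $D[t,dt]$ entering those arguments are naturally augmented.
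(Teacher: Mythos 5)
Your overall architecture is exactly the ``analogous'' proof the paper has in mind (its entire proof of Theorem~B$_+$ is the remark that it is proved analogously to the non-augmented case): the augmentation singles out a distinguished idempotent, the object decomposition of Theorem~B is augmentation-compatible, the morphism sets decouple with only the basepoint target coupled to the augmentation, and the basepoint contribution is pinned down as $\hoHincplus(A_s,B_t)$ -- genuinely the homotopy category of $\Hincplus$ -- by transferring the localization machinery of Section~\ref{sec:homot-prop-local} to the overcategory. That is the right proof and the right place to be careful.

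Two of your justifications, however, are wrong as stated and need repair. First, the weak equivalence $A \to \prod_{i\in\J}\loc A{u_i}$ does not respect augmentations ``because $\epsilon$ factors through $H(A)$'' -- an augmentation of a cdga does not factor through its cohomology. The correct argument is the one latent in your first paragraph: since $\epsilon_*(e_s)=1$ and $\bfk$ carries the zero differential, the cocycle $\epsilon(u_s)\in\bfk$ \emph{equals} $1$, so $\epsilon$ inverts $u_s$ and hence factors through the localization map $A\to\loc A{u_s}$ by the universal property; one then augments the product through the $s$-th projection. Second, and more substantively, the claim that ``the path objects $D[t,dt]$ entering those arguments are naturally augmented,'' so that Lemmas~\ref{sec:loc}, \ref{sec:good} and Propositions~\ref{sec:atestace}, \ref{sec:homot} transfer \emph{without change}, is false: $D[t,dt]$ admits augmentations (e.g.\ $\epsilon_D\circ p_1$), but no augmentation makes \emph{both} evaluations $p_1,p_2$ augmented maps, i.e.\ $(p_1,p_2)$ does not land in the categorical product $D\times_{\bfk}D$ of the overcategory, so $D[t,dt]$ is not a path object in $\Hin_+$ at all. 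The correct good path object is the relative one, $\big(D_+[t,dt]\big)_e \cong \bfk\oplus D_+[t,dt]$, with the two evaluations restricted to it (this is precisely the shape the paper itself uses when it enriches $\algs_+$ simplicially, setting $\algs_+(A,B)_n=\algs_+\big(A,(\Omega(\Delta^n)\otimes B_+)_e\big)$); alternatively one may simply invoke the factorization axiom in the overcategory, where fibrations and weak equivalences are created in $\Hin$. With this replacement your transfer of Theorem~\ref{Andrey'} and of Theorem~\ref{sec:main-results-1} to the augmented setting, and with it the whole proof, goes through.
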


\begin{theoremCplus}
The following three categories are equivalent.
\begin{itemize}
\item[--]
The homotopy category
$\honilpsimpdisconplus$ of pointed simplicial sets with finitely many components that
are rational and of finite type,

\item[--]
the homotopy category
$\horatcatdc_+$ of homologically disconnected  non-negatively gra\-ded
augmented cdgas of finite type over~${\mathbb Q}$, and

\item[--]
the homotopy category
$\horathincatdcplus$ of homologically disconnected ${\mathbb Z}$-graded
augmented cdgas of finite type over~${\mathbb Q}$.
\end{itemize}
\end{theoremCplus}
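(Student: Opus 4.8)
The plan is to transcribe the proof of Theorem~C into the pointed/augmented setting, replacing $\Mult$ and $\Tlum$ by the decorated constructions $\Mult_+$ and $\Tlum(-)_+$ and each connected equivalence by its pointed counterpart. First I would record the space-side decomposition. The pointed version of Example~\ref{v_susarne} (the unlabelled Example displayed before Theorem~B$_+$) gives, after imposing the nilpotent, rational, finite-type conditions componentwise, an equivalence
\[
\honilpsimpdisconplus \sim \Mult_+(\honilpsimpconplus).
\]
This is legitimate because a weak equivalence of pointed simplicial sets with finitely many components is precisely a pointed map inducing a bijection on components together with a weak equivalence on each component; it is the pointed replacement for the first display in the proof of Theorem~C.

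Next I would treat the algebra side using Theorem~B$_+$, which provides $\hoHindcplus \sim \Tlum(\hoHincplus)_+$ and $\hoBGdc_+ \sim \Tlum(\hoBGc)_+$. The notion of finite type (Definition~\ref{finite-type-def}) depends only on the underlying non-augmented cdga, so Proposition~\ref{p2} applies verbatim: a homologically disconnected augmented cdga is of finite type if and only if each of its connected factors is. Consequently the two equivalences above restrict to the finite-type full subcategories, yielding the pointed analogues of~(\ref{zase_chripka}),
\[
\horathincatdcplus \sim \Tlum(\horathincat)_+ \quad\text{and}\quad \horatcatdc_+ \sim \Tlum(\horatcat)_+,
\]
in which the $(-)_+$ decoration records that one distinguished factor is augmented.

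I would then glue the two sides. The pointed form of the Bousfield--Gugenheim equivalence (the pointed version of \cite[Theorem~9.4]{bousfield-gugenheim} recorded after~(\ref{houbicky})) identifies $\honilpsimpconplus$ with the connected augmented finite-type non-negatively graded cdga category, while its unpointed form identifies $\honilpsimpcon$ with $\horatcat$. Applying these factorwise --- and using that the adjunction~(\ref{houbicky}) is contravariant, so that the coproduct decoration $\Mult_+$ on the space side matches the product decoration $\Tlum(-)_+$ on the algebra side --- gives $\Mult_+(\honilpsimpconplus) \sim \Tlum(\horatcat)_+$ and hence the chain $\honilpsimpdisconplus \sim \horatcatdc_+$. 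The remaining equivalence $\horatcatdc_+ \sim \horathincatdcplus$ follows by combining the two displays of the previous paragraph with the pointed analogue of Proposition~\ref{sec:other-results}, namely $\horatcat \sim \horathincat$ in augmented form, applied factorwise.

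The main obstacle is not any single computation but the bookkeeping forced by the asymmetry of $\Mult_+$ and $\Tlum(-)_+$. As the paper emphasises, $\hounder$ and $*/\hoC$ need not be equivalent, so the Hom-sets of these decorated categories are defined by hand, with a forgetful functor applied to the distinguished (pointed, resp.\ augmented) component. The delicate point is therefore to verify that the factorwise connected equivalences are compatible with these hand-crafted morphism sets --- that the distinguished component is carried to the distinguished component while the remaining components use the unpointed Hom-sets --- so that the equivalences assemble functorially rather than merely objectwise. Once this compatibility is checked, the rest is a routine transcription of the argument for Theorem~C.
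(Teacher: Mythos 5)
Your proposal is correct and is exactly the argument the paper intends: the paper gives no separate proof of Theorem~C$_+$, stating only that the augmented/pointed theorems ``are proved analogously to the non-augmented case,'' and your transcription of the proof of Theorem~C --- the pointed Example for the space side, Theorem~B$_+$ restricted via Proposition~\ref{p2} (legitimate since finite type quantifies over all augmentation ideals and so depends only on the underlying cdga), the pointed Bousfield--Gugenheim equivalence, and the augmented form of Proposition~\ref{sec:other-results} --- is precisely that analogy carried out. You also correctly isolate the one genuinely delicate point, namely that the factorwise equivalences must be compatible with the forgetful functors appearing in the asymmetric Hom-sets of $\Mult(-)_+$ and $\Tlum(-)_+$, which holds here because the pointed and unpointed Bousfield--Gugenheim equivalences are induced by the same adjunction~(\ref{houbicky}).
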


\part{The Lie-Quillen approach}

In this part we give an application of the developed theory to the
structure of MC spaces and describe
the second version of disconnected
rational homotopy theory based on dglas.

\section{The simplicial Maurer-Cartan space}
\label{sec:appl-struct-simpl-3}

We write $\algs_+$ for the category of \emph{augmented} cdgas; it is,
thus, an overcategory of $\ground\in\algs$. As such, it inherits from
$\algs$ the structure of a closed model category. The weak
equivalences and fibrations are still quasi-isomorphisms and
surjective maps respectively. Note that the product of two augmented
cdgas $A$ and $B$ is their \emph{fiber product} $A\hskip
-.03em\times_{\ground}\hskip -.03em
B$. We denote the augmentation ideal of $A\in\algs_+$ by $A_+$; it is
a possibly non-unital cdga. Conversely, given a non-unital cdga $B$
one can form a unital algebra $B_e$ obtained by adjoining the unit;
$B_e\cong \ground\oplus B$. Thus, the category $\algs_+$ is equivalent
to the category of non-unital cdgas.

\begin{defi}
\label{sec:appl-struct-simpl-1}
A \emph{ complete} dgla is an inverse limit of finite-dimensional
nilpotent dglas. The category of complete dglas and their continuous
homomorphisms will be denoted by $\L$.
\end{defi}

\begin{rem}
\label{sec:appl-struct-simpl-2}
The functor of linear duality establishes an anti-equivalence between
the category $\L$ and that of conilpotent Lie coalgebras,
cf.~\cite{block05:_andre_quill} where complete Lie algebras were called
\emph{pronilpotent} Lie algebras; we feel that this terminology might
not be ideal since, e.g.~an~abelian Lie algebra on a countably
dimensional vector space is not pronilpotent under this convention.

Let us show, however, that complete dglas $\g \in \L$ are pronilpotent in the
classical sense,
\begin{equation}
\label{eq:9}
\g = \lim_k \g/\g_k,
\end{equation}
where $\g = \g_1 \supset \g_2 \supset \cdots$ is the lower central
series. Assume $\g =  \lim_n\g^n$ where $\g^n$, $n \geq 0$, are
finite-dimensional nilpotent.  Since the
filtered limit of finite-dimensional vector spaces is exact, we easily verify
that $(\g^n)_k \cong \lim_n (\g^n)_k$ and that $\g/\g_k \cong \lim_n
\g^n/(\g^n)_k$ for each $k \geq 1$. The nilpotence of
$\g^n$ implies $\lim_k \g^n/(\g^n)_k  \cong \g^n$, therefore
\[
\lim_k \g/\g_k \cong \lim_k \lim_n \g^n/(\g^n)_k \cong
\lim_n \lim_k \g^n/(\g^n)_k
 \cong  \lim_n \g^n = \g
\]
as desired.
\end{rem}

\begin{defi}\label{CEHar}\
Let $\Lhat:\algs_+\mapsto\L$ be the functor associating to an
augmented cdga $A$ the complete dgla $\L(A)$ whose underlying graded
Lie algebra is $\hat{\LL}\Sigma^{-1} A_+^*$, the completed free Lie algebra
on $\Sigma^{-1} A_+^*$. The differential $d$ in $\L(A)$ is defined as
$d=d_I+d_{\it II}$ where $d_I$ is induced by the internal differential in
$A_+$ and $d_{\it II}$ is determined by its restriction onto $\Sigma^{-1} A^*$,
which is, in turn, induced by the product map $A\otimes A\to A$.

Dually,
let $\Cfun:\L\mapsto\algs_+$ be the functor associating to a complete
dgla $\g$ the (discrete) augmented cdga $\Cfun(A)$ whose underlying
graded Lie algebra is $S\Sigma \g^*$, the symmetric algebra on
$\Sigma \g^*$. The differential $d$ in $\Cfun(\g)$ is defined as
$d=d_I+d_{\it II}$ where $d_I$ is induced by the internal differential in
$\g$ and $d_{\it II}$ is determined by its restriction onto $\Sigma \g^*$,
which is, in turn, induced by the bracket map $\g\otimes \g\to \g$.
\end{defi}

\begin{rem}
The dgla $\Lhat(A)$ is also known as the \emph{Harrison complex} of an
augmented cdga $A$ whereas $\Cfun(\g)$ is an analogue of the
Chevalley-Eilenberg complex of $\g$, except that usually $\g$ is not
assumed to be complete (and in that case $\Cfun(\g)$ has to be
completed).
\end{rem}
The following result follows directly from the definition.

\begin{prop}
\label{adj_bis}
The functors $\Cfun$ and $\Lhat$ are adjoint, so there is a natural
isomorphism for $A \in \algs_+$ and $\g\in\L$:
\[
\L\big(\Lhat(A), \g\big) \cong \algs_+\big(\Cfun (\g), A\big).
\]
\end{prop}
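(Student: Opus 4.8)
The plan is to establish the adjunction $\L\big(\Lhat(A),\g\big)\cong\algs_+\big(\Cfun(\g),A\big)$ by exhibiting a natural bijection between continuous dgla maps $\Lhat(A)\to\g$ and augmented cdga maps $\Cfun(\g)\to A$, and checking that both sides are governed by the same Maurer--Cartan-type data. The key observation is that $\Lhat(A)$ has underlying graded Lie algebra the completed free Lie algebra $\hat{\LL}\Sigma^{-1}A_+^*$, so by the universal property of the free (complete) Lie algebra a continuous morphism of graded Lie algebras $\Lhat(A)\to\g$ is determined freely by a map of graded vector spaces $\Sigma^{-1}A_+^*\to\g$, equivalently (by the duality between discrete and linearly compact spaces recalled in the conventions) by an element of $\big(\Sigma^{-1}A_+^*\big)^*\hatot\,\g\cong \Sigma A_+\hatot\,\g$. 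Symmetrically, since $\Cfun(\g)$ is free as a graded commutative algebra on $\Sigma\g^*$, an augmented cdga map $\Cfun(\g)\to A$ is determined by a map $\Sigma\g^*\to A_+$, i.e.\ by an element of the same space $\Sigma A_+\hatot\,\g$. First I would set up this common underlying bijection on the level of graded (non-differential) objects.

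The substance of the proof is then verifying that the bijection on underlying data is compatible with the differentials on both sides, so that it restricts to a bijection between the respective morphisms of \emph{differential} graded objects. Here I would use the explicit description in Definition~\ref{CEHar}: both differentials decompose as $d=d_I+d_{\it II}$, where $d_I$ comes from the internal differential of $A$ (resp.\ $\g$) and $d_{\it II}$ from the multiplication $A\ot A\to A$ (resp.\ the bracket $\g\ot\g\to\g$). A map of graded Lie algebras $\Lhat(A)\to\g$ commutes with the differentials precisely when the corresponding element of $\Sigma A_+\hatot\,\g$ satisfies a certain quadratic equation; I would compute this condition and show it coincides, term by term, with the condition that the dual map $\Cfun(\g)\to A$ be a chain map. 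The crucial point is a \emph{dual pairing} between the quadratic parts: the component of $d_{\it II}$ on the Lie side coming from the bracket $\g\ot\g\to\g$ is linearly dual to the component of $d_{\it II}$ on the commutative side coming from the product $A\ot A\to A$, so the two chain-map conditions transpose into each other under the duality.

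The main obstacle will be the bookkeeping with suspensions, signs, and the interaction of linear duality with completion. One must be careful that the isomorphism $(\Sigma V)^*\cong\Sigma V^*$ from the conventions, together with the convention $(V^*)^i=(V_i)^*$, is applied consistently so that the free functor $\hat{\LL}(-)$ on the Lie side is genuinely dual to the cofree (symmetric coalgebra) structure implicit on the commutative side; a sign discrepancy here would break the matching of $d_{\it II}$ terms. I would also need to confirm that the completed tensor product $\Sigma A_+\hatot\,\g$ is the correct ambient object on \emph{both} sides---on the Lie side because $\g$ is a limit of finite-dimensional nilpotent dglas and $\Lhat(A)$ is built using a completed free Lie algebra, and on the commutative side because $A$ is discrete while $\g^*$ is a discretization of a linearly compact space---so that continuity of Lie maps corresponds exactly to ordinary (unrestricted) cdga maps with no topology on $A$. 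Once the differentials are matched and the duality identifications are pinned down, naturality in $A$ and $\g$ is immediate from the functoriality of all the constructions involved, and the adjunction follows.
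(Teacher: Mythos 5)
Your proposal is correct and is exactly the argument the paper has in mind: the paper offers no written proof, stating only that the adjunction ``follows directly from the definition,'' and your plan---maps out of the free objects on both sides are determined by their values on generators, these values live in the same completed tensor space under the discrete/linearly-compact duality, and the two chain-map conditions are transposes of one another via the duality between the product $A\ot A\to A$ and the bracket $\g\ot\g\to\g$---is precisely the standard verification that makes this claim precise. One small bookkeeping point: under the paper's conventions $(\Sigma^{-1}A_+^*)^*\cong\Sigma^{-1}A_+$, so the common ambient object is $\Sigma^{-1}A_+\hatot\,\g$ (equivalently, degree $-1$ elements, i.e.\ MC-type elements, of $A_+\hatot\,\g$) rather than $\Sigma A_+\hatot\,\g$, but this is exactly the suspension bookkeeping you flagged and does not affect the argument.
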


\begin{rem}
\label{sec:appl-struct-simpl}
In Section~\ref{dual_Hinich} we endow $\L$ with the structure of a closed model
category, in such a way that the functors $\Lhat$ and $\Cfun$ will
become inverse equivalences of the corresponding homotopy
categories. We will also see that each cdga of the form $\Cfun(\g)$ is
cofibrant.
\end{rem}

\subsection{Simplicial mapping space for cdgas and dglas}
To construct a simplicial Hom in the category of complete dglas we are
forced, as an intermediate step, to deal with dglas endowed with a
linear topology, but which are \emph{not complete}. An example of such
a dgla is a tensor product $A\otimes \g$ where $\g$ is a complete dgla
and $A$ a discrete cdga that is infinite dimensional. Our convention
is that the tensor product of a discrete vector space $W$ and a~complete vector space
$V=\lim_\alpha  V_{\alpha}$   is
the topological vector space $W\otimes V:=  \lim_\alpha
(W \otimes V_{\alpha})$. Whenever two dglas $\g$ and $\h$ are endowed
with a linear topology we will write $\Hom_{\text{dgla}}(\g,\h)$ for
the set of \emph{continuous} Lie homomorphisms from $\g$ into $\h$.

Let $\g$ and $\h$ be two complete dglas.
We are going to construct a simplicial set of maps
$\L(\g,\h)_\bullet$. As expected, the simplicial enrichment will be
obtained by tensoring the dgla $\h$ with the  Sullivan-de~Rham algebra
$\Omega(\Delta^\bullet)$ of polynomial forms on
the standard topological cosimplicial
simplex, cf.~\cite[\S2]{bousfield-gugenheim}.
According to our conventions, $\Omega(\Delta^\bullet)$ will be
considered as a homologically graded cdga, so
$\Omega(\Delta^\bullet)\otimes\h$ will be a homologically graded dgla. In
particular, $\Omega^p(\Delta^\bullet)\otimes\h_q$ will be placed in
homological degree $q-p$.

\begin{defi}
Let $\L(\g,\h)_\bullet:=\Hom_{\text
{dgla}}\big(\g,\Omega(\Delta^\bullet)\otimes\h\big)$ with the faces and degeneracy maps
coming from the corresponding geometric maps on the cosimplicial simplex
$\Delta^*$.
\end{defi}

\begin{rem}
Note that $\Omega(\Delta^n)$, $n \geq 0$, is a discrete
infinite-dimensional cdga whereas
$\h$ is a complete dgla; thus $\Omega(\Delta^n)\otimes\h$ is a
topological, but not a complete dgla.
\end{rem}

Recall that the category $\algs_+$ also has a simplicial Hom: for
two augmented cdgas $A$ and $B$ let $\algs_+(A,B)_\bullet$ be the
simplicial set for which\label{Dnes_s_Jarcou_mechaci}
\[
\algs_+(A,B)_n=\algs_+\big(A,(\Omega(\Delta^n)\otimes B_+)_e\big).
\]

The following result is an enriched version of Proposition~\ref{adj_bis};
its proof is a straightforward inspection.

\begin{prop}\label{adjen}
For any augmented cdga $A$ and a complete dgla $\g$
there is a natural isomorphism of simplicial sets:
\begin{equation}
\label{adjenriched}
\L(\Lhat(A),\g)_\bullet \cong \algs_+(\Cfun(\g),A)_\bullet\ .
\end{equation}
\end{prop}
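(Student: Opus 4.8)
The plan is to prove the enriched isomorphism degreewise and then check compatibility with the simplicial operators, reducing everything to the twisting-cochain reading of the unenriched adjunction Proposition~\ref{adj_bis}. First I would recall how that proposition is established. Since $\Lhat(A)=\hat\LL(\Sigma^{-1}A_+^*)$ is free as a complete graded Lie algebra, a continuous dgla map $\Lhat(A)\to\g$ is the same as a continuous degree-zero linear map $\Sigma^{-1}A_+^*\to\g$ compatible with $d_I+d_{\it II}$; dualizing through the duality between the discrete space $A_+$ and the linearly compact space $A_+^*$, such a datum is precisely a degree $-1$ element $\tau$ of the dgla $A_+\widehat\otimes\g$ (the commutative algebra $A_+$ tensored with the Lie algebra $\g$), and the differential compatibility is exactly the equation $d\tau+\frac12[\tau,\tau]=0$. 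Symmetrically, since $\Cfun(\g)=S(\Sigma\g^*)$ is free as an augmented graded commutative algebra, a map $\Cfun(\g)\to A$ is an $\MC$ element of $\g\widehat\otimes A_+$; as $A_+\widehat\otimes\g\cong\g\widehat\otimes A_+$, both Hom-sets are naturally identified with $\MC(A_+\widehat\otimes\g)$, which is Proposition~\ref{adj_bis}.

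Next I would run this same computation in simplicial degree $n$ with coefficients in $\Omega(\Delta^n)$. On the left, $\L(\Lhat(A),\g)_n=\Hom_{\text{dgla}}(\Lhat(A),\Omega(\Delta^n)\otimes\g)$; the argument above applies verbatim with the topological (non-complete) target $\Omega(\Delta^n)\otimes\g$ in place of $\g$, since it uses only the universal property of $\hat\LL(-)$ and the $A_+/A_+^*$ duality, and it identifies this set with $\MC\big(A_+\widehat\otimes(\Omega(\Delta^n)\otimes\g)\big)$. On the right, the symmetric argument identifies $\algs_+(\Cfun(\g),A)_n=\algs_+\big(\Cfun(\g),(\Omega(\Delta^n)\otimes A_+)_e\big)$ with $\MC\big((\Omega(\Delta^n)\otimes A_+)\widehat\otimes\g\big)$. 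These two dglas coincide,
\[
A_+\widehat\otimes\big(\Omega(\Delta^n)\otimes\g\big)\cong\big(\Omega(\Delta^n)\otimes A_+\big)\widehat\otimes\g,
\]
their bracket being built from the commutative multiplication of $\Omega(\Delta^n)\otimes A_+$ and the Lie bracket of $\g$, and their differential being the total one; here one uses the convention $\Omega_*=\Omega^{-*}$ to reconcile the homological grading of $\g$ with the cohomological gradings of $A$ and $\Omega$. Hence the two $\MC$-sets agree and I obtain a bijection in each degree $n$.

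Finally I would observe that the simplicial structure on either side is induced by the cosimplicial cdga $\Omega(\Delta^\bullet)$ acting in the $\Omega$-slot of this common dgla; since the degreewise identifications are natural in $\Omega(\Delta^\bullet)$ — they merely transpose tensor factors — the faces and degeneracies correspond, so the bijections assemble into the asserted isomorphism of simplicial sets~(\ref{adjenriched}). The main obstacle is the middle step: one must verify that the twisting-cochain description is genuinely insensitive to whether the commutative or the Lie side is freely resolved, and that it survives passage to the non-complete target $\Omega(\Delta^n)\otimes\g$ — so that one cannot simply quote Proposition~\ref{adj_bis} with $\g$ replaced by $\Omega(\Delta^n)\otimes\g$, since $\Cfun$ is only defined on complete dglas. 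This is a matter of carefully tracking the completed tensor products and the Koszul signs arising from the clashing grading conventions; once it is in place, the simpliciality is automatic.
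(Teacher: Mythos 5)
Your proposal is correct and is essentially the paper's own argument: the paper disposes of this proposition with the single remark that it is ``an enriched version of Proposition~\ref{adj_bis}; its proof is a straightforward inspection,'' and your degreewise identification of both Hom-sets with the Maurer--Cartan set of the common dgla $A_+\hatot\big(\Omega(\Delta^n)\otimes\g\big)\cong\big(\Omega(\Delta^n)\otimes A_+\big)\hatot\,\g$, followed by naturality in the $\Omega(\Delta^\bullet)$ slot, is precisely that inspection carried out in detail. Your flagged subtlety (that one may not quote Proposition~\ref{adj_bis} verbatim with $\g$ replaced by the non-complete dgla $\Omega(\Delta^n)\otimes\g$) is genuine and is resolved exactly as you indicate: since $\Omega(\Delta^n)\otimes\g=\lim_\alpha\big(\Omega(\Delta^n)\otimes\g_\alpha\big)$ with each $\Omega(\Delta^n)\otimes\g_\alpha$ nilpotent, the series defining the extension of a map off the generators $\Sigma^{-1}A_+^*$ still converges, so the universal property of $\hat{\LL}(-)$ survives.
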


\begin{rem}
We defined the simplicial mapping space between two augmented cdgas in
this way in order for the adjunction (\ref{adjenriched}) to hold. The
simplicial mapping space in the category of (unital) cdgas is
different; for two (unital) cdgas $A$ and $B$ we have
\begin{equation}
\label{zitra_musim_do_Prahy}
\algs(A,B)_\bullet := \algs\big(A,B \otimes \Omega(\Delta^\bullet)\big).
\end{equation}
It is easy to see that if $A$ is an augmented cdga and $B$ is a (unital)
cdga then there is an isomorphism between simplicial sets
\[
\algs_+(A,B_e)_\bullet \cong \algs(A,B)_\bullet\ .
\]
\end{rem}

Denote by $\Sp$ the dgla spanned by two vectors $x$ \and $[x,x]$ with
$|x|=-1$ and the differential $d(x)=-\frac{1}{2}[x,x]$. The dgla $\Sp$
is designed to model the topological space $S^0$, the zero-dimensional
sphere (which is a disjoint union of two points). Note that $\Sp$ is
isomorphic to $\Lhat(\ground\times\ground)$, where $\ground\times
\ground$ is the augmented cdga (with vanishing differential) obtained
from the ground field $\ground$ by adjoining a unit.

\begin{defi}
\label{MCdef}
Let $\g$ be a complete cdga. Its \emph{MC simplicial set} $\MC_\bullet(\g)$ is the simplicial mapping space:
\[
\MC_\bullet(\g) :=\L(\Sp,\g)_\bullet\ .
\]
In other words, $\MC_\bullet(\g)$ is the simplicial space
of MC elements in $\g \otimes \Omega(\Delta^\bullet)$.
The set $\pi_0\MC_\bullet(\g)$ is called the \emph{MC
  moduli set} of $\g$ and will be denoted by $\MCmod(\g)$.
\end{defi}

\begin{example}
\label{ex1}
Fix a finite set $S$ and assign, to each $s \in S$, a degree $-1$ generator
$x_s$. Consider the complete dgla $\g_S := \hat\LL(x_s;\ s \in S)$
freely generated by the set $\{x_s\}_{s \in S}$, with the differential
given by $dx_s := -\frac 12 [x_s,x_s]$.
Let us describe the MC elements in $\g_S \ot \Omega(\Delta^n)$.

The vector space $\big(\g_S \ot \Omega(\Delta^n)\big)_{-1}$ consists of
expressions
\[
\textstyle
x = \sum_{s \in S} x_s \ot \alpha_s,\ \alpha_s \in \Omega^0(\Delta^n).
\]
The MC condition for $x$ reads
\[
\sum_{s}\left(
-\frac12[x_s,x_s] \ot \alpha_s - x_s \ot d\alpha_s + \frac12 [x_s,x_s]
\ot \alpha_s^2
\right)
+
\sum_{s' \not= s''} [x_{s'},x_{s''}] \ot \alpha_{s'} \alpha_{s''}=0.
\]
By the freeness of $\g_S$, the above equation is satisfied if and only
if
\begin{subequations}
\begin{align}
\label{1a}
d \alpha_s &= 0 \mbox { for each } s \in S,
\\
\label{1b}
\alpha_s(1-\alpha_s) &= 0 \mbox  { for each } s \in S \mbox { and}
\\
\label{1c}
\alpha_{s'} \alpha_{s''}&=0  \mbox  { for each } s',s'' \in S,\ s'
\not= s''.
\end{align}
\end{subequations}
Equation~(\ref{1a}) implies that $\alpha_s$ is a constant,
by~(\ref{1b}) the only possible values of this constant are $0$ or
$1$. Equation~(\ref{1c}) then implies that $\alpha_s = 1$ for at most
one $s$.

We conclude that $\MC_\bullet(\g_S)$ is the constant simplicial set
representing the discrete space $S \cup \{*\}$. If $S$ is arbitrary, we
define $\g_S := \lim_F \g_F$, the obvious limit over finite subsets
$F \subset S$. It is easy to check that the above calculation remains
valid, so we see that each
non-empty discrete space can be realized as the MC space of
some complete dg-Lie algebra. An interpretation of this example is
given in Remark~\ref{Jaruska} below.
\end{example}

\begin{rem}
\label{Jaruska_ma_hodne_prace.}
In light of Proposition~\ref{adjen} we see that the MC simplicial set
of a complete dgla $\g$ is isomorphic to
\[
\algs_+\big(\Cfun(\g),\ground\times\ground\big)_\bullet \cong
\algs\big(\Cfun(\g),\ground)_\bullet.
\] It is easy to see that
if two complete $\g$ and $\h$ are related by a filtered
quasi-isomorphism, then $\MC_\bullet(\g)$ and $\MC_\bullet(\h)$ are
weakly equivalent simplicial sets. We will not need this result.
\end{rem}

\subsection{Disjoint products of dglas and their MC spaces}

\label{Pozitri_do_Cambridge}
Recall that an MC element $\xi\in\h_{-1}$ in a dgla $\h$ allows one to
twist the differential $d$ in $\h$ according to the formula
$d^\xi(?)=d(?)+[?,\xi]$. The graded Lie algebra $\h$ supplied with the
twisted differential $d^\xi$ will be denoted by $\h^\xi$. The same
construction applies when $\h$ is a complete dgla, in that case
$\h^\xi$ will likewise be complete.

Given an arbitrary complete dgla $\g$ consider the complete dgla $\g*\Sp$ where $*$ stands for the coproduct in the category $\L$. Abusing the notation, we will write $x$ for the image of $x\in \Sp$ inside $\g*\Sp$ under the inclusion of dglas $\Sp\hookrightarrow \g*\Sp$.  It is clear that $x$ is an MC element in $\g*\Sp$. The twisted dgla  $(\g*\Sp)^x$ is analogous to adjoining a base point to a topological space. Generalizing this construction, we define a \emph{disjoint product} of two complete dglas; note that it also makes sense for ordinary (non-complete) dglas.

\begin{defi}
Let  $\g$ and $\h$ be two complete dglas. Their disjoint product is the complete dgla $(\g*\Sp)^x*\h$; it will be denoted by $\g\scup\h$.
\end{defi}

In particular, for a dgla $\g$ the twisted free product $(\g*\Sp)^x$
is now denoted as $\g\scup 0$. Even if $\g$ is non-negatively
graded, $\g\scup 0$ is not; this obviously applies also to
more general disjoint products of dglas.

\begin{rem}
\label{zavolam}
The operation of a disjoint product of two dglas was considered
(without this name) in \cite{BM} and, in a more general, operadic
context in \cite{chuang13:maurer-cartan}. It is not hard to prove that for any dgla $\g$
(complete or not) the dgla $\g\scup0$ is acyclic, cf.\ \cite[Theorem 5.7]{chuang13:maurer-cartan}
or \cite[Lemma 6.1]{BM}. It follows that for any two dglas
$\g$ and $\h$ the dgla $\g\scup\h$ is quasi-isomorphic to $\h$.

In particular, the operation of a disjoint product of two (complete)
dglas is very far from being commutative. This phenomenon has the
following topological explanation, or analogue. Given two
\emph{pointed} topological spaces $X$ and $Y$ define their `disjoint
product' as the disjoint union $X\scup Y$ having its base point in
$Y$. Clearly, the operation of disjoint product of two topological
spaces is likewise noncommutative, owing to the asymmetric placement
of the base point.
\end{rem}

The following statement `measures' the non-commutativity of the
disjoint product.

\begin{proposition}
The disjoint products $\g\scup \h$ and $\h\scup \g$ are
 related by a twist; namely the element $-x$ is an MC element in
 $\g\scup \h\cong (\g*\Sp)^x*\h$ and
 there is an isomorphism of dglas
 \[
(\g\scup \h)^{-x}\cong \h\scup \g.
 \]
\end{proposition}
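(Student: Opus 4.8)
The plan is to prove both assertions by a direct computation with the twisted differentials, since $(\g\scup\h)^{-x}$ and $\h\scup\g$ are (completed) free products on the same generating data, so that the only real content is the bookkeeping of signs. First I would record the differential $D$ of $\g\scup\h=(\g*\Sp)^x*\h$ explicitly on generators. Because the twist by $x$ is performed on $\g*\Sp$ \emph{before} $\h$ is adjoined, one obtains, writing $d_\g$ and $d_\h$ for the differentials of $\g$ and $\h$,
\[
D(g)=d_\g(g)+[g,x]\ \ (g\in\g),\qquad D(x)=-\tfrac{1}{2}[x,x]+[x,x]=\tfrac{1}{2}[x,x],\qquad D(h)=d_\h(h)\ \ (h\in\h).
\]
The Maurer--Cartan equation for $-x$ is then immediate:
\[
D(-x)+\tfrac{1}{2}[-x,-x]=-\tfrac{1}{2}[x,x]+\tfrac{1}{2}[x,x]=0,
\]
which proves the first assertion and, in particular, guarantees that the twisted differential $D^{-x}$ squares to zero, so that $(\g\scup\h)^{-x}$ is a genuine dgla.

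Next I would compute $D^{-x}(?)=D(?)+[?,-x]$ on generators. Adding $[\,\cdot\,,-x]$ to the formulas above cancels the bracket term on $\g$, flips the sign on $x$, and \emph{introduces} a bracket term on $\h$ (where $D$ had none):
\[
D^{-x}(g)=d_\g(g),\qquad D^{-x}(x)=-\tfrac{1}{2}[x,x],\qquad D^{-x}(h)=d_\h(h)-[h,x].
\]
On the other side, writing $y$ for the generator of the copy of $\Sp$ used in $\h\scup\g=(\h*\Sp)^y*\g$, the same unravelling gives
\[
d(h)=d_\h(h)+[h,y],\qquad d(y)=\tfrac{1}{2}[y,y],\qquad d(g)=d_\g(g).
\]

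Finally, since the underlying graded Lie algebras of $(\g\scup\h)^{-x}$ and $\h\scup\g$ are both the completed free product of $\g$, $\h$ and one copy of $\Sp$, I would define $\Phi\colon(\g\scup\h)^{-x}\to\h\scup\g$ on generators by the identity on $\g$ and on $\h$ and by $\Phi(x):=-y$; this extends to a continuous isomorphism of complete graded Lie algebras. It remains to check that $\Phi$ intertwines the differentials, which follows by comparing the two lists: on $\g$ both sides give $d_\g$; on $x$ one has $\Phi\big(-\tfrac{1}{2}[x,x]\big)=-\tfrac{1}{2}[y,y]=d(-y)=d\,\Phi(x)$; and on $h$ one has $\Phi\big(d_\h(h)-[h,x]\big)=d_\h(h)+[h,y]=d\,\Phi(h)$, using $\Phi(x)=-y$. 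Hence $\Phi$ is an isomorphism of dglas. There is no genuine obstacle here; the two points requiring care are the sign $\Phi(x)=-y$ (rather than $+y$), which is forced precisely by matching $D^{-x}(x)$ with $d(y)$, and the observation that the bracket term $-[h,x]$ produced on the $\h$-factor by the twist by $-x$ is exactly the term $[h,y]$ carried by the $\h$-factor of $\h\scup\g$.
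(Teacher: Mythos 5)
Your proof is correct and takes essentially the same approach as the paper: the paper's own proof simply exhibits the very same map (the identity on $\g$ and $\h$, sending the $\Sp$-generator $x$ of $\g\scup\h$ to the negative of the corresponding generator of $\h\scup\g$) and declares the compatibility with the differentials ``straightforward to check.'' Your explicit computation of the twisted differentials on generators, including the sign $\Phi(x)=-y$ forced by matching $D^{-x}(x)$ with $d(-y)$, is precisely that check, carried out correctly.
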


\begin{proof}
The isomorphism in the proposition is the identity map on $\g$ and
$\h$ and takes $x\in\g\scup \h$ to $-x\in\h\scup \g$. It is
straightforward to check that it has the desired property.
\end{proof}

However, the following result shows that the operation of the
disjoint product of dglas is associative, just as its topological
counterpart.

\begin{prop}
\label{Jarka_Ztratila_doklady}
For any three dglas $\g,\h,\a$ there is a natural isomorphism of dglas:
\[
(\g\scup \h)\scup\a\cong \g\scup(\h\scup\a).
\]
\end{prop}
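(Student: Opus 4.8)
The plan is to observe that both sides share the same underlying (completed) free product of graded Lie algebras, and then to write down an explicit isomorphism that is the identity on the three factors $\g,\h,\a$ and performs a unipotent substitution on the two adjoined base points. First I would unwind the nested definitions on the left-hand side: writing $(\g\scup\h)\scup\a=\big((\g\scup\h)*\Sp\big)^{b}*\a$, I denote by $a$ the base point of the inner product $\g\scup\h=(\g*\Sp)^{a}*\h$ and by $b$ the base point adjoined at the outer stage, so the underlying graded Lie algebra is the free product $\g*\Sp_a*\h*\Sp_b*\a$, where $\Sp_a,\Sp_b$ are the two copies of $\Sp$ generated by $a$ and $b$. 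Propagating the two twists, the differential $d_{\mathrm L}$ acts on free generators by $g\mapsto d_\g g+[g,a]+[g,b]$ on $\g$, by $a\mapsto \tfrac12[a,a]+[a,b]$, by $h\mapsto d_\h h+[h,b]$ on $\h$, by $b\mapsto\tfrac12[b,b]$, and by $d_\a$ on $\a$. The point to watch is that inside $\g\scup\h$ the factor $\h$ retains its own differential (it is \emph{not} twisted by $a$), whereas the outer twist by $b$ acts on all of $\g\scup\h$, hence also on $\h$. On the right-hand side $\g\scup(\h\scup\a)=(\g*\Sp)^{c}*(\h\scup\a)$ with $\h\scup\a=(\h*\Sp)^{e}*\a$; the underlying graded Lie algebra is again $\g*\Sp_c*\h*\Sp_e*\a$, and the differential $d_{\mathrm R}$ is $g\mapsto d_\g g+[g,c]$ on $\g$, $c\mapsto\tfrac12[c,c]$, $h\mapsto d_\h h+[h,e]$ on $\h$, $e\mapsto\tfrac12[e,e]$, and $d_\a$ on $\a$.

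Next I would define $\Phi\colon \g\scup(\h\scup\a)\to(\g\scup\h)\scup\a$ to be the identity on $\g,\h,\a$ together with $\Phi(c)=a+b$ and $\Phi(e)=b$, extended as a map of (completed) free graded Lie algebras. Since $(c,e)\mapsto(a+b,b)$ is a triangular, hence invertible, change of the base-point generators, $\Phi$ is an isomorphism of graded Lie algebras, with inverse determined by $a\mapsto c-e$ and $b\mapsto e$. It then remains to check $\Phi d_{\mathrm R}=d_{\mathrm L}\Phi$ on generators; as $\Phi$ is an algebra map and the differentials are derivations, agreement on generators suffices. All cases are immediate except the generator $c$, where graded antisymmetry $[a,b]=[b,a]$ for the odd elements $a,b$ yields $\Phi\big(\tfrac12[c,c]\big)=\tfrac12[a+b,a+b]=\tfrac12[a,a]+[a,b]+\tfrac12[b,b]=d_{\mathrm L}(a+b)=d_{\mathrm L}\Phi(c)$, as required.

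The only genuine difficulty is the bookkeeping of the two successive twists: recognizing that on the left the inner base point effectively couples $\g$ to \emph{both} $a$ and $b$, while on the right $c$ couples $\g$ to a single base point, is exactly what forces the substitution $c\mapsto a+b$ rather than the naive $c\mapsto a$. Naturality in $\g,\h,\a$ is then immediate, since $\Phi$ is the identity on these factors and the base-point substitution is canonical, so any triple of dgla maps commutes with $\Phi$; and in the complete case the same formulas define a continuous isomorphism of the completed free products.
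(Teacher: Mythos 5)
Your proof is correct and follows essentially the same route as the paper's: the paper also defines the isomorphism to be the identity on $\g,\h,\a$, sending the basepoint adjoined to $\g$ on the right-associated side to the \emph{sum} of the two basepoints on the left-associated side, and the basepoint adjoined to $\h$ to the outer basepoint. The only difference is that you explicitly write out the twisted differentials and carry out the compatibility check (including the key identity $\tfrac12[a+b,a+b]=\tfrac12[a,a]+[a,b]+\tfrac12[b,b]$ for odd $a,b$) that the paper dismisses as ``a straightforward calculation.''
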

\begin{proof}
We have an isomorphism of graded Lie algebras, disregarding the differential:
\begin{equation}\label{assoc}
\g\scup(\h\scup\a)\cong (\g*\Sp)*((\h*\Sp)*\a).
\end{equation}
Denote the generator of the first copy of $\Sp$ by $x$ and the
generator of the second copy of $\Sp$ by $y$ in (\ref{assoc}). Then we
have by definition an isomorphism of dglas:
\[
 \g\scup(\h\scup\a)\cong (\g*\Sp)^x*((\h*\Sp)^y*\a).
 \]
Similarly there is an isomorphism of dglas
\[
(\g\scup \h)\scup\a\cong (((\g*\Sp)^{x^\prime}*\h)*\Sp)^{y^\prime}*\a,
\]
where $x^\prime$ and $y^\prime$ have an obvious meaning.

Now consider the map $f:\g\scup(\h\scup\a)\cong (\g\scup
\h)\scup\a$ which is the identity on $\g,\h$ and $\a$,
$f(x):=x^\prime+y^\prime$ and $f(y):=y^\prime$.  A straightforward
calculation shows that $f$ is compatible with the differentials and
is, therefore, an isomorphism as claimed.
\end{proof}

\begin{rem}
The operation of disjoint product \emph{does not} make the category of dglas into a monoidal category, since there is no unit object. However, one can check that
the pentagon condition for the operation $\scup$ is satisfied and thus, in any expression involving multiple disjoint products the arrangement of brackets does not matter, up to a natural isomorphism.
\end{rem}

\begin{rem}
\label{Jaruska}
As an exercise we recommend to verify that for finite $S$ the algebra $\g_S$ from
Example~\ref{Jaruska} is isomorphic to the disjoint product of $0$
with itself, iterated ${\rm card\/}(S)$-times, that is
\[
\g_{\emptyset} \cong 0,\ \g_{\{*\}} \cong 0 \sqcup 0,\ \g_{\{*,*\}}
\cong (0 \sqcup 0) \sqcup 0 \cong 0 \sqcup (0 \sqcup 0), \ldots
\]
The fact that $\g_S$ represents the disjoint union $S \cup \{*\}$
established in Example~\ref{Jaruska} is thus corroborated by
Theorem~\ref{theoremF}.
\end{rem}

Define a {\em connected cover\/} $\oh$ of a dgla $\h =
(\h,[-,-],\partial)$ as the sub-dgla of $\h$ given by
\begin{equation}
\label{eq:10}
\oh_n := \left\{
\begin{array}{ll}
\h_n&\mbox {for }n > 0,
\\
\Ker(\partial : \h_0 \to \h_{-1})&\mbox {for }n=0, \mbox {and}
\\
0&\mbox {for }n <0.
\end{array}
\right.
\end{equation}
If $\h$ is complete then $\oh$ is clearly complete as well. The map
 $H_n(\oh) \to H_n(\h)$ induced by the inclusion $\iota: \oh
 \hookrightarrow \h$ is an isomorphism for $n \geq 0$.

For an MC element $\xi \in \h_{-1}$ denote by $\phi^\xi : \MC(\h^\xi) \to
\MC(\h)$ the isomorphism $\eta \mapsto \eta + \xi$, $\eta \in
\MC(\h^\xi)$, and by
$\phi_\bullet^\xi : \MC_\bullet(\h^\xi) \to
\MC_\bullet(\h)$ the obvious induced map of simplicial sets. Our proof
of Theorem~\ref{sec:main-results-3} will be based on

\begin{proposition}
\label{sec:introduction}
Let $\g$ be a complete dgla and $\xi \in \g_{-1}$ an MC element.  Then the composition
\[
\MC_\bullet(\overline{\g^\xi}) \stackrel{\MC_\bullet(\iota)}\vlra
\MC_\bullet(\g^\xi) \stackrel{\phi^\xi_\bullet}{\longrightarrow}
\MC_\bullet(\g)
\] induces a
weak equivalence between $\MC_\bullet(\overline{\g^\xi})$ and the
connected component of $\MC_\bullet(\g)$ containing $\xi \in
\MC(\g)$.
\end{proposition}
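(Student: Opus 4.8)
The first move is to dispose of the twist $\phi^\xi_\bullet$. For every $n$ the element $\xi\otimes 1$ is an MC element of $\g\otimes\Omega(\Delta^n)$ and $(\g\otimes\Omega(\Delta^n))^{\xi\otimes 1}=\g^\xi\otimes\Omega(\Delta^n)$, so $\eta\mapsto\eta+\xi\otimes 1$ defines a levelwise bijection $\phi^\xi_\bullet:\MC_\bullet(\g^\xi)\to\MC_\bullet(\g)$ compatible with faces and degeneracies. Thus $\phi^\xi_\bullet$ is an isomorphism of simplicial sets carrying the vertex $0\in\MC(\g^\xi)$ to $\xi$, and hence carrying the connected component of $0$ in $\MC_\bullet(\g^\xi)$ isomorphically onto the component of $\xi$ in $\MC_\bullet(\g)$. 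Writing $\h:=\g^\xi$ and $\oh:=\overline{\g^\xi}$ for its connected cover, it therefore suffices to prove that $\MC_\bullet(\iota):\MC_\bullet(\oh)\to\MC_\bullet(\h)$ is a weak equivalence onto the connected component of $\MC_\bullet(\h)$ containing the vertex $0$.

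Next I would pin down the source. Since $\oh$ is non-negatively graded, $\oh_{-1}=0$, so the only degree $-1$ element of $\oh$ is $0$ and $\MC_0(\oh)=\MC(\oh)=\{0\}$. The MC space of a complete dgla is a Kan complex, and a Kan complex with a single vertex is connected, so $\MC_\bullet(\oh)$ is connected with basepoint $0$, and $\MC_\bullet(\iota)$ sends this basepoint to $0\in\MC(\h)$. The image of a connected simplicial set under a simplicial map lies in a single component, which must then be the component $C\subset\MC_\bullet(\h)$ of the vertex $0$; this matches the target in the statement. It remains to show that $\MC_\bullet(\oh)\to C$ is a weak equivalence, and since both spaces are connected and Kan it is enough to check that $\MC_\bullet(\iota)$ induces an isomorphism on $\pi_n$ based at $0$ for every $n\geq 1$.

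For this I would invoke the standard computation of the homotopy groups of the MC space of a complete dgla (Getzler, \cite{getzler:AM:09}): based at an MC element $\tau$ of a complete dgla $\mathfrak{k}$ there are natural isomorphisms $\pi_n\big(\MC_\bullet(\mathfrak{k}),\tau\big)\cong H_{n-1}(\mathfrak{k}^\tau)$ for $n\geq 1$. Taking $\tau=0$ (so $\mathfrak{k}^0=\mathfrak{k}$) and using naturality in $\mathfrak{k}$, the map $\MC_\bullet(\iota)$ realizes, on $\pi_n$ based at $0$, precisely the map $\iota_*:H_{n-1}(\oh)\to H_{n-1}(\h)$. By the property of the connected cover recorded just after (\ref{eq:10}), $\iota$ induces an isomorphism $H_m(\oh)\to H_m(\h)$ for every $m\geq 0$; taking $m=n-1$ yields an isomorphism for all $n\geq 1$. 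Hence $\MC_\bullet(\iota)$ is a $\pi_n$-isomorphism in every positive degree and, together with the previous paragraph, a weak equivalence of $\MC_\bullet(\oh)$ onto $C$. Composing with $\phi^\xi_\bullet$ then proves the proposition.

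The main obstacle is the input from the third paragraph: the identification $\pi_n(\MC_\bullet(\mathfrak{k}),\tau)\cong H_{n-1}(\mathfrak{k}^\tau)$ is classical for finite-dimensional nilpotent dglas, and to apply it to a complete $\mathfrak{k}=\lim_\alpha\mathfrak{k}_\alpha$ one must commute homotopy groups with the inverse limit $\MC_\bullet(\mathfrak{k})=\lim_\alpha\MC_\bullet(\mathfrak{k}_\alpha)$ and verify that the tower is well behaved (no $\lim^1$ obstruction, and Kan-ness preserved under the limit), which is exactly the kind of pro-nilpotent bookkeeping the completeness hypothesis is designed to control. The only other point needing care is the naturality of Getzler's isomorphism, so that $\iota_*$ on homotopy groups is identified with $\iota_*$ on homology; this is built into the construction of the isomorphism through the tangent complex $\mathfrak{k}^\tau$.
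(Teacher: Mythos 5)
Your reduction is exactly the one the paper uses: the twist $\phi^\xi_\bullet$ is a simplicial isomorphism matching the component of $0\in\MC(\g^\xi)$ with the component of $\xi\in\MC(\g)$; the simplicial set $\MC_\bullet(\overline{\g^\xi})$ is connected because $0$ is its only vertex; and the whole statement then reduces to showing that $\iota$ induces isomorphisms on $\pi_n$ for $n\geq 1$, which one converts, via the formula $\pi_n(\MC_\bullet(-))\cong H_{n-1}(-)$, into the statement that $H_{n-1}(\overline{\g^\xi})\to H_{n-1}(\g^\xi)$ is an isomorphism --- immediate from the definition~(\ref{eq:10}) of the connected cover. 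So the architecture of your argument is sound and coincides with the paper's.

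The gap is at the crux, namely the input you yourself flag as ``the main obstacle'': the identification $\pi_n\big(\MC_\bullet(\mathfrak{k}),0\big)\cong H_{n-1}(\mathfrak{k})$, $n\geq 1$, natural in the \emph{complete} dgla $\mathfrak{k}$. Getzler's theorem is stated for nilpotent dglas; a complete dgla here is only pronilpotent, and what you defer as ``pro-nilpotent bookkeeping'' is precisely the nontrivial part: $\MC_\bullet(\lim_\alpha\mathfrak{k}_\alpha)=\lim_\alpha\MC_\bullet(\mathfrak{k}_\alpha)$ is a filtered inverse limit (not a priori a countable tower), and commuting $\pi_n$ past it requires exhibiting Kan fibrations, controlling $\lim^1$ (including nonabelian $\lim^1$ for $n=1$, where $\pi_1$ carries a Baker--Campbell--Hausdorff group structure), and separately commuting $H_{n-1}$ with the limit on the Lie-algebra side. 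None of this is carried out, so as written the proof is incomplete at its key step. The paper avoids the limiting argument entirely: by Remark~\ref{Jaruska_ma_hodne_prace.} one has $\MC_\bullet(\mathfrak{k})\cong\algs\big(\Cfun(\mathfrak{k}),\bfk\big)_\bullet$, the cdga $\Cfun(\mathfrak{k})$ is cofibrant by the results of Section~\ref{dual_Hinich}, and \cite[Proposition~8.12]{bousfield-gugenheim} then computes the homotopy groups of this mapping space directly as $\Hom\big(H^n(\Sigma\mathfrak{k}^*),\bfk\big)\cong H_{n-1}(\mathfrak{k})$, functorially and for all complete dglas at once; this is formula~(\ref{eq:3bis}). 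To repair your version you would either have to prove the limit-commutation statements (e.g.\ reduce to the lower-central-series tower $\mathfrak{k}=\lim_k\mathfrak{k}/\mathfrak{k}_k$ of Remark~\ref{sec:appl-struct-simpl-2}, show the induced maps of MC spaces are Kan fibrations, and kill $\lim^1$ using linear compactness) or simply substitute the paper's derivation of~(\ref{eq:3bis}) for the citation of Getzler.
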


\begin{proof}
\label{p1}
Let $\h$ be a complete dgla.
By Remark~\ref{Jaruska_ma_hodne_prace.} we have an
isomorphism of simplicial spaces
$\MC_\bullet(\h) \cong \algs\big(\Cfun(\h),\bfk\big)_\bullet$,
so
\[
\pi_n\big(\MC_\bullet(\h)\big) \cong
\pi_n\big(\algs(\Cfun(\h),\bfk)_\bullet\big), \ n \geq 1.
\]
As shown in
Section~\ref{dual_Hinich}, the cdga $\Cfun(\h)$ is
cofibrant, so the homotopy groups
of the simplicial space
$\algs\big(\Cfun(\h),\bfk\big)_\bullet$ can be, for $n \geq 1$, calculated
using~\cite[Proposition~8.12]{bousfield-gugenheim} with $X:= \Cfun(\h)$. One gets
\[
\pi_n\big(\algs(\Cfun(\h),\bfk)_\bullet\big) \cong
\Hom\big(H^n(\Sigma \h^*),\bfk)\big)
\cong H_{n-1}(\h).
\]
Since all isomorphisms above are functorial, they combine into a
functorial isomorphism
\begin{equation}
\label{eq:3bis}
\pi_n\big(\MC_\bullet(\h)\big) \cong H_{n-1}(\h),\ n \geq 1,
\end{equation}
valid for an arbitrary complete dgla. This isomorphism is for $n \geq
2$ an isomorphism of abelian groups, but we will not need this
result.

Let us return to the proof of our theorem. It is clear that the
the simplicial isomorphism $\phi^\xi_\bullet : \MC_\bullet(\g^\xi)
\cong \MC_\bullet(\g)$ induces an isomorphism between the connected
component containing the trivial MC element $0 \in \MC(\g^\xi)$ and the connected
component containing $\xi \in \g_{-1}$.
Furthermore, it is clear that the
simplicial set $\MC_\bullet(\overline{\g^\xi})$ is connected, with the
trivial MC element $0 \in \overline{\g^\xi}_{-1}$ its only $0$-simplex. It is therefore
enough to show that the inclusion
$\iota : \overline{\g^\xi} \hookrightarrow \g^\xi$ induces and isomorphism
\[
\pi_n\big(\MC_\bullet(\overline{\g^\xi})\big)
\cong \pi_n\big(\MC_\bullet(\g^\xi)\big),
\]
for arbitrary $n \geq 1$. By~(\ref{eq:3bis}), this is the same as showing that
$\iota$ induces an isomorphism
\[
H_{n-1}(\overline{\g^\xi}) \cong  H_{n-1}(\g^\xi),
\]
for each $n \geq 1$. The last isomorphism easily follows from the
definition~(\ref{eq:10}) of the connected cover $\overline{\g^\xi}$ of  $\g^\xi$.
\end{proof}

\begin{proof}[Proof of Theorem~\ref{sec:main-results-3}]
The isomorphism~(\ref{zas_bouchani}) follows from Proposition~\ref{sec:introduction}, the
weak equivalence from~(\ref{zas_bouchani}) and Theorem~\ref{theoremF}.
\end{proof}

\section{Proof of Theorem~\ref{theoremF}}
\label{disjointproof}

We start this section by the following auxiliary statement.

\begin{lemma}\label{simpl}
Let $A$, $B$ and $C$ be unital cdgas with $A$ and $B$ cofibrant and
such that $H^0(C)$ has no idempotents different from $1$.  Denote by
$\overline{A\timesred B}$ a cofibrant approximation of $A\timesred B$. Then
there is a weak equivalence of simplicial sets
\[
\algs(\overline{A\timesred B},C)_\bullet
\ \simeq\ \algs(A,C)_\bullet \cup \algs(B,C)_\bullet\ .
\]
\end{lemma}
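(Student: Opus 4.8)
The plan is to compare the two sides via the map induced by the projections and to verify it is a weak equivalence by analysing $\pi_0$ and, component by component, the higher homotopy groups. Let $q:\overline{A\timesred B}\to A\timesred B$ be the chosen cofibrant approximation and let $r_A:=\pi_A q$ and $r_B:=\pi_B q$ be its composites with the two projections of $A\timesred B$. Precomposition with $r_A$ and $r_B$ yields a natural simplicial map
\[
\Phi:\algs(A,C)_\bullet\cup\algs(B,C)_\bullet\longrightarrow\algs(\overline{A\timesred B},C)_\bullet ,
\]
and I would prove the lemma by showing $\Phi$ is a weak equivalence. Since $A$, $B$ and $\overline{A\timesred B}$ are cofibrant and every cdga is fibrant, all three simplicial sets are genuine derived mapping spaces, so it suffices to check that $\Phi$ is a bijection on $\pi_0$ and an isomorphism on $\pi_n$, $n\geq 1$, based at every vertex.

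For the first step I would use that for cofibrant $X$ one has $\pi_0\algs(X,C)_\bullet\cong[X,C]_\Hin$, a $1$-simplex $X\to C\otimes\Omega(\Delta^1)$ being exactly a Sullivan homotopy. As $\overline{A\timesred B}$ is a cofibrant replacement of $A\timesred B$, this gives $\pi_0\algs(\overline{A\timesred B},C)_\bullet\cong[A\timesred B,C]_\Hin$, and Theorem~\ref{sec:main-results-1} in its $\Hin$ form (applicable because $1\in H^0(C)$ is the only nontrivial idempotent) supplies the decomposition $[A\timesred B,C]_\Hin\cong[A,C]_\Hin\cup[B,C]_\Hin$, induced precisely by the two projections. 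Hence $\Phi$ is a bijection on $\pi_0$; in particular every component of the target is hit, and its basepoint is represented by a map $\chi:\overline{A\timesred B}\to C$ that is homotopic to $f r_A$ for some $f:A\to C$ (or, symmetrically, to $g r_B$).

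For the higher homotopy groups I would argue as in the proof of Proposition~\ref{sec:introduction}: the homotopy groups of a mapping space out of a cofibrant cdga are computed by the derivation complex. Concretely, a fibred version of \cite[Proposition~8.12]{bousfield-gugenheim}, together with the identification $\Der\simeq C_{AQ}$ of \cite[Theorem~2.4]{block05:_andre_quill}, gives an isomorphism of the form $\pi_n(\algs(X,C)_\bullet,h)\cong H^{-n}\big(\Der_h(X,C)\big)$ for $n\geq 1$, and under this identification $\Phi$ induces the restriction map $r_A^\ast:\Der_f(A,C)\to\Der_\chi(\overline{A\timesred B},C)$. By Lemma~\ref{sulho} the homology of the derivation complex depends only on the homotopy class of the structure map, so I may replace $\chi$ by $f r_A$ and identify $\Der_{f r_A}(\overline{A\timesred B},C)$ with the André--Quillen complex $C_{AQ}(A\timesred B,C)$, where $C$ carries the $A\timesred B$-module structure pulled back along $\pi_A$. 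It then remains to show that the projection induces a quasi-isomorphism $C_{AQ}(A,C)\to C_{AQ}(A\timesred B,C)$.

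The hard part will be this last quasi-isomorphism at the derived level for a general coefficient cdga $C$. The underived statement is immediate: writing $e_1=(1,0)$, $e_2=(0,1)$ for the orthogonal idempotents of $A\timesred B$, a derivation $D$ into $C$ whose module structure factors through $\pi_A$ satisfies $D(e_2)=2\,\chi(e_2)\,D(e_2)=0$, hence $D$ vanishes on $0\timesred B$ and is determined by its restriction to $A\timesred 0\cong A$, so $\Der_{f\pi_A}(A\timesred B,C)\cong\Der_f(A,C)$. Promoting this to $C_{AQ}$ is the real obstacle: one must see that the summand of the cotangent complex of $A\timesred B$ coming from $B$ is annihilated by coefficients in $C$. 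I expect to handle this exactly as in Remark~\ref{bouraci-kladiva-opet-na-scene}, which treats the case of $\bfk$-coefficients: by Proposition~\ref{monoidal} the completed Harrison functor sends products to coproducts, $\Lhat(A\timesred B)\cong\Lhat(A)\cup\Lhat(B)$, and the $B$-factor drops out because the relevant module structure factors through $A$; combining this with Lemma~\ref{charact} yields the required quasi-isomorphism of derivation complexes, now with coefficients in $C$. Granting this, $\Phi$ is a bijection on $\pi_0$ and an isomorphism on all higher homotopy groups, hence a weak equivalence, which proves the lemma.
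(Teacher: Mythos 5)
Your strategy (build the comparison map from the projections, check $\pi_0$, then check higher homotopy groups via derivation complexes) is genuinely different from the paper's, and it is in principle completable, but it has a real gap exactly at the step you call ``the real obstacle'', and the fix you propose for that step does not work. What you need is a quasi-isomorphism $\Der_f(A,C)\to\Der_{fr_A}(\overline{A\timesred B},C)$, i.e.\ invariance of Andr\'e--Quillen cohomology \emph{with coefficients in an arbitrary cdga $C$} under the localization $A\cong(A\timesred B)[e_1^{-1}]$. The tools you invoke for this --- Remark~\ref{bouraci-kladiva-opet-na-scene}, Proposition~\ref{monoidal} and Lemma~\ref{charact} --- all live in the augmented/Harrison formalism: $\Lhat(-)$ and $(I/I^2)^*$ compute derived derivations with coefficients in the ground field $\bfk$ through an augmentation, and the functor $\Lhat$ simply has no slot for a general coefficient module, so ``exactly as in Remark~\ref{bouraci-kladiva-opet-na-scene}\dots now with coefficients in $C$'' is not an argument. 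The needed statement is true and can be extracted from the paper's own localization technology: applying the construction in the proof of Proposition~\ref{sec:atestace} to the cofibrant algebra $\overline{A\timesred B}$ and a cocycle $u$ lifting $e_1$ exhibits $\overline{A\timesred B}[y,z]$, with $dz=uy-1$, as a cofibrant model of $(A\timesred B)[e_1^{-1}]\cong A$; since the relevant basepoint sends $u$ to $1\in C$, the kernel of the (surjective) restriction map $\Der\big(\overline{A\timesred B}[y,z],C\big)\to\Der\big(\overline{A\timesred B},C\big)$ is, up to sign, the cone of the identity of $C$, hence acyclic, so restriction is a quasi-isomorphism. But nothing of this sort appears in your proposal. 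A second, smaller, debt: the identification $\pi_n\big(\algs(X,C)_\bullet,h\big)\cong H^{-n}\big(\Der_h(X,C)\big)$, natural in cofibrant $X$, is itself a substantial theorem (essentially the main result of \cite{block05:_andre_quill}), not a routine ``fibred version'' of \cite[Proposition~8.12]{bousfield-gugenheim}, which concerns only coefficients $\bfk$.

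For contrast, the paper's proof avoids derivation complexes entirely. It tests both sides against an arbitrary finite \emph{connected} simplicial set $S$, uses the adjunction $\big[S,\algs(\overline{A\timesred B},C)_\bullet\big]\cong\big[A\timesred B,\Omega(S)\ot C\big]_\Hin$, checks by a bigrading argument that $H^0\big(\Omega(S)\ot C\big)\cong H^0(C)$ still has no nontrivial idempotents, and then applies the same Theorem~\ref{sec:main-results-1} you use for $\pi_0$ --- but with target $\Omega(S)\ot C$ instead of $C$. Letting the coefficient algebra vary over $\Omega(S)\ot C$ encodes all the higher homotopy information at once, so the only real input is the $\pi_0$-level splitting theorem. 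Your route, once the Andr\'e--Quillen step is actually proved, yields more (an identification of every homotopy group of the mapping space), but as written the crucial step is unsupported.
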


\begin{proof}
Let $S$ be a finite, connected simplicial set and let $\Omega(S)$ be
the cdga of Sullivan-de~Rham forms on $S$ (denoted by
$A(S)$ in \cite{bousfield-gugenheim}).
Then we have the following standard adjunction isomorphism,
cf.~\cite[Lemma~5.2]{bousfield-gugenheim}:
\begin{equation}
\label{Kooin}
\SSet\big(S,\algs(\overline{A\times B},C)_\bullet\big)
\cong \algs\big(\overline{A\times B}, \Omega(S)\otimes C\big).
\end{equation}
Observe that this adjunction automatically extends to simplicially
enriched Homs, that is
\begin{equation}
\label{v_Koline}
\SSet\big(S,\algs(\overline{A\times B},C)_\bullet\big)_\bullet
\cong \algs\big(\overline{A\times B}, \Omega(S)\otimes C\big)_\bullet.
\end{equation}
Indeed, by~(\ref{Kooin}) one has for each $n \geq 0$,
\[
\SSet\big(S \timesred \Delta^n,\algs(\overline{A\times B},C)_\bullet\big)
\cong \algs\big(\overline{A\times B}, \Omega(S \timesred
\Delta^n)\otimes C\big)
\cong \algs\big(\overline{A\times B}, \Omega(S) \!
\otimes\! C \!\otimes\! \Omega(\Delta^n)\big),
\]
consequently
\[
\SSet\big(S,\algs(\overline{A\times B},C)_\bullet\big)_n \cong
\algs\big(\overline{A\times B}, \Omega(S)
\otimes\! C\big)_n
\]
for each $n \geq 0$.
By naturality, the above individual isomorphisms assemble into an
isomorphism~(\ref{v_Koline}) of simplicial sets.  Since
$\overline{A\times B}$ is cofibrant, the simplicial set
$\algs(\overline{A\times B},C)_\bullet$ is
Kan. Adjunction~(\ref{v_Koline}) induces an isomorphism between the
sets of connected components of the corresponding simplicial sets,
therefore
\[
\big[S,\algs(\overline{A\times B},C)_\bullet\big]
\cong\big[\overline{A\times B}, \Omega(S)\otimes C\big]
\cong \big[{A\times B}, \Omega(S)\otimes C\big].
\]

The cdga $\Omega(S)\otimes C$ is bigraded and the first grading is
non-negative.  It is easy to show that in this situation any
idempotent in its cohomology must be contained in the $(0,0)$-graded
part which is isomorphic to $H^0(S)\otimes H^0(C)\cong H^0(C)$. But
$H^0(C)$ has no non-trivial idempotents other than $1$ by our assumption.  According
to Theorem 2.3
\[
\big[A\times B, \Omega(S)\otimes C\big]\cong \big[A,\Omega(S)\otimes
  C\big] \cup\, \big[B,\Omega(S)\otimes C\big]
\cong \big[S,\algs(A,C)_\bullet\big]\cup\, \big[S,\algs(A,C)_\bullet\big].
\]
Note that, because $S$ is connected, we have a natural
bijection
\[
\SSet \big(S,\algs(A,C)_\bullet \cup\algs(B,C)_\bullet\big)\cong
\SSet\big(S,\algs(A,C)_\bullet\big) \cup \SSet\big(S,\algs(A,C)_\bullet\big)
\]
and, therefore,
\[
\big[S,\algs(\overline{A\times B},C)_\bullet\big]\cong \big[A\times B, \Omega(S)\big].
\]
Thus, we have a natural bijection in the homotopy category of
simplicial sets, for any connected finite simplicial set $S$:
\[
\big[S,\algs(A,C)_\bullet\cup\algs(B,C)_\bullet\big]\cong
\big[S,\algs(\overline{A\times B},C)_\bullet\big].
\]
The desired conclusion follows.
\end{proof}

\begin{rem}
Note that the above lemma is an enriched analogue of Theorem
\ref{sec:main-results-1}. Since $A\timesred B$ is not cofibrant in
general, the result will not be true without taking a cofibrant
replacement.  Also note that the lemma can be easily generalized to a
finite direct product of cofibrant cdgas.
\end{rem}

We need one preliminary result on the MC twisting of dglas. Let $\g$ be a
complete dgla and consider the cdga $\Cfun(\g)$. It is clear that the
set of MC elements in $\g$ is in 1-1 correspondence with cdga maps
(augmentations) $\Cfun(\g)\to\ground$, in particular the canonical
augmentation corresponds to the zero MC element.

Let us explain this correspondence explicitly. The evaluation at $\xi \in
\g_{-1}$ determines a~degree $+1$ linear map
\hbox{$\alpha_\xi : \g^* \to \ground$} which in
turn defines a degree $0$ linear map (denoted by the same symbol)
\hbox{$\alpha_\xi : \Sigma \g^* \to \ground$} from the space of
generators of $\Cfun(\g)$ to the ground field.
The latter map extends to a unique
morphisms of graded algebras $\epsilon_\xi : \Cfun(\g) \to
\ground$ which commutes
with the differentials, i.e.~makes $\Cfun(\g)$ an augmented cdga, if
and only if $\xi$ is Maurer-Cartan.

Likewise, define a degree $0$ linear map
$\beta_\xi : \Sigma\g^* \to \ground
\oplus \Sigma\g^* \subset \Cfun(\g^\xi)$ as $\beta_\xi := \alpha_\xi  +
\id_{\Sigma\g^*}$. One can easily verify that $\beta_\xi$ extends to a cdga's
isomorphism $\phi_\xi: \Cfun(\g) \to \Cfun(\g^\xi)$ such that the
diagram
\begin{equation}
\raisebox{-3em}{}
\label{cockova_polevka}
{% Picture saved by xtexcad 2.4
\unitlength=.9pt
\begin{picture}(100.00,30)(0.00,30.00)
\thicklines
\put(80.00,27){\makebox(0.00,0.00)[tl]{\scriptsize $\epsilon_0$}}
\put(20.00,27){\makebox(0.00,0.00)[tr]{\scriptsize $\epsilon_\xi$}}
\put(50.00,53.00){\makebox(0.00,0.00)[b]{\scriptsize $\phi_\xi$}}
\put(50.00,0.00){\makebox(0.00,0.00){$\ground$}}
\put(100.00,50.00){\makebox(0.00,0.00){$\Cfun(\g^\xi)$}}
\put(0.00,50.00){\makebox(0.00,0.00){$\Cfun(\g)$}}
\put(20.00,50.00){\vector(1,0){60.00}}
\put(90.00,40.00){\vector(-1,-1){33}}
\put(10.00,40.00){\vector(1,-1){33}}
\end{picture}}
\end{equation}
commutes.

The maps $\epsilon_\xi$ and $\phi_\xi$ above have a nice geometric meaning
if we interpret elements $f \in \Cfun(\g)$ or~$f
\in \Cfun(\g^\xi)$ as polynomial functions on $\Sigma \g$. Then
$\epsilon_\xi$ is the evaluation at $\vartheta := \Sigma \xi$
while $\phi_\xi$ is given by
the shift by~$\vartheta$, that is
\[
\epsilon_\xi(f) = f(\vartheta)\ \mbox { and }
\phi_\xi(f)(u) = f(u+ \vartheta),\ u \in \Sigma \g.
\]
Checking the commutativity of~(\ref{cockova_polevka}) in this language
boils down to
\[
\epsilon_0\big(\phi_\xi(f)\big) = \phi_\xi(f)(0) = f(\vartheta) = \epsilon_\xi(f).
\]

Given $\xi\in\MC(\g)$ there is a bijective correspondence
$\MC(\g)\to\MC(\g^\xi)$ defined by $\eta\mapsto\eta-\xi$. In particular,
the element $\xi\in\MC(\g)$ corresponds to the zero element in
$\MC(\g^\xi)$.

\begin{lemma}\label{secondfac}
Let $\g$ be a complete dgla. Then the augmented cdga \hbox{$\Cfun(\g\scup
0)$} is quasi-isomorphic to $\Cfun(\g) \times \ground$, with the
augmentation given by the projection to the {\em second\/} factor.
\end{lemma}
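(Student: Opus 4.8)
The plan is to reduce the twisted free product to an untwisted one by means of the isomorphism $\phi_\xi$ assembled just above, and then to feed in the computation of the Chevalley--Eilenberg complex of a free product. Since by definition $\g\scup 0=(\g*\Sp)^x$, where $x\in\MC(\g*\Sp)$ is the image of the canonical generator of $\Sp$, I would begin by applying
$\phi_x:\Cfun(\g*\Sp)\stackrel{\cong}{\longrightarrow}\Cfun\big((\g*\Sp)^x\big)=\Cfun(\g\scup 0)$.
By the commuting triangle~(\ref{cockova_polevka}) this cdga isomorphism sends the canonical augmentation $\epsilon_0$ of $\Cfun(\g\scup 0)$ to the augmentation $\epsilon_x$ of $\Cfun(\g*\Sp)$, i.e.\ to evaluation at the Maurer--Cartan element $x$. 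Hence it is enough to prove that $\big(\Cfun(\g*\Sp),\epsilon_x\big)$ is quasi-isomorphic in $\algs_+$ to $\Cfun(\g)\times\ground$, augmented by the second projection.

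Next I would invoke the paper's computation of the Chevalley--Eilenberg complex of a free product of complete dglas, which yields a quasi-isomorphism $\Cfun(\g*\Sp)\simeq\Cfun(\g)\times_\ground\Cfun(\Sp)$, the product being taken in $\algs_+$, that is, the fibre product over $\ground$ (recall that the product of augmented cdgas is their fibre product over $\ground$). The second factor is completely explicit: writing $\xi,\omega$ for the generators of $\Cfun(\Sp)=\ground[\xi]\otimes\Lambda(\omega)$ dual to $x$ and $[x,x]$, with $|\xi|=0$ and $|\omega|=-1$, the Chevalley--Eilenberg differential is $d\omega=c\,\xi+c'\,\xi^2$ with $c,c'\neq 0$, the linear term coming from $\partial x=-\tfrac12[x,x]$ and the quadratic term being forced by the bracket $[x,x]$. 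A direct computation then gives $H\big(\Cfun(\Sp)\big)\cong\ground[\xi]/\big(\xi(c+c'\xi)\big)\cong\ground\times\ground$, so that $\Cfun(\Sp)\simeq\ground\times\ground$; the two idempotents of this cohomology correspond precisely to the two Maurer--Cartan elements $0$ and $x$ of $\Sp$, and $\epsilon_x$ is the projection onto the factor indexed by $x$.

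Finally I would assemble the pieces. Substituting $\Cfun(\Sp)\simeq\ground\times\ground$ into the fibre product and gluing $\Cfun(\g)$ to the factor indexed by the zero Maurer--Cartan element $0\in\MC(\Sp)$ (the common base point of the free product) collapses $\Cfun(\g)\times_\ground(\ground\times\ground)$ to the unital product $\Cfun(\g)\times\ground$, the surviving $\ground$ being the factor indexed by $x$. Since $x$ lies entirely in the $\Sp$-summand and restricts to $0$ on $\g$, the transported augmentation $\epsilon_x$ evaluates to the $x$-idempotent, i.e.\ it is exactly the projection onto this second $\ground$-factor, as required. I expect the genuine input to be the free-product computation, and the one delicate point to be the bookkeeping of the augmentation --- verifying that the base point ends up in the adjoined-point component (the second, $\ground$, factor) and not inside $\Cfun(\g)$. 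As an alternative that sidesteps the free-product theorem one can argue homotopically: $0=\Lhat(\ground)$ and $\g\simeq\Lhat\Cfun(\g)$, so by Proposition~\ref{monoidal} together with homotopy-invariance of $\scup$ one gets $\g\scup 0\simeq\Lhat\big(\Cfun(\g)\times\ground\big)$, whence $\Cfun(\g\scup 0)\simeq\Cfun\Lhat\big(\Cfun(\g)\times\ground\big)\simeq\Cfun(\g)\times\ground$ via the equivalence $\Cfun\Lhat\simeq\id$.
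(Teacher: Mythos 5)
Your main argument is correct and is essentially the paper's own proof: transport the canonical augmentation of $\Cfun(\g\scup 0)$ through the twisting isomorphism $\phi_x$ of~(\ref{cockova_polevka}) to the augmentation $\epsilon_x$ of $\Cfun(\g*\Sp)$, apply Theorem~\ref{freeprod} to obtain $\Cfun(\g*\Sp)\simeq\Cfun(\g)\times_{\ground}\Cfun(\Sp)\cong\Cfun(\g)\times\ground$, and check that $\epsilon_x$ becomes the projection onto the adjoined $\ground$-factor. Your explicit computation of $\Cfun(\Sp)$ (generators $\xi,\omega$ in degrees $0,-1$, with $H\big(\Cfun(\Sp)\big)\cong\ground\times\ground$) just makes concrete what the paper reads off from $\Sp\cong\Lhat(\ground\times\ground)$ and the counit quasi-isomorphism of Proposition~\ref{adj}(1).

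One warning about your closing alternative: as stated it is circular, not a genuine shortcut. The homotopy invariance of $\scup$ that you invoke is established in the paper only via Proposition~\ref{monoidal}(2), i.e.\ via the quasi-isomorphism $\Cfun(\g\scup\h)\simeq\Cfun(\g)\times\Cfun(\h)$, and that statement is proved in the course of Theorem~\ref{theoremF}, whose proof uses Lemma~\ref{secondfac} itself. Concretely, to replace $\g\scup 0$ by $\Lhat\Cfun(\g)\scup 0$ you need to know that $-\scup 0$ sends the weak equivalence $i_\g:\Lhat\Cfun(\g)\to\g$ to a weak equivalence in $\L$; since weak equivalences in $\L$ are detected by $\Cfun$, this is exactly a compatibility of $\Cfun$ with (twisted) completed free products --- the very content of the free-product theorem and of this lemma. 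So the alternative route does not sidestep Theorem~\ref{freeprod}; only your first argument stands on its own.
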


\begin{proof}
Consider the cdga $\Cfun(\g*\Sp)$. According to Corollary \ref{freeprod}
the augmented cdga $\Cfun(\g*\Sp)$ is quasi-isomorphic to
$\Cfun(\g)\times_{\ground}(\ground\times\ground)\cong\Cfun(\g)\times\ground$;
moreover, the augmentation of $\Cfun(\g)\times\ground$ factors through
the projection onto the \emph{first} factor. On the other hand, there
is, as in~(\ref{cockova_polevka}), an isomorphism
of cdgas (not respecting the augmentation)
$\phi_x:\Cfun(\g*\Sp)\to\Cfun(\g*\Sp)^x$ such that the induced augmentation
$\Cfun(\g*\Sp)\to \Cfun(\g*\Sp)^x\to\ground$ corresponds to the MC
element $x\in\MC(\g*\Sp)$.

Note that the augmentation of $\Cfun(\g*\Sp)$ corresponding to the MC element $x\in\MC(\g*\Sp)$ translates via the above quasi-isomorphism into the augmentation of $\Cfun(\g)\times\ground$ given by the projection onto the second factor. This proves the desired statement.
\end{proof}

We now return to the proof of Theorem~\ref{theoremF}.
It suffices to consider the
case when $\J$ consists of two elements. Let $\g$ and $\h$ be two
complete dglas. Then by Theorem \ref{freeprod} and Lemma
\ref{secondfac} the augmented cdga
$\Cfun(\g\scup\h)=\Cfun\big((\g\scup 0)*\h\big)$ is quasi-isomorphic
to\label{Jarusku_boli_hlava}
\[
\Cfun(\g\scup 0)\times_{\ground}\Cfun(\h)\simeq(\Cfun(\g)\times\ground)
\times_{\ground}\Cfun(\h)\cong
\Cfun(\g)\times\Cfun(\h).
\]

We conclude that the simplicial set $\MC_{\bullet}(\g\scup\h)=
\algs_+(\Cfun(\g\scup\h),\ground\times\ground)_\bullet$ is weakly
equivalent to
$\algs_+(\overline{\Cfun(\g)\times\Cfun(\h)},\ground\times\ground)_\bullet$. The
latter simplicial Hom is taken in the category of augmented cdgas; it
is isomorphic to the simplicial mapping space
$\algs(\overline{\Cfun(\g)\times\Cfun(\h)},\ground)_\bullet$ taken in the
category of (unital) non-augmented cdgas.  Now we have the following
weak equivalence of simplicial sets, which follows from Lemma
\ref{simpl}:
\[
\algs\big(\overline{\Cfun(\g)\timesred\Cfun(\h)},\ground\big)_\bullet
\simeq
\algs(\Cfun(\g),\ground)_\bullet
\cup\algs(\Cfun(\h),\ground)_\bullet\ .
\]
This finishes the proof of Theorem~\ref{theoremF}.

We close this section by two versions of an example which is a dgla
analogue of a disjoint union of a circle $S^1$ and an isolated
point. This example is simple enough to be worked out by hand, although
the calculations are still nontrivial.  We will see that, in this
particular case, an analogue of Theorem~\ref{theoremF} actually holds
without a completion. The general claim, in the non-completed context,
was made in \cite[Theorem 6.4]{BM}, but we have not been able to parse
the proof in op.~cit.
\begin{example}

\label{sec:introduction-8}
Let $x$ (resp.~$a$) be a generator of degree $-1$ (resp.~$0$). Denote by
$\ff$ the free non-complete Lie algebra $\LL(x,a)$ generated by $x$ and $a$,
with the differential given by $dx := -\frac12 [x,x]$, $da := 0$. In
this example we describe
MC elements $t$ in $\ff \otimes \Omega(\Delta^n)$, for an
arbitrary $n \geq 0$.

Let us start by observing that $\ff_{-1}$ has a basis
$\{e_i\}_{i \geq 0}$, with
\[
e_0 := x,\ e_1 := [a,x],\ e_2 := [a,[a,x]],\
e_3 := [a,[a,[a,x]]],\ldots
\]
{}From degree reasons,
each element of $\big(\ff \otimes \Omega(\Delta^n)\big)_{-1}$ has the
form
\begin{equation}
\label{eq:2.2}
t = \textstyle\sum_{i \geq 0} e_i\ot  \alpha_i  + a\ot \omega,
\end{equation}
for some $\alpha_i \in \Omega^0(\Delta^n)$, $i \geq 0$, and $\omega
\in \Omega^1(\Delta^n)$, with the assumption that only {\em finitely many\/}
$\alpha_i$'s are $\not=0$. The MC equation for $t$ reads
\[
\textstyle\sum_{i \geq 0} (de_i \ot \alpha_i - e_i \ot d\alpha_i)
+ a \ot d\omega + \frac12 \sum_{i,j \geq 0} [e_i,e_j] \ot
\alpha_i\alpha_j
-\sum_{i \geq 0} [a,e_i]\ot \omega\alpha_i   = 0
\]
(the term $\frac12 [a,a]\ot \omega^2$ clearly vanishes). Singling out
the parts in $\ff_0 \ot \Omega^2(\Delta^n), \ff_{-1} \ot
\Omega^1(\Delta^n)$ and $\ff_{-2} \ot \Omega^0(\Delta^n)$, respectively,
one gets the equations
\begin{subequations}
\begin{eqnarray}
\label{2a}
a\ot d\omega &=& 0,
\\
\textstyle
\label{2b}
\sum_{i \geq 0} (e_i \ot d\alpha_i +  [a,e_i]\ot \omega\alpha_i)   &=& 0,
\\
\textstyle
\label{2c}
\sum_{i \geq 0} de_i \ot \alpha_i + \frac12 \sum_{i,j \geq 0} [e_i,e_j] \ot
\alpha_i\alpha_j &=& 0.
\end{eqnarray}
\end{subequations}
Since, by definition,
$e_{i+1} = [a,e_i]$,~(\ref{2b}) implies that $d\alpha_0 = 0$,
i.e.~{\em $\alpha_0 \in \bfk$ is a constant\/}, while
\begin{equation}
\label{eq3}
d\alpha_{i+1} =- \omega\alpha_i,\ \mbox { for } i \geq 0.
\end{equation}
Equation~(\ref{2c}) means that $\sum_{i \geq 0} e_i\ot  \alpha_i$ is an MC element in $\ff \ot \Omega^0(\Delta^n)$, with $\Omega^0(\Delta^n)$
taken with the trivial differential. It expands to the
system
\begin{subequations}
\begin{eqnarray}
\label{3a}\textstyle
de_0 \ot  \alpha_0  &=& -\textstyle\frac12[e_0,e_0] \ot \alpha_0^2,
\\
\label{3b}\textstyle
de_1 \ot \alpha_1 &=&   - [e_0,e_1] \ot \alpha_0 \alpha_1,
\\
\label{3c}\textstyle
de_2 \ot \alpha_2 &=&   - [e_0,e_2] \ot \alpha_0 \alpha_2 -
\textstyle\frac12[e_1,e_1] \ot \alpha_1^2,
\\
\label{3d}\textstyle
de_2 \ot \alpha_3 &=&   - [e_0,e_3] \ot \alpha_0 \alpha_3 -
\textstyle[e_1,e_2] \ot \alpha_1\alpha_2,
\\ \nonumber
&\vdots&
\end{eqnarray}
\end{subequations}
Since $e_0 = x$ and $dx = - \frac12[x,x]$,~(\ref{3a}) is
equivalent to $\alpha_0 =
\alpha_0^2$, therefore $\alpha_0 \in \Omega^0(\Delta^n)$ is a constant
that equals either $0$ or $1$.

\noindent
{\bf Case $\alpha_0 =0$}. In this case~(\ref{3b}) reduces to $de_1 \ot
\alpha_1 = 0$. Since $de_1 \not=0$, it implies $\alpha_1 =
0$. Then~(\ref{3c}) reduces to $de_2 \ot \alpha_2 = 0$ so, by the same
argument, $\alpha_2 = 0$ and~(\ref{3d})  reduces to $de_3 \ot \alpha_3
= 0$.  We prove inductively that $\alpha_n = 0$ for
each $n \geq 0$. While~(\ref{2b}) is automatically
satisfied,~(\ref{2a}) requires
$d\omega = 0$. We conclude that the MC
elements $t$ in $\ff \ot \Omega(\Delta^n)$ with $\alpha_0 = 0$ are precisely
closed forms in $\Omega^1(\Delta^n)$.

\noindent
{\bf Case $\alpha_0 =1$}. Observe first that $\xi := e_0 \ot 1 = x \ot 1$ is
an MC element of this form. We are going to prove that it is the
{\em only\/} MC-element of $\ff \ot \Omega(\Delta^n)$ with $\alpha_0=1$.

The Lie algebra $\ff_0$ is one-dimensional, spanned by $a$. Consider,
only for the purposes of this example,
the completion $\hatff$ of $\ff$ with respect to the grading
by the number of $a$'s. Then $\exp(\ff_0)$ acts on the set of MC
elements in $\hatff$, in particular,
\[
\exp (a) e_0 = e_0 + e_1 + \frac1{2!} e_2 +
\frac1{3!}e_3 + \frac1{4!}e_4 +    \cdots
\]
is an MC element in $\hatff$. This means that
\begin{eqnarray*}
de_0 &=& - \textstyle\frac12 [e_0,e_0],
\\
de_1 &=&   - [e_0,e_1]
\\
de_2  &=&   - [e_0,e_2] -[e_1,e_1] ,
\\
de_3 &=& -[e_0,e_3] - 3[e_1,e_2],
\\
&\vdots&
\end{eqnarray*}
Substituting this to~(\ref{3a})--(\ref{3d}), we get
\begin{subequations}
\begin{eqnarray}
[e_0,e_0] \ot  \alpha_0  &=& [e_0,e_0] \ot \alpha_0^2,
\\
{}[e_0,e_1] \ot \alpha_1 &=&  [e_0,e_1] \ot \alpha_0 \alpha_1,
\\
\label{4c}
([e_0,e_2] +[e_1,e_1])
 \ot \alpha_2 &=&   [e_0,e_2] \ot \alpha_0 \alpha_2 +
\textstyle\frac12[e_1,e_1] \ot \alpha_1^2,
\\
\label{4d}
([e_0,e_3] + 3[e_1,e_2]) \ot \alpha_3
&=&
[e_0,e_3] \ot \alpha_0 \alpha_3
+[e_1,e_2] \ot \alpha_1\alpha_2,
\\ \nonumber
&\vdots&
\end{eqnarray}
\end{subequations}

If $\alpha_1 = 0$,~(\ref{4c}) reduces to $[e_1,e_1] \ot \alpha_2 = 0$,
which implies that $\alpha_2 = 0$.  Then~(\ref{4d}) implies that
$[e_1,e_2] \ot \alpha_3 = 0$, so $\alpha_3 = 0$.  Continuing this
process we prove that actually $\alpha_n = 0$ for all $n \geq 1$.
Equation~(\ref{eq3}) with $i=0$ gives $\omega = 0$. So the only MC
element in $\hatff \otimes \Omega(\Delta^n)$ with $\alpha_0 = 1$ and
$\alpha_1 = 0$ is $\xi := x \ot 1$.

Let now $t$ be an MC element in $\ff \otimes \Omega(\Delta^n)$ with
$\alpha_0 = 1$ and $\alpha_1 \not= 0$.  Then $\exp(-a \ot \alpha_1) t$
is clearly an MC element  in $\hatff \otimes \Omega(\Delta^n)$ with
$\alpha_1 = 0$, so $\exp(-a \ot \alpha_1) t = x \ot 1$
or, equivalently,
\begin{equation}
\label{eq4}
t = \exp(a \ot \alpha_1) x =  a\ot d\alpha_1 +  e_0 \ot 1 +
e_1 \ot
\alpha_1 + \frac1{2!} e_2 \ot \alpha_1^2 +
\frac1{3!}e_3 \ot \alpha_1^3 + \cdots
\end{equation}
In particular, $t$ has {\em infinitely many\/} non-trivial elements,
so {\em it is not\/} an MC element in the uncompleted dgla $\ff \ot
\Omega(\Delta^n)$.  So $\alpha_1$ must be $0$ therefore the only MC
element of $\ff \ot \Omega(\Delta^n)$ with $\alpha_0 = 1$ is $x \ot
1$. We arrive at the following conclusion.

\begin{claim}
The only MC elements of $\ff\ot \Omega(\Delta^n)$
are either $x$ or closed $1$-forms in $\Omega^1(\Delta^n)$. In other
words, one has an isomorphism of simplicial sets
\[
\MC_\bullet(\ff) \cong  \MC_\bullet(\fa) \cup \MC_\bullet(0),
\]
where $0$ is the trivial Lie algebra and $\fa$ the one-dimensional
abelian Lie algebra generated by $a$. So $\ff$ serves as an algebraic model
for the disjoint union $S^1 \cup \{*\}$.
\end{claim}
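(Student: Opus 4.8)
The plan is to simply assemble the two cases already analyzed above, since all of the substantive computation has been carried out. The analysis of the MC equations~(\ref{2a})--(\ref{2c}) for a general degree $-1$ element~(\ref{eq:2.2}) showed that the leading coefficient $\alpha_0 \in \Omega^0(\Delta^n)$ is forced to be a \emph{constant} equal to either $0$ or $1$. In the case $\alpha_0 = 0$ we proved inductively that all $\alpha_i$ vanish and that~(\ref{2a}) forces $d\omega = 0$, so that $t = a \ot \omega$ for a closed $1$-form $\omega$; in the case $\alpha_0 = 1$ the completion argument culminating in~(\ref{eq4}) showed that any nonzero $\alpha_1$ would yield an element with infinitely many nonzero components, which cannot lie in the uncompleted $\ff \ot \Omega(\Delta^n)$, whence $t = x \ot 1$ is the unique such element. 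This already establishes the first sentence of the Claim.

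To upgrade this degreewise description to an isomorphism of \emph{simplicial} sets, first I would observe that the value of $\alpha_0$ is a simplicial invariant. Indeed, the faces and degeneracies of $\MC_\bullet(\ff)$ are induced by the cosimplicial cdga structure of $\Omega(\Delta^\bullet)$, and every coface and codegeneracy is a cdga homomorphism, hence carries the constant $0$ to $0$ and the constant $1$ to $1$. Consequently the two subsets $\{t : \alpha_0 = 0\}$ and $\{t : \alpha_0 = 1\}$ are simplicial subsets of $\MC_\bullet(\ff)$, and since every MC element lies in exactly one of them, $\MC_\bullet(\ff)$ is their disjoint union.

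It then remains to identify the two pieces. The dgla inclusion $\fa \hookrightarrow \ff$ (sending $a$ to $a$) induces an injection of simplicial sets $\MC_\bullet(\fa) \hookrightarrow \MC_\bullet(\ff)$; since the MC elements of $\fa \ot \Omega(\Delta^n)$ are exactly the elements $a \ot \omega$ with $d\omega = 0$ (the bracket of $\fa$ being trivial), its image is precisely the $\alpha_0 = 0$ piece. The $\alpha_0 = 1$ piece consists in each simplicial degree of the single element $x \ot 1$ and is therefore the one-point simplicial set, which is exactly $\MC_\bullet(0)$ for the trivial Lie algebra $0$. Note that this last identification is not induced by the zero map $0 \to \ff$, which would hit the \emph{zero} MC element; it is simply the observation that both simplicial sets are a point. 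Assembling these gives the desired isomorphism $\MC_\bullet(\ff) \cong \MC_\bullet(\fa) \cup \MC_\bullet(0)$. The genuine obstacle lay entirely in the $\alpha_0 = 1$ case treated above, where passing to the completion $\hatff$ was essential; the only subtlety remaining here is the routine verification that the decomposition is by simplicial subsets, which the invariance of $\alpha_0$ supplies.
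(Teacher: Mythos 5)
Your proposal is correct and takes essentially the same route as the paper: there the Claim is simply the summary of the preceding case analysis ($\alpha_0$ constant equal to $0$ or $1$, giving closed $1$-forms in the first case and only $x \otimes 1$ in the second), which is exactly what you assemble. The extra bookkeeping you supply --- that the splitting by the value of $\alpha_0$ is preserved by the simplicial structure maps, that the $\alpha_0 = 0$ piece is the image of $\MC_\bullet(\fa) \hookrightarrow \MC_\bullet(\ff)$, and that the $\alpha_0 = 1$ piece is a point, hence $\MC_\bullet(0)$, though not via the map induced by $0 \to \ff$ --- is left implicit in the paper and is a correct, routine refinement rather than a different approach.
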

\end{example}

\begin{example}
Let us analyze the completed version of the previous example, i.e.~the
MC elements in $\hatff  \hatot \Omega(\Delta^n)$, where $\hatff$
is the completion of the Lie algebra $\ff =\LL(x,a)$. Of course, we know the answer from Theorem~\ref{theoremF}
but the explicit calculation that could be performed in this simple case is still instructive. Most of the
work done in the previous example applies in the complete situation with only small
modifications. For instance,  elements of $(\hatff \hatot
\Omega(\Delta^n))_{-1}$ are of the form~(\ref{eq:2.2}), but the sum may now
have {\em infinitely many\/} nontrivial terms.

Equations~(\ref{2a})--(\ref{3d}) still take place,
as does the separation into Case $\alpha_0 = 0$ and
Case $\alpha_0 =1$. Precisely as in
Example~\ref{sec:introduction-8} we show that the only MC element with
$\alpha_0 = 0$ is the trivial one. In contrast with the uncomplete
case we, however, do not require the sum~(\ref{eq4}) to be {\em
  finite\/}, so $\alpha_1$ there may be an arbitrary function from
$\Omega^0(\Delta^\bullet)$.

We conclude that an MC element $t$ in $\hatff  \otimes
\Omega(\Delta^n)$ is either $1\ot \omega$ for a closed form
$\omega \in \Omega^1(\Delta^\bullet)$ if $\alpha_0 = 0$,  or
\[
t = \exp(a \otimes \alpha)x,\ \mbox { for some $\alpha \in
  \Omega^0(\Delta^n)$},
\]
if $\alpha_0 =1$. Since the isotropy subgroup of $x$ under the
action of $\exp\big(a \otimes \Omega^0(\Delta^n)\big)$ is trivial, we  conclude

\begin{claim}
One has an isomorphism of simplicial sets
\[
\MC_\bullet(\hatff) \cong  \MC_\bullet(\fa) \cup \Omega^0(\Delta^\bullet).
\]
\end{claim}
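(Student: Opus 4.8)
The plan is to promote the levelwise classification of Maurer--Cartan elements obtained above into an isomorphism of simplicial sets, reading the disjoint union off from the behaviour of the constant $\alpha_0$. First I would note that the simplicial operators on $\MC_\bullet(\hatff)$ are induced by the cosimplicial cdga $\Omega(\Delta^\bullet)$, hence by unital, differential-preserving maps $\Omega(\Delta^n)\to\Omega(\Delta^m)$; these fix the scalars and therefore preserve the value of $\alpha_0$ in the expansion~\eqref{eq:2.2}. Since $\alpha_0$ is forced into $\{0,1\}$, it is a simplicial invariant and splits $\MC_\bullet(\hatff)$ as a disjoint union of simplicial subsets $M^0$ (where $\alpha_0=0$) and $M^1$ (where $\alpha_0=1$); this is the source of the symbol $\cup$ in the Claim.

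Next I would identify $M^0$ with $\MC_\bullet(\fa)$. The inclusion of dglas $\fa\hookrightarrow\hatff$, where $\fa$ is spanned by $a$, is continuous and hence induces a simplicial map $\MC_\bullet(\fa)\to\MC_\bullet(\hatff)$. On $n$-simplices an MC element of $\fa\hatot\Omega(\Delta^n)$ is just an $a\ot\omega$ with $\omega\in\Omega^1(\Delta^n)$ closed, and its image is precisely an element with $\alpha_0=0$ and vanishing Lie part. The Case $\alpha_0=0$ analysis shows these exhaust $M^0$, so this simplicial map is a levelwise bijection, giving $\MC_\bullet(\fa)\cong M^0$.

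Then I would identify $M^1$ with $\Omega^0(\Delta^\bullet)$ via the orbit map $\Phi_n\colon\Omega^0(\Delta^n)\to M^1_n$ sending $\alpha$ to the gauge transform $\exp(a\ot\alpha)x$ of the MC element $x\ot1$, written out in~\eqref{eq4}. Gauge invariance of the MC equation places $\Phi_n(\alpha)$ in $M^1_n$; reading off the coefficient of $e_1$ in~\eqref{eq4} recovers $\alpha$, so $\Phi_n$ is injective, while the Case $\alpha_0=1$ analysis shows it is onto. Naturality in $n$ is automatic because the simplicial operators are cdga maps and so commute with the bracket and with $\exp$. Combining $\MC_\bullet(\fa)\cong M^0$ with $\Omega^0(\Delta^\bullet)\cong M^1$ then yields the Claim.

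The delicate step, and the conceptual heart of the statement, is the identification of $M^1$: this is exactly where completeness is indispensable. The series $\exp(a\ot\alpha)x$ converges in $\hatff\hatot\Omega(\Delta^n)$ only because each bracket with $a\ot\alpha$ raises the number of $a$'s and $\hatff$ is complete in that grading. In the uncompleted algebra $\ff$ the same expression is an infinite sum and lies in $\ff\ot\Omega(\Delta^n)$ only for $\alpha=0$, which is precisely why the uncompleted Claim of Example~\ref{sec:introduction-8} carries $\MC_\bullet(0)$ (a single point) in place of $\Omega^0(\Delta^\bullet)$.
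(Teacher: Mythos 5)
Your proposal is correct and takes essentially the same route as the paper: both rest on the levelwise classification of MC elements carried over from the uncompleted example, split according to the constant $\alpha_0\in\{0,1\}$, identify the $\alpha_0=0$ part with $\MC_\bullet(\fa)$ (closed $1$-forms), and identify the $\alpha_0=1$ part with the free orbit $\alpha\mapsto\exp(a\ot\alpha)x$, $\alpha\in\Omega^0(\Delta^\bullet)$ --- your injectivity argument via the $e_1$-coefficient in~(\ref{eq4}) is just a restatement of the paper's observation that the isotropy subgroup of $x$ under $\exp\big(a\ot\Omega^0(\Delta^n)\big)$ is trivial. The only material you add is the explicit verification that the splitting by $\alpha_0$ and the orbit map are compatible with the simplicial operators, a point the paper leaves implicit.
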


By \cite[Proposition~1.1.]{bousfield-gugenheim}, $\Omega^0(\Delta^\bullet)$ is
contractible, therefore $\MC_\bullet(\hatff)$ has the simplicial
homotopy type of  $\MC_\bullet(\fa) \cup \MC_\bullet(0)$. Thus
$\hatff$ also serves as an algebraic model
for the disjoint union $S^1 \cup \{*\}$. Note that the simplicial sets
$\MC_\bullet(\hatff)$ and $\MC_\bullet(\ff)$ are weakly equivalent, but \emph{not} isomorphic.
\end{example}

\section{Dual Hinich correspondence}
\label{dual_Hinich}

The purpose of this section is to establish an analogue to the main
result of Hinich's paper~\cite{hinich01:_dg}, giving a closed model
category structure on cocomplete cocommutative dg coalgebras. A~formal
dualization of Hinich's result states that the category of complete
cdgas has a closed model structure, cf.~\cite{lazarev13:_maurer} where this
formulation was explicitly spelled out. The result proved in this
section should be viewed as the Koszul dual to Hinich's. We prove this
Koszul dual version by suitably adapting Hinich's methods. The use of
the associative version of this result contained in \cite{Lefevre-hasegawa02, positselski}
allows us to shorten the proof in several places.

Recall that in Definition~\ref{CEHar} we introduced
an adjoint pair of functors $\Cfun$ and $\hat\Lfun$
between the categories $\algs_+$ of augmented cdgas and $\L$ of
complete dglas.  We will write $i_{\g}:\hat\Lfun\Cfun(\g)\to \g$ and
$i_A:\Cfun\hat\Lfun(A)\to A$ for the counits of this adjunction.  Our goal
is to establish a closed model category structure on $\L$ in such a
way that the functors $\Cfun$ and $\hat\Lfun$ would induce an equivalence
on the level of homotopy categories.

The adjoint functors $\hat\Lfun$ and $\Cfun$ can be `embedded' into an
adjunction between bigger categories of \emph{associative}
algebras. We will denote the category of augmented dg associative (discrete)
algebras (dgas) by $\Ass_+$; algebras in  $\Ass_+$ will be {\em cohomologically
graded\/}. The augmentation ideal in an augmented dga $A$ will be denoted
by $A_+$, as in the commutative case. Let us remind the details of the
corresponding adjunction following \cite{Lefevre-hasegawa02}. First, we need a
relevant definition.

\begin{definition}
\label{sec:dual-hinich-corr-1}
A \emph{complete augmented} dga is, by definition, an inverse limit of
finite-dimensional nilpotent augmented dgas.\footnote{We call an
augmented algebra nilpotent if its augmentation ideal is nilpotent in
the usual sense.} The category of complete dgas and their continuous
homomorphisms will be denoted by $\Asshat_+$.  Algebras in $\Asshat_+$
are assumed to be \emph{homologically~graded}.
\end{definition}

Algebras of Definition~\ref{sec:dual-hinich-corr-1} are complete in
the sense of~\cite[Appendix~A.1]{quillen:Ann.ofMath.69}. For instance, repeating the
arguments used in Remark~\ref{sec:appl-struct-simpl-1} to prove that
complete dglas are pronilpotent, one may easily show that $\lim_k
A/A_+^k \cong A$, which is Condition (c) on
page~265 of that paper.

\begin{rem}
The functor of linear duality establishes an anti-equivalence between
the category $\Asshat_+$ and that of conilpotent dg coalgebras,
cf.~\cite{hamilton-lazarev1} where complete dgas were called \emph{formal} dgas;
again, we opted to change this terminology since formality often has
a~different meaning in homological algebra.
\end{rem}

\begin{definition}
Let $\Bhat:\Ass_+\mapsto\Asshat_+$ be the functor associating to a dga $A$
the complete dga $\Bhat(A)$ whose underlying graded algebra is
$\hat{T}\Sigma^{-1} A_+^*$,
the completed tensor algebra on $\Sigma^{-1}
A^*_+$. The differential $d$ in $\hat{T}\Sigma^{-1} A^*_+$ is defined as
$d=d_I+d_{\it II}$ where $d_I$ is induced by the internal differential in
$A$ and $d_{\it II}$ is determined by its restriction onto $\Sigma^{-1} A^*_+$,
which is, in turn, induced by the product map $A_+\otimes A_+\to A_+$.

Likewise, let $\B:\Asshat_+\mapsto\Ass_+$ be the functor associating to a
complete dga $C$ the (discrete) dga $\B(C)$ whose underlying graded
algebra is $T\Sigma C_+^*$, the tensor algebra on $\Sigma
C^*_+$. The differential $d$ in $\B(C)$ is defined as
$d=d_I+d_{\it II}$ where $d_I$ is induced by the internal differential in
$C$ and $d_{\it II}$ is determined by its restriction onto $\Sigma C_+^*$,
which is, in turn, induced by the product $C_+\otimes C_+\to C_+$.
\end{definition}

\begin{rem}
We will refer to either functor $\B$ or $\Bhat$ as the
\emph{cobar-construction}. Note that for a dga $A$ its
cobar-construction $\Bhat(A)$ is a complete Hopf algebra, whose
space $\Sigma^{-1} A^*_+$ of algebra generators consists of primitive
elements. If $A$ is commutative, then the
differential of $\Bhat(A)$ takes $\Sigma^{-1} A^*_+$ to primitives and
thus makes $\Bhat(A)$ a dg Hopf algebra.
\end{rem}

\begin{proposition}
\label{bar}\
\begin{enumerate}
\item
Let $A$ be a cdga. Then there is a natural isomorphism of complete dglas:
\[
\Prim\Bhat(A)\cong\hat\Lfun(A).
\]
\item
Let $\g$ be a complete dgla and
$\hat U\g$ be its completed universal enveloping algebra.
Then there is a quasi-isomorphism of dgas
\[
\B(\hat U\g)\simeq \Cfun(\g).
\]
\end{enumerate}
\end{proposition}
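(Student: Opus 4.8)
The plan is to treat the two parts separately: part (1) by the classical identification of primitives in a free complete Hopf algebra, and part (2) by combining part (1) with the associative bar--cobar duality of \cite{Lefevre-hasegawa02, positselski} together with a Poincar\'e--Birkhoff--Witt filtration.

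For part (1), I would first observe that the underlying complete graded algebra of $\Bhat(A)$ is the completed tensor algebra $\hat T(\Sigma^{-1}A_+^*)$, which carries its standard complete Hopf algebra structure in which the generators $\Sigma^{-1}A_+^*$ are primitive; equivalently $\Bhat(A)\cong \hat U\big(\hat\LL\,\Sigma^{-1}A_+^*\big)$ as complete Hopf algebras. The classical theorem that the primitives of a tensor Hopf algebra on a primitively generated space $V$ form the free Lie algebra $\LL(V)$ (in its completed graded form) then gives a natural identification $\Prim\Bhat(A)\cong\hat\LL\,\Sigma^{-1}A_+^*$ of the underlying graded Lie algebras, which is exactly the underlying graded Lie algebra of $\hat\Lfun(A)$. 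It remains to match differentials. Since the differential of $\Bhat(A)$ is a coderivation for the coproduct, it preserves $\Prim$, and only its restriction there must be computed. The summand $d_I$ restricts to the internal differential on generators, agreeing with the $d_I$ of $\hat\Lfun(A)$. For $d_{\it II}$, commutativity of $A$ makes the dual coproduct on $A_+^*$ cocommutative, so $d_{\it II}$ carries a generator into the weight-two part of $\Prim$, namely the image of the bracket; a Koszul sign check (the desuspension turns the symmetric dual product into the antisymmetric Lie cobracket) identifies this restriction with the bracket-induced differential defining $\hat\Lfun(A)$. Naturality in $A$ is built into the construction, completing (1).

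For part (2), I would route through the commutative algebra $\Cfun(\g)$. The first step is the Lie-algebra Koszul duality statement $\Bhat\big(\Cfun(\g)\big)\simeq \hat U\g$: by part (1) the complete dga $\Bhat\Cfun(\g)$ is the completed enveloping algebra on its primitives, and the natural comparison map $\Bhat\Cfun(\g)\to\hat U\g$ is a quasi-isomorphism. I would prove this by comparing the Poincar\'e--Birkhoff--Witt filtration on $\hat U\g$ with the corresponding filtration on $\Bhat\Cfun(\g)$; on the associated graded the Lie bracket drops out, so the claim reduces to the abelian case, where it becomes the acyclicity of the Koszul complex of a (co)symmetric algebra. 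The second step applies $\B$ and invokes the associative bar--cobar duality of \cite{Lefevre-hasegawa02, positselski}, by which the natural map $\B\Bhat(R)\to R$ is a quasi-isomorphism for augmented dgas $R$; taking $R=\Cfun(\g)$ and using that $\B$ preserves quasi-isomorphisms of complete dgas gives
\[
\B(\hat U\g)\ \simeq\ \B\Bhat\big(\Cfun(\g)\big)\ \simeq\ \Cfun(\g),
\]
as desired.

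The main obstacle is the first step of part (2): establishing $\Bhat\Cfun(\g)\simeq\hat U\g$ \emph{without} presupposing that the adjunction counit $i_\g:\hat\Lfun\Cfun(\g)\to\g$ is a weak equivalence, since that is itself an output of this section and so cannot be assumed here. This is exactly why I would argue through the PBW filtration directly rather than through $\hat U(i_\g)$; the delicate points are the convergence of the associated spectral sequence in the complete topological setting and the verification that the comparison map respects the dga structure. By contrast, the sign bookkeeping in part (1) and the fact that $\B$ preserves the relevant quasi-isomorphisms are routine once the filtrations are in place.
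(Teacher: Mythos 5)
Your part~(1) follows the same route as the paper (primitives of a completed tensor Hopf algebra form the completed free Lie algebra, plus the observation that commutativity of $A$ forces $d_{\it II}$ to land in primitives), and it is fine. The genuine problem is in part~(2), at the link $\B(\hat U\g)\simeq\B\Bhat\big(\Cfun(\g)\big)$, justified by ``$\B$ preserves quasi-isomorphisms of complete dgas.'' That preservation principle is false, and its failure is not a technicality: it is the very reason weak equivalences in $\Asshat_+$ are \emph{defined} as those maps $f$ for which $\B(f)$ is a quasi-isomorphism, rather than simply as quasi-isomorphisms. The functor $\B$ is, up to linear duality, a cobar construction, and cobar-type functors take only \emph{filtered} quasi-isomorphisms (equivalently, weak equivalences) to quasi-isomorphisms; there exist quasi-isomorphisms of conilpotent dg coalgebras --- hence, dually, of complete dgas --- that cobar does not preserve. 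Consequently, your Step 1 (the comparison map $q:\Bhat\Cfun(\g)\to\hat U\g$ is a quasi-isomorphism) does not entitle you to conclude that $\B(q)$ is a quasi-isomorphism. Worse, asking that $\B(q)$ be a quasi-isomorphism is, by definition, asking that $q$ be a weak equivalence in $\Asshat_+$; and since the other link $\B\Bhat\Cfun(\g)\simeq\Cfun(\g)$ is valid (Theorem~\ref{closedass}), that requirement is equivalent to the very statement $\B(\hat U\g)\simeq\Cfun(\g)$ you are trying to prove. So the crucial step either rests on a false principle or assumes the conclusion. A repair is possible --- show that $q$ is a filtered quasi-isomorphism with respect to admissible complete filtrations and invoke the result of \cite{Lefevre-hasegawa02} that such maps are weak equivalences --- but that verification is the actual content, and it is absent.

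The paper avoids this trap altogether by never applying $\B$ to a morphism of complete dgas: the inclusion $\g\hookrightarrow\hat U\g$ of primitives induces a map $\B(\hat U\g)\to\Cfun(\g)$ \emph{directly}, in the discrete category $\Ass_+$ where quasi-isomorphism is the appropriate notion, and the proof shows that this single map is a quasi-isomorphism. It does so by first reducing to $\g$ finite-dimensional nilpotent by passing to the limit (this reduction is also what disposes of the spectral-sequence convergence issues you flagged but left unresolved), then using the spectral sequence of the internal differential to reduce to dglas with vanishing differential, then the shifted lower central series filtration to reduce to the abelian case, where the claim becomes the Koszul-complex computation of $\Tor_{\Cfun(\tilde{\g})}(\ground,\ground)$. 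Your abelian-case endgame is the same computation, but without a legitimate substitute for the false preservation claim, and without the finite-dimensional nilpotent reduction that makes the filtrations finite, the proposed argument does not close.
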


\begin{proof}
The first statement is a consequence of the completed version of the
well-known fact that the primitive elements in the tensor algebra on a
graded vector space form the free Lie algebra on the same vector space
which easily follows from the Appendix to~\cite{quillen:Ann.ofMath.69}.

To prove the second statement, note that the inclusion of $\g$ into
$\hat U\g$ as the space of primitive elements induces a map of dgas
$\B(\hat U\g)\to\Cfun(\g)$. To see that the latter map is a
quasi-isomorphism it suffices to assume that $\g$ is
finite-dimensional nilpotent; the general result will be obtained
by passing to the limit. Denote by $\tilde{\g}$ the graded Lie algebra
with the same underlying space as $\g$ and the vanishing
differential. Then we have spectral sequences with $E_1$ terms
$H\big(\B(\hat U\tilde{\g})\big)$ and $H\big(\Cfun(\tilde{\g})\big)$,
which converge
to $\B(\hat U\g)$ and $\Cfun(\g)$ respectively. The map $\B(\hat
U\g)\to\Cfun(\g)$ gives a map between these spectral sequences.  It is
therefore enough to prove that the map $\B(\hat
U\tilde{\g})\to\Cfun(\tilde{\g})$ is a quasi-isomorphism.

As in the proof of Lemma~\ref{nilp} we use the filtration of
$\tilde{\g}^*$ induced by the shifted lower central series of
$\tilde{\g}$. This filtration induces, in the usual way, increasing
exhaustive and complete filtrations of $\B(\hat U\tilde{\g})$ and
$\Cfun(\tilde{\g})$ compatible with the map $\B({\hat
U}\tilde{\g})\to\Cfun(\tilde{\g})$. This brings us to the case when
$\tilde{\g}$ is abelian, the desired result then follows from the
calculation of $\Tor_{\Cfun(\tilde{\g})}(\ground,\ground)$ via the Koszul
complex.
\end{proof}

The category $\Ass_+$ has a closed model category structure, by a
general result of Hinich, cf.~\cite{hinich:CA97}. Namely, weak
equivalences in $\Ass_+$ are quasi-isomorphisms of augmented dgas and
fibrations are surjective homomorphisms. The category $\Asshat_+$ of complete dgas
also admits the structure of a closed model category, as follows.

\begin{definition}
A morphism $f:A\to B$ in $\Asshat_+$ is called
\begin{enumerate}
\item
a \emph{weak equivalence} if
${\B}(f):\B(B)\to\B(A)$
is a quasi-isomorphism in $\Ass_+$;
\item
a \emph{fibration} if $f$ is surjective; if, in addition, $f$ is a weak equivalence then $f$ is called an \emph{acyclic fibration};
\item
a \emph{cofibration} if $f$ has the left lifting property with respect to all acyclic fibrations. That means that in any commutative square
\[
\xymatrix{A\ar_f[d]\ar[r]&C\ar^g[d]\\
B\ar@{-->}[ur]\ar[r]&D}
\]
where $g$ is an acyclic fibration there exists a dotted arrow making
the whole diagram commutative.
\end{enumerate}
\end{definition}

\begin{theorem}
\label{closedass}
The category $\Asshat_+$ is a closed model category with fibrations,
cofibrations and weak equivalences defined as above. Moreover, it is
Quillen equivalent to the closed model category $\Ass_+$ via the
adjunctions $\B$ and $\Bhat$.
\end{theorem}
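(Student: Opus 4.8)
The plan is to verify the closed model category axioms MC1--MC5 of \cite{dwyer-spalinski} for $\Asshat_+$ directly, leaning on the contravariant adjunction $\Asshat_+(\Bhat A,C)\cong\Ass_+(\B C,A)$ and on the Hinich model structure already available on $\Ass_+$. First I would record the two counits $\varepsilon_A:\B\Bhat(A)\to A$ in $\Ass_+$ and $\eta_C:\Bhat\B(C)\to C$ in $\Asshat_+$ coming out of this adjunction, together with the classical \emph{bar--cobar} resolution statement that $\varepsilon_A$ is a quasi-isomorphism for every augmented dga $A$; this is exactly the associative input we may import from \cite{Lefevre-hasegawa02, positselski} (it can also be read off from the Appendix to \cite{quillen:Ann.ofMath.69}). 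With this in hand, MC1 is routine: $\Asshat_+$ has finite limits, and its finite colimits are obtained by completing the corresponding colimits of associative algebras, coproducts being completed free products. MC2 (two-out-of-three) is immediate because, by definition, the weak equivalences are the maps carried by the functor $\B$ to quasi-isomorphisms, and quasi-isomorphisms satisfy two-out-of-three. MC3 (closure under retracts) is equally formal: surjections are stable under retracts, the class of maps sent to quasi-isomorphisms by $\B$ is stable under retracts, and cofibrations are stable under retracts because they are defined by a lifting property.

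The real content lies in the factorization axiom MC5 and the lifting axiom MC4. Here I would first characterize the acyclic fibrations as precisely the surjections $f$ with $\B(f)$ a quasi-isomorphism, and, adapting the methods of \cite{hinich01:_dg}, detect them by the right lifting property against a generating set of elementary cofibrations obtained by freely adjoining generators (``cells'') to a complete dga; dually I would isolate a generating set of acyclic cofibrations. The two factorizations are then produced by a small-object-type construction attaching such cells to a given map $f:A\to B$. The crucial point, and the one that distinguishes the complete setting from Hinich's, is that this construction converges because every object of $\Asshat_+$ is pronilpotent, so passing to successive quotients by powers of the augmentation ideal stabilizes the attachment process; it is precisely at this combinatorial core that the associative results of \cite{Lefevre-hasegawa02, positselski} let me shorten the argument instead of rebuilding it. Once the factorizations and the matching descriptions of the weak factorization systems are established, MC4 follows in the standard way: one half is the definition of cofibration, and the other half---that acyclic cofibrations have the left lifting property against every surjection---comes from factoring an acyclic cofibration as an acyclic cofibration followed by a fibration and a retract argument.

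It remains to upgrade $(\B,\Bhat)$ to a Quillen equivalence. That it is a Quillen pair follows from the definitions: $\B$ preserves weak equivalences by fiat and carries (acyclic) fibrations of $\Asshat_+$ to (acyclic) cofibrations of $\Ass_+$, equivalently $\Bhat$ sends (acyclic) cofibrations to (acyclic) fibrations. To see it is an \emph{equivalence} it suffices to check that the derived counits are weak equivalences. One direction is the bar--cobar resolution recalled above, namely $\varepsilon_A:\B\Bhat(A)\to A$ is a quasi-isomorphism. For the other direction, a triangle identity gives that $\varepsilon_{\B C}\circ\B(\eta_C)$ is the identity of $\B C$; since $\varepsilon_{\B C}$ is a quasi-isomorphism, two-out-of-three forces $\B(\eta_C)$ to be a quasi-isomorphism, and hence $\eta_C:\Bhat\B(C)\to C$ is a weak equivalence in $\Asshat_+$ by the very definition of weak equivalence. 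I expect the genuine obstacle to be the convergence of the MC5 factorization construction in the complete setting and the attendant matching of the intrinsically defined surjective fibrations with the lifting-defined cofibrations; this is the place where the adaptation of Hinich's techniques, supported by \cite{Lefevre-hasegawa02, positselski}, does the substantive work.
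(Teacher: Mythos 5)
Your high-level architecture (MC1--MC3 formal, MC4 from MC5 by a retract argument, Quillen equivalence from the counits) is sound, and your triangle-identity argument that $\eta_C:\Bhat\B(C)\to C$ is a weak equivalence once one knows $\varepsilon_{\B C}$ is a quasi-isomorphism is exactly right. But there is a genuine gap at the step carrying all the weight: MC5. Your proposed ``small-object-type construction attaching cells,'' with convergence justified by pronilpotence of the objects, does not work in $\Asshat_+$. The obstruction is smallness of the \emph{domains}, not nilpotence of the targets: colimits (in particular transfinite compositions of pushouts) in $\Asshat_+$ are computed by completing the corresponding discrete colimits, and a map out of even the simplest cell domain into such a completed colimit need not factor through any stage. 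Concretely, put $C_m=\hat{T}(x_1,\dots,x_m)$ (completed tensor algebras on degree-$0$ generators, zero differential) with the evident inclusions $C_m\hookrightarrow C_{m+1}$; their colimit in $\Asshat_+$ is the completion of the union, which contains the convergent sum $\xi=\sum_{n\geq 1}x_1x_2\cdots x_n$, and sending the generator $y$ of the cell $\hat{T}(y)$ to $\xi$ gives a morphism factoring through no $C_m$. So the attachment process has no reason to terminate, and pronilpotence of each object does nothing to repair this. This is precisely why neither Hinich nor Lef\`evre-Hasegawa runs a small object argument on the complete (pro-)side: smallness holds on the dual \emph{coalgebra} side, where every conilpotent coalgebra is the filtered union of its finite-dimensional subcoalgebras, but fails after dualization.

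For comparison, the paper's own proof of this theorem is a single sentence: the statement is a reformulation of Th\'eor\`eme 1.3.1.2 of \cite{Lefevre-hasegawa02} under the anti-equivalence between complete dgas and conilpotent dg coalgebras, so no axiom-checking is done at all. If you want a self-contained argument, the viable route is the one the paper uses for the Lie analogue (Theorem~\ref{closedlie}): given $f$ in $\Asshat_+$, factor $\B(f)$ in $\Ass_+$ using Hinich's model structure \cite{hinich:CA97}, apply $\Bhat$ and perform a cobase change along the counit to obtain a factorization of $f$ itself, and then prove a ``Key Lemma'' (the associative analogue of Lemma~\ref{keylem}, which is Hinich's Key Lemma in \cite{hinich01:_dg}) showing that acyclicity survives this cobase change; your retract argument for the nontrivial half of MC4, and a lemma in the style of Lemma~\ref{sec:dual-hinich-corr-3} exchanging fibrations and cofibrations across the contravariant adjunction, then complete the proof. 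In other words, the factorizations must be \emph{transported through the bar--cobar adjunction from the discrete side}, not built by cell attachment on the complete side; engaging with that transport (and its Key Lemma) is the substantive work your proposal currently defers.
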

\begin{proof}
This is just a reformulation of Th\'eor\`eme 1.3.1.2 of \cite{Lefevre-hasegawa02}, taking into account the anti-equivalence between complete dgas and conilpotent dg coalgebras.
\end{proof}

\begin{rem}
The above result is an associative analogue of Hinich's theorem
\cite{hinich01:_dg} on the existence of a closed model category on dg
conilpotent coalgebras or, equivalently, on complete cdgas. All objects in
$\Asshat_+$ are fibrant and cofibrant objects correspond precisely
to $A_\infty$ algebras, cf.,~for example, \cite{hamilton-lazarev1} for a treatment of
$A_\infty$ algebras relevant to the present context.
\end{rem}

We will now construct a closed model category structure on $\L$ using
Theorem~\ref{closedass} as a shortcut; a more direct approach,
essentially repeating the original Hinich's argument in the dual context
is also possible.

\begin{definition}
\label{sec:dual-hinich-corr-2}
A morphism $f:\g \to \h$ in $\L$ is called
\begin{enumerate}
\item
a \emph{weak equivalence} if
${\Cfun}(f):\Cfun(\h)\to\Cfun(\g)$
is a quasi-isomorphism in $\algs_+$;
\item
a \emph{fibration} if $f$ is surjective; if, in addition, $f$ is a weak equivalence then $f$ is called an \emph{acyclic fibration};
\item
a \emph{cofibration} if $f$ has the left lifting property with respect to all acyclic fibrations.
\end{enumerate}
\end{definition}

We will prove that the above structures make $\L$ a closed model
category. Our proof will be based on the following results.

\begin{proposition}\
\label{adj}
\begin{enumerate}
\item
Let $A$ be an augmented cdga; then $i_A:\Cfun\hat\Lfun(A)\to A$ is a quasi-isomorphism.
\item
Let $\g$ be a complete dgla; then $i_{\g}:\hat\Lfun\Cfun(\g)\to \g$ is
a weak equivalence,
i.e.~$\Cfun(i_{\g}):\Cfun(\g) \to \Cfun\big(\hat\Lfun\Cfun(\g)\big)$ is a
quasi-isomorphism.
\end{enumerate}
\end{proposition}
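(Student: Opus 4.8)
The plan is to deduce both statements from the already-established associative Quillen equivalence of Theorem~\ref{closedass}, together with the two comparison results of Proposition~\ref{bar}; this honours the stated strategy of using the associative theory as a shortcut. The linchpin is the observation that, for a commutative augmented cdga $A$, the complete dga $\Bhat(A)$ is a \emph{cocommutative} complete Hopf algebra whose module of primitives is, by Proposition~\ref{bar}(1), exactly $\hat\Lfun(A)$. Since $\bfk$ has characteristic zero, the structure theorem for conilpotent cocommutative Hopf algebras (the completed Milnor--Moore theorem, cf.\ the Appendix to~\cite{quillen:Ann.ofMath.69}) then yields a natural isomorphism of complete dgas
\[
\Bhat(A) \cong \hat U\hat\Lfun(A),
\]
the differentials matching because every map in sight is a dg map. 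This is the bridge that lets the associative cobar--bar functors $\B,\Bhat$ compute the commutative-to-Lie functors $\Cfun,\hat\Lfun$.

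For part (1), I would combine this isomorphism with Proposition~\ref{bar}(2), applied to $\g=\hat\Lfun(A)$, to obtain a natural chain
\[
\B\Bhat(A) \;\cong\; \B\big(\hat U\hat\Lfun(A)\big) \;\stackrel{\simeq}{\longrightarrow}\; \Cfun\hat\Lfun(A),
\]
whose first arrow is an isomorphism and whose second is the quasi-isomorphism of Proposition~\ref{bar}(2). On the other hand the natural cobar--bar map $\B\Bhat(A)\to A$ is a quasi-isomorphism: this is the classical cobar--bar resolution, and it is also the derived counit of the Quillen equivalence of Theorem~\ref{closedass}, since $\Bhat(A)$ is cofibrant and every object of $\Ass_+$ is fibrant. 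It then remains to check that, under the displayed chain, this associative map is carried to the commutative counit $i_A\colon\Cfun\hat\Lfun(A)\to A$; granting this, $i_A$ is a composite of quasi-isomorphisms (invoke $2$-out-of-$3$) and part (1) is done. I expect this last compatibility to be the main obstacle: both maps are natural and respect the evident augmentations, and the comparison map of Proposition~\ref{bar}(2) is induced by the inclusion of $\g$ into $\hat U\g$ as primitives, so the verification reduces to tracing the three natural transformations on the generating spaces $\Sigma^{-1}A_+^*$ and then invoking naturality. It is routine but must be carried out with care.

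Part (2) I would derive formally from part (1). The adjunction of Proposition~\ref{adj_bis}, $\L(\hat\Lfun A,\g)\cong\algs_+(\Cfun(\g),A)$, is of bar--cobar (contravariant) type, but its triangle identities hold as for any adjunction; naturality of this isomorphism in the variable $\g$, applied to the morphism $i_\g\colon\hat\Lfun\Cfun(\g)\to\g$ and to the identity of $\hat\Lfun\Cfun(\g)$, gives
\[
i_{\Cfun(\g)}\circ\Cfun(i_\g)=\id_{\Cfun(\g)}.
\]
Now part (1), applied to the augmented cdga $A:=\Cfun(\g)$, says that $i_{\Cfun(\g)}\colon\Cfun\hat\Lfun\Cfun(\g)\to\Cfun(\g)$ is a quasi-isomorphism. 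Since it admits $\Cfun(i_\g)$ as a one-sided inverse, passing to cohomology shows that $H(\Cfun(i_\g))$ is inverse to the isomorphism $H(i_{\Cfun(\g)})$, hence $\Cfun(i_\g)$ is itself a quasi-isomorphism. By Definition~\ref{sec:dual-hinich-corr-2} this is precisely the assertion that $i_\g$ is a weak equivalence in $\L$, which completes part (2).
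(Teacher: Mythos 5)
Your proposal is correct, and it splits into two halves of different character relative to the paper. For part (1) you follow essentially the paper's own route: the paper likewise identifies $\hat{U}\hat\Lfun(A)$ with $\Bhat(A)$ (it writes $\B\big(\hat{U}\hat\Lfun(A)\big)=\B\Bhat(A)$ without comment), invokes Proposition~\ref{bar}(2) to compare $\B\big(\hat{U}\hat\Lfun(A)\big)$ with $\Cfun\hat\Lfun(A)$, and quotes Theorem~\ref{closedass} for the quasi-isomorphism $\B\Bhat(A)\simeq A$; if anything you are more scrupulous, since the paper never verifies that this zig-zag is compatible with the counit $i_A$ --- the identification you flag as ``routine but to be carried out with care'' is passed over silently there. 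For part (2), however, your argument genuinely differs: the paper simply reruns the associative comparison with $\Cfun(\g)$ in place of $A$ (Proposition~\ref{bar}(2) gives $\Cfun\big(\hat\Lfun\Cfun(\g)\big)\simeq\B\Bhat\Cfun(\g)$, and Theorem~\ref{closedass} gives $\B\Bhat\Cfun(\g)\simeq\Cfun(\g)$), whereas you deduce (2) formally from (1): the triangle identity $i_{\Cfun(\g)}\circ\Cfun(i_\g)=\operatorname{id}_{\Cfun(\g)}$ does indeed follow from naturality of the contravariant adjunction of Proposition~\ref{adj_bis} applied to $i_\g$ and $\operatorname{id}_{\hat\Lfun\Cfun(\g)}$, and then (1) with $A=\Cfun(\g)$ makes $H(\Cfun(i_\g))$ a one-sided, hence two-sided, inverse of the isomorphism $H(i_{\Cfun(\g)})$, so $\Cfun(i_\g)$ is a quasi-isomorphism, which is exactly the definition of $i_\g$ being a weak equivalence. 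Your route buys economy and robustness --- part (2) requires no further input from the associative theory, and the only delicate zig-zag-versus-counit identification is concentrated in part (1) --- while the paper's route keeps the two parts independent at the cost of running the same comparison twice and leaving that same compatibility implicit in both.
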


\begin{proof}
There is a quasi-isomorphism $\Cfun\hat\Lfun(A)\simeq
\B\big({\hat U}\hat\Lfun(A)\big)$ in $\Ass_+$ by Proposition \ref{bar}(2).
Furthermore, the dga $\B\big({\hat U}\hat\Lfun(A)\big)=\B\Bhat(A)$ is
quasi-isomorphic to $A$ by Theorem \ref{closedass}. This proves (1).

Let us prove that the induced map
$\Cfun(i_\g):\Cfun\big(\hat\Lfun\Cfun(\g)\big)\to\Cfun(\g)$ is a
quasi-isomorphism of augmented cdgas. By Proposition \ref{bar}(2) we
have a quasi-isomorphism of dgas
\[
\Cfun\big(\hat\Lfun\Cfun(\g)\big)\simeq \B {\hat
  U}\big(\hat\Lfun\Cfun(\g)\big)=\B\Bhat\Cfun(\g)
\]
and, by Theorem \ref{closedass}, $\B\Bhat\Cfun(\g)$ is quasi-isomorphic
to $\Cfun(\g)$ as required.
\end{proof}

The following statement is an analogue of the `Key Lemma' of
\cite[p.~223]{hinich01:_dg}.

\begin{lemma}\label{keylem}
Let $A$ be a cdga, $\g$ be a complete dgla and $f:A\to \Cfun(\g)$ be a
surjective map. Consider the pushout diagram
\[
\xymatrix
{
\hat\Lfun\Cfun(\g)\ar^{i_\g}[d]\ar^{\hat\Lfun(f)}[r]&\hat\Lfun(A)\ar_j[d]\\
\g\ar[r]&\hskip .5em\a\ .
}
\]
Then $\Cfun(j):\Cfun(\a) \to \Cfun\hat\Lfun(A)$ is a quasi-isomorphism of cdgas.
\end{lemma}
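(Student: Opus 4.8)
The plan is to reduce the statement to the already-established Quillen equivalence of Theorem~\ref{closedass} by passing through the associative cobar-construction, exactly as in the proof of Proposition~\ref{adj}. The key observation is that the Chevalley-Eilenberg functor $\Cfun$ and the Harrison functor $\hat\Lfun$ are, up to quasi-isomorphism, shadows of the bar/cobar adjunction $\B \dashv \Bhat$ between $\Ass_+$ and $\Asshat_+$; so a statement about $\Cfun$ of a pushout in $\L$ should be translatable into a statement about the bar construction of a corresponding pushout, where the homotopical behavior is controlled by Theorem~\ref{closedass}.

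First I would apply $\Cfun$ to the pushout square and use Proposition~\ref{adj}(1), namely that $i_A:\Cfun\hat\Lfun(A)\to A$ is a quasi-isomorphism, to replace the corner $\Cfun\hat\Lfun(A)$ by $A$ up to weak equivalence. The content of the lemma is then that the induced map $\Cfun(\a) \to A$ is a quasi-isomorphism, given that $f:A\to\Cfun(\g)$ is surjective and $\a$ is the pushout of $\g \leftarrow \hat\Lfun\Cfun(\g) \xrightarrow{\hat\Lfun(f)} \hat\Lfun(A)$. Since $\Cfun$ is a right adjoint (it sends the coproduct/pushout structure in $\L$ to the appropriate limit structure in $\algs_+$), and since $\hat\Lfun$ is cofibrant-cofibration-friendly via Theorem~\ref{closedass}, I expect $\Cfun$ applied to this pushout along a map of the form $\hat\Lfun(f)$ to compute (up to quasi-isomorphism) the analogous homotopy pullback/cotensor in augmented cdgas. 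Concretely, I would identify $\Cfun(\a)$ with a twisted tensor or cotensor object built from $A$ and $\Cfun(\g)$ over $\Cfun\hat\Lfun\Cfun(\g)$, and then invoke Proposition~\ref{adj}(2) — that $\Cfun(i_\g):\Cfun(\g)\to\Cfun\hat\Lfun\Cfun(\g)$ is a quasi-isomorphism — to collapse the base.

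The main technical device will be a filtration/spectral-sequence argument of the type used repeatedly in this paper (as in the proof of Proposition~\ref{bar}(2)): put the standard increasing, exhaustive, complete filtration on the cobar/Chevalley-Eilenberg complexes coming from word-length in the free (Lie or tensor) generators, check that all maps in sight respect it, and reduce to the associated graded. On the associated graded the differentials simplify to the ``bar'' part, and the statement becomes the assertion that the corresponding square of associative bar constructions is a homotopy pushout, which is exactly what the Quillen equivalence of Theorem~\ref{closedass} guarantees. Surjectivity of $f$ is what ensures the relevant filtrations are compatible and that the comparison map is a levelwise surjection, so the spectral sequences converge and can be compared.

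The hard part will be verifying that $\Cfun$ carries this particular pushout in $\L$ to something quasi-isomorphic to the expected homotopy-theoretic construction in $\algs_+$ — in other words, controlling how $\Cfun$ interacts with the pushout along $i_\g$ and $\hat\Lfun(f)$. This is delicate because $\Cfun$ is only a Quillen \emph{equivalence} up to the homotopy level established in this section, and pushouts are not preserved on the nose; the surjectivity hypothesis on $f$ is precisely the ``cofibration-like'' condition (mirroring Hinich's Key Lemma hypotheses) that makes the pushout homotopically meaningful. I would therefore spend the bulk of the proof setting up the filtration carefully and checking that the map $\Cfun(j)$ induces an isomorphism on the $E_1$-pages, where everything is governed by the known associative case.
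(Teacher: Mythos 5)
Your strategy has a genuine gap, and it occurs at exactly the two places where you lean on formality. First, the parenthetical claim that $\Cfun$ is a right adjoint which ``sends the coproduct/pushout structure in $\L$ to the appropriate limit structure in $\algs_+$'' is false: the adjunction of Proposition~\ref{adj_bis} has the form $\L(\hat\Lfun(A),\g)\cong\algs_+(\Cfun(\g),A)$, with maps \emph{out of} both functors, so it makes $\hat\Lfun$ carry limits of algebras to colimits of dglas but gives no formal control whatsoever of $\Cfun$ on colimits. That this control is non-formal is the whole point of the section: even for the plain coproduct, $\Cfun(\g*\h)\simeq\Cfun(\g)\times_{\ground}\Cfun(\h)$ is only a quasi-isomorphism, and proving it occupies the entire Appendix (Theorem~\ref{freeprod}). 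Second, your pivotal step --- that the statement reduces to a square of associative bar constructions being a homotopy pushout, ``which is exactly what the Quillen equivalence of Theorem~\ref{closedass} guarantees'' --- is unjustified. A Quillen equivalence does not certify that a given point-set pushout is a homotopy pushout; lemmas of this kind (Hinich's Key Lemma and its associative analogue) are precisely what one must prove \emph{in order to} construct such model structures, not consequences one extracts from them afterwards. Nor does the square translate cleanly into the associative world: applying the completed enveloping functor, the corner $\g$ becomes $\hat U(\g)$, which is related to the bar--cobar formalism only through the quasi-isomorphism $\B(\hat U\g)\simeq\Cfun(\g)$ of Proposition~\ref{bar}(2); the map $\hat U(i_\g)$ is not the counit of the $\B$--$\Bhat$ adjunction, and $\hat U(\g)$ has no evident cofibrancy in $\Asshat_+$, so even a cube-lemma argument has nothing to hold on to.

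What your sketch is missing is the step where surjectivity of $f$ actually enters, and it is the heart of the paper's proof: since $\Cfun(\g)$ is free as a graded algebra, a splitting of $f$ yields an isomorphism of graded algebras $A\cong B\oplus\Cfun(\g)$ with $B=\Ker(f)$ closed under the differential; hence $\hat\Lfun(A)\cong\hat\Lfun(B_e)*\hat\Lfun\Cfun(\g)$ and the pushout is identified \emph{explicitly} as $\a\cong\hat\Lfun(B_e)*\g$, with $j$ becoming $\mathrm{id}*\,i_\g$ up to twisted differentials. Only after this identification is there anything concrete to filter: the paper filters by the lower central series of $\g$ and of $\hat\Lfun(B_e)$ (not by word length in generators), uses Lemma~\ref{sec:dual-hinich-corr} to reduce to the quotients by the total filtration and Lemma~\ref{sublemma} to pass to the associated bigraded pieces, and there concludes by the exactness of the \emph{uncompleted} free product (established in the proof of Theorem~\ref{free}) together with Hinich's result for discrete dglas that the canonical map ${\mathscr L}{\mathscr C}(\mathrm{Gr}(\g))\to\mathrm{Gr}(\g)$ is a quasi-isomorphism --- not by Theorem~\ref{closedass}. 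Your reading of surjectivity as ensuring ``levelwise surjectivity of the comparison map'' for spectral-sequence convergence is not its actual role, and without the splitting you have no description of $\a$ at all, hence nothing on which to run a spectral sequence.
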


In the proof of Lemma~\ref{keylem} we will use the following technical statement.

\newtheorem*{sublemma}{Sublemma}

\begin{lemma}
\label{sublemma}
Let $\phi : (A',d') \to (A'',d'')$ be a chain map. Assume that $A' =
\bigcup_{p,q}F_{p,q}A'$ is a~finite double filtration which is
descending in the sense that
\[
F_{p+1,q}A'\cup F_{p,q+1}A' \subset F_{p,q}A' \ \mbox { for each }\ p,q.
\]
Assume also that $d'$ is the sum $d'_1+d'_2$ of two
differentials such that
\[
d'_1(F_{p,q}A') \subset F_{p,q}A' \ \mbox { and } \
d'_2(F_{p,q}A')\subset F_{p-1,q+1}A' \ \mbox { for each }\ p,q.
\]

Suppose that $A''$ has a double filtration
with the similar properties and that $\phi$ is compatible with
these filtrations.
If the induced map
\[
\phi_{p,q} :\frac{F_{p,q}A'}{ F_{p+1,q}A'\cup F_{p,q+1}A'}
\longrightarrow \frac{F_{p,q}A''}{ F_{p+1,q}A''\cup F_{p,q+1}A''}
\]
of the quotients, with the differentials induced by the
`untwisted' parts $d'_1$ resp.~$d''_1$, is a~quasi-isomorphism for
each $p$ and $q$, then $\phi$ is a quasi-isomorphism, too.
\end{lemma}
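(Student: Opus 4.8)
The plan is to prove the Sublemma by a spectral-sequence comparison, reducing the two interleaved differentials $d'_1,d'_2$ to a single convergent filtration on which the twisted part $d'_2$ becomes invisible on the associated graded. First I would replace the double filtration by the single \emph{decreasing} filtration of $A'$ indexed by $q$,
\[
\mathcal G_q A' := \bigcup_p F_{p,q}A',
\]
and likewise $\mathcal G_q A''$. This is a filtration by subcomplexes: since $d'_1(F_{p,q}A')\subset F_{p,q}A'\subset \mathcal G_qA'$ and $d'_2(F_{p,q}A')\subset F_{p-1,q+1}A'\subset \mathcal G_{q+1}A'\subset \mathcal G_qA'$, we get $d'(\mathcal G_qA')\subset \mathcal G_qA'$. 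Because the double filtration is finite, the filtration $\{\mathcal G_qA'\}_q$ is bounded, so the associated spectral sequence converges in the sense of~\cite{baordman:condition}; as $\phi$ respects the bifiltration it respects $\mathcal G_\bullet$ and induces a morphism of spectral sequences.

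The crux is the identification of the $E_0$ and $E_1$ pages. On $E_0^q=\mathcal G_qA'/\mathcal G_{q+1}A'$ the induced differential is the one coming from $d'_1$ alone: the twisted part $d'_2$ raises the filtration degree $q$ strictly, hence induces zero on the associated graded. To evaluate the $d'_1$-homology of $\mathcal G_qA'/\mathcal G_{q+1}A'$ I would impose on it the second, finite, decreasing filtration by $p$, namely the images $\Phi_p$ of $F_{p,q}A'$; this is again $d'_1$-stable, and in the situation at hand (where $F_{\bullet,\bullet}$ is the filtration attached to a bigrading, as in the applications of this Sublemma) its associated graded pieces are exactly the double quotients
\[
\frac{F_{p,q}A'}{F_{p+1,q}A'\cup F_{p,q+1}A'},
\]
with the differential induced by $d'_1$. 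By hypothesis $\phi_{p,q}$ is a quasi-isomorphism on each such piece, so the comparison theorem applied to the finite $p$-filtration shows that $\phi:\mathcal G_qA'/\mathcal G_{q+1}A'\to \mathcal G_qA''/\mathcal G_{q+1}A''$ is a quasi-isomorphism; equivalently $\phi$ is an isomorphism on the $E_1$-page of the $q$-filtered spectral sequence.

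To finish, I would invoke the comparison theorem once more: an isomorphism on $E_1$ between two spectral sequences associated to \emph{bounded} filtrations induces an isomorphism on $E_\infty$ and hence on the abutments $H(A')\to H(A'')$, so $\phi$ is a quasi-isomorphism. Since both filtrations are finite, this last step can equally be carried out by hand, by downward induction on $q$ (and, inside, on $p$) using the five lemma applied to the long exact homology sequences of $0\to\mathcal G_{q+1}\to\mathcal G_q\to\mathcal G_q/\mathcal G_{q+1}\to 0$.

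The main obstacle — and the only place where the precise hypotheses are used — is the bookkeeping on the associated graded: one must check that $d'_2$ truly drops out on $E_0$ (this is exactly the content of the shift $F_{p,q}\to F_{p-1,q+1}$, which strictly increases $q$) and that the $p$-graded pieces of $\mathcal G_q/\mathcal G_{q+1}$ reproduce the double quotients in the statement; the descending property of the bifiltration is what makes this identification legitimate. The role of the finiteness assumption is to guarantee convergence, without which the passage from the $E_1$-isomorphism to a quasi-isomorphism of the totalizations would be invalid.
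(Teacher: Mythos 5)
Your proof is correct and follows essentially the same route as the paper's: the paper likewise passes to the spectral sequences in the $q$-direction, observes that their $E_1$-sheets are the same as if $d'_2=d''_2=0$, and then reduces to the untwisted case where the claim follows from the hypothesis on the double quotients. Your write-up simply makes explicit the two steps the paper compresses — the vanishing of the twisted part on the associated graded, and the finite $p$-filtration comparison turning the assumed quasi-isomorphisms $\phi_{p,q}$ into an $E_1$-isomorphism — together with the convergence justification from the finiteness of the filtrations.
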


\begin{proof}
Consider the spectral sequences in the $q$-direction. The $E_1$-sheets of
these spectral sequences are clearly the same as if the twisted parts
$d'_2$ resp.~$d''_2$ of the full differentials vanish. We may therefore
assume $d'_2 = d''_2 = 0$ from the beginning, in which case is the
claim obvious.
\end{proof}

We will also need to know that the functor $\Cfun(-)$ preserves
filtered quasi-isomorphisms. The proof of the following statement
is a harmless modification of~\cite[Proposition~4.4.4]{hinich01:_dg}.

\begin{lemma}
\label{sec:dual-hinich-corr}
Assume that
$\uu$ and $\vv$ are filtered complete dglas, with complete filtrations
\begin{equation}
\label{snezi}
\uu = F_1\uu \supset F_2\uu \supset  F_3\uu \supset
\cdots \ \mbox { resp. } \ \vv = F_1\vv \supset F_2\vv \supset F_3\vv
\supset \cdots  .
\end{equation}
Let $\phi : \uu \to \vv$ be
a morphism, compatible with the filtrations, such that the induced map
$\phi_n : \uu/F_n\uu \to  \vv/F_n\vv$ is a quasi-isomorphism for each
$n \geq 1$. Then $\Cfun(\phi) : \Cfun(\vv) \to \Cfun(\uu)$
is a~quasi-isomorphism, too.
\end{lemma}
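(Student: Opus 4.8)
The plan is to prove the statement by filtering the Chevalley--Eilenberg complex by the weight that the filtration of the source induces, following the pattern of Hinich's Proposition~4.4.4 in \cite{hinich01:_dg}. The essential point is that passing to the associated graded replaces $\uu$ and $\vv$ by the \emph{weight-graded} Lie algebras $\mathrm{gr}_F\uu$ and $\mathrm{gr}_F\vv$, on which $\Cfun$ is easily seen to preserve quasi-isomorphisms.

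First I would record the weight filtration. By the duality between complete and discrete spaces, the filtration $\uu=F_1\uu\supset F_2\uu\supset\cdots$ dualizes to an exhaustive increasing filtration of $\uu^*$, giving each generator of $\Sigma\uu^*$ a finite weight $\geq 1$ and hence equipping $\Cfun(\uu)=S\Sigma\uu^*$ (Definition~\ref{CEHar}) with an exhaustive increasing filtration $W_{\le w}$ by subcomplexes. Both pieces $d_I$ and $d_{\it II}$ of the differential preserve this filtration: $d_I$ because the differential of $\uu$ preserves $F_\bullet$, and $d_{\it II}$ because the bracket satisfies $[F_p\uu,F_q\uu]\subset F_{p+q}\uu$. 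Consequently $\mathrm{gr}_W\Cfun(\uu)\cong\Cfun(\mathrm{gr}_F\uu)$, and similarly for $\vv$, with $\Cfun(\phi)$ filtered. Since homology commutes with the filtered colimit $\Cfun(\uu)=\colim_w W_{\le w}$ and each $W_{\le w}$ carries a \emph{finite} weight filtration, it suffices to prove that $\mathrm{gr}_W\Cfun(\phi)=\Cfun(\mathrm{gr}_F\phi)$ is a quasi-isomorphism.

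To see that $\mathrm{gr}_F\phi$ is a quasi-isomorphism I would run the five lemma on the short exact sequences $0\to F_n\uu/F_{n+1}\uu\to\uu/F_{n+1}\uu\to\uu/F_n\uu\to0$: the hypothesis that every $\phi_n$ is a quasi-isomorphism gives, by induction on $n$, that each $\mathrm{gr}_F^n\phi$ is one. It then remains to check that $\Cfun$ carries a quasi-isomorphism of \emph{weight-graded} dglas to a quasi-isomorphism. Here the weight is a genuine grading, so $\Cfun(\mathrm{gr}_F\uu)$ splits as a direct sum of its weight components, and a component of weight $w$ involves only symmetric words of length at most $w$ (each generator having weight $\geq1$). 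On such a component the word-length filtration is \emph{bounded}; its associated graded is $\big(S^k\Sigma(\mathrm{gr}_F\uu)^*,d_I\big)$, whose homology over the characteristic-zero field $\bfk$ is $S^k\Sigma H(\mathrm{gr}_F\uu)^*$ by the Künneth theorem together with Maschke's theorem (using that dualization is exact, so $H((\mathrm{gr}_F\uu)^*)\cong H(\mathrm{gr}_F\uu)^*$). As $\mathrm{gr}_F\phi$ is a quasi-isomorphism, so is its dual and each of its symmetric powers, whence $\Cfun(\mathrm{gr}_F\phi)$ is an isomorphism on the first page; since the filtration is bounded, the comparison theorem applies and $\Cfun(\mathrm{gr}_F\phi)$ is a quasi-isomorphism, completing the argument.

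The main obstacle is precisely the convergence of this last word-length spectral sequence: because a complete dgla may have a nontrivial part in degree $-1$ (the home of Maurer--Cartan elements), the symmetric algebra $S\Sigma\uu^*$ is \emph{not} bounded in a fixed total degree, so the word-length filtration cannot be applied to $\Cfun(\uu)$ directly. Interposing the weight filtration is what circumvents this: after passing to $\mathrm{gr}_F$ the weight becomes a grading that bounds the word length in each weight, restoring a bounded filtration. This interplay of the two differentials $d_I$ and $d_{\it II}$ against the two filtrations is exactly the mechanism isolated in the Sublemma (Lemma~\ref{sublemma}), which could be invoked in place of the explicit spectral-sequence bookkeeping above.
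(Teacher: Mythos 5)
Your proof is correct and is essentially the paper's own argument: the paper proves this lemma simply by declaring it a ``harmless modification'' of Hinich's Proposition~4.4.4, and what you have written out---dualizing the filtration to a weight filtration on $\Cfun(\uu)$, reducing to the associated graded $\Cfun(\mathrm{gr}_F\uu)$ by exhaustion and the five lemma, then running the bounded word-length spectral sequence within each weight---is exactly that modification transported to the linearly compact setting, including the correct diagnosis of why the word-length filtration alone fails to converge. The one point worth flagging is that you read the hypothesis ``filtered dgla'' as including $[F_p\uu,F_q\uu]\subset F_{p+q}\uu$; this is indeed what the paper intends (its applications use lower-central-series-type filtrations, and Hinich's admissible filtrations require it), but it is tacit in the statement, so you were right to make it explicit.
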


\begin{rem}
The map $\phi_n : \uu/F_n\uu \to  \vv/F_n\vv$ is a chain map
in the category of linearly compact spaces. By saying it is
a quasi-isomorphism we mean that it is a quasi-isomorphism in the
underlying category of dg vector spaces. It is easy to show that
$\phi_n$ is a quasi-isomorphism if and only if its dual $\phi_n^*$
is a quasi-isomorphism in the usual sense, compare
Remark~\ref{sec:dual-hinich-corr-4}.
\end{rem}

\begin{proof}[Proof of Lemma~\ref{keylem}]
Our proof will be similar to that of \cite[pp.~224-225]{hinich01:_dg} or
\cite[pp.~42-43]{Lefevre-hasegawa02}. The kernel
$B$ of the map $f:A\to \Cfun(\g)$ is a non-unital cdga.
Since $\Cfun(\g)$ is free as a~non-differential algebra,
choosing an algebra splitting of $f$, we obtain an
isomorphism
\[
A\cong B\oplus\Cfun(\g)
\]
 of graded algebras,
in which $B$ is closed under the differential.  Therefore
$\hat\Lfun(A)$ is isomorphic, as a complete graded Lie algebra, to
$\hat\Lfun(B_e)*\hat\Lfun\Cfun(\g)$, the (completed) free product of
$\hat\Lfun(B_e)$ and $\hat\Lfun\Cfun(\g)$. The differential in $\hat\Lfun(A)$
consists of three parts: the differential in $\hat\Lfun(B_e)$, the
differential in $\hat\Lfun\Cfun(\g)$ and the `twisted' part
mapping the generators of $\hat\Lfun(B_e)$ into $\hat\Lfun\Cfun(\g)$.

We clearly have the following isomorphisms of complete graded
Lie algebras (again, disregarding the differentials):
\begin{align*}
\a\cong \hat\Lfun(A)*_{\hat\Lfun\Cfun(\g)}\g\cong
\big(\hat\Lfun(B_e)*\hat\Lfun\Cfun(\g)\big)*_{\hat\Lfun\Cfun(\g)}\g
\cong\hat\Lfun(B_e)*\g.
\end{align*}
The differential in $\hat\Lfun(B_e)*\g$ is, under this isomorphism, also the
sum of three parts: the differential in $\hat\Lfun(B_e)$, the differential
in $\g$ and the twisted part, which maps the generators of
$\hat\Lfun(B_e)$ into $\g$. It is
clear that the map $j:\hat\Lfun(A)\to\a$, under the above identifications,
equals
\begin{equation}
\label{zase_jsem_podlehl}
\id *i_\g:\hat\Lfun(B_e)*\hat\Lfun\Cfun(\g)\to\hat\Lfun(B_e)*\g,
\end{equation}
but the differentials differ by the twistings from those of the
free products.

Before we proceed, we need some notation. If
$\uu$ and $\vv$ are filtered complete dglas as in~(\ref{snezi}), then
their completed free product $\uu * \vv$ possesses an induced double filtration
with $F_{p,q}(\uu * \vv)$ the closure of the subspace generated by all
iterated brackets of $u_i \in F_{p_i}\uu$ and $v_j \in F_{q_j}\vv$, $1
\leq i \leq a$, $1 \leq j \leq b$, such that
\[
p_1+ \cdots+ p_a = p \ \mbox { and } \ q_1+ \cdots + p_b = q.
\]
One also has the associated total filtration with $F_n(\uu * \vv) :=
\bigcup_{p+q = n} F_{p,q}(\uu * \vv)$.

\def\Gr{{\rm Gr}}
For graded dglas one defines the bigrading  on their completed free product in
the obvious analogous manner. We then have the following formula
for the associated bigraded dgla:
\begin{equation}
\label{Vcera_na_Karafiatovi}
\begin{aligned}
\Gr_{p,q}(\uu * \vv) &:= \frac{F_{p,q}(\uu * \vv)}{F_{p+1,q}(\uu * \vv)
  \cup F_{p,q+1}(\uu * \vv)} \cong \big(\Gr(\uu) * \Gr(\vv)\big)_{p,q}.
%\\
%\Gr_n(\uu * \vv) &\cong \big(\Gr(\uu) * \Gr(\vv)\big)_n
\end{aligned}
\end{equation}

Let us return to our proof. The lower central series $\g_1 \supset
\g_2 \supset\g_3 \supset
\cdots$  of $\g$ induces, in the standard
manner, a filtration of $\hat\Lfun\Cfun(\g)$ while the complete free Lie algebra
$\hat\Lfun(B_e)$ is filtered by its lower central series. As  explained
above, one has the induced double filtrations of the completed free
products $\hat\Lfun(B_e)*\hat\Lfun\Cfun(\g)$ and $\hat\Lfun(B_e)*\g$.
The associated total filtrations are stable under
the differentials and the map~(\ref{zase_jsem_podlehl})
factorizes, for each $n$, into a map
\begin{equation}
\label{snih}
\phi = \phi_n:
\frac{\hat\Lfun(B_e)*\hat\Lfun\Cfun(\g)}{F_n(\hat\Lfun(B_e)*\hat\Lfun\Cfun(\g))}
\longrightarrow
\frac{\hat\Lfun(B_e)*\g}{F_n(\hat\Lfun(B_e)*\g)}
\end{equation}
of dglas. By Lemma~\ref{sec:dual-hinich-corr}, it suffices to prove
that $\phi$ is a quasi-isomorphism for each $n$.

Fix $n$ and consider the double filtrations of the quotients
in~(\ref{snih}) induced by the double filtrations
$F_{p,q}\big(\hat\Lfun(B_e)*\hat\Lfun\Cfun(\g)\big)$
resp.~$F_{p,q}\big(\hat\Lfun(B_e)*\g\big)$.
It is straightforward though technically involved to prove that these
filtrations and the map $\phi$ satisfy the assumptions of
Lemma~\ref{sublemma}. Taking the quotients was needed to make these double
filtrations finite.

By Lemma~\ref{sublemma}, it is enough to prove that the bigraded components
$\phi_{pq}$ of the map $\phi$ are
quasi-isomorphisms. Using~(\ref{Vcera_na_Karafiatovi}), one easily
identifies $\phi_{pq}$ with the map
\begin{equation}
\label{za_chvili_k_Pakousum}
(\id *i_{\Gr(\g)})_{p,q}:
\big(\Gr(\hat\Lfun(B_e))*\hat\Lfun\Cfun(\Gr(\g))\big)_{p,q}
\longrightarrow
\big(\Gr(\hat\Lfun(B_e))*\Gr(\g)\big)_{p,q}
\end{equation}
if $p+q < n$ while $\phi_{pq} = 0$ otherwise.

Since the gradings of both $\Gr(\hat\Lfun(B_e))$,
$\hat\Lfun\Cfun(\Gr(\g))$ and $\Gr(\g)$ are {\em
positive\/}, the $(p,q)$-components of the
free products in~(\ref{za_chvili_k_Pakousum}) are
spanned by iterated brackets of length $\leq p+q$. We may thus
in~(\ref{za_chvili_k_Pakousum}) disregard the topologies and consider
{\em uncompleted\/} free products.

\def\Lhin{{\mathscr L}}\def\Chin{{\mathscr C}}
In the proof of Theorem~\ref{free} we established that the uncompleted
free product is an exact functor. To finish the proof, it is therefore
enough to establish that the canonical map
$\hat\Lfun\Cfun\big(\Gr(\g))_p
\to \Gr(\g)_p$ is a quasi-isomorphism for each $p$. It is immediate to
see that $\hat\Lfun\Cfun(\Gr(\g))_p$ coincides with the degree $p$-part
$\Lhin\Chin(\Gr(\g))_p$ of the composition of the adjoint functors $\Chin$ and
$\Lhin$ (see~\cite[2.2]{hinich01:_dg} for their definitions)
applied to $\Gr(\g)$
considered as a discrete dgla. The proof is finished by
of~\cite[Proposition~3.3.2]{hinich01:_dg} by which the canonical
map $\Lhin\Chin(\Gr(\g)) \to \Gr(\g)$ is a quasi-isomorphism.
\end{proof}

Let us prove another auxiliary statement.

\begin{lemma}
\label{sec:dual-hinich-corr-3}
The functor $\hat\Lfun :\algs_+  \to \L$ preserves weak
equivalences. Moreover, it maps fibrations to cofibrations and
cofibrations to fibrations.
\end{lemma}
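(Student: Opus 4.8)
The plan is to prove the three assertions separately, transporting each question across the contravariant adjunction $\L(\hat\Lfun A,\g)\cong\algs_+(\Cfun\g,A)$ of Proposition~\ref{adj_bis} and exploiting that the counits $i_A\colon\Cfun\hat\Lfun A\to A$ and $i_\g\colon\hat\Lfun\Cfun\g\to\g$ of Proposition~\ref{adj} are (weak) equivalences. Throughout I keep in mind that $\hat\Lfun$ is contravariant, so a morphism $f\colon A\to A'$ in $\algs_+$ produces $\hat\Lfun f\colon\hat\Lfun A'\to\hat\Lfun A$ and $\Cfun\hat\Lfun f\colon\Cfun\hat\Lfun A\to\Cfun\hat\Lfun A'$. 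First I would dispose of weak equivalences: if $f$ is a quasi-isomorphism, naturality of the counit gives a commutative square whose horizontal arrows are $i_A$ and $i_{A'}$ and whose vertical arrows are $\Cfun\hat\Lfun f$ and $f$. Since $i_A,i_{A'}$ are quasi-isomorphisms by Proposition~\ref{adj}(1) and $f$ is one by hypothesis, the two-out-of-three property forces $\Cfun(\hat\Lfun f)$ to be a quasi-isomorphism, which by Definition~\ref{sec:dual-hinich-corr-2} is exactly the statement that $\hat\Lfun f$ is a weak equivalence in $\L$.

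Next I would handle the passage from cofibrations to fibrations directly, since fibrations in $\L$ are just surjections and surjectivity is not a lifting property available before the model structure is in place. Cofibrations in $\algs_+$ are monomorphisms, being retracts of free extensions, so if $f$ is a cofibration then $f_+\colon A_+\to A'_+$ is injective. Under the duality between discrete and linearly compact spaces this dualizes to a surjection $\Sigma^{-1}(A'_+)^*\twoheadrightarrow\Sigma^{-1}(A_+)^*$ of the spaces of generators, and the completed free Lie functor carries a surjection of generators to a surjection of complete dglas (the image is a closed sub-dgla containing the generators, hence everything). Therefore $\hat\Lfun f$ is surjective, i.e.\ a fibration in $\L$.

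For the remaining direction — fibrations to cofibrations — I would use the adjunction, because a weak equivalence in $\L$ is a Chevalley--Eilenberg condition rather than an underlying quasi-isomorphism, so a direct lifting against acyclic fibrations cannot be run inside $\L$. Let $f$ be a fibration (surjection) and test $\hat\Lfun f$ against an arbitrary acyclic fibration $p\colon\g\to\h$ in $\L$. A careful transposition of the lifting square through the adjunction, using naturality in both variables, produces a square in $\algs_+$ with left-hand map $\Cfun p\colon\Cfun\h\to\Cfun\g$ and right-hand map $f\colon A\to A'$, in which a solution of the transposed lifting problem corresponds precisely to a solution of the original one. Since $p$ is a weak equivalence, $\Cfun p$ is a quasi-isomorphism by definition; and since $p$ is surjective, $p^*$ is injective, so $\Cfun p=S\Sigma p^*$ is a free extension and I claim a cofibration of cdgas. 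Granting this, $\Cfun p$ is an acyclic cofibration and $f$ a fibration, so the lift exists and $\hat\Lfun f$ is a cofibration.

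The delicate point — and the place where completeness is essential — is the claim that $\Cfun p$ is a cofibration for every surjection $p\colon\g\to\h$. As a graded algebra $\Cfun\g\cong\Cfun\h\otimes S(V)$, where $V$ is a graded complement of $\Sigma p^*(\Sigma\h^*)$ in $\Sigma\g^*$, so $\Cfun p$ is certainly a free extension; what must be verified is that $V$ admits a filtration along which the Chevalley--Eilenberg differential is lower triangular, so that the extension is a genuine (retract of a) cofibration and not merely an algebra inclusion. Here I would invoke the pronilpotence of $\g$ established in Remark~\ref{sec:appl-struct-simpl-2}: the lower central series of $\g$ induces an exhaustive increasing filtration of $\g^*$, hence of $V$, and the quadratic part $d_{II}$ of the differential strictly raises this filtration while the internal part $d_I$ preserves it, which is exactly the triangularity required. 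I expect this filtration bookkeeping to be the main obstacle; once it is in place, the two-out-of-three and transposed-lifting arguments above assemble into the lemma.
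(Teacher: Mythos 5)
Your argument tracks the paper's own proof very closely. The weak-equivalence step (naturality square for the counit $i_A$ plus two-out-of-three, using Proposition~\ref{adj}(1)) and the cofibration-to-fibration step (cofibrations in $\algs_+$ are monomorphisms, and $\hat\Lfun$ converts monomorphisms into surjections of complete dglas) are exactly the paper's arguments. For fibrations-to-cofibrations, your transposition of the lifting square through the adjunction, reducing everything to the single claim that $\Cfun(p)$ is a cofibration in $\algs_+$ for every surjection $p:\g\to\h$ in $\L$, is again precisely what the paper does. Even your filtration is not really different: the paper writes $\g=\lim_n \g/(\g_n\cap K)$ with $K=\Ker(p)$, so that $\Cfun(\h)\to\Cfun(\g)$ becomes a countable composition of maps $\Cfun(\pi_n)$ whose kernels $(\g_n\cap K)/(\g_{n+1}\cap K)$ are abelian (indeed central) dg ideals; your filtration of the complement $V\cong\Sigma K^*$ by annihilators of the lower central series is the linear dual of this tower.

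The one place where your sketch falls short is the assertion that ``the quadratic part $d_{II}$ strictly raises this filtration while the internal part $d_I$ preserves it, which is exactly the triangularity required.'' It is not exactly that. Because $d_I$ only \emph{preserves} the filtration, the differential of a stage-$n$ generator retains a linear component among the stage-$n$ generators themselves, namely the dual of the internal differential of $(\g_n\cap K)/(\g_{n+1}\cap K)$. So each stage of your extension is not a Sullivan-type attachment of free generators whose differentials lie in the previously built subalgebra; it is an attachment of a dg vector space of generators with a twisting into the base. Such an attachment is still a cofibration, but that requires a further (standard, characteristic-zero) argument: split each stage's complex of generators into an acyclic summand and a summand with zero differential and absorb the twisting by a change of variables. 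This is exactly what the paper gets by quoting the lemma of \cite[\S5.2.2]{hinich01:_dg} for its abelian-kernel extensions $\Cfun(\pi_n)$. With that supplement your proof closes; without it, the key triangularity claim, as stated, does not yield a standard cofibration.
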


\begin{proof}
Let $f : A \to B$ be a weak equivalence, i.e.~a quasi-isomorphism, in
$\algs_+$. By definition, $\Lhat (f) : \Lhat(B) \to \Lhat(A)$ is a
weak equivalence if the induced map   $\Cfun\Lhat (f) : \Cfun\Lhat(A)
\to \Cfun\Lhat(B)$ is a quasi-isomorphism. This follows from the
diagram
\[
\xymatrix{
{\Cfun\Lhat(A)} \ar_{i_A}[d]\ar^{\Cfun\Lhat(f)}[r]&{\Cfun\Lhat(B)}\ar^{i_B}[d]
\\
A \ar^f[r]&B}
\]
in which the vertical maps are quasi-isomorphisms by
Proposition~\ref{adj}(1).

Assume that $f$ is a fibration in $\algs_+$, i.e.~an
epimorphism. To prove that $\Lhat(f)$ is a cofibration, we must
find a dotted arrow in each diagram
\[
\xymatrix{{\Lhat(B)}\ar_{\Lhat(f)}[d]\ar[r]&\g\ar^u[d]
\\
{\Lhat(A)}\ar@{-->}[ur]\ar[r]&\h}
\]
in which $u : \g \to \h$ is an acyclic fibration in $\L$. Using the
adjunction between $\Lhat$ and $\Cfun$, we see that we may equivalently
seek for a dotted arrow in the diagram
\[
\xymatrix{{\Cfun(\h)}\ar_{\Cfun(u)}[d]\ar[r]&A\ar^f[d]
\\
{\Cfun(\g)}\ar@{-->}[ur]\ar[r]&B}
\]
in which $f$ is a fibration in $\algs_+$ by assumption.  Since $\Cfun(u)$ is a
quasi-isomorphism by the definition of weak equivalences in $\L$, all
we need to show is that $\Cfun(u)$ is a cofibration in $\algs_+$.

Denote by $\g = \g_1 \subset \g_2 \subset \g_2 \subset \cdots$ the
lower central series of $\g$.
In Remark~\ref{sec:appl-struct-simpl-2} we verified that $\g = \lim
\g/\g_n$. Since $K := {\rm Ker}(f) \subset \g$ is a closed subspace, by
standard properties of filtered limits we verify that the limit of the
tower
\begin{equation}
\label{Za_chvili_sraz_s_Jarkou_a_Nausnickou}
\h \cong {\g}/{(\g_1 \cap K)} \stackrel{\pi_1}
\twoheadleftarrow  {\g}/{(\g_2 \cap K)} \stackrel{\pi_2}\twoheadleftarrow
{\g}/{(\g_3 \cap K)}\stackrel{\pi_3} \twoheadleftarrow  \cdots
\end{equation}
equals $\g$, therefore the colimit of the diagram
\[
\Cfun(\h) \hookrightarrow \Cfun( {\g}/{\g_2 \cap K}) \hookrightarrow \Cfun( {\g}/{\g_3
  \cap K})
\hookrightarrow \Cfun( {\g}/{\g_4 \cap K})\hookrightarrow  \cdots
\]
equals $\Cfun(\g)$. It would therefore suffice to prove that the maps
$\Cfun(\pi_n)$, where $\pi_n$ are as
in~(\ref{Za_chvili_sraz_s_Jarkou_a_Nausnickou}),
are cofibrations in $\algs_+$ for each $n \geq 1$.

To this end, observe that the kernel of $\pi_n$ is an abelian dgla
$(\g_n \cap K)/(\g_{n+1} \cap K)$. With this knowledge, it is easy to
see as in the proof of the lemma in~\cite[\S5.2.2]{hinich01:_dg} that
$\Cfun(\pi_n)$ is a~standard cofibration obtained by adding free
generators to $\Cfun( {\g}/{\g_{n}\cap K})$.

To prove the last part of the lemma, note the standard fact that
cofibrations in $\algs_+$ are monomorphisms, while $\Lhat(-)$ clearly
converts monomorphisms to epimorphisms, i.e.~fibrations in $\L$.
\end{proof}

In the proof of Lemma~\ref{sec:dual-hinich-corr-3}
we established that the functor
$\Cfun(-)$ converts fibrations to cofibrations. As each $\g \in
\Lhat$ is fibrant, this in particular implies that the dglas
$\Cfun(\g)$ are cofibrant in~$\algs_+$.

\begin{theorem}
\label{closedlie}
The category $\L$ is a closed model category with fibrations,
cofibrations and weak equivalences as in
Definition~\ref{sec:dual-hinich-corr-2}. Moreover, it is Quillen
equivalent to the closed model category $\algs_+$ via the adjunctions
$\hat\Lfun$ and $\Cfun$.
\end{theorem}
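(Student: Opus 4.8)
The plan is to verify the closed model axioms for $\L$ with the classes of Definition~\ref{sec:dual-hinich-corr-2} and then deduce the Quillen equivalence from Proposition~\ref{adj}. The category $\L$ is complete and cocomplete (limits are the underlying inverse limits and the coproduct is the completed free product, so the pushouts used below exist). Axiom CM2 (two-out-of-three) and CM3 (closure under retracts) are immediate: weak equivalences are by definition created from $\algs_+$ by $\Cfun$, and the corresponding classes in $\algs_+$ enjoy these properties, while fibrations (surjections) and cofibrations (defined by a lifting property) are patently retract-closed. One half of CM4, that cofibrations lift against acyclic fibrations, holds by the definition of cofibration. So the real content is the factorization axiom CM5 and the \emph{other} half of CM4.

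First I would build both factorizations of CM5 from the Key Lemma~\ref{keylem}. Given $f:\g\to\h$, apply $\Cfun$ and factor $\Cfun(f):\Cfun(\h)\to\Cfun(\g)$ in $\algs_+$ as $\Cfun(\h)\stackrel{\alpha}{\to}D\stackrel{\beta}{\to}\Cfun(\g)$, choosing either an (acyclic cofibration, fibration) or a (cofibration, acyclic fibration) pair. By Lemma~\ref{sec:dual-hinich-corr-3}, $\hat\Lfun(\beta):\hat\Lfun\Cfun(\g)\to\hat\Lfun(D)$ is then a cofibration (resp.\ acyclic cofibration) and $\hat\Lfun(\alpha)$ an acyclic fibration (resp.\ fibration). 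I would form the pushout $\a:=\hat\Lfun(D)*_{\hat\Lfun\Cfun(\g)}\g$ of $\hat\Lfun(\beta)$ along the counit $i_\g$, with structure maps $k:\g\to\a$ and $j:\hat\Lfun(D)\to\a$. As a cobase change, $k$ is a cofibration (resp.\ acyclic cofibration). The naturality square $f\circ i_\g=i_\h\circ\hat\Lfun\Cfun(f)$ for the counit (together with $\hat\Lfun(\alpha)\circ\hat\Lfun(\beta)=\hat\Lfun\Cfun(f)$) makes the two maps $f:\g\to\h$ and $i_\h\circ\hat\Lfun(\alpha):\hat\Lfun(D)\to\h$ agree on $\hat\Lfun\Cfun(\g)$, so they glue to $r:\a\to\h$ with $rk=f$. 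Since $\beta$ is surjective, Lemma~\ref{keylem} gives that $\Cfun(j)$ is a quasi-isomorphism; combined with Proposition~\ref{adj}, two-out-of-three, and the relation $\Cfun(j)\Cfun(r)=\Cfun(\hat\Lfun(\alpha))\Cfun(i_\h)$, this forces $\Cfun(r)$ to be a quasi-isomorphism, i.e.\ $r$ is a weak equivalence. Moreover $r$ is surjective because $i_\h$ is surjective (it is the identity on the canonical copy of $\h$ among the free generators of $\hat\Lfun\Cfun(\h)$) and $\hat\Lfun(\alpha)$ is surjective ($\alpha$ is a monomorphism and $\hat\Lfun$ sends monos to epis). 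Thus $f=rk$ supplies both factorizations.

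The remaining and subtlest point is the second half of CM4: every fibration has the right lifting property against every acyclic cofibration. I would first treat the acyclic cofibrations of the special shape $\hat\Lfun(\beta')$, with $\beta'$ an acyclic fibration in $\algs_+$. Transposing through the adjunction of Proposition~\ref{adj_bis}, a lifting problem for $\hat\Lfun(\beta')$ against a fibration $p$ of $\L$ becomes a lifting problem for $\Cfun(p)$ against $\beta'$ in $\algs_+$; here $\Cfun(p)$ is a cofibration (recall $\Cfun$ carries fibrations to cofibrations) and $\beta'$ is an acyclic fibration, so the lift exists in $\algs_+$ and transposes back. Hence $\hat\Lfun(\beta')$, and therefore every cobase change of it, lifts against all fibrations; these cobase changes are exactly the acyclic cofibrations $k'$ produced by the second factorization above. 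A general acyclic cofibration $i$ can then be factored as $i=r'k'$ with such a $k'$ and a fibration $r'$; two-out-of-three makes $r'$ an acyclic fibration, so the defining lifting property of the cofibration $i$ realizes $i$ as a retract of $k'$, whence $i$ too lifts against all fibrations. This is where the argument is genuinely delicate, and it is the \emph{main obstacle}: everything hinges on the Key Lemma guaranteeing that the glued maps $r$ and the pushout maps $k'$ are weak equivalences; once that is secured, the lifting axioms follow by formal retract arguments.

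Finally, $(\hat\Lfun,\Cfun)$ is a Quillen adjunction by Lemma~\ref{sec:dual-hinich-corr-3} and the companion fact that $\Cfun$ sends fibrations to cofibrations. Since every object of $\L$ is fibrant (the terminal object is the trivial dgla and every map to it is surjective), every $\Cfun(\g)$ is cofibrant in $\algs_+$, and the counits $i_A$ and $i_\g$ are weak equivalences by Proposition~\ref{adj}, the derived unit and counit are weak equivalences. By the standard criterion this upgrades the Quillen adjunction to a Quillen equivalence, completing the proof.
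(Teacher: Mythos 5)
Your proposal is correct and follows essentially the same route as the paper's own proof: CM1 by completed (co)limits, CM5 by factoring $\Cfun(f)$ in $\algs_+$, applying $\hat\Lfun$ and pushing out along the counit $i_\g$, with Lemma~\ref{keylem} plus Proposition~\ref{adj} supplying acyclicity of the glued maps, the second half of CM4 by the retract argument through the special acyclic cofibrations $\hat\Lfun(\beta')$ (whose lifting property you, rightly, obtain by transposing through the adjunction and using that $\Cfun$ turns fibrations into cofibrations), and the Quillen equivalence from Proposition~\ref{adj} and Lemma~\ref{sec:dual-hinich-corr-3}. The only caution is your phrase ``as a cobase change, $k$ is a cofibration (resp.\ acyclic cofibration)'': the cofibration part is formal, but the acyclicity in the second case is \emph{not} stable under cobase change at this stage of the argument and must come from the same Key-Lemma computation you spell out for $r$ (namely $\Cfun(i_\g)\Cfun(k)=\Cfun\hat\Lfun(\beta)\Cfun(j)$ with two-out-of-three), which your closing sentence correctly acknowledges and which is exactly how the paper argues.
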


\begin{proof}
The arguments are precisely dual to those of Hinich
\cite[pp.~223--224]{hinich01:_dg}. 
The category $\hat{\mathscr L}$ admits arbitrary limits and
colimits. Indeed, the limits are created in the category of linearly
compact dg vector spaces. The coequalizer of two maps $f,g: \mathfrak
g\to \mathfrak h$ is the quotient of $\mathfrak h$ by the closed ideal
in $\mathfrak h$ generated by the elements $f(a)-g(a)$ where
$a\in\mathfrak g$. Finally, the coproduct of a family of complete
dglas is constructed by taking their free product, and then
completing.
This proves axiom CM1; the axioms CM2 and CM3 are
obvious, and one half of CM4 holds by definition.

Let us prove the factorization axiom CM5.  Given a map $f:\g\to\h$ in
$\L$, consider the corresponding map of cdgas
$\Cfun(f):\Cfun(\h)\to\Cfun(\g)$. Suppose we factorize  $\Cfun(f)$ as
\begin{equation}
\label{eq:31}
\Cfun(\h)\stackrel i\to A\stackrel p\to\Cfun(\g)
\end{equation}
where $p$ is a
surjective map, i.e.~a fibration, and $i$ a cofibration in
$\algs_+$.  Then one has an induced factorization of $f$:
\begin{equation}
\label{Dnes_docela_pekne}
\g\stackrel{\tilde{i}}\longrightarrow
\hat\Lfun(A)*_{\hat\Lfun\Cfun(\g)}\g\stackrel{\tilde{p}}\longrightarrow\h
\end{equation}
where $\tilde{p}$ is easily seen to be an epimorphism, i.e.~a
fibration in $\L$.  By construction, $\tilde{i}$ is obtained by a
cobase-change from $\Lhat(p)$ which is a cofibration by
Lemma~\ref{sec:dual-hinich-corr-3}. Thus $\tilde{i}$ is a~cofibration as
well.

Since $\algs_+$ is a closed model category,
factorization~(\ref{eq:31}) exists and can be chosen such that $p$ is
a quasi-isomorphism. Then the corresponding $\tilde{i}$
in~(\ref{Dnes_docela_pekne}) is acyclic; this follows (just as
in~\cite{hinich01:_dg}) from Lemma \ref{keylem} and Proposition
\ref{adj}(1). This proves the first factorization of~CM5.

The proof of the second factorization property of CM5 is similar, we
only choose this time $i$ in~(\ref{eq:31}) to be a quasi-isomorphism.
The map $\tilde{p}$ is then acyclic by Lemma~\ref{keylem}.

It remains to prove the second half of CM4, i.e.~that any acyclic
cofibration in $\L$ has the LLP (left lifting property) with respect
to all fibrations. Let $f:\g\to \h$ be such a cofibration. Factorize
it as in~(\ref{Dnes_docela_pekne}): $f=\tilde p \circ \tilde i$ where
$\tilde p$ is an acyclic fibration and $\tilde i$ is an acyclic
cofibration obtained by a cobase-change from a map $\hat\Lfun(p)$,
where $p$ is an acyclic fibration in $\algs_+$.

By Lemma~\ref{sec:dual-hinich-corr-3}, $\hat\Lfun(p)$ is an acyclic
cofibration in $\L$, so it has the LLP with respect to all
fibrations in $\L$, thus $\tilde i$ has the same property. Since
$\tilde p$
has the LLP with respect to $f$ it follows that $f$ is a retract of
$\tilde i$ and so, it has the LLP with respect to all fibrations in $\L$ as
required. This finishes our proof that $\L$ is a closed model
category.

The statement
that the adjoint functors $\Lhat$ and $\Cfun$ form a Quillen pair,
i.e.~satisfy assumptions of~\cite[Theorem~9.7]{dwyer-spalinski},
follows at once from Proposition~\ref{adj} and
Lemma~\ref{sec:dual-hinich-corr-3}.
\end{proof}

An interesting feature of the closed model structure of
Definition~\ref{sec:dual-hinich-corr-2} is the existence and uniqueness
of minimal models.

\begin{definition}
\label{sec:dual-hinich-corr-5}
A dgla $\Min \in \L$ of the form $\Min = \big(\Lfreecompl(M),\partial\big)$
is {\em minimal\/} if $\partial$ induces the trivial differential on $M$. A {\em
  minimal model\/} of a dgla $\g \in \L$ is a minimal dgla $\Min$
together with a weak equivalence $\Min \to \g$.
\end{definition}

\begin{theorem}
\label{sec:dual-hinich-corr-6}
Each $\g \in \L$ has a minimal model unique up to an isomorphism.
\end{theorem}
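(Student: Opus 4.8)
The plan is to prove existence and uniqueness separately, using the closed model structure of Theorem~\ref{closedlie} together with the pronilpotence of objects of $\L$ recorded in Remark~\ref{sec:appl-struct-simpl-2}. Throughout I write $Q\g := \g/\overline{[\g,\g]}$ for the linearly compact space of indecomposables (abelianization) of $\g \in \L$.

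For existence I would first produce an arbitrary free cofibrant model and then strip off the linear part of the differential. Factoring the canonical map $0 \to \g$ by CM5 exactly as in the proof of Theorem~\ref{closedlie} yields a complete dgla of the form $(\Lfreecompl(V),\partial)$ together with an acyclic fibration $(\Lfreecompl(V),\partial) \to \g$; the factorization constructed there is a cobase change of maps $\hat\Lfun(p)$, so this model is genuinely free on a graded linearly compact space $V$. Writing $\partial = \partial_1 + \partial_{\geq 2}$, where $\partial_1\colon V\to V$ is the component landing in the generators and $\partial_{\geq 2}$ lands in brackets, the pair $(V,\partial_1)$ is a complex of linearly compact spaces. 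Choosing a splitting $V \cong M \oplus C$ with $(C,\partial_1)$ acyclic and $\partial_1|_M = 0$, a standard change of generators absorbing the contractible summand $\Lfreecompl(C)$ transfers $\partial$ to a differential $\bar\partial$ on $\Lfreecompl(M)$ with $\bar\partial M \subseteq [\Lfreecompl(M),\Lfreecompl(M)]$; the bracket-length filtration is complete, so the transfer converges. The resulting $\Min = (\Lfreecompl(M),\bar\partial)$ is minimal in the sense of Definition~\ref{sec:dual-hinich-corr-5}, realized after the change of generators as a retract of the free model whose inclusion $\Min \to (\Lfreecompl(V),\partial)$ is a weak equivalence, and composing with the acyclic fibration gives a weak equivalence $\Min \to \g$. (Equivalently one may build $\Min$ directly by the inductive cell-attachment procedure of Sullivan--Quillen theory, adjoining closed generators to hit $H_*(\g)$ and then generators whose differential is a \emph{decomposable} cycle to cancel surplus homology, the homological grading guaranteeing that the attaching cocycles may be chosen decomposable so that minimality is preserved.)

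For uniqueness, let $\rho\colon \Min \to \g$ and $\rho'\colon \Min' \to \g$ be two minimal models. Since $\Min$ and $\Min'$ are free, hence cofibrant, and every object of $\L$ is fibrant, the lifting axioms of Theorem~\ref{closedlie} furnish a map $\psi\colon \Min \to \Min'$ with $\rho'\psi$ homotopic to $\rho$, and by two-out-of-three $\psi$ is a weak equivalence. The crux is then the rigidity statement that \emph{a weak equivalence between minimal complete dglas is an isomorphism}. I would first show $\psi$ induces an isomorphism $Q\psi\colon M \xrightarrow{\cong} M'$. This is where minimality is essential: the indecomposables of a free complete dgla, equipped with the differential induced by $\partial_1$, compute the cotangent (Harrison-type) invariant which, by the analogue of Lemma~\ref{charact}, is preserved by weak equivalences; since $\partial_1 = 0$ on a minimal object this invariant is exactly $(M,0)$, respectively $(M',0)$, so $Q\psi$ is a quasi-isomorphism of complexes with zero differential, i.e.\ an isomorphism.

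Finally I would upgrade the isomorphism on indecomposables to an isomorphism of dglas by a filtration argument. With respect to the lower central series $\Min = \Min_1 \supset \Min_2 \supset \cdots$ the map $\psi$ is filtered, and on the associated graded it is identified with $\Lfreecompl(Q\psi)$, which is an isomorphism because $Q\psi$ is. Since $\Min \cong \lim_k \Min/\Min_k$ and $\Min' \cong \lim_k \Min'/\Min'_k$ by Remark~\ref{sec:appl-struct-simpl-2}, a filtered map inducing an isomorphism on associated graded quotients is itself an isomorphism, so $\psi$ is an isomorphism and uniqueness follows. The main obstacle I anticipate is the rigidity step, namely verifying that a weak equivalence—defined only through $\Cfun$—restricts to an isomorphism on indecomposables; this must reconcile the $\Cfun$-definition of weak equivalence with the linear-algebra content of minimality, whereas the remaining steps are formal model-category theory or convergent-filtration bookkeeping.
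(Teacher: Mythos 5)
Your overall strategy is close in spirit to the paper's (model-category lifting plus a rigidity statement for minimal objects), but the two proofs diverge in instructive ways, and your uniqueness argument as written has one genuine gap. On existence, the paper goes a different route: it transfers a $C_\infty$-structure onto the cohomology $H$ of $\Cfun(\g)$ (citing the homotopy transfer result of \cite{markl:ha}), sets $\Min := \big(\Lfreecompl(\Sigma^{-1}H^*),\partial\big)$, and notes that the transferred $C_\infty$-morphism dualizes to a weak equivalence $\Min \to \Lhat\Cfun(\g)$, which composed with $i_\g$ (a weak equivalence by Proposition~\ref{adj}(2)) is the minimal model. Your route --- take the quasi-free cofibrant replacement $\Lhat(A)$ coming from CM5, split the generators as $M\oplus C$ with $(C,\partial_1)$ contractible, and change generators --- is the Koszul-dual decomposition-theorem argument and is viable, but one step you gloss over genuinely requires proof: weak equivalences in $\L$ are defined through $\Cfun$, not through quasi-isomorphism of underlying complexes, so you must verify that the inclusion $\Min \hookrightarrow (\Lfreecompl(V),\partial)$ is a weak equivalence in this sense. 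This can be done with the paper's own tools (after the change of generators the free model is the completed free product of $\Min$ with $(\Lfreecompl(C),\partial_1)$, so Theorem~\ref{freeprod} together with Proposition~\ref{adj}(1) identifies $\Cfun_+$ of it with $\Cfun_+(\Min)$ times an acyclic factor), but it must be said; your parenthetical Sullivan-style alternative suffers from this confusion more seriously, since adjoining generators to match $H_*(\g)$ only controls the quasi-isomorphism type of the underlying complex, which is not the relevant equivalence.

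The genuine gap is in uniqueness: you assert that $\Min$ and $\Min'$ are ``free, hence cofibrant.'' Nothing in the paper (nor in your argument) establishes that an arbitrary quasi-free complete dgla is cofibrant in the model structure of Definition~\ref{sec:dual-hinich-corr-2}; what the paper knows is only that objects of the form $\Lhat(A)$, and retracts thereof, are cofibrant (via Lemma~\ref{sec:dual-hinich-corr-3}). The analogous characterization of cofibrant objects is quoted for $\Asshat_+$ from \cite{Lefevre-hasegawa02}, but it is never proved for $\L$, and the paper's uniqueness proof is structured precisely to avoid needing it: instead of lifting from a minimal model, it lifts from the known-cofibrant object $\Lhat\Cfun(\g)$, using the map $\varphi:\Min\to\Lhat\Cfun(\g)$ supplied by the existence construction, to produce the comparison weak equivalence $\alpha\varphi:\Min\to\Min'$. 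Your argument is repairable within your own framework: the minimal model you construct is a retract in $\L$ of the cofibrant object $\Lhat(A)$, hence cofibrant, so you may compare any given minimal model against that particular one and conclude that all minimal models are isomorphic to it. With that correction, your rigidity step --- the induced map on indecomposables is a quasi-isomorphism of complexes with zero differential, hence an isomorphism, and the lower-central-series/completeness argument promotes it to an isomorphism of dglas --- is sound, and in fact supplies the details behind the sentence the paper leaves as a bare assertion (``it induces an isomorphism of the generators; it is therefore itself an isomorphism'').
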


\begin{proof}
For $\g \in \L$ denote by $H$ the cohomology of $\Cfun(\g)$ and choose
a homotopy equivalence $f: \Cfun(\g) \to H$ in the category of dg
vector spaces; here $H$ is considered with the trivial differential.
By~\cite[Move~M1, p.~133]{markl:ha}, there exists a $C_\infty$-algebra
$C$ with the underlying dg-vector space $H$, and a $C_\infty$-morphism
\hbox{$F: \Cfun(\g) \to C$} extending $f$. Let $\Min =
\big(\Lfreecompl(\Sigma^{-1}H^*),\partial\big)$ be the complete dgla
corresponding to the $C_\infty$-algebra $C$. It is minimal and the
$C_\infty$-morphism $F$ translates into a weak equivalence $\varphi :
\Min \to \Lhat\Cfun(\g)$.  The canonical map $i_\g: \Lhat\Cfun(\g) \to
\g$ is, by Proposition~\ref{adj}(2), a weak equivalence, too. The
composition
\[
\rho:
\Min \stackrel \varphi\longrightarrow \Lhat\Cfun(\g)
\stackrel{i_\g}\longrightarrow  \g
\]
is then the desired minimal model of $\g$.

Let us prove uniqueness. Suppose that $\rho' : \Min' \to \g$ is
another minimal model. Since $\Cfun(\g)$ is fibrant as every object of
$\algs_+$, $\Lhat\Cfun(\g)$ is cofibrant by
Proposition~\ref{sec:dual-hinich-corr-3}.  Although $\rho'$ need not
be a fibration, it is still a map between cofibrant objects.  By
e.g.~\cite[Lemma~3.5]{vogt03:_cofib_e}, there exists the dotted arrow
$\alpha$ in the diagram
\[
\xymatrix{\Min  \ar[r]^\varphi &
\Lhat\Cfun(\g) \ar@{-->}[r]^\alpha \ar[rd]_{i_\g}
& \Min' \ar[d]^{\rho'}
\\
 && \g
}
\]
making the right triangle homotopy commutative. The composition
$\alpha \varphi$ is a weak equivalence of minimal dglas, so it induces
an isomorphism of the generators; it is therefore itself an
isomorphism.
\end{proof}

\subsection{Non-unital monoidal structures on $\L$ and $\algs_+$}
In Subsection~\ref{Pozitri_do_Cambridge} we constructed a non-unital
monoidal structure on the category $\L$ of complete dglas.  An
analogous, but simpler, structure on the category $\algs_+$ is given
by the product $A\times B$ of augmented cdgas $A, B \in \algs_+$
equipped with the augmentation through the projection onto the {\em
second\/} factor. The operation $A,B \mapsto A\times B$ is coherently
associative, but not commutative and not unital.  The two structures
correspond to each other under the adjoint functors $\hat\Lfun$ and
$\Cfun$:

\begin{proposition}
\label{monoidal}\
\begin{enumerate}
\item
For two augmented cdgas $A$ and $B$ there is a natural isomorphism in $\L$:
\[
\hat\Lfun(A\times B)\cong\hat\Lfun(A)\scup\hat\Lfun(B).
\]
\item
For two complete dglas $\g$ and $\h$ there is a natural quasi-isomorphism in $\algs_+$:
\[
\Cfun(\g\scup\h)\simeq\Cfun(\g)\times\Cfun(\h).
\]
\end{enumerate}

\begin{proof}
Starting from part (1), take $B$ to be the trivial augmented cdga:
$B=\mathbb k$. Disregarding the differentials, we have the following
isomorphisms of complete graded algebras:
\[
\hat\Lfun(A\times\mathbb k)
\cong \hat{\mathbb{L}}(\Sigma^{-1}A^*)
\cong
\hat{\mathbb{L}}(\Sigma^{-1}A^*_+)*\hat{\mathbb{L}}(\Sigma^{-1}\mathbb
k)\cong\hat\Lfun(A)*\hat\Lfun(\mathbb k)
\cong\hat\Lfun(A)*\mathfrak s.
\]
Direct inspection shows that, under the above isomorphism,
the differential in
$\hat\Lfun(A\times\mathbb k)$ corresponds to the twisted differential
$d^x$ in $\hat\Lfun(A)*\mathfrak s\cong
\hat\Lfun(A) \sqcup 0$, so the dglas $\hat\Lfun(A\times\mathbb k)$ and
$\hat\Lfun(A) \sqcup 0$ are isomorphic.
For a general cdga $B$ we have:
\[
\hat\Lfun(A\times B)
\cong \hat\Lfun\big((A\times{\mathbb k})\times_{\mathbb k} B\big)
\cong \hat\Lfun(A\times \mathbb k)*\hat\Lfun(B)
\cong \big(\hat\Lfun(A)\scup 0\big)*\hat\Lfun(B)
\cong \hat\Lfun(A)\scup \hat\Lfun(B).
\]
Here we used the fact that $\hat\Lfun$ transforms categorical products
in $\algs_+$ into categorical coproducts in $\L$, which follows
formally from its adjointness property.

Part (2) was established in the proof of Theorem~\ref{theoremF} on
page~\pageref{Jarusku_boli_hlava}.
\end{proof}
\end{proposition}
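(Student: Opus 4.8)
The plan is to regard part~(1) as carrying the real content and to deduce part~(2) by assembling quasi-isomorphisms already produced elsewhere. For part~(1) I would first establish the special case $B=\ground$, where $\ground$ is the trivial augmented cdga, and then bootstrap to arbitrary $B$ by a formal adjunction argument; the reduction step is where essentially all the work sits.

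For the special case, observe that when $A\times\ground$ is augmented by the projection onto the second factor, its augmentation ideal is $A$ itself, now viewed as a \emph{non-unital} algebra. Forgetting differentials we therefore have $\hat\Lfun(A\times\ground)=\hat{\mathbb L}(\Sigma^{-1}A^*)$, and the splitting $A=\ground\cdot 1\oplus A_+$ of the augmented algebra $A$ gives $\Sigma^{-1}A^*\cong\Sigma^{-1}A_+^*\oplus\Sigma^{-1}(\ground\cdot 1)^*$ and hence an isomorphism of complete graded Lie algebras
\[
\hat\Lfun(A\times\ground)\cong\hat{\mathbb L}(\Sigma^{-1}A_+^*)*\hat{\mathbb L}\big(\Sigma^{-1}(\ground\cdot 1)^*\big)\cong\hat\Lfun(A)*\Sp,
\]
the second free factor being a copy of $\Sp\cong\hat\Lfun(\ground\times\ground)$ whose generator $x$ is dual to the unit $1_A$. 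The crux — and the step I expect to be the main obstacle — is to verify that under this identification the differential of $\hat\Lfun(A\times\ground)$ is precisely the twist $d^x$, so that $\hat\Lfun(A\times\ground)\cong(\hat\Lfun(A)*\Sp)^x=\hat\Lfun(A)\scup 0$. The mechanism is that the product component $d_{\it II}$ of the Harrison differential is built from the multiplication of $A\times\ground$, and the products $1_A\cdot 1_A=1_A$ and $1_A\cdot a=a$ (for $a\in A_+$) — which are inert inside $A_+$ alone but become visible once the unit is pushed into the augmentation ideal — contribute respectively the self-bracket in $d(x)$ and the bracket $[-,x]$ twisting the $\hat\Lfun(A)$-part. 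Tracking these terms carefully is the one genuinely computational point.

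To reach a general augmented cdga $B$ I would use the isomorphism of augmented cdgas $A\times B\cong(A\times\ground)\times_\ground B$, where $\times_\ground$ is the fibre product, i.e.~the categorical product in $\algs_+$. The adjunction of Proposition~\ref{adj_bis}, $\L(\hat\Lfun A,\g)\cong\algs_+(\Cfun\g,A)$, then shows by a Yoneda argument in $\g$ that $\hat\Lfun$ carries categorical products in $\algs_+$ to categorical coproducts (completed free products) in $\L$. Combining this with the special case yields
\[
\hat\Lfun(A\times B)\cong\hat\Lfun(A\times\ground)*\hat\Lfun(B)\cong\big(\hat\Lfun(A)\scup 0\big)*\hat\Lfun(B)=\hat\Lfun(A)\scup\hat\Lfun(B),
\]
the last equality being the definition of the disjoint product; naturality is immediate since every arrow in the chain is natural in $A$ and $B$.

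Finally, for part~(2) no new work should be needed: I would simply collect the chain of quasi-isomorphisms produced in the proof of Theorem~\ref{theoremF}. By the free-product computation (Theorem~\ref{freeprod}) together with Lemma~\ref{secondfac} one has
\[
\Cfun(\g\scup\h)=\Cfun\big((\g\scup 0)*\h\big)\simeq\Cfun(\g\scup 0)\times_\ground\Cfun(\h)\simeq\big(\Cfun(\g)\times\ground\big)\times_\ground\Cfun(\h)\cong\Cfun(\g)\times\Cfun(\h),
\]
which is exactly the assertion of part~(2).
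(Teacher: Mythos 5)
Your proposal is correct and follows essentially the same route as the paper's own proof: the reduction of part~(1) to the case $B=\ground$ via $A\times B\cong(A\times\ground)\times_\ground B$ and the product-to-coproduct property of $\hat\Lfun$, the identification $\hat\Lfun(A\times\ground)\cong(\hat\Lfun(A)*\Sp)^x$ by inspecting how the unit of $A$ generates the twist, and the derivation of part~(2) from Theorem~\ref{freeprod} and Lemma~\ref{secondfac} exactly as in the proof of Theorem~\ref{theoremF}. If anything, you spell out the mechanism behind the paper's ``direct inspection'' step (the products $1_A\cdot 1_A$ and $1_A\cdot a$ producing $d(x)$ and the twist $[-,x]$) in more detail than the paper does.
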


\section{Disconnected spaces and dg Lie algebras}
\label{disc-spaces}

In this section we prove Theorem~D.
The ground field  will always
be the field $\mathbb Q$ of rational numbers.

Let us define a functor $\mathcal Q:\SSet_+\mapsto\L$ as the
composition $\mathcal Q(S):=\Lhat \Omega(S)$, where $\Omega(S)$ is the polynomial
Sullivan-de~Rham algebra of a pointed simplicial set $S\in
\SSet_+$~\cite[\S2]{bousfield-gugenheim},
with the augmentation $A(S)
\to {\mathbb Q}$ induced by the inclusion of the base point of $S$.  The
simplicial MC space functor $\MC_\bullet:\L\mapsto \SSet_+$
was recalled in Definition~\ref{MCdef}, the base point of $\MC_\bullet(\g)$ is the
$0$-simplex corresponding to the trivial MC-element $0 \in \g$.
We then have the following result.

\begin{proposition}
The functors $\MC_\bullet:\L\mapsto \SSet_+$ and $\mathcal
Q:\SSet_+\mapsto \L$ form an adjoint pair, i.e.~there is a
natural isomorphism
\[
\SSet_+\big(S, \MC_\bullet(\g)\big) \cong \L\big(\mathcal Q(S),\g\big),
\]
for each pointed simplicial set  $S$ and a complete dgla $\g$.
\end{proposition}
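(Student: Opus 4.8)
The plan is to obtain the desired isomorphism by splicing together two adjunctions that are already in place: the adjunction $\Lhat \dashv \Cfun$ of Proposition~\ref{adj_bis} and the classical Sullivan--de~Rham adjunction of Bousfield--Gugenheim. Concretely, I would produce the chain of natural bijections
\[
\SSet_+\big(S, \MC_\bullet(\g)\big)
\;\cong\; \algs_+\big(\Cfun(\g), \Omega(S)\big)
\;\cong\; \L\big(\Lhat\Omega(S), \g\big)
\;=\; \L\big(\mathcal Q(S), \g\big),
\]
where the final equality is merely the definition $\mathcal Q(S) = \Lhat\Omega(S)$, and the right-hand bijection is Proposition~\ref{adj_bis} applied to the augmented cdga $A = \Omega(S)$. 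Everything thus reduces to establishing the left-hand bijection, that is, the \emph{pointed} form of the Bousfield--Gugenheim adjunction relating $S$ with $\MC_\bullet(\g)$.

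For the unpointed precursor I would start from Remark~\ref{Jaruska_ma_hodne_prace.}, which identifies $\MC_\bullet(\g)$ with the simplicial mapping space $\algs(\Cfun(\g),\ground)_\bullet$; by~(\ref{zitra_musim_do_Prahy}) the latter is $\algs\big(\Cfun(\g),\Omega(\Delta^\bullet)\big)$, i.e.\ the simplicial set $\langle\Cfun(\g)\rangle$ of Bousfield--Gugenheim. The standard adjunction used in~(\ref{Kooin}) (see \cite[Lemma~5.2]{bousfield-gugenheim}) then yields a natural bijection
\[
\SSet\big(S,\MC_\bullet(\g)\big)
\;\cong\; \algs\big(\Cfun(\g),\Omega(S)\big),
\]
valid for \emph{every} simplicial set $S$, since $\Omega$ converts colimits of simplicial sets into limits of cdgas and $\Hom$ commutes with those limits.

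The essential point --- and the only place where genuine work is needed --- is to check that this bijection carries \emph{pointed} maps to \emph{augmentation-preserving} maps, so that it restricts to the claimed isomorphism over $\SSet_+$ and $\algs_+$. Here I would invoke naturality of the adjunction with respect to the base-point inclusion $\iota:\ast\to S$. If $\Psi:\Cfun(\g)\to\Omega(S)$ corresponds to $\psi:S\to\MC_\bullet(\g)$, naturality identifies the $0$-simplex $\psi(\iota(\ast))\in\MC(\g)$ with the composite $\Omega(\iota)\circ\Psi:\Cfun(\g)\to\Omega(S)\to\Omega(\ast)=\ground$; note that $\Omega(\iota)$ is exactly the base-point augmentation of $\Omega(S)$ entering the definition of $\mathcal Q(S)$. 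Under the identification of $\MC(\g)$ with augmentations $\Cfun(\g)\to\ground$ recalled before~(\ref{cockova_polevka}), the base point $0\in\MC(\g)$ corresponds to the canonical augmentation $\epsilon_0$ of $\Cfun(\g)$. Hence $\psi$ is pointed, i.e.\ $\psi(\iota(\ast))=0$, if and only if $\Omega(\iota)\circ\Psi=\epsilon_0$, which is precisely the condition that $\Psi$ respect the augmentations. The anticipated obstacle is therefore not any single hard calculation but this careful bookkeeping of base points against augmentations; once it is settled, naturality of both constituent adjunctions makes the composite bijection natural in $S$ and in $\g$, completing the proof.
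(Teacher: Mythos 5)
Your proposal is correct and follows essentially the same route as the paper: both obtain the adjunction by composing $\Lhat \dashv \Cfun$ (Proposition~\ref{adj_bis}) with the Sullivan--de~Rham adjunction between $\Omega$ and the Bousfield--Gugenheim realization functor $F$, using that $\MC_\bullet = F\circ\Cfun$ and $\mathcal Q = \Lhat\circ\Omega$. The only difference is that the paper simply cites \cite[\S 8.9]{bousfield-gugenheim} for the pointed/augmented form of the second adjunction, whereas you rederive it from the unpointed version by matching base points with augmentations --- a correct and harmless elaboration.
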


\begin{proof}
Note that, by definition, there is an isomorphism of simplicial sets
$\MC(\g)\cong F\big(\Cfun(\g)\big)$, where $F$ is the Bousfield-Kan
functor associating to a cdga $B$ the simplicial set $\algs(B,\mathbb
Q)_\bullet$ recalled in~(\ref{zitra_musim_do_Prahy}).

Note also that if $B$ is an augmented cdga then $F(B)$ is naturally a
pointed simplicial set.  The proposition now follows from the fact
that the functors $\MC$ and $\mathcal Q$ are the compositions of the
upper resp.~lower functors in the diagram
\begin{equation}
\label{Predevcirem_jsem_slavil}
\raisebox{-.7em}{}
\L \adjointext \Cfun\Lhat \algs_+\adjointext {F}\Omega \SSet_+
\end{equation}
in which the functors $\Lhat$ and $\Cfun$ are adjoint by
Proposition~\ref{adj_bis}, while the functors $F$ and $\Omega$ form an
adjoint pair by~\cite[\S8.9]{bousfield-gugenheim} (the functor
$\Omega$ of Sullivan-De~Rham forms was denoted by $A$
in~\cite{bousfield-gugenheim}).
\end{proof}

\begin{definition}
The subcategory in the homotopy category of $\L$ formed by the disjoint
products of finitely many non-negatively graded complete dglas whose
homology are finite-dimensional in each degree will be denoted by $\hoLdc$.
\end{definition}

\begin{rem}
\label{sec:dual-hinich-corr-4}
Note that $\g = (\g,\partial) \in \L$ is a chain complex in the
category of linearly compact spaces. Since $\partial$ is continuous,
$\Ker(\partial)$, and also $\im(\partial)$ as a continuous image of a~linearly compact space, are closed subspaces of $\g$. So $H(\g) =
\Ker(\partial)/\im(\partial)$ is a linearly compact graded space. Clearly
$H(\g)^* \cong H(\g^*)$, where $ H(\g^*)$ is the ordinary cohomology
of the discrete dual $(\g^*,\partial^*)$.
\end{rem}

\begin{rem}
The functor
$\g,\h \mapsto \g\scup\h$ lifts to the homotopy category of $\L$.
Indeed, there is a quasi-isomorphism
$\Cfun(\g)\times \Cfun(\h)\simeq \Cfun(\g\scup\h)$, by
Proposition~\ref{monoidal}(2). If $\g$
and $\g^\prime$ are weakly equivalent complete dglas then, by
definition, the cdgas $\Cfun(\g)$ and $\Cfun(\g^\prime)$ are
quasi-isomorphic, thus the cdgas $\Cfun(\g)\times \Cfun(\h)$ and
$\Cfun(\g^\prime)\times \Cfun(\h)$ are likewise quasi-isomorphic. Therefore
the complete dglas $\g\scup\h$ and $\g^\prime\scup\h$ are
weakly equivalent.
\end{rem}

It follows from Theorem~\ref{closedlie}
resp.~\cite[Lemma~8.5]{bousfield-gugenheim} that the composite
functors $\MC_\bullet$ and $\mathcal Q$ induce an adjoint pair of
functors between the homotopy categories of $\L$ and $\SSet_+$. The
following result implies Theorem~D.

\begin{theorem}
\label{equidgla}
The functors $\MC_\bullet$ and $\mathcal Q$ determine mutually inverse
equivalences between the categories $\hoLdc$ and $\honilpsimpdisconplus$.
\end{theorem}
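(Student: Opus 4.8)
The plan is to factor both functors through the intermediate category $\algs_+$ of augmented cdgas and assemble the desired equivalence from the two Quillen equivalences already established. Recall from the adjunction proposition preceding Remark~\ref{sec:dual-hinich-corr-4}, and from diagram~(\ref{Predevcirem_jsem_slavil}), that $\MC_\bullet = F\circ\Cfun$ and $\mathcal Q = \Lhat\circ\Omega$, the middle category being $\algs_+$. Thus it suffices to show that $\Cfun$ and $\Lhat$ restrict to inverse equivalences between $\hoLdc$ and the homotopy category $\horathincatdcplus$ of homologically disconnected augmented cdgas of finite type, and then to invoke Theorem~C$_+$, which identifies $\horathincatdcplus$ with $\honilpsimpdisconplus$ through the Bousfield--Gugenheim pair $F$ and $\Omega$.

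First I would check that $\Cfun$ carries $\hoLdc$ into $\horathincatdcplus$. An object of $\hoLdc$ is a disjoint product $\bigsqcup_{i\in\J}\g_i$ of non-negatively graded complete dglas with finite-dimensional homology in each degree. By Proposition~\ref{monoidal}(2) one has a quasi-isomorphism $\Cfun\big(\bigsqcup_i\g_i\big)\simeq\prod_i\Cfun(\g_i)$, so it is enough to understand each $\Cfun(\g_i)$. Since $\g_i$ is non-negatively graded, the generating space $\Sigma\g_i^*$ of $\Cfun(\g_i)$ sits in cohomological degrees $\geq 1$, whence $\Cfun(\g_i)$ is connected and non-negatively graded. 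As $\Cfun(\g_i)$ is cofibrant, Lemma~\ref{charact} gives $(I/I^2)^*\simeq\Sigma\Lhat\Cfun(\g_i)$, and by Proposition~\ref{adj}(2) the latter has the homology of $\g_i$, which is finite-dimensional in each degree; hence $\Cfun(\g_i)$ is of finite type. Therefore $\prod_i\Cfun(\g_i)$ is a homologically disconnected augmented cdga of finite type, i.e.\ an object of $\horathincatdcplus$.

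Conversely I would verify that $\Lhat$ maps $\horathincatdcplus$ back into $\hoLdc$. By Theorem~B$_+$ every such $A\in\Hindcplus$ is weakly equivalent to a product $A_1\times\cdots\times A_s$ of homologically connected cdgas, each of finite type by Proposition~\ref{p2}; Proposition~\ref{monoidal}(1), applied repeatedly, then yields $\Lhat(A)\simeq\Lhat(A_1)\scup\cdots\scup\Lhat(A_s)$. Replacing each $A_j$ by its minimal model, which is connected and non-negatively graded, a dual grading count shows that the generators $\Sigma^{-1}(A_j)_+^*$ of $\Lhat(A_j)$ lie in homological degrees $\geq 0$, so $\Lhat(A_j)$ is non-negatively graded; and Lemma~\ref{charact} together with the finite type of $A_j$ forces $H(\Lhat(A_j))$ to be finite-dimensional in each degree. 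Thus $\Lhat(A)$ is a disjoint product of the required shape and lies in $\hoLdc$, the passage being compatible with weak equivalences since $\Lhat$ preserves them by Lemma~\ref{sec:dual-hinich-corr-3}.

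With both inclusions in hand, Theorem~\ref{closedlie} finishes the argument: the Quillen equivalence $(\Lhat,\Cfun)$ induces inverse equivalences between the homotopy categories of $\L$ and $\algs_+$, and the two restriction statements above cut these down to inverse equivalences between $\hoLdc$ and $\horathincatdcplus$. Composing with the equivalence $\horathincatdcplus\simeq\honilpsimpdisconplus$ of Theorem~C$_+$, realized by $F$ and $\Omega$, produces mutually inverse equivalences $\MC_\bullet=F\Cfun$ and $\mathcal Q=\Lhat\Omega$ between $\hoLdc$ and $\honilpsimpdisconplus$, as claimed. The main obstacle I anticipate is the bookkeeping of the base point and augmentation in the disconnected setting: the disjoint product $\g\scup\h$ is non-commutative with its base point confined to a distinguished factor, so one must confirm that $\Cfun$ and $\Lhat$ respect the asymmetric role of that factor and that this matches the pointed structure encoded in the categories underlying $\honilpsimpdisconplus$; granting this, the remaining verifications are the finite-type and non-negativity correspondences, for which Lemma~\ref{charact} is the decisive tool.
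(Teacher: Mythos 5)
Your proposal follows essentially the same route as the paper: factor $\MC_\bullet$ and $\mathcal Q$ through $\algs_+$ via diagram~(\ref{Predevcirem_jsem_slavil}), check that $\Cfun$ and $\Lhat$ restrict between $\hoLdc$ and $\horathincatdcplus$ using Proposition~\ref{monoidal} in both directions and a finite-type argument, then conclude with the Quillen equivalence of Theorem~\ref{closedlie} and with Theorem~C$_+$. The paper does the same, up to formally introducing an auxiliary ``extended'' category $\exthoLdc$ of dglas weakly equivalent to objects of $\hoLdc$, to accommodate the fact that $\Lhat(A)$ is only weakly equivalent, not isomorphic, to a disjoint product; your appeal to Lemma~\ref{sec:dual-hinich-corr-3} plays the same role.

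One step of your argument, however, is not supported by what you cite. To show that $\Cfun(\g_i)$ has finite type you invoke Lemma~\ref{charact} to get $(I/I^2)^*\simeq\Sigma\Lhat\Cfun(\g_i)$ and then claim, ``by Proposition~\ref{adj}(2)'', that the latter has the homology of $\g_i$. But Proposition~\ref{adj}(2) only asserts that $i_{\g_i}:\Lhat\Cfun(\g_i)\to\g_i$ is a \emph{weak equivalence} in $\L$, and weak equivalences there are defined by applying $\Cfun$ (Definition~\ref{sec:dual-hinich-corr-2}); they are not a priori quasi-isomorphisms of the underlying complexes --- this is precisely the subtlety of Hinich-type model structures. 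The gap is small and easily repaired, and the paper's route avoids it entirely: since $\Cfun(\g_i)$ is free as a graded commutative algebra on $\Sigma\g_i^*$, its unique augmentation ideal satisfies $I/I^2\cong\Sigma\g_i^*$ on the nose, so $H(I/I^2)\cong\Sigma H(\g_i)^*$ is finite-dimensional in each degree directly by hypothesis, with no homotopy-theoretic input needed. With that substitution your proof agrees with the paper's.
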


\begin{proof}
We use the pointed (resp.~augmented) versions of the results of
Part~\ref{part:point-view-dg} formulated in
Section~\ref{sec:augm-dg-comm}.  As the first step, observe that the
category $\hoLdc$ is equivalent to the auxiliary `extended' category
$\exthoLdc$ whose objects are dglas weakly equivalent to objects of
$\hoLdc$.  We claim that the left adjunction
of~(\ref{Predevcirem_jsem_slavil}) restricts to the adjunction
\begin{equation}
\label{Cajik_od_Jarky}
\raisebox{-.9em}{}
\exthoLdc \adjointext \Cfun\Lhat \horathincatdcplus,
\end{equation}
where $\horathincatdcplus$ is the full subcategory of ${\rm ho}\algs_+$
consisting of augmented homologically disconnected algebras of finite
type. To this end, we must check that
\begin{equation}
\label{Tucinek}
\Cfun(\g) \in
\horathincatdcplus\ \mbox { and }\ \Lhat(A) \in \exthoLdc
\end{equation}
whenever $\g \in \hoLdc$
and $A \in \horathincatdcplus$.

Let us look at the first condition. Assume that $\g = \g_1 \sqcup
\cdots \sqcup \g_k$, where $\g_i \in \hoLdc$ are non-negatively
graded. Since, by Proposition~\ref{monoidal}(2), $\Cfun(\g_1 \sqcup
\cdots \sqcup \g_k)$ is quasi-isomorphic to $\Cfun(\g_1) \times \cdots
\times \Cfun(\g_k)$, it is enough to show that $\Cfun(\g) \in
\horathincatdcplus$ whenever $\g \in \hoLdc$ is non-negatively graded.
By Remark~\ref{sec:appl-struct-simpl}, $\Cfun(\g)$ is cofibrant non-negatively graded
connected cdga whose only augmentation ideal $I$ is the ideal
generated by $\Sigma\g^*$, so $I/I^2 \cong  \Sigma \g^*$.
Therefore $\Cfun(\g)$ is connected of finite type since $H(\g^*)$ is such by
assumption.  This proves the first condition of~(\ref{Tucinek}).

To prove the second condition, observe that, by definition, the
algebra $A$ is weakly equivalent (i.e.~related by a ziz-zag of
quasi-isomorphisms) to a finite product of non-negatively graded
connected algebras of finite type, $A \simeq A_1 \times \cdots \times
A_k$, with the augmentation given by the projection to the last
factor.  Since, by Proposition~\ref{monoidal}(1),
\[
\Lhat(A_1 \times
\cdots \times A_k) \cong \Lhat(A_1)\sqcup \cdots \sqcup \Lhat(A_k),
\]
it is enough to verify that $\Lhat(A) \in \exthoLdc$ whenever $A$ is
connected, cofibrant non-negatively graded cdga of finite type. Since in
this case $\Lhat(A)$ is non-negatively graded, it suffices to
prove that the homology of $\Lhat(A)$ is finite-dimensional in each
degree. This, however, immediately follows from Lemma~\ref{charact}.

As the functors $\Lhat$ and $\Cfun$ form a Quillen pair by
Theorem~\ref{closedlie},
their restrictions in~(\ref{Cajik_od_Jarky}) are mutually inverse
equivalences of categories. The rest of the proof immediately follows
from Theorem~C$_+$ of Section~\ref{sec:augm-dg-comm} by which the category
$\horathincatdcplus$ is equivalent to the category $\honilpsimpdisconplus$.
\end{proof}

\begin{rem}
The reader may wonder why Theorem~\ref{equidgla} relates {\em
augmented\/} algebras to dg-Lie algebras with no additional
structure. The answer is that the dg Lie-analogue of an augmentation is
a choice of an MC-element. From this perspective, each dgla is
canonically augmented by the trivial MC-element $0$. Our definition
of the disjoint product $\g \sqcup \h$ is such that
its augmentation is induced by the augmentation of the last factor.
\end{rem}

\begin{rem}
Note that Theorem \ref{equidgla} allows one to say something new even
for connected spaces (or simplicial sets). Indeed, the category of
\emph{unpointed} connected spaces is a subcategory of
\emph{disconnected} pointed spaces. Namely, this subcategory consists
of spaces, consisting of two connected components, one of which is the
basepoint. We see, therefore, that the homotopy category of rational
connected unpointed spaces is equivalent to a certain subcategory of
$\hoLdc$, whose objects are of the form $\g\scup 0$ for a
non-negatively graded complete dgla $\g$. This subcategory is not
full; in fact it is easy to see that there is a natural bijection
\[
\big[\g\scup 0,\h\scup 0\big]
\cong \big[\MC_\bullet(\g),\MC_\bullet(\h)\big]\cup \{\ast\}
\]
where $*$ denotes an isolated basepoint corresponding to the zero map
$\g\scup 0\to\h\scup 0$.
\end{rem}

\appendix

\section{Cohomology of free products of dg Lie algebras}

The purpose of this appendix is to express the Chevalley-Eilenberg
cohomology of free products of dglas in terms of the
Chevalley-Eilenberg cohomology of the factors.
The main results are Theorem~\ref{free} for the non-complete case  and
Theorem~\ref{freeprod} for the compete one.
These results
seem completely standard but we have not found them in the
literature. There is an analogous result for group cohomology, but it
relies on the construction of the classifying space of a group; such a
construction has no analogue for an arbitrary (i.e.~not necessarily
nilpotent) dgla, so we needed to develop a certain algebraic machinery
instead. First, introduce some notation.

Recall that $\Lie$ is the category of (discrete) dglas. We denote
by $\A$ the category of complete cdgas; its objects are inverse
limits of finite dimensional nilpotent \emph{non-unital} cdgas.

\begin{defi}
Let $\Cf:\Lie \mapsto \A$ be the functor associating to a discrete dgla
$\g\in\Lie$ the complete cdga $\Cf(\g)$ whose underlying non-unital cdga
is $\hat{S}_+\Sigma\g^*$, the completed non-unital
symmetric algebra on $\Sigma \g^*$. The differential $d$ in
$\Cf(\g)$ is defined as $d=d_I+d_{\it II}$, where $d_I$ is induced by the
internal differential in $\g$ and $d_{\it \it II}$ is determined by its
restriction onto $\Sigma\g^*$, which is, in turn, induced by the
bracket map $\g\otimes \g\to \g$.
\end{defi}

\begin{rem}
\label{spectralseq}
The construction $\Cf(\g)$ is a double complex
with the horizontal differential $d_{\it II}$ and the vertical differential
$d_I$. As such, it has two spectral sequences associated with it; one
converging to the direct product totalization, the other
to the direct sum totalization. Since $\Cf(\g)$ is
constructed using direct products, only the first spectral sequence is
relevant. We denote this spectral sequence by $E^\prime(\g)$;
we have $E^\prime_2(\g)=H\big(\Cf(H(\g)\big)$.  Using this
spectral sequence, we easily prove that the functor $\Cf :\Lie \mapsto \A$ preserves
quasi-isomorphisms.
\end{rem}
The proof of Theorem~\ref{free} below will use the following lemma.

\begin{lemma}
\label{sec:append-cohom-free-1}
Let $f : \uu \to \vv$ be a morphism of dglas such that the
induced morphism \hbox{$\U(f)  : \U(\uu) \to \U(\vv)$} of their universal enveloping
algebras is a quasi-isomorphism. Then
$f$ is a quasi-isomorphism too.
\end{lemma}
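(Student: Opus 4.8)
The plan is to compare $\U(\g)$ with the symmetric algebra $S(\g)$ via the Poincar\'e--Birkhoff--Witt theorem and then to exploit the fact that the symmetric algebra functor reflects quasi-isomorphisms. Throughout, $S(\g)$ denotes the free graded-commutative algebra on the underlying complex of $\g$, equipped with the differential that is the derivation extending the internal differential of $\g$; thus $S(-)$ and $\U(-)$ carry the same differential on generators and differ only through their multiplicative relations.

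First I would recall the symmetrization map $\omega_\g\colon S(\g)\to\U(\g)$, $\omega_\g(x_1\cdots x_n)=\frac1{n!}\sum_{\sigma}\pm\,x_{\sigma(1)}\cdots x_{\sigma(n)}$, the sum running over all permutations with the appropriate Koszul signs. Since the characteristic is zero, $\omega_\g$ is an isomorphism of graded vector spaces (this is the content of PBW), and because both differentials act by applying the internal differential of $\g$ factorwise, $\omega_\g$ commutes with the differentials; hence it is an isomorphism of complexes. It is moreover natural: for any dgla map $h$ both $S(h)$ and $\U(h)$ act factorwise, so $\U(h)\circ\omega=\omega\circ S(h)$. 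Applying this to $f$ yields a commutative square whose horizontal arrows $\omega_\uu,\omega_\vv$ are isomorphisms of complexes and whose vertical arrows are $S(f)$ and $\U(f)$; consequently $\U(f)$ is a quasi-isomorphism if and only if $S(f)$ is.

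It therefore remains to show that $S(f)$ being a quasi-isomorphism forces $f$ to be one. Here I would use that, over a field of characteristic zero, $S^n(\g)$ is the $\mathfrak S_n$-invariant summand of $\g^{\otimes n}$ cut out by the averaging idempotent; by the K\"unneth theorem $H(\g^{\otimes n})\cong H(\g)^{\otimes n}$, and passing to invariants (an exact operation in characteristic zero) yields a natural isomorphism $H\big(S^n\g\big)\cong S^n\big(H(\g)\big)$. Now $S(\g)=\bigoplus_{n\ge0}S^n(\g)$ is the direct sum of the subcomplexes $S^n(\g)$, and $S(f)$ preserves this weight grading; hence on homology $S(f)$ decomposes as $\bigoplus_n S^n\big(H(f)\big)$. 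If $S(f)$ is a quasi-isomorphism, each weight component is an isomorphism; taking the weight-one component shows that $S^1\big(H(f)\big)=H(f)\colon H(\uu)\to H(\vv)$ is an isomorphism, i.e.\ $f$ is a quasi-isomorphism.

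The only genuinely delicate point is the verification that $\omega_\g$ is a chain isomorphism natural in $\g$; everything else is formal (K\"unneth, the direct-sum decomposition by weight, and restriction to weight one). This is classical PBW in characteristic zero, and the naturality together with compatibility with differentials is immediate from the factorwise description of the two differentials, so I expect no real obstacle here; the main thing to record carefully is that one never has to run a spectral sequence backwards, precisely because $\omega_\g$ supplies an honest chain isomorphism rather than merely a filtration whose associated graded is $S(\g)$.
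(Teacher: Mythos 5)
Your proof is correct, but it takes a genuinely different route from the paper's. The paper argues via Quillen's structure theory of dg Hopf algebras: it invokes the quasi-inverse pair $\U$/$\Prim$ (Milnor--Moore, \cite[Theorem~4.5]{quillen:Ann.ofMath.69}) together with the isomorphism $H\U \cong \U H$ (\cite[Theorem~2.1]{quillen:Ann.ofMath.69}), and then chases a small commutative square comparing $H\Prim\U(f)$ with $\Prim H\U(f)$ to conclude that $H(f)$ is an isomorphism. You instead bypass Hopf-algebra structure theory entirely: the PBW symmetrization $\omega_\g\colon S(\g)\to\U(\g)$ is a natural chain isomorphism in characteristic zero (your verification that it commutes with the factorwise differentials is right), so the hypothesis transfers to $S(f)$; then the weight decomposition $S(\g)=\bigoplus_n S^n(\g)$, K\"unneth, and exactness of taking $\mathfrak S_n$-invariants in characteristic zero give $H(S(f))=\bigoplus_n S^n(H(f))$, and since a grading-preserving isomorphism is an isomorphism in each weight, the weight-one piece yields that $H(f)$ is an isomorphism. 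Your argument is more elementary and self-contained (it needs only PBW and K\"unneth, not the $\Prim$ functor or the commutation of $H$ with $\U$ as Hopf algebras), at the modest cost of having to check the chain-map property and naturality of $\omega$ by hand; the paper's argument is shorter once Quillen's theorems are taken as black boxes and stays functorial at the Hopf-algebra level. Both proofs, as they must, lean essentially on the characteristic-zero hypothesis.
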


\begin{proof}
By \cite[Theorem~4.5]{quillen:Ann.ofMath.69}, the functor $\U$
from dglas to cocommutative connected dg Hopf-algebras admits a
quasi-inverse, denoted by $\Prim$, which associates to a
dg Hopf algebra its dgla of primitive elements. Consider the following
commutative diagram of canonical maps:
\begin{equation}
\label{eq:8}
\raisebox{-.9cm}{}
{% Picture saved by xtexcad 2.4
\unitlength=1pt
\begin{picture}(100.00,31)(0.00,20.00)
\thicklines
\put(50.00,2){\makebox(0.00,0.00)[b]{\scriptsize $\Prim H\U(f)$}}
\put(50.00,42.00){\makebox(0.00,0.00)[b]{\scriptsize $H\Prim \U(f)$}}
\put(100.00,0.00){\makebox(0.00,0.00){\hphantom.$\Prim H\U (\vv)$.}}
\put(0.00,0.00){\makebox(0.00,0.00){$\Prim H\U (\uu)$}}
\put(100.00,40.00){\makebox(0.00,0.00){$H\Prim\U (\vv)$}}
\put(0.00,40.00){\makebox(0.00,0.00){$H\Prim\U (\uu)$}}
\put(25,0.00){\vector(1,0){50.00}}
\put(100.00,30.00){\vector(0,-1){20.00}}
\put(0.00,30.00){\vector(0,-1){20.00}}
\put(25,40.00){\vector(1,0){50}}
\end{picture}}
\end{equation}

Since $\Prim \U(\uu) \cong \uu$ one has  $H\Prim \U(\uu) \cong H(\uu)$
and, since $\U H \cong H\U$ by \cite[Theorem~2.1]{quillen:Ann.ofMath.69}, one has $\Prim
H\U (\uu) \cong \Prim \U H (\uu)  \cong H(\uu)$; similarly for
$\vv$ in place of $\uu$. We conclude that
both the vertical arrows in~(\ref{eq:8}) are isomorphisms, while the
bottom map $\Prim H\U(f)$ is an isomorphism by assumption. Thus $H(f) =
H\Prim \U(f)$ must be an isomorphism too.
\end{proof}

\begin{theorem}
\label{free}
Let $\g$ and $\h$ be two discrete dglas. Then there is a
quasi-isomorphism $\hbox{$\Cf(\g*\h)$} \cong \Cf(\g)\times\Cf(\h)$
where $\g*\h$ is the free (non-completed) product of $\g$ and $\h$.
\end{theorem}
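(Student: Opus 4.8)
The natural candidate for the quasi-isomorphism is the map
\[
\Phi : \Cf(\g*\h) \longrightarrow \Cf(\g)\times \Cf(\h)
\]
induced, through the contravariance of $\Cf$, by the two canonical inclusions $\g \hookrightarrow \g*\h \hookleftarrow \h$; componentwise it is a morphism of complete non-unital cdgas. The plan is to prove $\Phi$ is a quasi-isomorphism in two stages: first reduce, by a spectral sequence comparison, to the case of graded Lie algebras carrying the \emph{zero} differential, and then settle that ``formal'' case by passing to universal enveloping algebras.

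First I would record that the free product of dglas preserves quasi-isomorphisms, which simultaneously pins down $H(\g*\h)$. Since $\U$ is left adjoint to the forgetful functor $\Ass \to \Lie$, it carries the coproduct $\g*\h$ to the coproduct $\U\g \sqcup \U\h$ of associative algebras. The augmentation ideal of a free product of augmented dgas decomposes as the direct sum of the alternating tensor products $\overline{\U\g}\ot\overline{\U\h}\ot\cdots$, so over the field $\ground$ the Künneth theorem shows that $\U(f)\sqcup\U(g)$ is a quasi-isomorphism whenever $f,g$ are, and that $H(\U\g\sqcup\U\h)\cong H\U\g \sqcup H\U\h$ as algebras. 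Combined with Lemma~\ref{sec:append-cohom-free-1}, the first fact proves that $f*g$ is a quasi-isomorphism whenever $f$ and $g$ are (this is the exactness of the uncompleted free product reused later). Feeding Quillen's identities $H\U\cong\U H$ and $\Prim\,\U\cong\id$ (as used in the proof of Lemma~\ref{sec:append-cohom-free-1}) into the same computation yields the natural isomorphism $H(\g*\h)\cong H\g * H\h$.

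Next I would filter $\Cf(-)$ by symmetric word length. As recalled in Remark~\ref{spectralseq}, the part $d_I$ of the differential preserves this filtration while $d_{\it II}$ raises it by one, so $E_1 = \hat{S}_+\Sigma(H(-))^*$ and $E_2(-)=H\big(\Cf(H(-))\big)$. The map $\Phi$ respects the filtration, and on the target the spectral sequence of $\Cf(\g)\times\Cf(\h)$ is the direct sum of the two factor spectral sequences. Using the isomorphism $H(\g*\h)\cong H\g*H\h$ from the previous step, the induced map on $E_2$ becomes
\[
H\big(\Cf(H\g * H\h)\big)\longrightarrow H\big(\Cf(H\g)\big)\times H\big(\Cf(H\h)\big),
\]
that is, the cohomology of the map $\Phi$ for the pair $(H\g,H\h)$ of Lie algebras with vanishing differential. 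Since $\Cf$ is built from direct products, the filtration is complete and the spectral sequences are conditionally convergent, so by the comparison theorem of~\cite{baordman:condition} it suffices to know that $\Phi$ is an isomorphism on $E_2$; equivalently, it suffices to prove the theorem in the formal case.

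It therefore remains to treat two graded Lie algebras $\mathfrak p,\mathfrak q$ with zero differential and show that $\Cf(\mathfrak p*\mathfrak q)\to \Cf(\mathfrak p)\times\Cf(\mathfrak q)$ is a quasi-isomorphism. Here I would identify $H(\Cf(\mathfrak p))$ with the reduced $\overline{\Ext}_{\U\mathfrak p}(\ground,\ground)$ via the Poincar\'e--Birkhoff--Witt comparison (the discrete, reduced analogue of Proposition~\ref{bar}(2)) and use $\U(\mathfrak p*\mathfrak q)\cong \U\mathfrak p\sqcup\U\mathfrak q$ to reduce to the purely associative statement
\[
\overline{\Ext}_{R\sqcup S}(\ground,\ground)\cong \overline{\Ext}_R(\ground,\ground)\oplus \overline{\Ext}_S(\ground,\ground),
\]
which I would establish by assembling a free resolution of $\ground$ over $R\sqcup S$ out of free resolutions over $R$ and over $S$ (using that $R\sqcup S$ is free as a one-sided module over each factor, so that reduced $\overline{\Tor}$ is additive along a Mayer--Vietoris sequence) and then dualising. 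The main obstacle is precisely this formal case: beyond the additive computation one must check that $\Phi$ actually realises the splitting and that the algebra structure on the target matches the non-unital direct product $\times$ rather than merely agreeing as graded vector spaces, and one must keep the completions in $\Cf=\hat{S}_+\Sigma(-)^*$ compatible with the uncompleted bar/resolution picture. This bookkeeping, rather than any single conceptual difficulty, is where the real work lies.
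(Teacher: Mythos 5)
Your proposal is correct, but it takes a genuinely different route from the paper's. The two arguments share their first step: the exactness of the uncompleted free product, obtained from $\U(\g*\h)\cong\U(\g)*\U(\h)$, the alternating-word decomposition of a free product of dgas, and Lemma~\ref{sec:append-cohom-free-1} (and your extra conclusion $H(\g*\h)\cong H(\g)*H(\h)$ follows correctly by applying that same lemma to the natural map $H(\g)*H(\h)\to H(\g*\h)$, whose image under $\U$ is the K\"unneth isomorphism). After that they diverge. The paper stays inside the closed model category $\Lie$: since both $\Cf(-)$ (Remark~\ref{spectralseq}) and $-*\h$ preserve quasi-isomorphisms, it replaces $\g$ and $\h$ by standard cofibrant dglas, observes that $\g*\h$ is then standard cofibrant as well, and finishes with the fact (dual to \cite[Proposition~9.1.1]{hinich01:_dg}) that for a cofibrant dgla $\a$ the complex $\Cf(\a)$ is quasi-isomorphic to $\Der(\a,\ground)\cong(\a/[\a,\a])^*$, so the theorem collapses to the evident splitting $\Der(\g*\h,\ground)\cong\Der(\g,\ground)\times\Der(\h,\ground)$. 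You instead use the word-length spectral sequence of Remark~\ref{spectralseq} to reduce to graded Lie algebras with vanishing differential, and settle that formal case by identifying reduced Chevalley--Eilenberg cohomology with $\overline{\Ext}_{\U(-)}(\ground,\ground)$ and proving additivity of reduced Ext over an associative free product $R\sqcup S$ (via the decomposition of its augmentation ideal into words beginning with an $R$-letter and words beginning with an $S$-letter). Your endgame avoids the model structure and cofibrancy entirely, and your overall strategy --- reduce to zero differential, then treat the formal case --- is in fact exactly how the paper later handles the \emph{complete} version (Lemma~\ref{nilp} and Theorem~\ref{freeprod}); what it costs you is a comparison theorem for a whole-plane, conditionally convergent spectral sequence, where completeness of the filtration alone is strictly speaking not sufficient (Boardman's Theorem~7.2 also asks for vanishing of the derived term $RE_\infty$), plus the naturality bookkeeping you flag at the end.

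Two of your worries can be discharged at once. First, $\Phi$ is a morphism of complete cdgas by construction (a pair of algebra maps into a product is an algebra map into the product), and the theorem only asserts a quasi-isomorphism, so no separate verification of multiplicativity is needed. Second, the completion in $\Cf=\hat{S}_+\Sigma(-)^*$ is harmless for your Ext identification: since $\g^*$ is linearly compact, the completed symmetric algebra on $\Sigma\g^*$ is precisely the full linear dual of $S_+\Sigma\g$, so $\Cf(\mathfrak p)$ \emph{is} the honest reduced Chevalley--Eilenberg cochain complex, whose cohomology is the dual of $\overline{\Tor}^{\U\mathfrak p}(\ground,\ground)$, i.e.\ $\overline{\Ext}_{\U\mathfrak p}(\ground,\ground)$. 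As for the convergence caveat above, note that it is present at exactly the same level in the paper's own Remark~\ref{spectralseq} (``we easily prove \dots''), so your argument meets the paper's own standard of rigor on this point.
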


\begin{proof}
As the first step we prove that the functor $\g\mapsto \hbox{$\g*\h$}$
preserves quasi-isomorphisms. Our proof of this fact will
use the isomorphism $\hbox{$\U(\g*\h)$}\cong \U(\g)*\U(\h)$ for universal
enveloping algebras, where the symbol $*$ in the right hand
side stands for the free product of dgas.

Let $\g^\prime$ be a dgla quasi-isomorphic to $\g$. Since $\U$
preserves quasi-isomorphisms by \cite[Theorem~2.1]{quillen:Ann.ofMath.69}, it suffices
to prove that the functor of taking the free product on the
category of dgas preserves quasi-isomorphisms. Indeed, if it is so, then
we have a chain of isomorphisms and quasi-isomorphisms:
\[
\U(\g*\h)\cong \U(\g)*\U(\h)\simeq  \U(\g^\prime)*\U(\h)\cong
\U(\g^\prime*\h).
\]
Lemma~\ref{sec:append-cohom-free-1} then implies that $\g*\h$ is
quasi-isomorphic to $\g^\prime*\h$.

To show the exactness of the free product functor for associative
algebras, observe that the product $A*B$ of dgas $A$ and $B$
decomposes as a direct sum of tensor products of $A$ and~$B$:
\[
A*B\cong A \oplus B \oplus (A\otimes B) \oplus (B\otimes A)
\oplus (B\otimes A\otimes B) \oplus (A\otimes B\otimes A) \oplus\cdots
\]
so the statement follows from the exactness of the tensor product
of dgas over a field of characteristic zero.

Recall that the functor $\g\mapsto \Cf(\g)$  preserves
quasi-isomorphisms
by  Remark~\ref{spectralseq}.
Since $\Lie$ is a closed model category, we
conclude that it suffices to prove the statement of our theorem in the
case when both $\g$ and $\h$ are standard cofibrant dglas, i.e.~when they
are obtained from the trivial dgla by a sequence of cell attachments. In
that case $\g*\h$ is likewise a standard cofibrant~dgla.

Further, we claim that if $\a$ is a cofibrant dgla then $\Cf(\a)$ is
quasi-isomorphic to the space $\Der(\a,\ground)$ of derivations of
$\a$ with coefficients in the trivial one-dimensional $\a$-module
$\ground$ -- note that $\Der(\a,\ground)$ is isomorphic to
$(\a/[\a,\a])^*$, the dual space of indecomposables of $\a$. This is
a~standard fact which can be proved e.g.~by $\ground$-linear dualization
of \cite[Proposition~9.1.1]{hinich01:_dg}.
Finally, it is clear that
$\Der(\g*\h,\ground)\cong \Der(\g,\ground)\times\Der(\h,\ground)$.
The desired result is proved.
\end{proof}

Recall that we are really interested in \emph{complete} dglas whereas
the last result concerns non-complete ones. Somewhat surprisingly, it
also holds for complete dglas, as a \emph{consequence} of Theorem
\ref{free}. In the formulation of the complete case we have to take
the fiber product of the corresponding CE complexes $\Cfun(\g)$ and
$\Cfun(\h)$ since the latter are \emph{augmented} cdgas.  Let us prove
some preliminary statements. From now on we will write ${\Cfun}_+(\g)$
for the augmentation ideal of the cdga $\Cfun(\g)$.

\begin{lemma}
\label{nilp}
Let $\g$ and $\h$ be finite dimensional nilpotent graded Lie algebras,
viewed as \emph{complete} dglas.  Then there is a quasi-isomorphism
${\Cfun}_+(\g*\h)\simeq{\Cfun}_+(\g)\times\Cfun_+(\h)$.
\end{lemma}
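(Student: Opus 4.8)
The plan is to reduce the complete (nilpotent, finite-dimensional) case to the non-complete Theorem~\ref{free} that we have already proved. Since $\g$ and $\h$ are finite-dimensional nilpotent graded Lie algebras, their completed free product $\g * \h$ coincides with the ordinary (non-completed) free product in $\Lie$ as a \emph{graded} object; completion only affects the topology, and for finite-dimensional nilpotent pieces the relevant quotients are finite-dimensional. Thus I would first observe that the Chevalley--Eilenberg functor $\Cfun(-)$ of Definition~\ref{CEHar}, which produces the symmetric algebra $S\Sigma\g^*$ on the linear dual, is built out of the same underlying complex as the functor $\Cf(-)$ of the appendix, which produces the \emph{non-unital} symmetric algebra $\hat S_+ \Sigma\g^*$. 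Concretely, for an augmented cdga one has $\Cfun(\g) \cong \ground \oplus \Cfun_+(\g)$, and the augmentation ideal $\Cfun_+(\g)$ is precisely $\Cf(\g)$ in the notation of the appendix (both are $\hat S_+\Sigma\g^*$ with the same differential $d_I + d_{\it II}$). So the statement to be proved, $\Cfun_+(\g*\h) \simeq \Cfun_+(\g)\times\Cfun_+(\h)$, is literally a restatement of $\Cf(\g*\h) \simeq \Cf(\g)\times\Cf(\h)$.

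Next I would invoke Theorem~\ref{free} directly. That theorem gives a quasi-isomorphism $\Cf(\g*\h)\cong\Cf(\g)\times\Cf(\h)$ for \emph{discrete} dglas $\g$ and $\h$. The content of the present lemma is that this continues to hold when we regard the finite-dimensional nilpotent $\g$ and $\h$ as objects of $\L$ and take $\Cfun_+$ instead of $\Cf$. The key point making this transition harmless is that for finite-dimensional $\g$ the dual $\g^*$ is again finite-dimensional, so the completed symmetric algebra $\hat S_+\Sigma\g^*$ and the ordinary one agree degreewise after fixing total degree; the double complex is the same, and hence so is its direct-product totalization and its spectral sequence $E'(\g)$ from Remark~\ref{spectralseq}. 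I would therefore note that under the identification $\Cfun_+ \cong \Cf$ (valid for finite-dimensional inputs), the free product $\g*\h$ in $\L$ and in $\Lie$ coincide, and the conclusion of Theorem~\ref{free} transports verbatim.

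The one genuine verification is to match the product $\Cf(\g)\times\Cf(\h)$ of the appendix with the \emph{fiber} product appearing on the dgla/cdga side, since $\Cfun(\g)$ and $\Cfun(\h)$ are \emph{augmented} cdgas and their categorical product in $\algs_+$ is $\Cfun(\g)\times_{\ground}\Cfun(\h)$ rather than the naive product. On augmentation ideals, however, the fiber product of augmented algebras induces the ordinary direct product of the augmentation ideals: $(\Cfun(\g)\times_\ground\Cfun(\h))_+ \cong \Cfun_+(\g)\times\Cfun_+(\h)$. Spelling this out is routine, and it is exactly the non-unital product $\Cf(\g)\times\Cf(\h)$ that Theorem~\ref{free} delivers.

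The main obstacle I anticipate is purely bookkeeping rather than conceptual: one must be careful that the differential $d_{\it II}$ induced by the bracket behaves identically on $\hat S_+\Sigma(\g*\h)^*$ whether computed via the completed or the discrete free product, and that the passage to augmentation ideals does not disturb the quasi-isomorphism of Theorem~\ref{free}. Since $\g,\h$ are assumed finite-dimensional and nilpotent, all completions are trivial in each fixed degree and these checks reduce to the discrete case already handled, so no new analytic input (filtrations, limits, Lemma~\ref{sec:dual-hinich-corr}) is needed here; those tools are reserved for the genuinely infinite-dimensional complete case of Theorem~\ref{freeprod}, obtained from this lemma by a limiting argument.
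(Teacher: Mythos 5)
Your proposal has a genuine gap, and it is located exactly at the claim the whole argument rests on: that for finite-dimensional nilpotent $\g$ and $\h$ the completed free product coincides with the ordinary free product, so that ``completion only affects the topology.'' This is false. The free product of two nonzero finite-dimensional nilpotent Lie algebras is infinite-dimensional and \emph{not} nilpotent (already for two one-dimensional abelian factors it is the free Lie algebra $\LL(x,y)$), and the completed free product in $\L$ is the inverse limit $\lim_m\, (\g \CF \h)/(\g \CF \h)^{\geq m}$ over the bracket-length quotients, which strictly contains the uncompleted free product: it contains infinite series of iterated brackets, e.g.\ $x + [x,y] + [x,[x,y]] + \cdots$. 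Consequently your identification of the two sides also fails: $\Cfun_+(\g*\h)$ is the \emph{uncompleted} symmetric algebra on the \emph{continuous} dual of the completed free product, while $\Cf$ of the uncompleted free product (the object Theorem~\ref{free} speaks about) is the \emph{completed} symmetric algebra on the \emph{full} linear dual. For the infinite-dimensional object $\g*\h$ these are different complexes, so Theorem~\ref{free} cannot be transported ``verbatim''; bridging this discrepancy is precisely the content of the lemma, not a bookkeeping formality. (Even your preliminary claim that $S_+\Sigma\g^*$ and $\hat S_+\Sigma\g^*$ agree degreewise for finite-dimensional $\g$ needs $\g_{-1}=0$; but the factors are not where the real problem lies.)

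The paper's proof shows what is actually required. First, one reduces to the case of \emph{abelian} $\g$ and $\h$ by filtering with the (shifted) lower central series and running strongly convergent spectral sequences --- and here the completed free product must be handled with care: one needs the filtration on the continuous dual $(\g*\h)^*$ to be exhaustive, which is proved using that every continuous functional factors through some finite-dimensional nilpotent quotient. Second, for abelian factors the free product carries a genuine weight (bracket-length) grading preserved by the differential, so $\Cfun_+(\g*\h)$ splits as a direct \emph{sum} of weight subcomplexes; a map of such sums is a quasi-isomorphism iff it is one on the corresponding direct \emph{product}, and the direct-product totalization is identified with the appendix complex $\Cf$ of the \emph{uncompleted} free product --- only at this point does Theorem~\ref{free} apply. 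So the filtration and limit arguments you propose to defer to Theorem~\ref{freeprod} are in fact indispensable already in this lemma.
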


\begin{proof}
The statement of the lemma is similar to the quasi-isomorphism of
Theorem \ref{free}, however the essential difference is that $\g$ and
$\h$ are regarded as \emph{complete} dglas (with vanishing
differentials) and so, their free product $\g*\h$ is likewise
completed. However, it turns out not to influence the result.  The
natural maps $\g \to \g * \h$ and $\h \to \g*\h$ induce a map
\begin{equation}
\label{chprod}
\Cfun_+(\g*\h)\to\Cfun_+(\g)\times\Cfun_+(\h);
\end{equation} we want to show that this map is a quasi-isomorphism.

We start the proof by reducing the statement to the case when $\g$ and $\h$
are abelian.
First observe that $\g$, being finite dimensional
nilpotent, has finite filtration, whose associated graded is
abelian (graded) Lie algebra which will be denoted by $\g^\natural$.

Indeed, consider the lower central series $\g = \g_1 \supset \g_2
\supset \cdots \supset \g_{s+1} = 0$ of $\g$. Although the associated graded of
this descending filtration need not be abelian, the shifted filtration
\begin{equation}
\label{malo_snehu}
\g = F_0\g \supset  F_1\g \supset \cdots \supset F_{s}\g = 0
\end{equation}
with $F_n \g := \g_{n+1}$, $0 \leq n \leq s$,
does have this property. Let $G^n\g^* := (F_{n+1}\g)^\perp = \g_{n+2}^\perp$ be the
annihilator of $F_{n+1}\g$ in $\g^*$.
Then
\begin{equation}
\label{na_Honzove_prednasce}
0 \subset G^0\g^* \subset  G^1\g^* \subset \cdots \subset G^{s}\g^* = \g^*
\end{equation}
is a finite ascending filtration of $\g^*$.
The subspaces
\[
G^n\Cfun_+(\g) := \bigoplus_{k \geq 1}\bigoplus_{n = n_1+\cdots+ n_k}
\Sigma G^{n_1}\g^* \cdots \Sigma G^{n_k}\g^* \subset \Cfun_+(\g)
\]
form an ascending filtration
\[
0 \subset G^0\Cfun_+(\g) \subset G^1\Cfun_+(\g) \subset
G^2\Cfun_+(\g) \subset \cdots \subset \Cfun_+(\g)
\]
which is exhaustive, Hausdorff and complete. The $E_1$-term of the
induced spectral sequence clearly equals
$\Cfun_+(\g^\natural)$ and this spectral
sequence strongly converges to $\Cfun_+(\g)$.

Let us turn our attention to the completed free product $\g *
\h$. One has a natural
epimorphism $\pi: \LL(\g,\h) \epi \g \CF \h$, where
$\LL(\g,\h)$ is the free graded Lie algebra generated by $\g \oplus
\h$, and $\g\CF\h$ denotes, only in this proof, the {\em uncompleted\/}
free product. Let $\LL^{\geq m}(\g,h)$ be the ideal
spanned by products of at least $m$ elements, and  $(\g \CF \h)^{\geq
  m} := \pi \big(\LL^{\geq m}(\g,h)\big)$. Since $\pi$ is an
epimorphism, $(\g \CF \h)^{\geq  m}$ is an ideal, and
\[
\g * \h = \lim_m \ (\g \CF \h) /(\g \CF \h)^{\geq
  m}.
\]

Filtration~(\ref{malo_snehu}) induces, in the standard manner, a
filtration $F_n \LL(\g,\h)$, $n \geq 0$, of $\LL(\g,\h)$. Denote
finally by $F_n(\g \CF \h): = \pi\big(F_n \LL(\g,\h)\big)$ the induced
filtration of the uncompleted free product and by $F_n(\g * \h)$ its
closure in $\g * \h$. It follows from the continuity of the
bracket in $\g * \h$ that
\[
\g*\h =
F_0 (\g *\h) \supset F_1 (\g *\h) \supset F_2 (\g *\h) \supset \cdots
\]
is a descending filtration by ideals. The formula $G^n(\g*\h)^* :=
\big(F_{n+1}(\g*\h)\big)^\perp$ defines an ascending filtration
\begin{equation}
\label{napadna}
0 \subset G^0(\g* \h)^* \subset G^1(\g* \h)^* \subset G^2(\g* \h)^* \subset
\cdots (\g * \h)^*.
\end{equation}

Recall that $(\g * \h)^*$, by definition, consists of {\em continuous\/}
linear functionals. By continuity, every such functional $\alpha$ factors
through the canonical epimorphism
\[
(\g * \h) \epi (\g \CF \h)/(\g \CF \h)^{\geq m}
\]
for $m >\!> 0$.
By the finiteness of~(\ref{malo_snehu}), the induced filtration of
$(\g \CF \h)/(\g \CF \h)^{\geq m}$ is finite, so $\alpha$ annihilates
$F_n (\g *\h)$ for $n >\!>0$.
This implies that the filtration~(\ref{napadna}) is exhaustive.

Now we proceed as in the case of
$\Cfun_+(\g)$. The filtration~(\ref{napadna}) induces an exhaustive,
Hausdorff and complete filtration of $\Cfun_+(\g * \h)$. The
induced spectral sequence strongly converges to $\Cfun_+(\g * \h)$ and
its $E_1$-term is $\Cfun_+(\g^\natural *
\h)$.

Since the canonical map~(\ref{chprod}) is compatible with the
filtrations, by the comparison theorem for spectral sequences, it is
enough to prove that~(\ref{chprod}) is a quasi-isomorphism
with $\g$ replaced by $\g^\natural$.
Repeating the same steps with $\h$ in place of $\g$, we prove that
the desired statement would follow if we can
prove that the map
$\Cfun_+(\g^\natural*\h^\natural)\to \Cfun_+(\g^\natural)\times
\Cfun_+(\h^\natural)$  is a quasi-isomorphism. In other words, we
reduced the statement of the lemma to the case when both $\g$ and $\h$
are {\em abelian\/}.

The dg space $\Cfun(\g*\h)=S\Sigma(\g*\h)^*$ consists of symmetric
tensors in $\Sigma(\g*\h)^*$; we can introduce a grading on
$\Cfun(\g*\h)$ as follows. Let $x\in\g*\h$. Then $x$ has weight $n+1$
if $x$ is a sum of Lie monomials of bracket length $n$; this grading
lifts to $\Cfun(\g*\h)$.

It is clear that the differential in $\Cfun(\g*\h)$ preserves the
weight grading and so the dg space $\Cfun(\g*\h)$ decomposes as an
infinite direct sum of subcomplexes consisting of elements of fixed
weight. It follows that the map (\ref{chprod}) is a quasi-isomorphism
if and only if it is a quasi-isomorphism for each weight
component. This, in turn, holds if and only if and only if a similar
statement holds when the infinite direct sum over all positive weights
is replaced by the corresponding infinite direct product. Observe
that the obtained completed dg space has the form
$\hat{\Cfun}(\g*\h)^*=\hat{S}\Sigma(\g*\h)^*$ where in the last
formula $\g*\h$ stands for the \emph{uncompleted} free product. It is,
therefore, nothing but the standard complex computing the
Chevalley-Eilenberg cohomology of the uncompleted Lie algebra $\g*\h$.
Thus, the statement is reduced to computing the usual
Chevalley-Eilenberg cohomology of $\g*\h$ and, therefore, follows from
Theorem \ref{free}.
\end{proof}

\begin{lemma}
\label{sec:append-cohom-free}
Let $\a$ be a complete dgla and let $\tilde{\a}$ be the complete dgla
with vanishing differential and the same graded Lie bracket as $\a$. Then
there exists a spectral sequence $E^{\prime\prime}(\a)$ converging strongly
to $\Cfun_+(\a)$ such that
$E_1^{\prime\prime}(\a)=H\big(\Cfun_+(\tilde{\a})\big)$.
\end{lemma}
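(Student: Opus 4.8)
The plan is to exhibit $\Cfun_+(\a)$ as the total complex of a double complex and to let $E''(\a)$ be the spectral sequence of its \emph{second} filtration. Recall from Definition~\ref{CEHar} that the underlying graded space of $\Cfun_+(\a)$ is $S_+\Sigma\a^*$ and that its differential is $d=d_I+d_{\it II}$, with $d_I$ induced by the internal differential of $\a$ and $d_{\it II}$ by the bracket. I would grade $S_+\Sigma\a^*$ by the \emph{polynomial} (symmetric) degree $p\geq 1$ and by the complementary degree $q:=(\text{total cohomological degree})-p$. Since $d_I$ is the derivation induced by the self-map $\partial^*$ of $\a^*$, it preserves $p$ and raises total degree by one, hence has bidegree $(0,1)$; since $d_{\it II}$ is induced by the bracket $\a\ot\a\to\a$, it sends a generator in $\Sigma\a^*$ to a quadratic element of $S^2\Sigma\a^*$ and hence has bidegree $(1,0)$. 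Thus $(S_+\Sigma\a^*,d_{\it II},d_I)$ is a genuine double complex with $d_{\it II}$ horizontal and $d_I$ vertical, exactly as in Remark~\ref{spectralseq}, and $\Cfun_+(\a)$ is its direct-sum totalization.

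Next I would take the filtration by the complementary degree $q$, so that on the associated graded only the horizontal part $d_{\it II}$ survives as $d_0$. This gives $E_1^{\prime\prime}(\a)=H\big(S_+\Sigma\a^*,d_{\it II}\big)$, with $d_1$ the map induced by $d_I$. The key observation is that $d_{\it II}$ depends only on the graded Lie bracket of $\a$ and not on its internal differential; consequently the complex $(S_+\Sigma\a^*,d_{\it II})$ is literally $\Cfun_+(\tilde{\a})$, the Chevalley--Eilenberg complex of the dgla $\tilde{\a}$ with vanishing differential and the same bracket. This yields the required identification $E_1^{\prime\prime}(\a)\cong H\big(\Cfun_+(\tilde{\a})\big)$ immediately, once the bicomplex is set up.

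The main point to settle, and the one requiring care, is strong convergence; here the choice of filtration is essential. Because we work inside the augmentation ideal, every homogeneous element has polynomial degree $p\geq 1$, so in a fixed total degree $N$ the complementary degree satisfies $q=N-p\leq N-1$. Hence the complementary-degree filtration is \emph{bounded above} in each total degree, which makes it automatically Hausdorff and complete; exhaustiveness is precisely the statement that $\Cfun(\g)$ is built from the \emph{non-completed} symmetric algebra, i.e.\ a direct sum over $p$, so that every element has finite support in $p$ and lies in some filtration stage. (It is worth stressing that the complementary filtration, not the polynomial one, is forced on us here: for a direct sum the filtration by $p$ fails to be complete, its completion being the direct product, compare Remark~\ref{spectralseq}.) With the filtration exhaustive, Hausdorff and complete, and with the differentials $d_r$ exiting the bounded-above region, the spectral sequence converges strongly to $H\big(\Cfun_+(\a)\big)$ in the sense of~\cite{baordman:condition}, exactly as in the convergence arguments used for Lemma~\ref{nilp}. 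I expect the bookkeeping of the bidegrees of $d_I$ and $d_{\it II}$, together with the verification that it is the direct-sum (rather than completed) structure that guarantees exhaustiveness, to be the only genuinely delicate steps.
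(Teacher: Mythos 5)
Your proof is correct and follows essentially the same route as the paper's own argument: the paper likewise decomposes $\Cfun_+(\a)=\bigoplus_{p\geq 1}S^p$ and filters by the complementary degree $q$, observes that in each fixed total degree the filtration terminates (since $p\geq 1$ forces $q$ to be bounded there), hence is Hausdorff and complete while exhaustiveness comes from the direct-sum structure, and concludes strong convergence with $E_1=H\big(\Cfun_+(\tilde\a)\big)$. The only difference is one of exposition: you make explicit the bidegree bookkeeping for $d_I$, $d_{\it II}$ and the identification of the $E_1$-page, which the paper dismisses as obvious.
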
\label{sscom}

\begin{proof}
As graded vector spaces, $\Cfun_+(\a) = \bigoplus_{p \geq 1}
S^p(\Sigma^{-1}\a^*)$, where $S^p(-)$ denotes the subspace of the
symmetric algebra consisting of elements of homogeneity $p$. It is
obvious that
\begin{equation}
\label{dnes}
F_n\Cfun_+(\a) := \bigoplus_{q \geq n,\ p \geq 1} \big[S^p(\Sigma^{-1}\a^*)\big]^{p+q}
\end{equation}
forms a decreasing exhaustive filtration of the cochain complex
$\Cfun_+(\a)$. Its degree $m$ component equals
\[
F_n\Cfun_+(\a)^m :
= \bigoplus_{q \geq n,\ m > q} \big[S^{m-q}(\Sigma^{-1}\a^*)\big]^m.
\]
Clearly $F_n\Cfun_+(\a)^m = 0$ if $n > m$, the
filtration~(\ref{dnes}) is thus Hausdorff and complete. The induced
spectral sequence therefore converges strongly and obviously has the
properties stated in the lemma.
\end{proof}

\begin{theorem}
\label{freeprod}
Let $\g$ and $\h$ be two complete dglas. Then there is a quasi-isomorphism
$\hbox{$\Cfun_+(\g*\h)$} \simeq\Cfun_+(\g)\times \Cfun_+(\h)$.
\end{theorem}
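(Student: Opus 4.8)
The plan is to establish the comparison map and then reduce, in two steps, to the finite-dimensional nilpotent situation already settled in Lemma~\ref{nilp}. The canonical inclusions $\g\hookrightarrow\g*\h$ and $\h\hookrightarrow\g*\h$ of the two factors into the completed free product induce, exactly as in the map~(\ref{chprod}), a morphism of non-unital cdgas
\[
\Phi:\Cfun_+(\g*\h)\longrightarrow\Cfun_+(\g)\times\Cfun_+(\h),
\]
and the assertion of the theorem is precisely that $\Phi$ is a quasi-isomorphism.

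The first reduction trivialises the differentials. Both source and target carry the decreasing, exhaustive, Hausdorff and complete filtration of Lemma~\ref{sec:append-cohom-free} — on the product one simply filters each factor — and $\Phi$ preserves this filtration, being induced by Lie maps and hence respecting the symmetric powers and the internal degree. Thus $\Phi$ induces a morphism of the associated strongly convergent spectral sequences $E''$. Since the completed free product of graded Lie algebras depends only on the underlying graded brackets, forgetting differentials gives $\widetilde{\g*\h}\cong\tilde\g*\tilde\h$, so the map induced by $\Phi$ on $E_1$-terms is exactly the comparison map for the graded Lie algebras $\tilde\g,\tilde\h$ equipped with the zero differential. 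By the comparison theorem for strongly convergent spectral sequences it is therefore enough to prove the theorem under the assumption that $\g$ and $\h$ are complete \emph{graded Lie algebras} with vanishing differential.

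The second reduction passes to finite dimensions. Writing $\g=\lim_\alpha\g^\alpha$ and $\h=\lim_\beta\h^\beta$ as inverse limits of finite-dimensional nilpotent graded Lie algebras, the duality between linearly compact and discrete spaces turns these limits into filtered colimits of the discrete duals; since $\Cfun_+(\a)$ is the positive part of the symmetric algebra $S\Sigma\a^*$ and is built functorially from $\a^*$, the functor $\Cfun_+$ carries such inverse limits to filtered colimits. Hence $\Cfun_+(\g)=\colim_\alpha\Cfun_+(\g^\alpha)$, likewise for $\h$, and
\[
\Cfun_+(\g*\h)\cong\colim_{\alpha,\beta}\Cfun_+(\g^\alpha*\h^\beta).
\]
For each pair $(\alpha,\beta)$ Lemma~\ref{nilp} provides a quasi-isomorphism $\Cfun_+(\g^\alpha*\h^\beta)\to\Cfun_+(\g^\alpha)\times\Cfun_+(\h^\beta)$; as cohomology commutes with filtered colimits, passing to the colimit exhibits $\Phi$ as a filtered colimit of quasi-isomorphisms, hence a quasi-isomorphism.

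I expect the genuine obstacle to lie in the displayed identification $\g*\h\cong\lim_{\alpha,\beta}(\g^\alpha*\h^\beta)$, i.e.\ in the claim that the completed free product — a coproduct in $\L$ — commutes with the cofiltered limits presenting $\g$ and $\h$. This is not formal, since coproducts need not commute with limits; it has to be checked directly from the construction of the completed free product as the completion of the uncompleted one $\g\CF\h$ with respect to the combined nilpotency and bracket-length filtrations, in the spirit of the identity $\g*\h=\lim_m(\g\CF\h)/(\g\CF\h)^{\geq m}$ used in the proof of Lemma~\ref{nilp}. Once this is in place, the colimit bookkeeping and the appeal to Lemma~\ref{nilp} are routine.
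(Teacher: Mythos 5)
Your proposal is correct and is essentially the paper's own proof with the two reductions performed in the opposite order: the paper first treats finite-dimensional nilpotent $\g$, $\h$ (with arbitrary differentials) by comparing the strongly convergent spectral sequences of Lemma~\ref{sec:append-cohom-free}, whose $E_1$-map is an isomorphism by Lemma~\ref{nilp}, and only then passes to general complete dglas by the limit argument, whereas you first kill the differentials and then take limits --- both orders are legitimate since Lemma~\ref{sec:append-cohom-free} applies to arbitrary complete dglas. The step you single out as the genuine obstacle is exactly the paper's isomorphism~(\ref{zitra_na_Borovou_Ladu}), established just as you anticipate: $\g*\h\cong\lim_{\alpha,\beta,m}(\g_\alpha\CF\h_\beta)/(\g_\alpha\CF\h_\beta)^{\geq m}$ exhibits the completed free product as a limit of finite-dimensional spaces, whence the continuous dual is $\colim_{\alpha,\beta}(\g_\alpha*\h_\beta)^*$ and therefore $\Cfun_+(\g*\h)\cong\colim_{\alpha,\beta}\Cfun_+(\g_\alpha*\h_\beta)$, after which exactness of filtered colimits finishes the argument.
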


\begin{proof}
Assume first that $\g$ and $\h$ are finite-dimensional nilpotent.
Arguing as in the proof of Lemma \ref{nilp} we construct a map
$\hbox{$\Cfun_+(\g*\h)$}\longrightarrow\Cfun_+(\g)\times\Cfun_+(\h)$;
the desired statement is equivalent to proving that this is a
quasi-isomorphism. Consider a map of spectral sequences
\[
E_1^{\prime\prime}(\g*\h)=
H(\Cfun_+(\widetilde{\g*\h}))\to E_1^{\prime\prime}(\widetilde\g)
\times E_1^{\prime\prime}(\widetilde\h)=
H\big(\Cfun_+(\tilde{\g})\big)\times H\big(\Cfun_+(\tilde{\h})\big)
\]
of Lemma~\ref{sec:append-cohom-free}.
The map in the middle is an isomorphism by Lemma \ref{nilp}, so the desired
result for finite dimensional nilpotent $\g$ and $\h$ follows.

Assume now that $\g = \lim_\alpha \g_\alpha$ and $\h = \lim_\beta
\h_\beta$ are limits of finite-dimensional nilpotent algebras.
Then the continuous duals equal $\g^* = \colim_\alpha \g_\alpha^*$,
$\h^* = \colim_\beta \h_\beta^*$
which readily implies that
\[
\Cfun_+(\g) = \colim_\alpha \Cfun_+(\g_\alpha)\  \mbox { and }\
\Cfun_+(\h) = \colim_\beta \Cfun_+(\h_\beta).
\]
By the standard properties of the filtered limits, one has
\begin{equation}
\label{zitra_na_Borovou_Ladu}
\g * \h \cong \lim_{\alpha,\beta,m} (\g_\alpha \CF \h_\beta)/
(\g_\alpha \CF \h_\beta)^{\geq m},
\end{equation}
where $\CF$ denotes, as in the proof of Lemma~\ref{nilp}, the
uncompleted free product. The isomorphism~(\ref{zitra_na_Borovou_Ladu})
represents $\g * \h$ as a limit of finite dimensional spaces, so
\begin{eqnarray*}
(\g * \h)^* &\cong& \colim_{\alpha,\beta,m}\big[ (\g_\alpha \CF \h_\beta)/
(\g_\alpha \CF \h_\beta)^{\geq m}\big]^*
\cong \colim_{\alpha,\beta} \colim_m \big[ (\g_\alpha \CF \h_\beta)/
(\g_\alpha \CF \h_\beta)^{\geq m}\big]^*
\\
&\cong& \colim_{\alpha,\beta}
(\g_\alpha * \h_\beta)^*
\end{eqnarray*}
which easily implies that
\[
\Cfun_+(\g * \h) = \colim_{\alpha,\beta} \Cfun_+(\g_\alpha * \h_\beta).
\]
Since the canonical map~(\ref{chprod}) is compatible with these
colimits and the colimits are exact functors, the general case follows
from the finite-dimensional nilpotent one.
\end{proof}

\begin{rem}
Note that the quasi-isomorphism of Theorem \ref{freeprod} can also be formulated as $\Cfun(\g*\h)\simeq\Cfun(\g)\times_{\ground}\Cfun(\h)$.
\end{rem}

%\bibliography{b}

\end{document}